\author{Nicolae Mihalache}
\address{KTH - Royal Institute of Technology \newline
         Department of Mathematics \newline
         100 44 Stockholm, Sweden \newline \newline
         Supported by the \emph{Knut and Alice Wallenberg Foundation \newline}}
\email{nicolae@kth.se}
\title{Two counterexamples in rational and interval dynamics}
\date{}
\newtheorem*{thma}{Theorem A}
\newtheorem*{thmb}{Theorem B}
\newtheorem{theorem}{Theorem}[section]
\newtheorem{prop}[theorem]{Proposition}
\newtheorem{definition}[theorem]{Definition}
\newtheorem{lemma}[theorem]{Lemma}
\newtheorem{corollary}[theorem]{Corollary}
\newtheorem*{remark}{Remark}
\theoremstyle{remark}
\newcommand{\R}{\mathbb{R}}
\newcommand{\C}{\mathbb{C}}
\newcommand{\al}{\alpha}
\newcommand{\be}{\beta}
\newcommand{\ga}{\gamma}
\newcommand{\de}{\delta}
\newcommand{\De}{\Delta}
\newcommand{\te}{\theta}
\newcommand{\si}{\sigma}
\newcommand{\la}{\lambda}
\def\ra{{\rightarrow}}
\def\se{{\subseteq}}
\def\ve{{\varepsilon}}
\def\ez{{\varepsilon_0}}
\def\ft{{\infty}}
\def\sm{{\setminus}}
\def\es{{\emptyset}}
\def\pa{{\partial}}
\def\RS{{\overline\C}}
\def\mo{^{-1}}
\def\mfa{{\mbox{ for all }}}
\def\ma{{\mbox{ and }}}
\def\gt{{\ga_\ft}}
\def\gtp{{\ga'_\ft}}
\def\gl{{\ol\ga}}
\def\gg{{g_\gt}}
\newcommand\MO[1]{\mathop{\mathrm{#1}}\nolimits}
\newcommand\MOo[2]{\mathop{\mathrm{#1}}\nolimits\left(#2\right)}
\newcommand\Deg[2]{\deg_{#1}\left({#2}\right)}
\newcommand\Ku{\underline{K}}
\newcommand\iu{\ensuremath{\underline{i}}}
\newcommand\ku{\underline{k}}
\newcommand\um{\underline{m}}
\newcommand\SetEnu[2]{\ensuremath{\left\{{#1},\ldots,{#2}\right\}}}
\newcommand\SetDef[2]{\ensuremath{\left\{{#1}\ :\ {#2}\right\}}}
\newcommand\Set[1]{\ensuremath{\left\{{#1}\right\}}}
\newcommand\rset{\ensuremath{I\setminus\left\{c_1,\ldots,c_l\right\}}}
\newcommand\var[2]{\ensuremath{={#1},\ldots,{#2}}}
\newcommand\ol[1]{\ensuremath{\overline{#1}}}
\newcommand\sq[3]{\ensuremath{\left({#1}_{#2}\right)_{#2\geq #3}}}
\newcommand\sqno[1]{\ensuremath{\left({#1}_{n}\right)_{n\geq 1}}}
\newcommand\sqnz[1]{\ensuremath{\left({#1}_{n}\right)_{n\geq 0}}}
\newcommand\gab[1]{\ga\in[\al_{#1},\be_{#1}]}
\newcommand\gabp[1]{\ga'\in[\al_{#1}',\be_{#1}']}
\newcommand\ag[1]{{\mbox{ for all }\gab{#1}}}
\newcommand\agp[1]{{\mbox{ for all }\gabp{#1}}}
\newcommand\allg[1]{for all $\gab{#1}$}
\newcommand\allgp[1]{for all $\gabp{#1}$}
\newcommand\fd[2]{\ensuremath{:{#1}\rightarrow{#2}}}
\newcommand\limi[2]{\ensuremath{\lim\limits_{{#1}\ra\ft}{#2}}}
\newcommand\cG{{conditions (\ref{equGRep}) to (\ref{equGAl}) }}
\newcommand\IFT{{Implicit Functions Theorem }}
\newcommand\rthm[1]{{Theorem \ref{thm#1}}}
\newcommand\rpro[1]{{Proposition \ref{prop#1}}}
\newcommand\rpros[2]{{Propositions \ref{prop#1} and \ref{prop#2}}}
\newcommand\rdef[1]{{Definition \ref{def#1}}}
\newcommand\rdefs[2]{{Definitions \ref{def#1} and \ref{def#2}}}
\newcommand\rsec[1]{{Section \ref{sect#1}}}
\newcommand\rlem[1]{{Lemma \ref{lem#1}}}
\newcommand\rlems[2]{{Lemmas \ref{lem#1} and \ref{lem#2}}}
\newcommand\rcor[1]{{Corollary \ref{cor#1}}}
\newcommand\rcors[2]{{Corollaries \ref{cor#1} and \ref{cor#2}}}
\newcommand\requ[1]{{(\ref{equ#1})}}
\newcommand\requs[2]{{(\ref{equ#1}) and (\ref{equ#2})}}
\newcommand\ineq[1]{{inequality (\ref{equ#1})}}
\newcommand\ineqss[2]{{inequalities (\ref{equ#1}), (\ref{equ#2})}}
\newcommand\sit{\ensuremath{SI_2}}
\newcommand\siti{\ensuremath{SI_2^\ft}}
\newcommand\iti{\ensuremath{I_2^\ft}}
\newcommand\ts{\ensuremath{\times\Sigma}}
\newcommand\iab[1]{\ensuremath{[\al_{#1},\be_{#1}]}}
\newcommand\nm[1]{\ensuremath{\left|\left|{#1}\right|\right|}}
\newcommand\abs[1]{\ensuremath{\left|{#1}\right|}}
\newcommand\lr[1]{\ensuremath{\left({#1}\right)}}
\newcommand\nc[1]{\nm{#1}_{C^1}}
\newcommand\lm[1]{\la^{-{#1}}}
\newcommand\lmp[1]{\la^{-\lr{#1}}}
\newcommand\A{\ensuremath{\mathcal A}}
\newcommand\F{\ensuremath{\mathcal F}}
\newcommand\G{\ensuremath{\mathcal G}}
\newcommand\Hc{\ensuremath{\mathcal H}}
\newcommand\Ht{\ensuremath{\tilde{\mathcal H}}}
\newcommand\Sc{\ensuremath{\mathcal S}}
\newcommand\Pc{\ensuremath{\mathcal P}}
\def\ti{\tilde}
\newcommand\intI{\ensuremath{\stackrel{\circ}{I}}}
\def\dia{\MO{diam}}
\def\dst{\MO{dist}}
\def\cri{\MO{Crit}}
\newcommand\dw[1]{\dia W^{#1}}
\begin{document}

\maketitle
\begin{abstract}
In rational dynamics, we prove the existence of a polynomial that satisfies the \emph{Topological Collet-Eckmann 
condition}, but which has a recurrent critical orbit that is not Collet-Eckmann. This shows that the converse of the main theorem 
in \cite{MCN1} does not hold.

In interval dynamics, we show that the Collet-Eckmann property for recurrent critical orbits is not a topological invariant 
for real polynomials with negative Schwarzian derivative. This contradicts a conjecture of {\'S}wi{\c{a}}tek \cite{TRCE}.
\end{abstract}


\section{Introduction}
In one-dimensional real and complex dynamics, there are several conditions which guarantee some form of non-uniform hyperbolicity, which in turn gives a reasonable understanding on statistical and geometric properties of the dynamics. Classical examples include the \emph{Misiurewicz condition} \cite{HM}, \emph{semi-hyperbolicity} \cite{JJ} and the \emph{Collet-Eckmann
condition} ($CE$) \cite{CE,HJ,BC,HN,NS,NP,HP1,HP2,CEH,HCE}. More recent examples include the \emph{Topological Collet-Eckmann condition} ($TCE$) \cite{NP,HCE,ETIC}, summability conditions \cite{HNUH,NUH,ACIP} and the \emph{Collet-Eckmann condition for recurrent critical orbits} ($RCE$) \cite{TRCE,MCN1}.

In the setting of rational dynamics, $CE$ implies $TCE$ \cite{CEH,ETIC}, but there are $TCE$
polynomials which do not verify the $CE$ condition \cite{ETIC}. Graczyk asked
whether $RCE$ is equivalent to $TCE$. In a previous paper, the author
showed that $RCE$ implies $TCE$ \cite{MCN1}. Here, we show that the converse is not
true.

\begin{thma}
There exists a $TCE$ rational map that is not $RCE$.
\end{thma}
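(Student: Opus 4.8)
The plan is to deduce Theorem~A from a construction in the real, interval setting. It suffices to produce two real polynomials $f$ and $g$ with negative Schwarzian derivative, topologically conjugate to one another, such that $f$ satisfies $RCE$ while $g$ does not. Granting this, $f$ is $RCE$, hence $TCE$ by \cite{MCN1}; since $TCE$ is a topological invariant \cite{ETIC} and $g$ is topologically conjugate to $f$, the map $g$ is $TCE$ as well, and by construction it is not $RCE$. Taking all critical points real (so with real orbits), the complex a priori bounds available for real polynomials with negative Schwarzian ensure that $TCE$ and the failure of $RCE$ for $g$ as an interval map persist for $g$ viewed as a rational map, which is exactly Theorem~A. (The same pair moreover shows that $RCE$ is not a topological invariant, contradicting {\'S}wi{\c{a}}tek's conjecture \cite{TRCE}.)

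\textbf{Construction of the pair.} I would work in a family of real polynomials with at least two critical points (cubics will do), and fix a combinatorial type in which one critical point $c$ is recurrent while the other is kept tame --- preperiodic, or attracted to a hyperbolic cycle --- hence non-recurrent, so that it does not interfere. Two topologically conjugate maps in this family share the closest-return times $1 < n_1 < n_2 < \cdots$ of $c$, but the derivative the orbit of $c$ accumulates between consecutive deep returns is a metric quantity that can differ from one such map to another. I would choose the combinatorics so that these returns are deep --- $c$ comes nearly as close to itself as the local scale permits --- and then select, inside the conjugacy class, a parameter $f$ for which the gains between deep returns strictly dominate the losses suffered near $c$ (so $f$ is Collet--Eckmann at $c$, whence $RCE$), and a parameter $g$ for which they do not, so that $\liminf_n \frac1n\log\abs{(g^n)'(g(c))} = 0$ and $g$ is not Collet--Eckmann at its recurrent point $c$, hence not $RCE$. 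Producing these two parameters is a limiting/parameter-exclusion argument, controlled by the real a priori bounds (Koebe distortion, negative Schwarzian) and by the monotonicity and transversality of the relevant one-parameter slices of the family.

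\textbf{Conjugacy and a direct verification of $TCE$.} The point requiring most care is that $f$ and $g$ be genuinely topologically conjugate on the relevant invariant set, not merely combinatorially equivalent: rigidity would otherwise force them to be affinely conjugate and so to have the same $RCE$ status, so the construction must stay in a regime where combinatorial equivalence does not determine the map metrically --- conjugacy on a small (Cantor or dendrite) Julia set rather than on all of $\R$, with the second critical point obstructing rigidity. As a safeguard one may also check $TCE$ for $g$ directly through \emph{ExpShrink}: bounded geometry of the principal nest together with the negative Schwarzian derivative yield uniform distortion, the chosen return pattern keeps deep critical pull-backs multiplicatively sparse, and as in \cite{HCE,ETIC} one obtains $C > 0$ and $\lambda > 1$ with $\dia \cmp_y g^{-n}(B(x,r)) \le C\lm n$ for every $x$ in the Julia set, every $n \ge 1$, and every $y \in g^{-n}(x)$.

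\textbf{Main obstacle.} The crux is to make the close returns of $c$ deep and slow enough to annihilate every uniform exponential lower bound on $\abs{(g^n)'(g(c))}$ --- thereby killing $RCE$ --- while keeping them organised enough that the pull-backs they generate stay multiplicatively sparse and the puzzle pieces retain bounded geometry, so that $TCE$ (equivalently, ExpShrink) survives, and while keeping $f$ and $g$ genuinely conjugate yet metrically distinct. Reconciling these competing demands, and making the associated parameter-selection scheme converge, is the technical heart of the argument; the extra, tame critical point is exactly what provides the room to do it.
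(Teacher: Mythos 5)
Your reduction does not work, and the place it breaks is precisely the step you dispatch in one sentence. The conjugacy you propose to exploit is a conjugacy of \emph{interval} maps, but the $TCE$ condition you need for Theorem~A is a condition on the \emph{rational} map, and its topological invariance (from \cite{ETIC}) requires a conjugacy on the sphere (on a neighborhood of the Julia set), not on an invariant interval. An interval conjugacy does not extend to the plane in this situation: the paper's own Theorem~B pair has corresponding critical points of different local degrees ($2$ versus $4$), so the two polynomials are demonstrably \emph{not} conjugate as complex maps, and the paper warns about exactly this. Your fallback, ``complex a priori bounds for real polynomials with negative Schwarzian ensure that $TCE$ persists for $g$ viewed as a rational map,'' is not an argument; establishing $ExpShrink$ for the complex map whose recurrent critical point is \emph{not} Collet--Eckmann is the entire technical content of the paper's proof of Theorem~A (the telescope of Section~\ref{sectUHPnRCE}, built on Propositions~\ref{propA}, \ref{propB}, \ref{propGCE} and Corollaries~\ref{corGNR}, \ref{corGBS}, \ref{corGCE}). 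The key quantitative mechanism there is that a pullback through a quadratic critical point loses only a square root in \emph{diameter} while the \emph{derivative} along the critical orbit can be made arbitrarily small relative to it; the loss is engineered by sending the recurrent critical point $c_2$ close to the other critical point $c_1$ (whose image is a repelling fixed point), not by making the closest returns of $c_2$ to itself ``as deep as the local scale permits,'' which would threaten the very sparsity and bounded geometry you need.

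There is a second, independent obstruction to your construction of the pair itself: you want two parameters $f\neq g$ in the \emph{same} cubic family, with the same combinatorics and all periodic orbits repelling, but with different growth of $\abs{(g^n)'(g(c))}$. For real polynomials with real, non-degenerate critical points and identical kneading data, rigidity leaves essentially no such freedom; the paper circumvents this in Theorem~B exactly by taking the two maps with critical points of \emph{different} degrees (a degree~$3$ and a degree~$7$ polynomial), which is also why any attempt to transfer complex-analytic conditions between them through the conjugacy is hopeless. Note finally that the logical order of the paper is the reverse of yours: Theorem~A is proved first, by a direct construction of a single bimodal cubic that satisfies $ExpShrink$ but not $RCE$, and Theorem~B is then obtained by running the same parameter-selection scheme simultaneously in the two families with different critical degrees; Theorem~A is not a corollary of Theorem~B plus topological invariance, and cannot be.
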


On the interval, $TCE$ is equivalent to $CE$ in
the S-unimodal setting \cite{NS,NP}, thus $CE$ is a topological invariant. However, with more than one critical point this
is no longer true. {\'S}wi{\c{a}}tek conjectured that $RCE$ is topologically invariant for multimodal analytic maps with negative Schwarzian derivative \cite{TRCE}. We show this conjecture to be false.

\begin{thmb}
In interval dynamics, the $RCE$ condition for S-multimodal analytic maps is not topologically invariant.
\end{thmb}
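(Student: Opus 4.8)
The plan is to exhibit two topologically conjugate S-multimodal analytic maps — concretely, two real polynomials with only real critical points and negative Schwarzian derivative — one of which satisfies $RCE$ while the other does not. The guiding principle is the one behind Theorem A: $RCE$ is a \emph{metric} statement about the forward expansion $|(f^n)'(f(c))|$ along recurrent critical orbits, whereas a topological conjugacy only pins down the \emph{combinatorics} (the kneading data) of those orbits. So I would engineer a recurrent critical point $c_1$ whose forward orbit makes extremely close passages near a second critical point $c_2$ along a lacunary sequence of times $n_1<n_2<\cdots$, and arrange that the \emph{rate} of these approaches is not determined by the combinatorics, so that it can be tuned from ``slow'' (where $RCE$ survives) to ``fast'' (where $RCE$ fails) without leaving the topological conjugacy class.

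In detail, I would first fix a combinatorial model: a multimodal polynomial family with critical points $c_1,c_2$ (plus possibly a few more, only to make the polynomial construction convenient), in which $c_2$ has a harmless itinerary — for instance non-recurrent, so that it plays no role in $RCE$ — while $c_1$ is recurrent with a prescribed itinerary forcing $f^{n_j}(c_1)$ into the $k_j$-th level of a fixed nested sequence of intervals shrinking to $c_2$, with the $n_j$ growing fast. Using parameter-exclusion and transversality techniques in the style of Benedicks--Carleson and Tsujii (implicit function theorem along critical-value curves; cross-ratio and Koebe distortion estimates furnished by the negative Schwarzian derivative), one realizes this combinatorics on a positive-dimensional set $\Lambda$ of parameters; since analytic (indeed polynomial) multimodal maps have no wandering intervals, combinatorial equivalence on $\Lambda$ promotes to topological conjugacy. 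The crucial extra output is a relation between the metric depth $\dst(f_\lambda^{n_j}(c_1),c_2)$ and the parameter: for a choice $\lambda_0\in\Lambda$ it is $\ge\rho^{k_j}$ for a fixed $\rho\in(0,1)$, while for another choice $\lambda_1\in\Lambda$ it decays much faster, e.g.\ like $\rho^{k_j^2}$.

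It then remains to convert this into estimates on $(f^n)'(f(c_1))$. Away from the close passages to $c_2$, the negative Schwarzian derivative together with the hyperbolic structure visible in the prescribed combinatorics yields genuine exponential growth of the derivative, while each close passage at time $n_j$ costs a factor comparable to $\dst(f^{n_j}(c_1),c_2)$. For $f_{\lambda_0}$ the total cost up to time $N$ is $\sum_{n_j\le N}k_j\log(1/\rho)=o(N)$ by lacunarity, hence $|(f_{\lambda_0}^n)'(f(c_1))|\ge C\lambda^n$ for some $\lambda>1$; after checking that $c_1$ is the only critical point that is both recurrent and relevant (the itinerary of $c_2$, and of any auxiliary critical points, being innocuous by design), $f_{\lambda_0}$ is $RCE$. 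For $f_{\lambda_1}$ the cost along a subsequence $N_k\to\infty$ is $\gtrsim cN_k$ for a definite $c>0$, which swamps the available expansion, so $\frac{1}{N_k}\log|(f_{\lambda_1}^{N_k})'(f(c_1))|$ fails to stay bounded below by any positive constant and $f_{\lambda_1}$ is not $RCE$. Since $f_{\lambda_0}$ and $f_{\lambda_1}$ are topologically conjugate, this refutes the conjecture of {\'S}wi{\c{a}}tek \cite{TRCE}.

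The main obstacle is the tension built into the requirement: the itinerary of $c_1$ must be \emph{exactly} prescribed across the whole set $\Lambda$ (combinatorial rigidity), yet the geometric scale of the nest at $c_2$, hence the approach rate, must genuinely vary over $\Lambda$ (metric flexibility). Making these coexist is precisely a ``non-rigidity with prescribed geometry'' phenomenon; it is \emph{false} in the S-unimodal case, where real bounds force $CE$ and give $TCE\Leftrightarrow CE$ \cite{NS,NP}, so the construction must exploit in an essential way that a second critical point can destroy the bounded-geometry control of the return dynamics near $c_2$. Two subsidiary points also require care: preserving the negative Schwarzian derivative throughout the family (handled by working inside an explicit low-degree S-multimodal family, or via a known sign criterion), and verifying that for the $RCE$ representative $f_{\lambda_0}$ no critical point other than $c_1$ is simultaneously recurrent and non-$CE$.
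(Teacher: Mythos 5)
There is a genuine gap: the key mechanism that makes the counterexample possible is missing from your plan. You keep the two conjugate maps inside one family (same critical orders) and ask for a positive-dimensional set $\Lambda$ of parameters realizing one fully prescribed kneading invariant, on which the metric depth of the close passages, $\dst\lr{f_\lambda^{n_j}(c_1),c_2}$, can be tuned from $\rho^{k_j}$ down to $\rho^{k_j^2}$. But if the nest at $c_2$ is combinatorially defined (and it must be, otherwise the conjugacy does not transport it), its $k$-th level is a monotone pullback whose length is, by Koebe/negative-Schwarzian distortion, comparable to the reciprocal of the derivative along a combinatorially determined stretch of orbit spent near repelling periodic points; for maps with the same critical orders and multipliers uniformly bounded away from $1$ and $\infty$, this forces $\log\dst\lr{f^{n_j}(c_1),c_2}\asymp -k_j$ with uniform constants for \emph{every} map realizing that combinatorics. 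So the super-exponential depth $\rho^{k_j^2}$ needed for the non-$RCE$ representative is incompatible with the prescribed itineraries. You identify this tension yourself (``non-rigidity with prescribed geometry'') but do not resolve it, and invoking Benedicks--Carleson/Tsujii parameter exclusion does not help: those techniques produce large parameter sets with \emph{varying} combinatorics satisfying metric conditions, not a positive-dimensional fiber of one fixed kneading invariant (in a natural family such a fiber is essentially finite, which is exactly the naturality the paper exploits).

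The paper escapes the rigidity differently: it builds two maps with the \emph{same} kneading data but with corresponding critical points of \emph{different order} --- a cubic model with quadratic critical points and a degree-$7$ model with $T'(x)=(x+1)^3(x-1)^3$, whose critical points have order $4$. Topological conjugacy on the interval ignores the order, but the derivative cost of a close passage of the recurrent critical orbit past the other critical point scales like $(\mathrm{distance})^{1}$ versus $(\mathrm{distance})^{3}$, while the combinatorially prescribed time subsequently spent near the repelling fixed point $1$ is governed by $(\mathrm{distance})^{2}$ versus $(\mathrm{distance})^{4}$. This mismatch of exponents ($\tfrac 12$ versus $\tfrac 34$ in inequality (\ref{equLog})) is what lets identical combinatorial data (the block-length ratio $\eta$ chosen with $\te_1<\eta<\te_2$) produce exponential growth of the derivative along the critical orbit for one map and exponential decay for the other (Corollary \ref{corA}); one limit is $CE$ hence $RCE$, the other has a recurrent non-$CE$ critical point, and they are conjugate by the no-homterval/kneading argument. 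Without some device of this kind --- a quantity invisible to the kneading data but governing the derivative budget, such as the critical order --- your construction has no source of metric flexibility, and the argument does not go through.
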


\subsection{History}

In one-dimensional dynamics, the orbits of critical points (zeros of the derivative in the smooth case) play a special role. Conditions on critical orbits and their consequences for the dynamics were first studied in the context of interval dynamics.

Two important results in this direction were published in 1981. Misiurewicz showed that S-unimodal maps with non-recurrent critical orbit have an absolutely continuous invariant probability measure \cite{HM}, see \rdefs{Multi}{NSD}. 
Jakobson proved that real quadratic maps which have such invariant measures are abundant, that is, the set of their parameters is of positive measure \cite{HJ}. Collet and Eckmann introduced the $CE$ condition and showed that S-unimodal $CE$ maps have absolutely continuous invariant probability measures \cite{CE}. Benedicks and Carleson showed that $CE$ maps are abundant in the quadratic family \cite{BC}.

Nowicki and Sands showed that $CE$ is equivalent to several non-uniform hyperbolicity conditions for S-unimodal maps \cite{HN,NS}. One of them is \emph{Uniform Hyperbolicity on repelling Periodic orbits} $(UHP)$, see \rdef{UHP}. Shortly after, it was noticed by Nowicki and Przytycki that $CE$ is also equivalent to $TCE$ in this setting \cite{NP}, see \rdef{TCE}. Therefore $CE$ becomes a topological invariant for S-unimodal maps. This is no longer true for S-multimodal maps \cite{NP,TRCE}. {\'S}wi{\c{a}}tek conjectured that $RCE$ (see \rdef{MRCE}) is topologically invariant for multimodal analytic maps with negative Schwarzian derivative, see Conjecture 1 in \cite{TRCE}. We provide a counterexample to this conjecture, Theorem B.

A generalization of the Misiurewicz condition, semi-hyperbolicity, was studied in complex polynomial dynamics by Carleson, Jones and Yoccoz \cite{JJ}, see \rdef{NR}. They show that semi-hyperbolicity is equivalent to $TCE$ with $P=1$ (see \rdef{TCE}), but also to John regularity of Fatou components. They also prove that such polynomials satisfy the \emph{Exponential Shrinking of components condition} $(ExpShrink)$, using a telescopic construction, see \rdef{ES}.

$CE$ rational were initially studied by Przytycki \cite{HP1,HP2}. Later, he showed that $TCE$ implies $CE$ if the Julia set contains only one critical point (unicritical case) \cite{HCE}. Graczyk and Smirnov show that $CE$ implies the \emph{backward or second Collet-Eckmann condition} $(CE2(z_0))$, which in turn implies the H\"older regularity of Fatou components \cite{CEH}, see \rdef{CE2}. The first implication is obtained using a telescopic construction along the backward orbit of $z_0$. Przytycki and Rohde proved that $CE$ implies $TCE$ \cite{CEiTCE}. Therefore, in the unicritical case they are equivalent and $CE$ is topologically invariant. Recently, Aspenberg showed that $CE$ rational maps are abundant \cite{A}.

Przytycki, Rivera-Letelier and Smirnov establish the equivalence of several non-uniform hyperbolicity conditions, as $TCE$, $CE2(z_0)$, $UHP$, $ExpShrink$ and the existence of a positive lower bound for the Lyapunov exponent of invariant measures \cite{ETIC}. A semi-hyperbolic counterexample shows that $TCE$ does not imply $CE$. Another counterexample, involving semi-hyperbolic maps, shows that $CE$ is not a quasi-conformal invariant. 

In an attempt to characterize $TCE$ in terms of properties of critical orbits, the author studied $RCE$ for rational maps (see \rdef{RCE}). In \cite{MCN1} it is shown that $RCE$ implies $TCE$. We provide a counterexample to the converse, Theorem A.

\subsection{A short overview}
In the following section we present some definitions and basic results and lemmas necessary for our study.

In \rsec{Fam},  we describe a technique of building real polynomials with prescribed
topological and analytical properties by specifying their combinatorial properties. 
The critical orbits of the polynomials we shall construct will be on the interval $[0,1]$. Therefore, we can restrict our attention to the dynamics on the unit interval. We shall make use of the theory of \emph{kneading sequences}  to construct our maps.

A kneading sequence is a sequence of symbols associated to the points of a critical orbit. The critical points of an interval map define a partition of the interval. To each element of the partition we associate a symbol. The orbit of a critical point will thus generate a symbol sequence which describes the itinerary of the point through the various partition elements. Knowledge of the kneading sequences is enough to fully describe the combinatorics of a map. Moreover, in the absence of \emph{homtervals} (\rdef{Hom}), two maps with the same kneading sequences are topologically conjugate.

We shall consider one-parameter families of bimodal maps (i.e.\ with two critical points) of the interval, see \rdef{Multi}.  
While the theory of multimodal maps and kneading sequences is generally well understood \cite{MS}, for the most part it is related 
 to topological properties of the dynamics. We develop new tools to obtain a prescribed growth (or lack thereof)
of the derivative on the critical orbits.

In \rsec{UHPnRCE} we prove Theorem A, constructing an $ExpShrink$ polynomial (thus $TCE$) which is not $RCE$. In the vicinity of critical points the diameter of a small domain decreases at most in the power rate, while the derivative
can approach $0$ as fast as one wants (in comparison with the diameter of the domain). This important difference in the behaviour of derivative and diameter is the main idea of the (rather technical) proof. 

In \rsec{RCEnTOP}  we prove Theorem B, a counterexample to the conjecture of {\'S}wi{\c{a}}tek.
Using careful estimates of the derivative on the critical orbits we construct two
polynomials with negative Schwarzian derivative with the same combinatorics, thus topologically 
conjugate on the interval, such that only one is $RCE$. This situation is in sharp contrast with 
the unimodal case, where the Collet-Eckmann condition is topologically invariant \cite{NS, NP}. An important feature
of our counterexample is that the corresponding critical points of this two polynomials are of
different degree. One should be aware that considered as maps of the complex plane they are not conjugate.




\section{Preliminaries}

Let $R$ be a rational map, $J$ its Julia set and $\MO{Crit}$ the set of
critical points.

\begin{definition}
\label{defCE}
We say that $c\in\mathrm{Crit}$ satisfies the
\emph{Collet-Eckmann condition} $(c\in CE)$ if $|(R^n)'(R(c))|>C\lambda^n$ for all
$n>0$ and some constants $C>0,\lambda>1$. We say that $R$ is \emph{Collet-Eckmann}
if all critical points in $J$ are $CE$.
\end{definition}

\begin{definition}
\label{defNR}
Given $c\in\mathrm{Crit}$ we say that it is \emph{non-recurrent} $(c\in NR)$ if
$c\notin \omega(c)$, where $\omega(c)$ is the $\omega$\emph{-limit set},
the set of accumulation points of the orbit $(R^n(c))_{n>0}$. We call $R$
\emph{semi-hyperbolic} if all critical points in $J$ are non-recurrent and $R$
has no parabolic periodic orbits.
\end{definition}

Recurrent Collet-Eckmann condition is weaker than Collet-Eckmann or semi-hyper\-bo\-li\-ci\-ty alone. Indeed, in \cite{CEH} it is shown that a Collet-Eckmann rational map cannot have parabolic cycles.

\begin{definition}
\label{defRCE}
We say that $R$ satisfies the \emph{Recurrent Collet-Eckmann} $(RCE)$ condition
if every recurrent critical point in the Julia set is Collet-Eckmann and $R$
has no parabolic periodic orbits.
\end{definition}

Let us remark that a $RCE$ rational map may have critical points in $J$ that are Collet-Eckmann
and non-recurrent in the same time. Moreover any critical orbit may accumulate on other
critical points.

Several weak hyperbolicity standard conditions are shown to be equivalent in \cite{ETIC}. 
Among these conditions we recall \emph{Topological Collet-Eckmann condition}
$(TCE)$, \emph{Uniform Hyperbolicity on Periodic orbits} $(UHP)$, \emph{Exponential Shrinking of components} $(ExpShrink)$ and 
\emph{Backward Collet-Eckmann condition at some $z_0\in\overline{\mathbb C}$} $(CE2(z_0))$.

Let us define these conditions.

\begin{definition}
\label{defES}
$R$ satisfies the \emph{Exponential Shrinking of components
condition} $(ExpShrink)$ if there are $\lambda>1,\ r>0$ such that
for all $z\in J,\ n>0$ and every connected component $W$ of $R^{-n}(B(z,r))$
$$\MO{diam}{W}<\lambda^{-n}.$$
\end{definition}
Here we use the spherical distance as $\infty$ may be contained in the Julia set.

\begin{definition}
\label{defUHP}
$R$ satisfies \emph{Uniform Hyperbolicity on Periodic orbits} $(UHP)$ if there is $\lambda>1$ such that
for all periodic points $z\in J$ with $R^n(z)=z$ for some $n>0$
$$|(R^n)'(z)|>\la^n.$$
\end{definition}

\begin{definition}
\label{defCE2}
$R$ satisfies the \emph{Backward Collet-Eckmann condition at some $z_0\in\overline{\mathbb C}$} $(CE2(z_0))$ 
if there are $\lambda>1,C>0$ and $z_0\in\RS$ such that for any preimage $z\in\RS$ of $z_0$ with $R^n(z)=z_0$ for some $n>0$
$$|(R^n)'(z)|>C\la^n.$$
\end{definition}

The definition of $TCE$ is technical but it is stated exclusively in topological terms,
therefore it is invariant under topological conjugacy.

\begin{definition}
\label{defTCE}
$R$ satisfies the \emph{Topological Collet-Eckmann condition}
$(TCE)$ if there are $M\geq 0,P\geq 1$ and $r>0$ such that
for all $z\in J$ there exists a strictly increasing sequence of integers $\sq nj1$ such that for all $j\geq 1$,
$n_j\leq P\cdot j$ and
$$\#\SetDef i{0\leq i<n_j,\MO{Comp}_{R^i(z)}R^{-(n_j-i)}B(R^{n_j}(x),r)\cap\cri\neq\es}\leq M,$$
where $\MO{Comp}_y$ means the connected component containing $y$.
\end{definition}

Using the equivalence of these conditions, we may formulate the main result in \cite{MCN1} as follows.
\begin{theorem}
The $RCE$ condition implies $TCE$ for rational maps.
\label{thmMCN1}
\end{theorem}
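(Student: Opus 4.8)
The plan is to deduce $TCE$ from the $RCE$ hypothesis by working with the equivalent formulation in terms of $ExpShrink$ and $UHP$, rather than attempting to verify the combinatorial definition of $TCE$ directly. The strategy is to show that $RCE$ implies $UHP$: once every periodic point $z\in J$ with $R^n(z)=z$ satisfies $|(R^n)'(z)|>\lambda^n$ for a uniform $\lambda>1$, the equivalence of conditions established in \cite{ETIC} immediately yields $TCE$. Because $RCE$ only controls the recurrent critical points, the first task is to isolate the role of the non-recurrent critical points: a periodic orbit in $J$ either stays far from $\cri$, in which case it is uniformly hyperbolic by a Koebe/bounded-distortion argument, or it passes near some critical point.

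First I would set up the dichotomy for a repelling cycle $\mathcal{O}=\{z, R(z),\ldots,R^{n-1}(z)\}$ in $J$ according to how close $\mathcal{O}$ comes to $\cri$. If $\dst(\mathcal{O},\cri)$ is bounded below by a fixed $\delta>0$, then along the whole cycle $R$ is locally invertible with uniformly bounded distortion on definite-size balls, so $|(R^n)'(z)|$ grows at a uniform exponential rate by a standard telescoping/shadowing estimate (this is where hyperbolicity away from critical points is genuinely uniform because $J$ is compact and $R$ expands on a neighbourhood of $J$ minus a neighbourhood of $\cri$ in a suitable sense — or more carefully, one uses that the derivative cannot be too small except near $\cri$). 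If instead $\mathcal{O}$ enters the $\delta$-neighbourhood of some $c\in\cri\cap J$, I would split into two cases: (a) $c$ is non-recurrent, so $c\notin\omega(c)$, and the forward orbit of $c$ stays a definite distance from $c$; this lets one control the return geometry near $c$ and, combined with no parabolic cycles, forces the cycle to pick up definite expansion on its excursion near $c$ (a semi-hyperbolicity-type estimate, cf.\ \cite{JJ}); (b) $c$ is recurrent, so by $RCE$ we have $|(R^m)'(R(c))|>C\lambda_0^m$ for all $m$, and one transfers this lower bound along the cycle by comparing the piece of the cycle orbit near $c$ with the critical orbit of $c$ itself, using bounded distortion on the complement of a shrinking neighbourhood of $\cri$.

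The key step — and the main obstacle — is the transfer of the $CE$ estimate from the critical orbit $R(c),R^2(c),\ldots$ to a periodic orbit that merely shadows it for a while near a recurrent critical point $c$. Concretely, if the cycle enters $B(c,\rho)$ at time $i$ and leaves at time $i+m$, one wants $|(R^m)'(R^i(z))|$ to be comparable to $|(R^{m-1})'(R(c))|\cdot\rho^{?}$, and then to sum these contributions over all the (possibly many) near-critical excursions of the cycle without the errors accumulating catastrophically. The telescoping/telescope construction used in \cite{JJ,CEH,ETIC} to pass between these conditions is the right tool, but one must be careful that the number of critical excursions of a single cycle is controlled and that the distortion estimates are uniform in the cycle; handling several distinct critical points, some recurrent and some not, simultaneously, and making the constants $M$, $P$ of $TCE$ (or equivalently the $\lambda$ of $UHP$) uniform over all cycles, is the technically delicate part. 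Once $UHP$ is in hand, Theorem~\ref{thmMCN1} follows by quoting the equivalence $UHP\Leftrightarrow TCE$ from \cite{ETIC}.
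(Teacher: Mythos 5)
Your reduction --- prove $UHP$ from $RCE$ and then quote the equivalence $UHP\Leftrightarrow TCE$ from \cite{ETIC} --- is exactly how the statement arises in the paper: Theorem~\ref{thmMCN1} is not proved here at all, it is imported from \cite{MCN1}, whose main result is precisely that $RCE$ implies uniform hyperbolicity on periodic orbits, and the present paper only reformulates it via the equivalences of \cite{ETIC}. So the strategy is the right one; the problem is that everything that makes \cite{MCN1} a theorem rather than a plan is left open in your sketch.

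Concretely, three gaps. First, the ``far from $\cri$'' case is not the easy case you describe: a lower bound on $|R'|$ off a $\delta$-neighbourhood of $\cri$ only gives $|(R^n)'(z)|\geq\theta^n$ with possibly $\theta<1$, and the absence of parabolic cycles makes each multiplier larger than $1$ in modulus but gives no uniform $\lambda>1$; uniform expansion away from critical points is an output of the shrinking-of-pullbacks/telescope machinery, not an input one may assume. Second, the transfer of the Collet--Eckmann estimate from the orbit of a recurrent critical point to a periodic orbit that shadows it, uniformly over all cycles and with control of the number and depth of near-critical excursions, is the actual content of the theorem; you name it as ``the main obstacle'' and point to telescoping in \cite{JJ,CEH,ETIC} without carrying out the estimate, so the core of the proof is missing. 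Third, your clean dichotomy (recurrent critical point: use $RCE$; non-recurrent: use a semi-hyperbolicity estimate as in \cite{JJ}) overlooks the interactions that $RCE$ explicitly allows: as the paper remarks, critical orbits may accumulate on other critical points, and non-recurrent critical points in $J$ need not be Collet--Eckmann, so an excursion near a non-recurrent $c$ cannot be handled by the Carleson--Jones--Yoccoz argument alone; one must control nested critical excursions, which is what the block-by-block telescope of \cite{MCN1} (mirrored in the proof of Theorem A in this paper) is built to do. As written, the proposal is a correct outline of the known route, but not a proof.
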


To produce our counterexamples we restrict to dynamics of real polynomials on the interval with all critical points real.
This is a particular case of multimodal dynamics.

In the sequel all distances and derivatives 
are considered with respect to the Euclidean metric if not specified otherwise.

Let us define multimodal maps and state some classical results about their dynamics. 

\begin{definition}
\label{defMulti}
Let $I$ be the compact interval $[0,1]$ and $f:I\rightarrow I$ a \emph{piecewise strictly monotone}
continuous map. This means that $f$ has a finite number of turning points 
$0<c_1<\ldots<c_l<1$, points where $f$ has a local extremum, and $f$ is strictly monotone
on each of the $l+1$ intervals $I_1=[0,c_1), I_2=(c_1,c_2),\ldots,I_{l+1}=(c_l,1]$. Such a map is called
\emph{$l$-modal} if $f(\partial I)\subseteq\partial I$. If $l=1$ then $f$ is called \emph{unimodal}. 
If $f$ is $C^{1 + r}$ with $r\geq 0$ it is called a smooth $l$-modal map if $f'$ has no zeros outside $\SetEnu{c_1}{c_l}$.
\end{definition}

If $f$ is a $l$-modal map, let us denote by $\MO{Crit}_f$ the set of turning points - or critical points  
$$\MO{Crit}_f=\SetEnu{c_1}{c_l}.$$

Let us define the \emph{Recurrent Collet-Eckmann condition} ($RCE$) in the context of multimodal dynamics. Remark that it is similar to \rdef{RCE}.
\begin{definition}
\label{defMRCE}
We say that $f$ satisfies $RCE$ if every recurrent critical point $c\in\MO{Crit}_f$, $c\in\omega(c)$ is Collet-Eckmann, that is, there exist $C>0,\la>1$ such that for all $n\geq 0$
$$\abs{(f^n)'(f(c))}>C\la^n.$$
\end{definition}
Our counterexamples are polynomials which have all critical points in $I=[0,1]$ which is included in the Julia set, as they do not have attracting or neutral periodic orbits and $I$ is forward invariant. Therefore in this case the previous definition is equivalent to \rdef{RCE}. Analogously, semi-hyperbolicity, $UHP$, $CE2(x)$ and $TCE$ admit very similar definitions to the rational case.

For all $x\in I$ we denote by $O(x)$ or $O^+(x)$ its forward orbit 
$$O(x)=(f^n(x))_{n\geq 0}.$$ Analogously, let $O^-(x)=\SetDef{y\in f^{-n}(x)}{n\geq 0}$ and
$O^\pm(x)=\SetDef{y\in f^{n}(x)}{n\in \mathbb Z}$. We also extend these notations to
orbits of sets. For $S\subseteq I$ let $O^+(S)=\SetDef{f^{n}(x)}{x\in S, n\geq 0}$, 
$O^-(S)=\SetDef{y\in f^{-n}(x)}{x\in S, n\geq 0}$ and $O^\pm(S)=O^+(S)\cup O^-(S)$.

One of the most important questions in all areas of dynamics is when two systems
have similar underlaying dynamics. A natural equivalence relation for multimodal
maps is topological conjugacy.

\begin{definition}
\label{defConj}
We say that two multimodal maps $f,g:I\rightarrow I$ are \emph{topologically conjugate} or
simply \emph{conjugate} if there is a homeomorphism $h:I\rightarrow I$ such that
$$h\circ f=g\circ h.$$
\end{definition}
One may remark that if $f$ and $g$ are conjugate by $h$ then $h(f^n(x))=g^n(h(x))$ for all 
$x\in I$ and $n\geq 0$ so $h$ maps
orbits of $f$ onto orbits of $g$. It is easy to check that $h$ is a monotone bijection form the 
critical set of $f$ to the critical set of $g$. We may also consider combinatorial properties
of orbits and use the order of the points of critical orbits to define another 
equivalence relation between multimodal maps. Theorem II.3.1 in \cite{MS} shows that it is 
enough to consider only the forward orbit of the critical set.
\begin{theorem}
\label{thmEquiv}
Let $f,g$ be two $l$-modal maps with turning points $c_1<\ldots<c_l$ respectively $\tilde c_1<\ldots<\tilde c_l$.
The following properties are equivalent.
\begin{enumerate}
\item There exists an order preserving bijection $h$ from $O^+(\MO{Crit}_f)$ to $O^+(\MO{Crit}_g)$ such that
$$h(f(x))=g(h(x))\mbox{ for all }x\in O^+(\MO{Crit}_f).$$
\item There exists an order preserving bijection $\tilde h$ from $O^\pm(\MO{Crit}_f)$ to $O^\pm(\MO{Crit}_g)$ 
such that
$$\tilde h(f(x))=g(\tilde h(x))\mbox{ for all }x\in O^\pm(\MO{Crit}_f).$$
\end{enumerate}
\end{theorem}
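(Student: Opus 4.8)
The implication $(2)\Rightarrow(1)$ is immediate: $O^+(\MO{Crit}_f)$ is forward invariant and contained in $O^\pm(\MO{Crit}_f)$, so the restriction of $\tilde h$ to $O^+(\MO{Crit}_f)$ is again an order preserving bijection onto $O^+(\MO{Crit}_g)$ commuting with the dynamics. The content is $(1)\Rightarrow(2)$. First I would record that $h$ respects the critical data: $h(\MO{Crit}_f)=\MO{Crit}_g$ and, by order preservation, $h(c_i)=\tilde c_i$ for every $i$ (a turning point is recognizable from the way the critical orbits are ordered around it). Consequently $h$ carries the laps $I_1,\dots,I_{l+1}$ of $f$ onto the laps $\tilde I_1,\dots,\tilde I_{l+1}$ of $g$ in the natural order; and since $h(v_i)=h(f(c_i))=g(h(c_i))=g(\tilde c_i)=\tilde v_i$, where $v_i=f(c_i)$ and $\tilde v_i=g(\tilde c_i)$, the map $h$ sends the part of each image lap $f(I_j)$ lying in its domain into $g(\tilde I_j)$, the image laps being the intervals cut out by consecutive critical values (the two outer ones also bounded by $f(0)$ or $f(1)$).

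Next I would extend $h$ along backward orbits. Since $O^\pm(\MO{Crit}_f)=O^+(\MO{Crit}_f)\cup\bigcup_{n\geq 0}f^{-n}(\MO{Crit}_f)$, set $Z_n=O^+(\MO{Crit}_f)\cup\bigcup_{0\leq k\leq n}f^{-k}(\MO{Crit}_f)$: this is an increasing family of sets with union $O^\pm(\MO{Crit}_f)$ and $f(Z_n\setminus Z_{n-1})\subseteq Z_{n-1}$. Define $\tilde h$ on $Z_n$ by induction on $n$, with $\tilde h|_{Z_0}=h$. Suppose $\tilde h$ is defined on $Z_{n-1}$, order preserving, commuting with $f$ and $g$, and mapping $Z_{n-1}\cap I_j$ into $\tilde I_j$ for each $j$. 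Given $y\in Z_n\setminus Z_{n-1}$, the points of $\MO{Crit}_f\cup\partial I$ already lie in $Z_0$, so $y$ sits in a unique open lap $I_j$; put $w:=\tilde h(f(y))$, defined because $f(y)\in Z_{n-1}$. As $f(y)\in f(I_j)$ and, by the induction hypothesis and the previous paragraph, $\tilde h$ maps $Z_{n-1}\cap f(I_j)$ into $g(\tilde I_j)$, we get $w\in g(\tilde I_j)$; since $g$ is a homeomorphism from $\tilde I_j$ onto $g(\tilde I_j)$, there is a unique point of $\tilde I_j$ over $w$, and this is the definition of $\tilde h(y)$. By construction $\tilde h$ commutes with the dynamics and keeps $Z_n\cap I_j$ inside $\tilde I_j$; order preservation on $Z_n$ follows by comparing two points according to whether they lie in the same lap (use monotonicity of $f$ on $I_j$, order preservation of $\tilde h$ on $Z_{n-1}$, and monotonicity of $g$ on $\tilde I_j$) or in distinct laps (use the ordering of the laps). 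The union $\tilde h=\bigcup_n\tilde h|_{Z_n}$ is then an order preserving injection of $O^\pm(\MO{Crit}_f)$ into $O^\pm(\MO{Crit}_g)$ commuting with $f$ and $g$; running the same construction with $f$ and $g$ exchanged yields its inverse, so $\tilde h$ is the desired bijection.

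The delicate point, and the one I expect to absorb essentially all of the work, is the two extreme laps $I_1=[0,c_1)$ and $I_{l+1}=(c_l,1]$, whose $f$-images are bounded on one side by a boundary value $f(0)$ or $f(1)\in\partial I$ rather than by a critical value. Such boundary values need not belong to $O^+(\MO{Crit}_f)$, so the step ``$\tilde h$ maps $f(I_j)$ into $g(\tilde I_j)$'' must be verified by hand for $j=1$ and $j=l+1$, using $f(\partial I)\subseteq\partial I$, $g(\partial I)\subseteq\partial I$, and — crucially — that $f$ and $g$ share the same pattern of increasing and decreasing laps, which is itself forced by $(1)$. Once this boundary bookkeeping and the matching of the up–down shape are settled, the inductive extension above goes through routinely.
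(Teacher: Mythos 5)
You give no proof of the two assertions on which your construction stands, and they are the crux: (i) that order preservation forces $h(c_i)=\tilde c_i$, and (ii) that condition (1) forces $f$ and $g$ to share the same pattern of increasing and decreasing laps. Neither is true at the stated level of generality, and without some normalization the implication $(1)\Rightarrow(2)$ is not available in the literal form you set out to prove. Take $f(x)=ax(1-x)$ with $a=1+\sqrt 5$, whose turning point $c=\tfrac 12$ lies on the superattracting $2$-cycle $\{\tfrac 12,\tfrac a4\}$, and its mirror $g(x)=1-ax(1-x)$, a $1$-modal map decreasing on its first lap with $g(\partial I)=\{1\}$, whose turning point $\tilde c=\tfrac 12$ lies on the $2$-cycle $\{1-\tfrac a4,\tfrac 12\}$. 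The unique order preserving bijection between the two forward critical orbits conjugates the dynamics, so (1) holds; yet it sends $c$ to the non-critical point $1-\tfrac a4$, and the lap patterns of $f$ and $g$ differ. Worse, (2) fails for this pair: the only periodic points of either map inside $O^{\pm}(\MO{Crit})$ are the critical $2$-cycles, so any order preserving conjugating bijection $\tilde h$ would have to send $c\mapsto 1-\tfrac a4$ and $\tfrac a4\mapsto \tilde c$, and would then biject the fiber $\{x\in O^{\pm}(\MO{Crit}_f):f(x)=c\}$, which has two elements ($1-\tfrac a4$ and $\tfrac a4$), onto $\{y\in O^{\pm}(\MO{Crit}_g):g(y)=1-\tfrac a4\}$, which is the single point $\tilde c$ because $1-\tfrac a4$ is the minimum value of $g$ — a contradiction.

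So the step you flag as ``absorbing essentially all of the work'' is not bookkeeping that will ``go through routinely'': the forcing of $h(c_i)=\tilde c_i$ and of a common lap pattern is false in general and has to come from hypotheses, which is why the theorem is stated and used (here only for maps in $\mathcal S_l$, fixing $0$ and increasing on $I_1$, following \cite{MS}, which is where the paper sends the reader instead of giving a proof) in a setting where the monotonicity type is fixed and $h(c_i)=\tilde c_i$ can actually be established before the pullback starts. Granted those facts, your inductive lap-by-lap extension is the standard argument and is fine for the interior laps; but as written both directions rest on unproved, and in general untrue, identifications — note that even your ``immediate'' $(2)\Rightarrow(1)$ needs $\tilde h(\MO{Crit}_f)=\MO{Crit}_g$ for the restriction to land onto $O^{+}(\MO{Crit}_g)$, and that too does not follow from order preservation alone.
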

If $f$ and $g$ satisfy the properties of the previous theorem we say that they are \emph{combinatorially equivalent}. Note that if $f$ and $g$ are conjugate by an order preserving homeomorphism $h$ then the 
restriction of $h$ to $O^+(\MO{Crit}_f)$
is an order preserving bijection onto $O^+(\MO{Crit}_g)$ so $f$ and $g$ are combinatorially equivalent.
The converse is true only in the absence of homtervals. It is the case of all the examples in this chapter. 
There is a very convenient way to describe the combinatorial type of a multimodal map using symbolic
dynamics. We associate to every point $x\in I$ a sequence of symbols $\iu(x)$ that we call the 
\emph{itinerary} of $x$. The itineraries $\ku_1,\ldots,\ku_l$ of the critical values $f(c_1),\ldots,f(c_l)$ 
are called the kneading sequences of $f$ and the ordered set of kneading sequences the kneading invariant. 
Combinatorially equivalent multimodal maps have the same kneading invariants but the converse is true 
only in the absence of homtervals.
We use the kneading invariant to describe the dynamics of multimodal maps in one-dimensional families.
We build sequences $(\F_n)_{n\geq 0}$ of compact families of $C^1$ multimodal maps with
$\F_{n + 1} \subseteq \F_n$ for all $n\geq 0$ and obtain our examples as the intersection
of such sequences.

When not specified otherwise, we assume $f$ to be a multimodal map.
\begin{definition}
\label{defAttr}
Let $O(p)$ be a periodic orbit of $f$. This orbit is called \emph{attracting}
if its \emph{basin} 
$$B(p)=\SetDef{x\in I}{f^k(x)\rightarrow O(p)\mbox{ as }k\rightarrow \infty}$$
contains an open set. The \emph{immediate basin} $B_0(p)$ of $O(p)$ is the union of connected
components of $B(p)$ which contain points from $O(p)$. If $B_0(p)$ is a neighborhood of $O(p)$
then this orbit is called a \emph{two-sided attractor} and otherwise a \emph{one-sided attractor}.
Suppose $f$ is $C^1$ and let $m(p)=|(f^n)'(p)|$ where $n$ is the period of $p$. If $m(p)<1$ we say that 
$O(p)$ is attracting respectively super-attracting if $m(p)=0$. We call $O(p)$ \emph{neutral} if $m(p)=1$ 
and we say it is \emph{repelling} if $m(p)>1$.
\end{definition}

Let us denote by $B(f)$ the union of the basins of periodic attracting orbits and by $B_0(f)$ the 
union of immediate basins of periodic attractors. The basins of attracting periodic contain 
intervals on which all iterates of $f$ are monotone. Such intervals do not intersect $O^-(\MO{Crit}_f)$
and they do not carry too much combinatorial information.

\begin{definition}
\label{defHom}
Let us define a \emph{homterval} to be an interval on which $f^n$ is monotone for all $n\geq 0$.
\end{definition}

Homtervals are related to \emph{wandering intervals} and they play an important role in the study of
the relation between conjugacy and combinatorial equivalence.

\begin{definition}
\label{defWand}
An interval $J\subseteq I$ is \emph{wandering} if all its iterates $J,f(J),f^2(J),\ldots$ are disjoint 
and if $(f^n(J))_{n\geq 0}$ does not tend to a periodic orbit.
\end{definition}
  
Homtervals have simple dynamics described by the following lemma, Lemma II.3.1 in \cite{MS}.
\begin{lemma}
\label{lemHom}
Let $J$ be a homterval of $f$. Then there are two possibilities:
\begin{enumerate}
\item $J$ is a wandering interval;
\item $J\subseteq B(f)$ and some iterate of $J$ is mapped into an interval $L$ such that
$f^p$ maps $L$ monotonically into itself for some $p\geq 0$.
\end{enumerate}
\end{lemma}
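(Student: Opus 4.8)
The plan is to argue by a dichotomy based on whether the forward iterates of the homterval $J$ remain mutually disjoint or not. First I would set $J_n = f^n(J)$ and recall that each $J_n$ is an interval on which $f$ is monotone (since $J$ is a homterval, $f^{n+1}$ is monotone on $J$, hence $f$ is monotone on $J_n = f^n(J)$, so in particular $J_n$ contains no turning point in its interior, $J_n\cap\cri_f$ meeting $J_n$ at most in an endpoint). Two cases arise: either the intervals $J_0,J_1,J_2,\dots$ are pairwise disjoint, or there exist $m<n$ with $J_m\cap J_n\neq\es$.

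In the first case, I would invoke \rdef{Wand}: a homterval whose iterates are all disjoint is either wandering, or the sequence $(f^n(J))_{n\ge 0}$ tends to a periodic orbit. If it does not tend to a periodic orbit, we are in situation (1). If it does tend to a periodic orbit $O(p)$ of period $p$, then for some large $N$ the interval $J_N$ lies in a small neighborhood of a point of $O(p)$; I would take $L$ to be a suitable interval around that point of $O(p)$ on which $f^p$ is defined and monotone (possible since $\cri_f$ is finite and the $J_n$ avoid it), and check that $f^p$ maps $L$ into itself, placing $O(p)$ in $B(f)$ and hence $J$ in $B(f)$: this is situation (2).

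In the second case, suppose $J_m\cap J_n\neq\es$ with $m<n$; replacing $J$ by $J_m$ and setting $q=n-m$, I would study the interval $L$ generated as the increasing union $\bigcup_{k\ge 0} f^{kq}(J_m \cup J_n)$, or more carefully the connected component containing $J_m$ of $\bigcup_{k\ge 0} f^{kq}(J_m)$. Because $f^q$ is monotone on the homterval-image $J_m$ and $J_m\cap f^q(J_m)\neq\es$, the sets $f^{kq}(J_m)$ form a nested or overlapping chain, and their union $L$ is an interval satisfying $f^q(L)\subseteq L$. A standard argument (monotone self-map of an interval, no turning points in the interior of $L$) shows that $f^q$ restricted to $L$ has a fixed point or the orbits converge to $\pa L$, so $L\subseteq B(f)$; since some iterate of $J$ lands in $L$, we get $J\subseteq B(f)$, again situation (2).

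The main obstacle is the bookkeeping in the second case: one must verify that the union $L$ is genuinely an interval on which $f^q$ is monotone (the monotonicity could in principle flip if $L$ grew to contain a turning point), and that $f^q(L)\subseteq L$ rather than merely $f^q(L)\cap L\neq\es$. This requires exploiting the homterval hypothesis fully — that \emph{all} iterates of $J$, not just finitely many, are monotone — to rule out turning points entering the picture, together with the finiteness of $\cri_f$ to guarantee the growing intervals stabilize. Once monotonicity and invariance of $L$ are secured, the conclusion that $L$ (hence $J$) lies in the basin of an attracting periodic orbit is routine; the first case is essentially immediate from the definition of wandering interval.
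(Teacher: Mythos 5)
The paper itself gives no proof of this lemma --- it is quoted verbatim as Lemma II.3.1 of de Melo--van Strien \cite{MS} --- so your sketch has to stand on its own, and it has a genuine gap exactly at the point you flag. In the case where two iterates of $J$ meet, the crux is the monotonicity of $f^q$ on $L=\bigcup_{k\geq0}f^{kq}(J_m)$, and your proposed way of securing it --- that the homterval hypothesis ``rules out turning points entering the picture'' --- is false. The hypothesis only prevents a turning point from lying in the interior of each individual image $f^{j}(J)$; it does not prevent the union from straddling a turning point sitting at a common endpoint of two consecutive images. Concretely, take a bimodal $f$ whose second turning point $c_2$ (a local minimum) is fixed, $f(c_2)=c_2$, with $f$ mapping $[c_2,1]$ into itself and, say, $f(x)<x$ on $(c_2,1)$, and let $T=[b,c_2]$ with $c_1<b<c_2$. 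Every iterate $f^n(T)$, $n\geq1$, lies in $[c_2,1]$ where $f$ is strictly increasing, so $T$ is a homterval; $f(T)=[c_2,f(b)]$ meets $T$ at $c_2$, so we are in your second case with $q=1$; but $L\supseteq[b,f(b)]$ contains $c_2$ in its interior, so $f^q$ is \emph{not} monotone on $L$. The conclusion of the lemma still holds, but only with a different $L$ (here $[c_2,f(b)]$, i.e.\ after discarding the initial images and keeping the one-sided interval at the turning point); handling this in general --- a straddled turning point of $f^q$ forces some image to end exactly at it, one restarts the union beyond it, and finiteness of the turning points of $f^q$ is what makes the process terminate --- is precisely the idea missing from your plan, and it cannot be replaced by the nonexistent ``no turning point interior to $L$'' claim.

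The first case also needs more care than your parenthetical suggests: when $f^n(J)$ tends to a periodic orbit, the periodic point at which the images cluster may itself be a turning point, and the orbit may attract only from one side, so there need not be any two-sided interval around it on which $f^p$ is monotone and maps into itself; one has to take a one-sided interval (say between the periodic point and $J_N$) and actually prove its invariance, for which ``$\cri_f$ is finite and the $J_n$ avoid it'' is not sufficient. The membership $J\se B(f)$ is the easy part there, since a nondegenerate interval inside the basin makes the orbit attracting in the sense of \rdef{Attr}. So your overall dichotomy (pairwise disjoint iterates versus overlapping ones) is the standard skeleton, but both branches --- and above all the monotonicity of $f^q$ on $L$ --- require arguments your proposal does not contain, and in the main instance the proposal plans to derive them from a statement that is simply not true.
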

  
Multimodal maps satisfying some regularity conditions have no wandering intervals. Let us say that 
$f$ is \emph{non-flat} at a critical point $c$ if there exists a $C^2$ diffeomorphism 
$\phi:\mathbb R\rightarrow I$ with $\phi(0)=c$ such that $f\circ\phi$ is a polynomial near
the origin.

The following theorem is Theorem II.6.2 in \cite{MS}.
\begin{theorem}
\label{thmWand}
Let $f$ be a $C^2$ map that is non-flat at each critical point. Then $f$ has
no wandering intervals.
\end{theorem}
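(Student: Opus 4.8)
The statement is the classical ``no wandering intervals'' theorem, and I would reconstruct the argument of \cite{MS}. Suppose for contradiction that $f$ has a wandering interval $J$. Its iterates $J,f(J),f^2(J),\ldots$ are pairwise disjoint subintervals of $I$, so $\sum_{n\ge 0}\abs{f^n(J)}\le\abs I<\infty$ and in particular $\abs{f^n(J)}\to 0$. Each of the finitely many points of $\cri_f$ lies in at most one $f^n(J)$, so there is $N$ with $f^n(J)\cap\cri_f=\es$ for all $n\ge N$; replacing $J$ by the still wandering interval $f^N(J)$, I may assume $f^n|_J$ is a diffeomorphism onto $f^n(J)$ for every $n\ge 0$. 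A standard ``wandering intervals avoid the hyperbolic region'' argument (bounded distortion of pull-backs away from $\cri_f$, cf.\ \cite{MS}) then shows that the forward orbit of $J$ must accumulate on $\cri_f$: were it to stay at distance $\ge\de>0$ from $\cri_f$, then on a fixed-size neighborhood of each $f^n(J)$ the map $f$ is a $C^2$ diffeomorphism with derivative and second derivative under control, every pull-back of a definite neighborhood of $f^n(J)$ back to $J$ would have distortion bounded by $\exp(C\sum_{i<n}\abs{f^i(J)})\le\exp(C\abs I)$, and this bounded distortion is incompatible with $\abs{f^n(J)}\to 0$ for an interval whose orbit does not converge to a periodic orbit (see \rlem{Hom} and \rdef{Wand}). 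Fix then $c\in\cri_f$ and $n_1<n_2<\cdots$ with $f^{n_k}(J)\to c$.

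The core of the proof is the cross-ratio (Koebe) machinery for $C^2$ non-flat maps. For nested intervals $J\subset T$ with components $L,R$ of $T\sm J$, set $\mathbf B(T,J)=\abs J\,\abs T/(\abs L\,\abs R)$ (or the Poincar\'e length of $J$ in $T$). The facts I would use are: (i) whenever $f^m$ is a diffeomorphism on $T$, $\mathbf B(f^m(T),f^m(J))\ge\mathbf B(T,J)\cdot\exp(-C\sum_{i<m}\abs{f^i(T)})$, so the cross-ratio is almost non-decreasing as long as the intervals $f^i(T)$ have bounded intersection multiplicity (then $\sum_i\abs{f^i(T)}\le\kappa\abs I$); and (ii) near a critical point $c$, non-flatness provides a $C^2$ change of coordinates conjugating $f$ to a pure power map $x\mapsto x^{\ell}$, which has negative Schwarzian derivative and hence \emph{does not decrease} cross-ratios, so passing through $c$ costs nothing in this bookkeeping, the only loss coming from the $C^2$ coordinate changes, which is again of the summable type. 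Combined with the Koebe principle, this says: if $f^m|_T$ is monotone and $f^m(T)$ contains $f^m(J)$ together with a definite proportion of extra space on each side, then $f^m|_T$ has uniformly bounded distortion and $T$ itself contains $J$ with a definite proportion of extra space on each side.

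Using the visits $f^{n_k}(J)\to c$, I would build a neighborhood $T\supset J$ whose forward iterates $f^n(T)$ keep bounded intersection multiplicity, arranging --- this is where the ``room'' created by the power-map behavior at $c$ is exploited --- that for infinitely many $k$ the interval $f^{n_k}(T)$ is a definite scaled neighborhood of $f^{n_k}(J)$. By the previous paragraph, $f^{n_k}|_T$ then has uniformly bounded distortion while $f^{n_k}(J)$ shrinks and $T\supset J$ stays of definite relative size. Feeding this into a pigeonhole/limiting argument on the first-return maps of a fixed neighborhood of $c$ produces an interval $V\supseteq J$, an integer $p\ge 1$, an integer $s\ge 0$ and an interval $L$ with $f^s(V)\subseteq L$, $f^p(L)\subseteq L$ and $f^p|_L$ monotone. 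But then the orbit of every point of $V$ --- in particular of every point of $J$ --- converges to a periodic orbit of period $p$ or $2p$ inside $L$; since $\abs{f^n(J)}\to 0$ this forces $(f^n(J))_{n\ge 0}$ to tend to that periodic orbit, contradicting the assumption that $J$ is wandering. Hence $f$ has no wandering interval.

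The step I expect to be the main obstacle is the construction of the pull-back neighborhood $T$ together with the control of the intersection multiplicity of the intervals $f^n(T)$: one has to choose $T$ and the return times so that the total length $\sum_n\abs{f^n(T)}$ stays bounded (so that the distortion estimates survive) while $T$ still ``sees'' the definite space created near the critical point. This combinatorial bookkeeping, and the cross-ratio control of pull-backs through critical points, are exactly the places where the $C^2$ and non-flatness hypotheses are essential, and where the general multimodal case is substantially harder than the $S$-unimodal one.
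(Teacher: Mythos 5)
The first thing to note is that the paper does not prove this statement at all: Theorem \ref{thmWand} is quoted verbatim as Theorem II.6.2 of \cite{MS} and used as a black box, so the only meaningful comparison is with the proof in that reference (due to Martens, de Melo and van Strien). Your plan does follow the outline of that proof: disjointness of the iterates, reduction to the case where no iterate of $J$ meets the critical set, cross-ratio inequalities that are ``almost preserved'' up to a factor $\exp(-C\sum_i|f^i(T)|)$, non-flatness used to reduce the passage through a critical point to a power map (which does not decrease cross-ratios), a Koebe/macroscopic Koebe principle, and a final contradiction via an interval $L$ with $f^p(L)\subseteq L$ and $f^p|_L$ monotone, which forces convergence to a periodic orbit and contradicts \rlem{Hom}(1) versus (2). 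So the strategy is the right one and is the same as in the cited source.

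However, as a proof the proposal has genuine gaps, one of which you flag yourself. First, the step ``if the orbit of $J$ stays at distance $\ge\de$ from $\MO{Crit}_f$ then bounded distortion contradicts $|f^n(J)|\to 0$'' is not correct as sketched: the summability $\sum_n|f^n(J)|\le|I|$ only bounds the distortion of $f^n$ \emph{on $J$ itself}, which is not contradictory; to get distortion on a definite neighborhood $T$ of $J$ you would already need control of $\sum_n|f^n(T)|$, which is exactly the kind of control you have not yet established. In \cite{MS} this case is handled differently, via Ma\~n\'e's hyperbolicity theorem for the part of the dynamics away from the critical set (a $C^2$ result in its own right), so this step needs either that theorem or a genuinely different argument. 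Second, and more importantly, the heart of the matter --- the choice of the enlarged interval $T$, the combinatorial argument giving bounded intersection multiplicity of the intervals $f^n(T)$ (hence summability of their lengths), and the analysis of first-return maps near the critical point that ultimately produces the monotone interval $L$ mapped into itself --- is only described in one sentence and explicitly deferred. This is precisely where the $C^2$ and non-flatness hypotheses do their work and where the multimodal case is delicate; without it the argument is a roadmap of \cite{MS}, not a proof. Since the paper itself treats the theorem as a known external result, citing \cite{MS} (as you implicitly do) is the appropriate resolution, but the write-up should not present the sketched distortion argument for the ``orbit away from the critical set'' case as if it were complete.
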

Guckenheimer proved this theorem in 1979 for unimodal maps with \emph{negative Schwarzian derivative}
with \emph{non-degenerate} critical point, that is with $|f''(c)|\neq 0$. The Schwarzian derivative was
first used by Singer to study the dynamics of quadratic unimodal maps $x\rightarrow ax(1-x)$ with 
$a\in [0,4]$. He observed that this property is preserved under iteration and that is has important
consequences in unimodal and multimodal dynamics.
\begin{definition}
\label{defNSD}
Let $f:I\rightarrow I$ be a $C^3$ $l$-modal map. The \emph{Schwarzian derivative} of $f$ at $x$ is
defined as
$$Sf(x)=\frac{f'''(x)}{f'(x)}-\frac 32\left(\frac{f''(x)}{f(x)}\right)^2,$$
for all $x\in I\setminus\SetEnu{c_1}{c_l}$.
\end{definition}

We may compute the Schwarzian derivate of a composition 
\begin{equation}
\label{equSchw}
S(g\circ f)(x)=Sg(f(x))\cdot|f'(x)|^2+Sf(x),
\end{equation}
therefore if $Sf<0$ and $Sg<0$ then $S(f\circ g)<0$ so negative Schwarzian derivative is preserved under
iteration. Let us state an important consequence of this property for $C^3$ maps of the interval proved
by Singer (see Theorem II.6.1 in \cite{MS}).
\begin{theorem}[Singer]
\label{thmSing}
If $f:I\rightarrow I$ is a $C^3$ map with negative Schwarzian derivative then
\begin{enumerate}
\item the immediate basin of any attracting periodic orbit contains either a critical point of $f$
or a boundary point of the interval $I$;
\item each neutral periodic point is attracting;
\item there are no intervals of periodic points.
\end{enumerate}
\end{theorem}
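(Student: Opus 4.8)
The plan is to prove the three statements in turn, all of them resting on the classical minimum principle for maps with negative Schwarzian derivative: if $Sf<0$ on an interval $J$ on which $f$ is a diffeomorphism, then $|f'|$ has no positive local minimum in the interior of $J$; equivalently, for any closed subinterval $[a,b]\subseteq J$ one has $\min_{x\in[a,b]}|f'(x)|=\min(|f'(a)|,|f'(b)|)$. I would first establish this lemma: setting $\psi=1/\sqrt{|f'|}$ on $J$, a direct computation using \requ{Schw} (or rather the definition \rdef{NSD}) shows that $\psi''$ has the sign of $-Sf/|f'|^{1/2}\cdot(\text{positive factor})$, so $Sf<0$ forces $\psi''>0$, i.e. $\psi$ is strictly convex; hence $\psi=1/\sqrt{|f'|}$ can have no interior local \emph{maximum}, which is exactly the statement that $|f'|$ has no interior positive local minimum. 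By \requ{Schw} the same conclusion holds for all iterates $f^n$ on any interval on which $f^n$ is a diffeomorphism, since a composition of maps with negative Schwarzian again has negative Schwarzian.

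For (1), suppose $O(p)$ is an attracting periodic orbit of period $n$, and let $B_0$ be a connected component of the immediate basin $B_0(p)$ containing a point of $O(p)$, say $p$ itself after relabeling. If $B_0$ contained no critical point of $f$ and no endpoint of $I$, then $f^n$ would map $\overline{B_0}$ into $B_0$ (by invariance of the immediate basin) and $f^n$ would be a diffeomorphism on a neighborhood of $\overline{B_0}=[a,b]$; by the minimum principle applied to $f^n$, $|(f^n)'|$ attains its minimum over $[a,b]$ at an endpoint. But the endpoints $a,b$ of a component of the immediate basin are either fixed by $f^n$ (then they are neutral or repelling periodic points, contradicting that they lie in the basin of an \emph{attracting} orbit, or forcing $|(f^n)'|\geq 1$ there and hence $|(f^n)'(p)|$ at the attracting point would also be $\geq$ something $\ge 1$ by comparison along the orbit — here one has to argue a bit more carefully using that $p$ is interior) or they leave $B_0$ under iteration. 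The clean argument is: the boundary points of $B_0$ cannot be attracted to $O(p)$ since $B_0$ is a component of the basin; so either they are periodic (then $|(f^n)'|\ge 1$ at them, hence $\ge 1$ on all of $[a,b]$ by the minimum principle, contradicting $|(f^n)'(p)|<1$), or their forward orbit eventually hits a critical point or an endpoint of $I$ — but then, tracing back, that critical point / endpoint lies in $B_0(p)$. Making this trichotomy precise is where I expect the main friction.

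For (2), let $p$ be a neutral periodic point of period $n$, so $|(f^n)'(p)|=1$. Working with $g=f^n$ (still negative Schwarzian) near $p$: the minimum principle prevents $|g'|$ from having an interior local minimum equal to $1$ unless $|g'|\equiv 1$ on one side, which by statement (3) is excluded (no intervals of periodic points and, more basically, $Sg<0$ is incompatible with $|g'|$ constant on an interval since then $\psi$ would be affine, forcing $Sg=0$). Hence on one side of $p$, $|g'|>1$ and on the other $|g'|\le 1$ with the bound eventually strict; a standard argument then shows $p$ attracts an interval from (at least) one side, so $O(p)$ is attracting (one-sided attractor allowed). For (3), if $f$ had an interval $L$ of periodic points of period $n$, then $f^n=\mathrm{id}$ on $L$, so $|(f^n)'|\equiv 1$ on $L$, so $\psi=1/\sqrt{|(f^n)'|}\equiv 1$ is affine on $L$, forcing $S(f^n)=0$ on the interior of $L$; but $S(f^n)<0$ there by \requ{Schw} and $Sf<0$, a contradiction. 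I would present (3) first since it is the shortest and is used in (2), then (1), then (2) — the real work is the boundary-behavior trichotomy in part (1).
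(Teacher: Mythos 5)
The paper itself does not prove this statement: it is quoted as Singer's classical theorem with a pointer to Theorem II.6.1 in \cite{MS}, so the comparison can only be with the classical argument, and your route --- strict convexity of $\psi=|f'|^{-1/2}$ (your computation of $\psi''$ is the right one) and the resulting minimum principle for $|(f^n)'|$ on intervals of monotonicity --- is exactly that classical route. Your part (3) is correct (if ``interval of periodic points'' allows varying periods, add the remark that the closed sets of points of period dividing $n$ cover the interval, so some subinterval satisfies $f^n=\mathrm{id}$), and part (2) is correct in outline: strict convexity of $\psi$ with $\psi(p)=1$ forces $|(f^n)'|<1$ on at least one punctured side of $p$ (your trichotomy is slightly off when $\psi'(p)=0$, where it is $<1$ on both sides, but that only helps), convexity also limits the nearby fixed points of $f^n$ to finitely many, and the case $(f^n)'(p)=-1$ is handled by passing to $f^{2n}$.

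The genuine gap is in part (1), where you yourself anticipated friction, and it is concrete. First, the reduction is wrong as stated: assuming only that the component $W$ of the immediate basin containing $p$ avoids $\mathrm{Crit}_f$ and $\partial I$ does not make $f^n$ a diffeomorphism near $\overline W$, because critical points of $f^n$ in $W$ come from critical points of $f$ lying in the \emph{other} components $f(W),\ldots,f^{n-1}(W)$ of the immediate basin of the orbit; the hypothesis for contradiction must be that the whole immediate basin avoids $\mathrm{Crit}_f\cup\partial I$ (which is what the statement asserts), and then a critical point of $f^n$ in $W$ already yields the conclusion. Second, your boundary trichotomy is not the right one and its third branch is unjustified: the endpoint $a$ of $W$ is \emph{not} in the basin, so its forward orbit hitting a critical point or a point of $\partial I$ does not place that point in the immediate basin ``by tracing back''. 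The correct dichotomy, after replacing $f^n$ by $g=f^{2n}$ so that $g$ is increasing on $\overline W=[a,b]$ with $g(\overline W)\subseteq\overline W$, is: either $g(a)=a$, or $g(a)\in W$, and the latter contradicts maximality of the component exactly by your connectedness remark ($\{a\}\cup W\subseteq B(p)$). Third, in the periodic-endpoint case your contradiction invokes $|(f^n)'(p)|<1$, which is not available: with the topological notion of attracting of Definition \ref{defAttr} --- and precisely because of part (2) --- the multiplier at $p$ may equal $1$; moreover $|g'(a)|\geq 1$ at a fixed endpoint itself needs a short argument (points just inside $W$ converge to $O(p)$ and $a\notin O(p)$). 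The standard finish avoids both difficulties: from $g(a)=a$, $g(p)=p$, $g(b)=b$, the mean value theorem gives $\xi_1<p<\xi_2$ with $g'(\xi_1)=g'(\xi_2)=1$; the minimum principle then gives $g'\geq 1$ on $[\xi_1,\xi_2]$, so the $g$-orbit of any point on one side of $p$ never approaches $p$, contradicting the fact that every point of $W$ tends to $p$ under $g$ (which uses $O(p)\cap\overline W=\{p\}$).
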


Combining this result with Theorem \ref{thmWand} and Lemma \ref{lemHom} we obtain the following
\begin{corollary}
\label{corHom}
If $f$ is $C^3$ multimodal map with negative Schwarzian derivative that is non-flat
at each critical point and which has no attracting periodic orbits then it has no 
homterval. Therefore $O^-(\MO{Crit}_f)$ is dense in $I$.
\end{corollary}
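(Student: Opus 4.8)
The plan is to chain together the three results just quoted, eliminating each of the "bad" cases that could otherwise permit a homterval. Let $J$ be a homterval of $f$. By \rlem{Hom}, either $J$ is a wandering interval, or $J\se B(f)$ and some iterate of $J$ lands in an interval $L$ on which some $f^p$ acts monotonically into itself. First I would dispose of the wandering-interval case: since $f$ is $C^3$ (in particular $C^2$) and non-flat at each critical point, \rthm{Wand} asserts that $f$ has no wandering intervals, so this alternative is vacuous. Thus necessarily $J\se B(f)$. But $B(f)$ is the union of basins of attracting periodic orbits, and by hypothesis $f$ has no attracting periodic orbits, so $B(f)=\es$; this forces $J=\es$, i.e.\ $f$ has no (nontrivial) homterval.

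The point where one must be slightly careful is the interface between \rlem{Hom} and \rthm{Sing}: \rlem{Hom} phrases the second alternative in terms of an interval $L$ with $f^p$ mapping $L$ monotonically into itself, which a priori could be a neutral or an "interval of periodic points" situation rather than a genuine attractor, and a priori $B(f)$ (as defined via attracting orbits) might not literally contain $J$. Here \rthm{Sing} does the work: a $C^3$ map with $Sf<0$ has no intervals of periodic points (part (3)) and every neutral periodic point is attracting (part (2)), so the monotone self-map $f^p|_L$ genuinely produces an attracting periodic orbit whose basin contains (an iterate of, hence) $J$. In other words, under negative Schwarzian the two alternatives of \rlem{Hom} really do reduce to "$J$ wandering" or "$J$ in the basin of an honest attracting cycle", and both have been excluded. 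I expect this bookkeeping — matching the slightly loose statement of \rlem{Hom}(2) to the precise dichotomy needed — to be the only real obstacle; everything else is immediate from the cited theorems.

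For the final assertion, that $O^-(\cri_f)$ is dense in $I$, I would argue by contradiction. If the closure of $O^-(\cri_f)$ is not all of $I$, its complement is a nonempty open set, hence contains a nondegenerate interval $U$ disjoint from $O^-(\cri_f)$. Disjointness from $O^-(\cri_f)$ means that for every $n\ge 0$ the set $f^n(U)$ avoids $\cri_f$, so $f^n$ has no critical point in $U$ and (being $C^1$ with no critical points outside $\cri_f$) is strictly monotone on $U$ for every $n$. Thus $U$ is a homterval, contradicting the first part of the corollary. Therefore $O^-(\cri_f)$ is dense, completing the proof.
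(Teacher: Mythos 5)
Your proposal is correct and follows essentially the same route the paper intends: the corollary is stated there without a written proof, as the direct combination of Lemma \ref{lemHom}, Theorem \ref{thmWand} and Singer's Theorem \ref{thmSing}, which is exactly your argument (your extra care about the second alternative of Lemma \ref{lemHom} is harmless, since that lemma already asserts $J\subseteq B(f)$, which is empty under the no-attractor hypothesis). The density argument via ``an interval missing $O^-(\MO{Crit}_f)$ is a homterval'' is the standard one the paper implicitly uses.
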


The following corollary is a particular case of the corollary of Theorem 
II.3.1 in \cite{MS}.
\begin{corollary}
\label{corEquiv}
Let $f,g$ and $h$ be as in Theorem \ref{thmEquiv}. If $f$ and $g$ have no homtervals then they are 
topologically conjugate.
\end{corollary}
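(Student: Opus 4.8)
The plan is to extend the order-preserving bijection between critical orbits supplied by \rthm{Equiv} to a homeomorphism of the whole interval $I$, and then check that it conjugates $f$ and $g$ by a density argument.

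First I would record the elementary consequence of having no homtervals: $O^-(\MO{Crit}_f)$ is dense in $I$. To see this, take a connected component $U$ of $I\setminus\overline{O^-(\MO{Crit}_f)}$ and note that, for every $n\geq 0$, $f^n(U)$ avoids $\MO{Crit}_f$ (otherwise $U$ would meet $f^{-n}(\MO{Crit}_f)\subseteq O^-(\MO{Crit}_f)$), so $f^n|_U$ is monotone, making $U$ a homterval --- a contradiction. The same applies to $g$, so $O^\pm(\MO{Crit}_f)\supseteq O^-(\MO{Crit}_f)$ and $O^\pm(\MO{Crit}_g)$ are dense subsets of $I=[0,1]$. (Alternatively one could quote \rcor{Hom}, but the direct argument avoids its extra regularity assumptions.)

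Next I would invoke property $(2)$ of \rthm{Equiv} to get an order-preserving bijection $\tilde h:O^\pm(\MO{Crit}_f)\to O^\pm(\MO{Crit}_g)$ intertwining $f$ and $g$ on these orbits (one may also arrange $\tilde h$ to extend $h$, following the proof of Theorem II.3.1 in \cite{MS}). Since both $O^\pm$-orbits are dense in $I$, I would extend $\tilde h$ monotonically by $H(x)=\sup\{\tilde h(y):y\in O^\pm(\MO{Crit}_f),\ y\leq x\}$ for $x\in(0,1]$ and $H(0)=0$. The standard checks then show $H:I\to I$ is an order-preserving homeomorphism: monotonicity is immediate, continuity holds because a jump of $H$ would leave a nonempty open subinterval of $I$ disjoint from the dense set $H(O^\pm(\MO{Crit}_f))=O^\pm(\MO{Crit}_g)$, surjectivity follows from that same density, and injectivity because $O^\pm(\MO{Crit}_f)$ is dense, so no nondegenerate interval can be collapsed. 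By construction $H$ restricts to $\tilde h$ on $O^\pm(\MO{Crit}_f)$. Finally, $H\circ f$ and $g\circ H$ are continuous on $I$ and agree on the dense set $O^\pm(\MO{Crit}_f)$, hence they coincide on $I$; thus $H$ is the desired topological conjugacy (and it extends $h$).

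The only step that genuinely uses the hypothesis is the density claim in the second paragraph, and that is precisely what ``no homterval'' delivers; I do not anticipate a real obstacle, the remainder being the routine monotone-extension argument for order isomorphisms between dense subsets of an interval.
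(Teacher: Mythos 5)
Your proof is correct, and it is essentially the argument the paper delegates to the literature: the statement is quoted as the corollary of Theorem II.3.1 in \cite{MS}, whose proof is exactly this monotone extension of the order isomorphism between the dense sets $O^\pm(\MO{Crit}_f)$ and $O^\pm(\MO{Crit}_g)$, with density supplied by the absence of homtervals. Your direct derivation of density from the no-homterval hypothesis (rather than via Corollary \ref{corHom}, which carries extra smoothness assumptions) is the right way to keep the statement at its stated generality.
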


All our examples of multimodal maps in this chapter are polynomials with negative Schwarzian 
derivative and without attracting periodic orbits. We prefer however to use slightly more general 
classes of multimodal maps, as suggested by the previous two corollaries. As combinatorially equivalent
multimodal maps have the same monotonicity type we only use maps that are increasing on the leftmost lap
$I_1$, that is exactly the multimodal maps $f$ with $f(0)=0$. Let us define some classes of multimodal maps
$$\mathcal S_l=\SetDef{f:I\rightarrow I}{f\mbox{ is a }C^1 l\mbox{-modal map with }f(0)=0},$$
$$\mathcal S'_l=\SetDef{f\in\mathcal S_l}{f\mbox{ is }C^3\mbox{ and }Sf<0},$$
$$\Pc_l=\SetDef{f\in \mathcal S'_l}{f\mbox{ non-flat at each critical point}}\mbox{ and}$$
\label{defP2}
$$\Pc'_l=\SetDef{f\in \Pc_l}{\mbox{all periodic points of }f\mbox{ are repelling}}.$$

We have seen that in the absence of homtervals combinatorially equivalent multimodal maps
are topologically conjugate. Using symbolic dynamics is a more convenient way to describe
the combinatorial properties of forward critical orbits. Let $\A_I=\SetEnu{I_1}{I_{l+1}}$ and
$\A_c=\SetEnu{c_1}{c_l}$ be two alphabets and $\A=\A_I\cup\A_c$. Let 
$$\Sigma=\A_I^{\mathbb N}\cup\bigcup_{n\geq 0}\left(\A_I^n\times \A_c\right)$$
be the space of sequences of symbols of $\A$ with the following 
property. If $\iu\in\Sigma$ and $m=|\iu|\in\overline{\mathbb N}$ is its length then $m=\infty$ if and
only if $\iu$ consists only of symbols of $\A_I$. Moreover, if $m<\infty$ then $\iu$ contains
exactly one symbol of $\A_c$ on the rightmost position. Let $\Sigma'=\Sigma\setminus\A_c$ 
be the space of sequences $\iu\in\Sigma$ with $|\iu|>1$. Let us define the shift transformation
$\sigma:\Sigma'\rightarrow\Sigma$ by
$$\sigma(i_0i_1\ldots)=i_1i_2\ldots.$$
If $f\in \Sc_l$ let $\iu:I\rightarrow \Sigma$ be defined by $\iu(x)=i_0(x)i_1(x)\ldots$
where $i_n(x)=I_k$ if $f^n(x)\in I_k$ and $i_n(x)=c_k$ if $f^n(x)=c_k$ for all $n\geq 0$. The map
$\iu$ relates the dynamics of $f$ on $I\setminus\SetEnu{c_1}{c_l}$ with the shift transformation
$\sigma$ on $\Sigma'$
$$\iu(f(x))=\sigma(\iu(x))\mbox{ for all }x\in I\setminus\SetEnu{c_1}{c_l}.$$
Moreover, we may define a \emph{signed lexicographic ordering} on $\Sigma$ that makes
$\iu$ increasing. It becomes strictly increasing in the absence of homtervals.

\begin{definition}
\label{defOrder}
A \emph{signed lexicographic ordering} $\prec$ on $\Sigma$ is defined as follows. Let us
define a sign $\epsilon:\A\rightarrow\{-1,0,1\}$ where $\epsilon(I_j)=(-1)^{j+1}$ for 
all $j\var 1{l+1}$ and $\epsilon(c_j)=0$ for all $j\var 1{l}$. Using the natural 
ordering on $\A$ we say that $\underline x\prec\underline y$ if there exists $n\geq 0$
such that $x_i=y_i$ for all $i\var 0{n-1}$ and
$$x_n\cdot\prod_{i=0}^{n-1}\epsilon(x_i)<y_n\cdot\prod_{i=0}^{n-1}\epsilon(y_i).$$
\end{definition}
Let us observe that $\prec$ is a complete ordering and that $\epsilon\cdot f'>0$ on $\rset$,
that is $\epsilon $ represents the monotonicity of $f$. The product $\prod_{i=0}^{n-1}\epsilon(x_i)$
represents therefore the monotonicity of $f^n$. This is the main reason for the monotonicity of $\iu$ 
with respect to $\prec$.

\begin{prop}
\label{propOrder}
Let $f\in\Sc_l$ for some $l\geq 0$.
\begin{enumerate}
\item If $x<y$ then $\iu(x)\preceq\iu(y)$.
\item If $\iu(x)\prec\iu(y)$ then $x<y$.
\item If $f\in\Pc'_l$ then $x<y$ if and only if $\iu(x)\prec\iu(y)$.
\end{enumerate}
\end{prop}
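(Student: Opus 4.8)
The plan is to prove the three statements more or less simultaneously, exploiting the definition of $\prec$ through the sign function $\epsilon$, which by the remark preceding the statement encodes the local monotonicity of $f$ on the laps $I_1,\ldots,I_{l+1}$. First I would fix $f\in\Sc_l$ and two points $x,y\in I$, and set $\iu(x)=x_0x_1\ldots$, $\iu(y)=y_0y_1\ldots$. The key elementary observation to establish at the outset is the following: if $x$ and $y$ lie in the \emph{same} lap $I_k$ (i.e.\ $x_0=y_0=I_k$ with $\epsilon(I_k)=\pm1$), then $x<y$ is equivalent to $f(x)<f(y)$ when $\epsilon(I_k)=+1$ and to $f(x)>f(y)$ when $\epsilon(I_k)=-1$; if $f(x)$ or $f(y)$ is a critical point one takes the appropriate non-strict version. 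This is just strict monotonicity of $f$ on $I_k$. I would also record that $x_0\neq y_0$ with, say, $x_0=I_j$, $y_0=I_k$, $j<k$ forces $x<y$ (with equality of first symbols only if one of them is a critical symbol $c_m$, which sits between $I_m$ and $I_{m+1}$).

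For part (1), assume $x<y$ and argue by comparing the itineraries position by position. If $x_0\neq y_0$ then, since the laps and critical points are linearly ordered along $I$ and $x<y$, we must have $x_0$ strictly to the left of $y_0$ in the natural ordering on $\A$; as the empty product of signs is $1$, the defining inequality $x_0\cdot 1 < y_0\cdot 1$ holds and $\iu(x)\prec\iu(y)$. If instead $x_0=y_0$, this common symbol is some $I_k$ (it cannot be a critical symbol $c_m$, for then $x=y=c_m$), and by the elementary observation $f(x)$ and $f(y)$ are ordered the same way as $x,y$ precisely when $\epsilon(I_k)=+1$ and in the opposite way when $\epsilon(I_k)=-1$. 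One then applies the inductive hypothesis to $f(x),f(y)$ whose itineraries are $\sigma(\iu(x)),\sigma(\iu(y))$: the first place $n\geq 1$ where they differ gives $x_n\prod_{i=1}^{n-1}\epsilon(x_i)$ compared to $y_n\prod_{i=1}^{n-1}\epsilon(y_i)$, and multiplying through by the extra factor $\epsilon(x_0)=\epsilon(y_0)$ (which flips the inequality exactly when it should, by the observation) yields $\iu(x)\preceq\iu(y)$ with the full product $\prod_{i=0}^{n-1}$. The degenerate cases ($x=y$, or the itineraries agreeing on one being a prefix of the other because a critical point was hit) give equality. To make the induction legitimate one should phrase it as: for every $N$, if $x<y$ then $\iu(x)$ and $\iu(y)$ agree up to position $N$ or the first disagreement before position $N$ respects $\prec$; then let $N\to\infty$.

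Part (2) is the contrapositive-flavoured converse: if $\iu(x)\prec\iu(y)$ then $x<y$. Here I would argue directly. We cannot have $x=y$ (their itineraries would coincide), so either $x<y$ — which is what we want — or $y<x$; in the latter case part (1) gives $\iu(y)\preceq\iu(x)$, contradicting $\iu(x)\prec\iu(y)$ since $\prec$ is a strict order (using that $\prec$ is a complete ordering, as observed after \rdef{defOrder}). Hence $x<y$. Part (3) then follows immediately: if $f\in\Pc'_l$ then by \rcor{corHom} (all periodic points repelling, together with non-flatness and $Sf<0$) $f$ has no homtervals, so $\iu$ is injective; combined with (1) this upgrades (1) to: $x<y$ implies $\iu(x)\prec\iu(y)$ (the alternative $\iu(x)=\iu(y)$ is excluded by injectivity). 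Together with (2) this gives the stated equivalence.

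The main obstacle is purely bookkeeping: correctly tracking how the accumulated sign $\prod_{i=0}^{n-1}\epsilon(x_i)$ interacts with the inductive step, and handling the boundary cases where a critical symbol terminates one of the sequences (so that one itinerary is a finite prefix situation) or where $x_0\ne y_0$ but one of them is a critical point lying exactly between two laps. None of this is deep, but it must be done carefully so that the non-strict versus strict distinctions in (1) versus (3) come out right; the honest way to organise it is the finite-$N$ induction described above, with the no-homterval injectivity from \rcor{corHom} invoked only at the very end for (3).
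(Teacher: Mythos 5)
Your proof is correct, but it is organised differently from the paper's. For parts (1) and (2) the paper simply cites Lemma II.3.1 of de Melo--van Strien, whereas you reprove (1) by the sign-bookkeeping induction and then deduce (2) from (1) using that $\prec$ is a total strict order; both are legitimate, yours being self-contained at the cost of the finite-$N$ induction you acknowledge. For part (3) the routes genuinely diverge: you invoke the no-homterval conclusion of Corollary~\ref{corHom} to get injectivity of $\iu$, while the paper uses the other conclusion of that same corollary, namely density of $O^-(\MO{Crit}_f)$ in $I$, together with the observations that $O^-(\MO{Crit}_f)$ is exactly the set of points with finite itinerary and that it is countable: an interval on which $\iu$ is constant would meet this set, hence consist entirely of finite-itinerary points, contradicting countability. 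Your variant works, but the step ``no homtervals implies $\iu$ injective'' deserves the one line you omit: if $\iu(x)=\iu(y)$ with $x<y$, then part (1) forces every $z\in[x,y]$ to have this same itinerary; the common itinerary cannot be finite (otherwise $f^n$ would be constant on $[x,y]$ while also being a composition of maps strictly monotone on the laps visited at times $0,\dots,n-1$), so it consists of lap symbols only, and then every $f^n$ is monotone on $[x,y]$, i.e.\ $[x,y]$ is a homterval. With that line added, your argument gives a proof independent of the density-plus-countability trick; the paper's argument avoids discussing intervals of constant itinerary at the price of the (equally easy) countability observation, and treats the finite and infinite itinerary cases uniformly.
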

\begin{proof}
The first two points are Lemma II.3.1 in \cite{MS}. If $f\in\Pc'_l$ then by Corollary 
\ref{corHom} $O^-(\MO{Crit}_f)$ is dense in $I$. Let us note that 
$$O^-(\MO{Crit}_f)=\SetDef{x\in I}{|\iu(x)|<\infty}.$$
Moreover, $O^-(\MO{Crit}_f)$ is countable as $f^{-1}(x)$ is finite for all $x\in I$,
therefore $\iu$ is strictly increasing.
\end{proof}

Let us define the kneading sequences of $f\in\Sc_l$ by $\ku_i=\iu(f(c_i))$ for 
$i\var 1l$, the itineraries of the critical values. The kneading invariant of $f$ is
$\Ku(f)=(\ku_1,\ldots,\ku_l)$. The last point of the previous lemma shows that if
$f,g\in\Pc'_l$ and $\Ku(f)=\Ku(g)$ then there is an order 
preserving bijection $h:O^+(\MO{Crit}_f)\rightarrow O^+(\MO{Crit}_g)$. Therefore, by Corollaries 
\ref{corHom} and \ref{corEquiv}, $f$ and $g$ are topologically conjugate. 

Let us define one-dimensional smooth families of multimodal maps. They are the central object of this paper.

\begin{definition}
\label{defFam}
We say that $\F:[\al,\be]\rightarrow \Sc_l$ is a \emph{family of $l$-modal maps}
if $\F$ is continuous with respect to the $C^1$ topology of $\Sc_l$.
\end{definition}

Note that we do not assume the continuity of critical points in such a family - as in the
general definition of a family of multimodal maps in \cite{MS} - as it is a direct consequence
of the smoothness conditions we impose.

When not stated otherwise we suppose $\F:[\al,\be]\rightarrow\Sc_l$ is a family
of $l$-modal maps and denote $f_\ga=\F(\ga)$.
\begin{lemma}
\label{lemCrit}
The critical points $c_i:[\al,\be]\rightarrow I$ of $f_\ga$ are continuous maps for 
all $i\var 1l$.
\end{lemma}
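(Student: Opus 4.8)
The plan is to show that each $c_i$ is continuous at an arbitrary parameter $\ga_0\in[\al,\be]$ by exploiting the fact that a turning point of an $l$-modal map $f\in\Sc_l$ is characterized by the sign changes of $f'$, together with the $C^1$-continuity of $\ga\mapsto f_\ga$. First I would fix $\ga_0$ and set $f_0=f_{\ga_0}$, with turning points $0<c_1(\ga_0)<\ldots<c_l(\ga_0)<1$. Since $f_0$ is $l$-modal, $f_0'$ vanishes exactly at the $c_i(\ga_0)$ and changes sign there (the $c_i(\ga_0)$ are local extrema, and $f_0$ is strictly monotone on each complementary lap), so in particular $f_0'$ is nonzero and of constant sign on the closure of each lap $I_k$ away from small neighborhoods of the critical points.

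The key step is a separation-and-squeezing argument. Given $\ve>0$ small enough that the intervals $[c_i(\ga_0)-\ve,c_i(\ga_0)+\ve]$ are pairwise disjoint and contained in $(0,1)$, I would consider the compact set $L_\ve=I\sm\bigcup_i(c_i(\ga_0)-\ve,c_i(\ga_0)+\ve)$. On $L_\ve$ the continuous function $|f_0'|$ attains a positive minimum $\de>0$. By $C^1$-continuity of $\F$ at $\ga_0$, there is a neighborhood $U$ of $\ga_0$ in $[\al,\be]$ such that $\nm{f_\ga'-f_0'}_\ft<\de/2$ for all $\ga\in U$; hence $|f_\ga'|>\de/2>0$ on $L_\ve$ for every such $\ga$, which forces every critical point of $f_\ga$ to lie in $\bigcup_i(c_i(\ga_0)-\ve,c_i(\ga_0)+\ve)$. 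It remains to see that $f_\ga$ has exactly one turning point in each $(c_i(\ga_0)-\ve,c_i(\ga_0)+\ve)$: since $f_0'$ changes sign across $c_i(\ga_0)$, for $\ve$ small $f_0'$ has opposite (nonzero) signs at the two endpoints $c_i(\ga_0)\pm\ve$, and after shrinking $U$ further the same is true for $f_\ga'$; thus $f_\ga'$ has a zero in each such interval, and it has exactly one turning point there because $f_\ga$ is $l$-modal with exactly $l$ turning points and the $l$ intervals are disjoint. This pins $c_i(\ga)$ inside $(c_i(\ga_0)-\ve,c_i(\ga_0)+\ve)$ for all $\ga\in U$, i.e.\ $|c_i(\ga)-c_i(\ga_0)|<\ve$, giving continuity at $\ga_0$.

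The main obstacle I anticipate is purely bookkeeping rather than conceptual: one must make sure the $C^0$-closeness of $f_\ga'$ to $f_0'$ is used on a fixed compact set (so that the minimum $\de$ is genuinely positive and independent of $\ga$) and that the sign-change argument near each $c_i(\ga_0)$ is not spoiled by $f_\ga'$ developing spurious extra zeros — this is exactly where the hypothesis $f_\ga\in\Sc_l$ (precisely $l$ turning points, hence $f_\ga'$ nonvanishing off them) does the work of matching the count. A minor point to be careful about is the behaviour near the endpoints $0,1$: since $f_0$ is increasing on the leftmost lap $I_1$ and the $c_i(\ga_0)$ are interior, for $\ve$ small the neighborhoods stay in $(0,1)$ and $0,1$ lie in $L_\ve$, so no critical point of $f_\ga$ can escape to the boundary. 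No compactness of $\F$ beyond continuity is needed, and the argument is local in $\ga$, so it yields continuity of each $c_i$ on all of $[\al,\be]$.
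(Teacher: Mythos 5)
Your proof is correct and follows essentially the same route as the paper: both fix $\ga_0$, take the positive minimum of $|f_{\ga_0}'|$ on the compact set obtained by removing $\ve$-neighborhoods of the $c_i(\ga_0)$, use $C^1$-closeness to force $f_\ga'$ to keep the same nonzero sign there, and then use the sign alternation together with the count of exactly $l$ turning points to trap one $c_i(\ga)$ in each $\ve$-neighborhood. Nothing essential differs beyond your slightly more explicit bookkeeping near the endpoints.
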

\begin{proof}
Fix $\ga_0\in[\al,\be]$ and $$0<\ve<\frac 12\min_{i\neq j}|c_i(\ga_0)-c_j(\ga_0)|.$$
Let $A=\SetDef{x\in[0,1]}{\ve\leq\min_i|x-c_i(\ga_0)|}$,
a finite union of compact intervals and
$$\theta=\min_{x\in A}|f_{\ga_0}'(x)|>0$$
by Definition \ref{defMulti}. Then the monotonicity of $f_{\ga_0}$ alternates on the connected components of $A$. Let $\delta>0$ be such that
$||f_\ga-f_{\ga_0}||_{C^1}<\frac\theta 2$ for all 
$\ga\in(\ga_0-\delta,\ga_0+\delta)\cap[\al,\be].$
Therefore the critical points $c_i(\ga)$ satisfy
$$|c_i(\ga)-c_i(\ga_0)|<\ve$$
for all $i\var 1l$ and $\ga\in(\ga_0-\delta,\ga_0+\delta)\cap[\al,\be]$
as $f'_\ga(x)\cdot f'_{\ga_0}(x)>0$ for all $x\in A$.
\end{proof}

Let us show that the $C^1$ continuity of families of multimodal maps is preserved under iteration.
\begin{lemma}
\label{lemIter}
Let $G,H\fd{[a,b]}{C^1(I,I)}$ be continuous. Then the map 
$$c\ra G(c)\circ H(c)\mbox{ is continuous on }[a,b].$$
\end{lemma}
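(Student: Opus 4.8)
The plan is to reduce the joint continuity of the composition map $c\mapsto G(c)\circ H(c)$ to two separate estimates: one controlling the effect of perturbing the outer map $G$, and one controlling the effect of perturbing the inner map $H$. Fix $c_0\in[a,b]$ and $\ve>0$; I want to find $\delta>0$ so that $\nc{G(c)\circ H(c)-G(c_0)\circ H(c_0)}<\ve$ whenever $|c-c_0|<\delta$. The triangle inequality splits the difference as
$$\nc{G(c)\circ H(c)-G(c_0)\circ H(c)}+\nc{G(c_0)\circ H(c)-G(c_0)\circ H(c_0)},$$
so it suffices to bound each term separately. For the first term, write $(G(c)-G(c_0))\circ H(c)$: its $C^0$ norm is at most $\nm{G(c)-G(c_0)}_{C^0}$, which is small by continuity of $G$ at $c_0$; its derivative is $\bigl((G(c))'-(G(c_0))'\bigr)(H(c))\cdot (H(c))'$, whose sup-norm is at most $\nm{G(c)-G(c_0)}_{C^1}$ times $\nm{H(c)}_{C^1}$, and the latter is uniformly bounded for $c$ near $c_0$ (again by continuity of $H$), so this term is controlled by $\nm{G(c)-G(c_0)}_{C^1}$.

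For the second term, $G(c_0)\circ H(c)-G(c_0)\circ H(c_0)$, I would use that $G(c_0)$ is a fixed $C^1$ map on the compact interval $I$, hence uniformly continuous together with its derivative. The $C^0$ estimate is immediate from uniform continuity of $G(c_0)$ applied to the two nearby points $H(c)(x)$ and $H(c_0)(x)$, using $\nm{H(c)-H(c_0)}_{C^0}$ small. For the derivative, the chain rule gives
$$\bigl(G(c_0)\circ H(c)\bigr)'=(G(c_0))'(H(c))\cdot (H(c))',$$
and I compare this with $(G(c_0))'(H(c_0))\cdot (H(c_0))'$ by adding and subtracting $(G(c_0))'(H(c))\cdot (H(c_0))'$: the resulting two pieces are bounded by $\nm{(G(c_0))'}_{C^0}\cdot\nm{H(c)-H(c_0)}_{C^1}$ and by $\nm{H(c_0)}_{C^1}$ times the modulus of continuity of $(G(c_0))'$ evaluated at the displacement $\nm{H(c)-H(c_0)}_{C^0}$, respectively. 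Both are small once $|c-c_0|$ is small enough, by continuity of $H$ and uniform continuity of $(G(c_0))'$ on the compact set $I$.

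The step I expect to be the main (though mild) obstacle is the last one: the term involving $(G(c_0))'(H(c))-(G(c_0))'(H(c_0))$ is \emph{not} controlled merely by the $C^1$-smallness of $H(c)-H(c_0)$ in a Lipschitz fashion, since $G(c_0)$ is only assumed $C^1$, not $C^2$; one must genuinely invoke the uniform continuity of $(G(c_0))'$ on the compact interval $I$ and the fact that the values $H(c)(x)$ all lie in $I$. Once that is in place, choosing $\delta$ small enough to simultaneously make $\nm{G(c)-G(c_0)}_{C^1}$, $\nm{H(c)-H(c_0)}_{C^1}$, and the relevant modulus-of-continuity contributions each less than a fixed fraction of $\ve$ completes the argument.
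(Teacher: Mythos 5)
Your proposal is correct and follows essentially the same route as the paper's proof: a triangle-inequality splitting into an outer-map perturbation and an inner-map perturbation, uniform $C^1$ bounds near $c_0$, and — the one genuinely needed ingredient, which you correctly identify — the uniform continuity of the fixed outer derivative on the compact interval $I$ to handle the term $(G(c_0))'(H(c))-(G(c_0))'(H(c_0))$, exactly as the paper does for $\gc'$. The only cosmetic difference is your choice of intermediate term in the splitting, which changes nothing of substance.
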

\def\cin{{c\in(c_0-\de,c_0+\de)\cap[a,b]}}
\def\acm{{\mbox{ for all }\cin}}
\def\ac{{ for all $\cin$}}
\def\gc{g_{c_0}}
\def\hc{h_{c_0}}
\begin{proof}
Fix $c_0\in[a,b]$ and $\ve>0$. We show that there is $\de>0$ such that 
$$\nc{G(c)\circ H(c)-G(c_0)\circ H(c_0)}<\ve\acm.$$
For transparency we denote $g_c=G(c)$ and $h_c=H(c)$ for all $c\in[a,b]$. Let
$$M=\max\SetDef{\nc{g_c},\nc{h_c},1}{c\in[a,b]}.$$
As $\gc'$ is uniformly continuous on $I$, there is $\de'>0$
such that 
$$|\gc'(x)-\gc'(y)|<\frac\ve{4M}\mbox{ for all }x,y\in I\mbox{ with }|x-y|<\de'.$$
Let $\de>0$ such that
$$\sup\SetDef{\nc{g_c-\gc},\nc{h_c-\hc}}{\cin}<\min\left(\frac\ve{4M},\de'\right).$$
We compute a bound for $\nc{g_c\circ h_c-\gc\circ\hc}$\ac
$$
\begin{array}{rcl}
 ||g_c\circ h_c-\gc\circ\hc||_\ft & \leq & ||g_c\circ h_c - g_c\circ\hc||_\ft + ||g_c\circ\hc - \gc\circ\hc||_\ft\\
                            & \leq & M\frac\ve{4M} + \frac\ve{4M}\\
                            & < & \ve.
\end{array}
$$
Analogously
$$
\begin{array}{rcl}
 ||g_c'\circ h_c\cdot h_c'-\gc'\circ\hc\cdot \hc'||_\ft & \leq & 
                 ||(g_c'\circ h_c - \gc'\circ h_c)\cdot h_c'||_\ft + \\
             & & ||(\gc'\circ h_c - \gc'\circ\hc)\cdot h_c'||_\ft + \\
             & & ||\gc'\circ\hc\cdot (h_c' - \hc')||_\ft\\
                            & \leq & \frac\ve{4M}M + \frac\ve{4M}M + M\frac\ve{4M}\\
                            & < & \ve
\end{array}
$$
as $||h_c - \hc||_\ft<\de'$.
\end{proof}
Remark that by iteration $\ga\ra f_\ga^n$ is continuous for all $n\geq 1$. 

The following proposition shows that pullbacks of given combinatorial type of continuous maps
are continuous in a family of multimodal maps.
\begin{prop}
\label{propPull}
Let $y:[\al,\be]\rightarrow \stackrel{\circ}{I}$ be continuous and $S\in\A^n_I$, where $\stackrel{\circ}{I}$ denotes the interior of $I$. 
A maximal connected domain of definition $D$ of the map $\ga\rightarrow x_\ga$ such that
$$
\begin{array}{l}
f^n_\ga(x_\ga)=y(\ga)\mbox{ and}\\
\iu(x_\ga)\in S\ts
\end{array}
$$
is open in $[\al,\be]$ and $\ga\rightarrow x_\ga$ is unique and continuous on $D$.
\end{prop}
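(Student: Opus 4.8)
Write $S=(I_{j_0},\dots,I_{j_{n-1}})$ with each $I_{j_k}\in\A_I$ (the case $n=0$ is trivial, as then $x_\ga=y(\ga)$). The constraint $\iu(x_\ga)\in S\ts$ says exactly that $f^k_\ga(x_\ga)\in I_{j_k}$ for $0\le k\le n-1$, and $f_\ga$ is strictly monotone, hence injective, on each lap $I_{j_k}$. The plan is to realize $x_\ga$ as the $n$-fold pullback of $y(\ga)$ through these laps and to push continuity through each pullback step, using continuity of $\ga\mapsto f_\ga$, of the critical points $\ga\mapsto c_i(\ga)$ (\rlem{Crit}), and of $y$.

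First I would settle uniqueness. If $x_\ga$ and $x'_\ga$ both satisfy the two conditions, put $y_k=f^k_\ga(x_\ga)$ and $y'_k=f^k_\ga(x'_\ga)$; then $y_n=y(\ga)=y'_n$, and a downward induction from $k=n$ to $k=1$, using that $f_\ga$ is injective on $I_{j_{k-1}}$ and that both $y_{k-1},y'_{k-1}$ lie in $I_{j_{k-1}}$, yields $y_{k-1}=y'_{k-1}$; hence $x_\ga=y_0=y'_0=x'_\ga$. So $\ga\mapsto x_\ga$ is a well defined partial map; write $\mathcal D$ for its set of definition, so that $D$ is a connected component of $\mathcal D$ (in particular $D\se\mathcal D$).

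Next I would prove that $\mathcal D$ is open and $\ga\mapsto x_\ga$ continuous on it. Fix $\ga_0\in\mathcal D$ and set $y^0_k=f^k_{\ga_0}(x_{\ga_0})$ for $0\le k\le n$. Each $y^0_k$ with $k<n$ lies in the \emph{interior} of the lap $I_{j_k}$: it is not a critical point (the $k$-th symbol of $\iu(x_{\ga_0})$ lies in $\A_I$), and it is neither $0$ nor $1$, since $f(\pa I)\se\pa I$ would then force $y(\ga_0)=f^n_{\ga_0}(x_{\ga_0})\in\pa I$, contradicting $y(\ga_0)\in\intI$. Write $[a_k(\ga),b_k(\ga)]$ for the closure of the lap of $f_\ga$ carrying the symbol $I_{j_k}$: its endpoints lie in $\{0,1,c_1(\ga),\dots,c_l(\ga)\}$, depend continuously on $\ga$ by \rlem{Crit}, and the monotonicity type of $f_\ga$ there (the sign $\epsilon(I_{j_k})$) does not depend on $\ga$. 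Now set $y_n(\ga)=y(\ga)$ and define, by downward recursion, $y_{k-1}(\ga)=\bigl(f_\ga|_{[a_{k-1}(\ga),b_{k-1}(\ga)]}\bigr)^{-1}\bigl(y_k(\ga)\bigr)$. Each step rests on the elementary fact that if $[a(\ga),b(\ga)]$ vary continuously, $f_\ga$ is strictly monotone there of a fixed type, $w$ is continuous near $\ga_0$, and $w(\ga_0)$ lies strictly between $f_{\ga_0}(a(\ga_0))$ and $f_{\ga_0}(b(\ga_0))$, then near $\ga_0$ the preimage $(f_\ga|_{[a(\ga),b(\ga)]})^{-1}(w(\ga))$ is defined, stays in $(a(\ga),b(\ga))$, and depends continuously on $\ga$ --- proved by squeezing $w(\ga)$ between $f_\ga(x_0-\ve)$ and $f_\ga(x_0+\ve)$ for small $\ve$, $x_0$ being the value at $\ga_0$. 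Its hypothesis holds at $k=n$ since $y(\ga_0)=f_{\ga_0}(y^0_{n-1})$ with $y^0_{n-1}\in(a_{n-1}(\ga_0),b_{n-1}(\ga_0))$, and, as $y_{k-1}(\ga_0)=y^0_{k-1}$ again lies strictly inside the next lap, it persists at each further step after shrinking the neighborhood. After $n$ steps I obtain an interval neighborhood $U$ of $\ga_0$ and a continuous map $\ga\mapsto x_\ga\in\intI$ on $U$ with $f^k_\ga(x_\ga)=y_k(\ga)\in I_{j_k}$ for $k<n$ and $f^n_\ga(x_\ga)=y(\ga)$; hence $U\se\mathcal D$. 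Thus $\mathcal D$ is open and $\ga\mapsto x_\ga$ continuous on it, and by maximality $U\se D$ whenever $\ga_0\in D$, so $D$ is open and carries the unique continuous branch.

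The main obstacle is not conceptual but the bookkeeping in the recursion: one must check at every stage that the pullback lands \emph{strictly} inside the prescribed lap, so that the monotone inverse branch is unambiguous and its target lies in the interior of that lap's image. This is precisely where $y(\ga)\in\intI$, the invariance $f(\pa I)\se\pa I$, and the non-criticality encoded by $S\in\A_I^n$ enter; continuity of the moving lap endpoints is furnished by \rlem{Crit}.
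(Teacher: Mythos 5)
Your proof is correct, but it follows a different route from the paper's. The paper argues directly at the level of $f^n_\ga$: uniqueness comes from the observation that two solutions $x_1<x_2$ with the same itinerary prefix $S\in\A_I^n$ would force $f^n_\ga$ to be strictly monotone on $[x_1,x_2]$, contradicting $f^n_\ga(x_1)=f^n_\ga(x_2)$; and persistence/continuity is a quantitative implicit-function-style estimate using $\theta=(f^n_{\ga_0})'(x_{\ga_0})\neq 0$ (nonzero precisely because the prefix avoids critical symbols) together with the $C^1$-continuity of $\ga\mapsto f^n_\ga$ furnished by Lemma \ref{lemIter}, squeezing $y(\ga)$ inside $f^n_\ga\left((x_{\ga_0}-\ve,x_{\ga_0}+\ve)\right)$. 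You instead decompose into $n$ single-step pullbacks through the prescribed laps: uniqueness by stepwise injectivity of $f_\ga$ on each lap, and existence/continuity by an intermediate-value squeeze on each lap with continuously moving endpoints (Lemma \ref{lemCrit}), using only $C^0$ closeness of the maps. Your version is more elementary (no derivative of the iterate is needed) and has the merit of verifying the itinerary condition $\iu(x_\ga)\in S\times\Sigma$ explicitly at every step --- a point the paper's proof leaves implicit for the perturbed solution --- as well as handling cleanly the boundary issue via $f(\pa I)\se\pa I$ and $y(\ga)\in\intI$; the cost is the bookkeeping of $n$ nested shrinkings of the parameter neighborhood, whereas the paper's argument is shorter because it leverages the already-proved $C^1$-continuity of the iterates and a single distortion-of-derivative estimate. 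Both arguments are complete and yield the same conclusion, including the openness of the maximal domain $D$.
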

\begin{proof}
Suppose that for some $\ga$ there are $x_1<x_2\in \stackrel{\circ}{I}$ with $f^n_\ga(x_1)=f^n_\ga(x_2)=y(\ga)$ 
and such that $\iu(x_1)=\iu(x_2)=S\iu(y(\ga))$ for some $\ga\in[\al,\be]$. But $S\in\A_I^n$  
so $f^n$ is strictly monotone on $[x_1,x_2]$, which contradicts $f^n_\ga(x_1)=f^n_\ga(x_2)$ so
$\ga\rightarrow x_\ga$ is unique.

Let $x_{\ga_0}$ be as in the hypothesis and $\ve >0$ such that $(x_{\ga_0}-\ve,x_{\ga_0}+\ve)\se\stackrel{\circ}{I}$. We show that there exists
$\delta>0$ such that $\ga\rightarrow x_\ga$ is defined on $(\ga_0-\delta,\ga_0+\delta)\cap[\al,\be]$
and takes values in $(x_{\ga_0}-\ve, x_{\ga_0}+\ve)$. Let
$$\theta=(f_{\ga_0}^n)'(x_{\ga_0})\neq 0$$
and by eventually diminishing $\ve$ we may suppose that 
$$|(f_{\ga_0}^n)'(x)- \theta|<\frac\theta 4\mbox{ for all }x\in
(x_{\ga_0}-\ve,x_{\ga_0}+\ve).$$
Let $\delta_1>0$ be such that 
$$||f_\ga^n-f_{\ga_0}^n||_{C^1}<\frac{\theta\ve}4<\frac{\theta}4\mbox{ for all }\ga\in
(\ga_0-\delta_1,\ga_0+\delta_1)\cap[\al,\be].$$
Let also $\delta_2>0$ be such that 
$$|y(\ga)-y(\ga_0)|<\frac{\theta\ve}4\mbox{ for all }
\ga\in(\ga_0-\delta_2,\ga_0+\delta_2)\cap[\al,\be].$$
We choose $\delta=\min(\delta_1,\delta_2)$ and show that
$$y(\ga)\in f^n_\ga((x_{\ga_0}-\ve, x_{\ga_0}+\ve))\mbox{ for all }
\ga\in(\ga_0-\delta,\ga_0+\delta)\cap[\al,\be].$$
Indeed, $f^n_\ga$ is monotone on $(x_{\ga_0}-\ve, x_{\ga_0}+\ve)$ and
$$|f^n_\ga(x_{\ga_0}\pm\ve)-y(\ga_0)|>\frac{\theta\ve}4$$
for all $\ga\in(\ga_0-\delta,\ga_0+\delta)\cap[\al,\be]$ as
$|f^n_\ga(x_{\ga_0}\pm\ve)-y(\ga_0)|=|f^n_\ga(x_{\ga_0}\pm\ve)-
f^n_{\ga_0}(x_{\ga_0}\pm\ve)+f^n_{\ga_0}(x_{\ga_0}\pm\ve)-
f^n_{\ga_0}(x_{\ga_0})|$ and $|f^n_{\ga_0}(x_{\ga_0}\pm\ve)-
f^n_{\ga_0}(x_{\ga_0})|>\frac 34\theta\ve$.
\end{proof}

As an immediate consequence of the previous proposition and Lemma \ref{lemCrit} we obtain the 
following corollary.
\begin{corollary}
\label{corCont}
If $\F$ realizes a finite itinerary sequence $\iu_0\in\Sigma$, that is for all 
$\ga\in[\al,\be]$ there is $x(\iu_0)(\ga)\in I$ such that 
$$\iu(x(\iu_0)(\ga))=\iu_0,$$
then $x(\iu_0):[\al,\be]\rightarrow I$ is unique and continuous.
\end{corollary}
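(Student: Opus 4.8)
The plan is to deduce the statement directly from Proposition~\ref{propPull} and Lemma~\ref{lemCrit}, the latter supplying the continuity of the target point. Since $\iu_0\in\Sigma$ has finite length, by the very definition of $\Sigma$ it carries exactly one symbol of $\A_c$, sitting in the rightmost position; so I may write $\iu_0=Sc_k$ with $S\in\A_I^n$, $n=|\iu_0|-1\geq 0$, and $c_k\in\A_c$. If $n=0$ then $\iu_0=c_k$ forces $x(\iu_0)(\ga)=c_k(\ga)$, and the conclusion is exactly Lemma~\ref{lemCrit}; so from now on I assume $n\geq 1$.

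First I would unpack the realization hypothesis for the point $x_\ga:=x(\iu_0)(\ga)$: its itinerary equals $\iu_0=Sc_k$, hence $f_\ga^m(x_\ga)$ lies in an open lap for every $m<n$ while $f_\ga^n(x_\ga)=c_k(\ga)$. In particular $\iu(x_\ga)\in S\ts$, so $x_\ga$ is, at the parameter $\ga$, a solution of the pullback problem of Proposition~\ref{propPull} with combinatorial word $S$ and target $y(\ga)=c_k(\ga)$. To be inside the scope of that proposition I need two elementary checks: that $y$ takes values in $\intI$, which holds because $0<c_1<\cdots<c_l<1$; and that $x_\ga\in\intI$, which holds because the orbit of a boundary point of $I$ stays in $\pa I$ (as $f(\pa I)\se\pa I$) and so never reaches $c_k\notin\pa I$, forcing $x_\ga\neq 0$ and $x_\ga\neq 1$. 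Next I would fix $\ga_0\in[\al,\be]$ and apply Proposition~\ref{propPull}: it yields an open-in-$[\al,\be]$ connected neighborhood $D\ni\ga_0$ together with a unique continuous branch $\ga\mapsto\hat x_\ga$ on $D$ with $f_\ga^n(\hat x_\ga)=c_k(\ga)$ and $\iu(\hat x_\ga)\in S\ts$. Since $f_\ga^n(\hat x_\ga)=c_k(\ga)$ makes the $n$-th symbol of $\iu(\hat x_\ga)$ equal to $c_k$ (and a critical symbol terminates an itinerary), one gets $\iu(\hat x_\ga)=Sc_k=\iu_0=\iu(x_\ga)$ for $\ga\in D$; then the uniqueness assertion of Proposition~\ref{propPull}, read at the single parameter $\ga$, gives $\hat x_\ga=x_\ga$ for every $\ga\in D$. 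Thus $x(\iu_0)$ agrees on a neighborhood of $\ga_0$ with the continuous map $\hat x$, hence is continuous at $\ga_0$; as $\ga_0$ was arbitrary, $x(\iu_0)$ is continuous on $[\al,\be]$. Uniqueness of $x(\iu_0)(\ga)$ for fixed $\ga$ is again the uniqueness part of Proposition~\ref{propPull}: two distinct points with itinerary $Sc_k$ would leave $f_\ga^n$ strictly monotone between them, contradicting that both are sent to $c_k(\ga)$.

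I expect no real obstacle here, since all the analytic work is already contained in Proposition~\ref{propPull} and Lemma~\ref{lemCrit}. The only points that require a moment of care are the bookkeeping identifying $\iu_0$ with the pair $(S,c_k)$, the verification that $y=c_k$ and the realized point $x(\iu_0)(\ga)$ both lie in the interior $\intI$ so that Proposition~\ref{propPull} genuinely applies, and the patching step—recognizing that the locally defined continuous branch $\hat x$ must coincide with the globally prescribed $x(\iu_0)$ by pointwise uniqueness, which is precisely what promotes ``continuous on each $D$'' to ``continuous on $[\al,\be]$''.
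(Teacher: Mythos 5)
Your proposal is correct and follows the same route the paper intends: the paper states the corollary as an immediate consequence of Proposition~\ref{propPull} (applied with $S$ the interval-symbol prefix of $\iu_0$ and target $y(\ga)=c_k(\ga)$, whose continuity is Lemma~\ref{lemCrit}), which is exactly your argument, with the boundary checks and the patching-by-uniqueness step spelled out in more detail than the paper bothers to.
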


One may observe that if $x,y:[\al,\be]\rightarrow I$ are continuous and
for some $k\geq 0$ 
$$\left(f_\al^k(x(\al))- y(\al)\right)\cdot\left(f_\be^k(x(\be))- y(\be)\right)<0$$
then there exists $\ga\in[\al,\be]$ such that 
\begin{equation}
\label{equSol}
f_\ga^k(x(\ga)) = y(\ga).
\end{equation}
Therefore if $\iu(x(\al))\neq\iu(x(\be))$ then there exists
$\ga\in[\al,\be]$ such that $\iu(x(\ga))$ is finite. Let 
$m=\min\SetDef{k\geq 0}{\exists\ga\in[\al,\be]\mbox{ such that }\iu(x(\al))(k)\neq\iu(x(\ga))(k)}$ then the itinerary
$\sigma^m\iu(x(\ga))=\iu(f_\ga^m(x(\ga)))$ changes the first symbol on $[\al,\be]$.
Without loss of generality we may assume that $\sigma^m\iu(x(\al))\prec\sigma^m\iu(x(\be))$. 
Therefore there exists $i\in\SetEnu 1l$ such that $f_\ga^m(x(\al))\leq c_i(\al)$ and 
$f_\ga^m(x(\be))\geq c_i(\al)$, which yields $\ga$ using the previous remark. 

A simplified version of the proof of Proposition \ref{propPull} shows that if 
$F:[\al,\be]\rightarrow C^1(I)$ is continuous, $r_0\in I$ is a root of $F(\ga_0)$
and $(F(\ga_0))'(r_0)\neq 0$ then there are $J\subseteq [\al,\be]$ a neighborhood of $\ga_0$ 
and $r:J\rightarrow I$ continuous such that $F(\ga)(r(\ga))=0$ for all $\ga\in J$.
For $F(\ga)(x)=f_\ga^n(x)-x$ we obtain the following corollary.
\begin{corollary}
\label{corRep}
Let $r_0$ be a periodic point of $f_{\ga_0}$ of period $n\geq 1$ that is not neutral. There
exists a connected neighborhood $J\subseteq [\al,\be]$ of $\ga_0$ and $r:J\rightarrow I$ 
continuous such that $r(\ga)$ is a non-neutral periodic point of $f_\ga$ of period $n$. 
Moreover, provided $r(\ga)$ is not super-attracting for any $\ga\in J$, the itinerary $\iu(r(\ga))$ 
is constant.
\end{corollary}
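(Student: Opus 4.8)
The plan is to feed the family $F(\ga)(x)=f_\ga^n(x)-x$ into the implicit-function statement derived just above from \rpro{Pull}. By the remark following \rlem{Iter}, $\ga\mapsto f_\ga^n$ is continuous from $[\al,\be]$ into $C^1(I,I)$, so $F$ is a continuous map into $C^1(I)$. Since $r_0$ has period $n$, $F(\ga_0)(r_0)=f_{\ga_0}^n(r_0)-r_0=0$, and since $r_0$ is not neutral, $|(f_{\ga_0}^n)'(r_0)|\neq 1$; in particular $(F(\ga_0))'(r_0)=(f_{\ga_0}^n)'(r_0)-1\neq 0$. The cited statement then produces a neighborhood of $\ga_0$ in $[\al,\be]$, which (as $[\al,\be]$ is an interval) I may take to be a connected $J$, together with a continuous $r:J\to I$ such that $f_\ga^n(r(\ga))=r(\ga)$ for all $\ga\in J$ and $r(\ga_0)=r_0$.

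Next I would shrink $J$ to secure the two remaining properties of $r(\ga)$. Because $r_0$ has exact period $n$, the points $r_0,f_{\ga_0}(r_0),\dots,f_{\ga_0}^{n-1}(r_0)$ are pairwise distinct; continuity of $(\ga,x)\mapsto f_\ga^k(x)$ and of $r$ keeps $f_\ga^0(r(\ga)),\dots,f_\ga^{n-1}(r(\ga))$ pairwise distinct on a smaller connected neighborhood of $\ga_0$, so there $r(\ga)$ still has exact period $n$. Likewise $\ga\mapsto(f_\ga^n)'(r(\ga))$ is continuous (it is the evaluation of the $C^1$-continuous family $f_\ga^n$ at the continuously varying point $r(\ga)$) and equals $(f_{\ga_0}^n)'(r_0)$ at $\ga_0$, of modulus $\neq 1$; so after one further shrinking $|(f_\ga^n)'(r(\ga))|\neq 1$ throughout $J$, i.e.\ $r(\ga)$ is non-neutral (repelling or attracting according to the sign of $|(f_{\ga_0}^n)'(r_0)|-1$).

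For the last assertion, assume in addition that $r(\ga)$ is not super-attracting for any $\ga\in J$. By \rdef{Attr} this means $(f_\ga^n)'(r(\ga))\neq 0$, equivalently the periodic orbit of $r(\ga)$ misses $\cri_{f_\ga}=\{c_1(\ga),\dots,c_l(\ga)\}$. Hence $\iu(r(\ga))\in\A_I^{\mathbb N}$, and its $k$-th symbol records which lap $I_j$ contains $f_\ga^k(r(\ga))$, so it is determined by the signs of $f_\ga^k(r(\ga))-c_j(\ga)$ for $j=1,\dots,l$. For fixed $k$ and $j$ this is a continuous function of $\ga$ on $J$ (using \rlem{Crit}) which never vanishes, hence has constant sign on the connected set $J$; therefore each symbol of $\iu(r(\ga))$, and so $\iu(r(\ga))$ itself, is independent of $\ga$.

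The step I expect to require the most care is this final one: it is precisely the ``not super-attracting'' hypothesis that prevents some iterate $f_\ga^k(r(\ga))$ from passing through a critical point $c_j(\ga)$ as $\ga$ sweeps across $J$, which is exactly what would otherwise allow the itinerary to jump; everything else is a routine combination of the cited implicit-function statement with continuity and connectedness.
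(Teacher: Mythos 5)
Your proposal is correct and follows essentially the same route as the paper: apply the implicit-function remark preceding the corollary to $F(\ga)(x)=f_\ga^n(x)-x$, shrink the neighborhood using continuity of $(f_\ga^n)'(r(\ga))$ to preserve non-neutrality, and observe that a change of itinerary would force some iterate of $r(\ga)$ through a critical point, which the non-super-attracting hypothesis forbids. Your direct sign-constancy argument on the connected set $J$ is just the explicit form of the paper's contradiction (a non-constant itinerary yields a parameter with finite itinerary, hence a super-attracting orbit), and the extra care about exact period is harmless.
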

\begin{proof}
As a periodic point, $r(\ga)$ exists and is continuous on a connected neighborhood 
$J_0$ of $\ga_0$, using the previous remark. As $|(f_{\ga_0}^n)'(r_0)| \neq 1$,
there is a connected neighborhood $J_1$ of $\ga_0$ such that
$$|(f_{\ga}^n)'(r(\ga))| \neq 1\mbox{ for all }\ga\in J_1.$$
Let $J=J_0\cap J_1$ so $r(\ga)$ is a non-neutral periodic point of period $n$ for all $\ga\in J$.
Suppose that its itinerary $\iu(r(\ga))$ is not constant, then there is $\ga_1\in J$ such
that $\iu(r(\ga_1))$ is finite so the orbit of $r(\ga_1)$ contains a critical point thus it is 
super-attracting.
\end{proof}

Let us define the \emph{asymptotic kneading sequences} $\ku^-_j(\ga)$ and $\ku^+_j(\ga)$ \allg{} and $j\var 1l$.
When they exist, the asymptotic kneading sequences capture important information about the local variation
of the kneading sequences.
\begin{definition}
\label{defAsy}
Let $j\in\SetEnu 1l$ and $\ga\in[\al,\be]$.
If $\ga>\al$ and for all $n\geq 0$ there exists $\de>0$ such that $\ku_j(\ga-\te)\in S_n\times\Sigma$ with
$S_n\in\A_I^n$ for all $\te\in(0,\de)$ then we set $\ku^-_j(\ga)(k)=S_n(k)$ for all $0\leq k<n$. Analogously,
if $\ga<\be$ and for all $n\geq 0$ there exists $\de>0$ such that $\ku_j(\ga+\te)\in S'_n\times\Sigma$ with
$S'_n\in\A_I^n$ for all $\te\in(0,\de)$ then we set $\ku^+_j(\ga)(k)=S'_n(k)$ for all $0\leq k<n$.
\end{definition}

Let us define a sufficient condition for the existence of the asymptotic kneading sequences
\allg{}.
\begin{definition}
\label{defNat}
We call a family $\F\fd{[\al,\be]}{\Sc_l}$ of $l$-modal maps \emph{natural} if for all $j=1,\ldots, l$ the set
$$\ku_j^{-1}(\iu)=\SetDef{\ga\in[\al,\be]}{\ku_j(\ga)=\iu}\mbox{ is finite for all }\iu\in\Sigma
\mbox{ finite.}$$
\end{definition}
This property does not hold in general for $C^1$ families of multimodal maps, even polynomial,
as such a family could be reparametrized to have intervals of constancy in the parameter space.
It is however generally true for analytic families such as the quadratic family $a\ra ax(1-x)$ with
$a\in[0,4]$.

The following proposition shows that this property guarantees the existence of all asymptotic kneading sequences.
\begin{prop}
\label{propAsy}
Let $\F\fd{[\al,\be]}{\Sc_l}$ be a natural family of $l$-modal maps and $j\in\SetEnu 1l$. Then $\ku^-_j(\ga)$ 
exists for all $\ga\in(\al,\be]$ and $\ku^+_j(\ga)$ exists for all $\ga\in[\al,\be)$. Moreover, if 
$\ku_j(\ga)\in\A_I^\ft$ for some $\ga\in(\al,\be)$ then $\ku^-_j(\ga)=\ku_j(\ga)=\ku^+_j(\ga)$. 
If $\ku_j(\ga)=Sc_i$ with $S\in\A_I^n$ for some $n\geq 0$ and $i\in\SetEnu 1l$ then 
$\ku^-_j(\ga)=Sl_1l_2\ldots$ and $\ku^+_j(\ga)=Sr_1r_2\ldots$ with $l_1,r_1\in\left\{I_i,I_{i+1}\right\}$.
\end{prop}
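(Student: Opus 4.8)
The plan is to show first that, for a natural family, the kneading sequence $\ku_j$ cannot ``agree to infinite precision'' with a fixed finite word on a whole one-sided neighborhood of a parameter, and deduce from this the existence of the asymptotic sequences. Fix $j$ and $\ga\in(\al,\be]$. For each $n\geq 0$ consider the finitely many words $S\in\A_I^n$; I want to exhibit $\de>0$ and a single $S_n\in\A_I^n$ with $\ku_j(\ga-\te)\in S_n\times\Sigma$ for all $\te\in(0,\de)$. The critical value $x\mapsto f_\ga(c_j(\ga))$ depends continuously on $\ga$ by \rlem{Iter} and \rlem{Crit}, and likewise each iterate $f_\ga^k(f_\ga(c_j(\ga)))$ is continuous in $\ga$. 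Hence the set of parameters near $\ga$ for which $f_\ga^k(f_\ga(c_j(\ga)))$ lies in the open lap $I_i$ is open, and the only way the first $n$ symbols of $\ku_j$ can fail to be eventually constant as $\te\downarrow 0$ is if $f_\ga^k(f_\ga(c_j(\ga-\te)))=c_i(\ga-\te)$ for some $k<n$ and some $i$ for a sequence of $\te$ accumulating at $0$. But that would mean $\ku_j(\ga-\te)$ is a finite word (of length $\leq n$) for infinitely many $\te$, hence (since there are only finitely many finite words of length $\leq n$) that $\ku_j^{-1}(\iu)$ is infinite for some finite $\iu$, contradicting naturalness. So $\ku^-_j(\ga)$ is well defined; the argument for $\ku^+_j(\ga)$ on $[\al,\be)$ is symmetric.

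Next I would treat the case $\ku_j(\ga)\in\A_I^\ft$ for $\ga\in(\al,\be)$. Here $f_\ga^k(f_\ga(c_j(\ga)))$ lies in an open lap $I_{i_k}$ for every $k$; by continuity of $c_j$ and of the iterates, for each fixed $n$ there is a neighborhood of $\ga$ on which the first $n$ symbols of $\ku_j$ stay equal to those of $\ku_j(\ga)$. Intersecting with the one-sided neighborhoods from the previous paragraph gives $\ku^-_j(\ga)=\ku_j(\ga)=\ku^+_j(\ga)$.

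Finally, suppose $\ku_j(\ga)=Sc_i$ with $S\in\A_I^n$. Then $f_\ga(c_j(\ga))$ has itinerary starting with the open word $S$, so on a neighborhood of $\ga$ the first $n$ symbols of $\ku_j$ remain $S$; thus $\ku^-_j(\ga)$ and $\ku^+_j(\ga)$ both begin with $S$. The point $f_\ga^n(f_\ga(c_j(\ga)))=c_i(\ga)$, so $f_\ga^{n+1}(c_j(\ga))=c_i(\ga)$; as $\ga$ moves to $\ga-\te$, by continuity $f_{\ga-\te}^{n+1}(c_j(\ga-\te))$ is close to $c_i(\ga)$ and lies in one of the two laps $I_i,I_{i+1}$ adjacent to $c_i$, which (once $\te$ is small, using the eventual constancy already established) is a fixed choice; this gives the $(n{+}1)$-st symbol $l_1\in\{I_i,I_{i+1}\}$ of $\ku^-_j(\ga)$, and similarly $r_1\in\{I_i,I_{i+1}\}$ for $\ku^+_j(\ga)$.

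The main obstacle is the first paragraph: ruling out the pathological possibility that $\ku_j(\ga-\te)$ is a finite word for a sequence $\te\downarrow 0$ \emph{with the lengths going to infinity} (so that no single finite $\iu$ is hit infinitely often). I would handle this by noting that if the first $n$ symbols were not eventually constant, then arbitrarily close to $\ga$ there are parameters where $\ku_j$ is a finite word of length $\leq n$ — a fixed $n$ — so some such word is realized infinitely often, contradicting \rdef{Nat}; the lengths-to-infinity scenario is automatically excluded because we fix $n$ first and only then ask for the neighborhood.
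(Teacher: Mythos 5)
Your proposal is correct and follows essentially the same route as the paper: naturalness implies that for each fixed $n$ only finitely many parameters have a kneading sequence of length at most $n$, which forces the first $n$ symbols of $\ku_j$ to be constant on a one-sided neighborhood, and the remaining claims follow from the continuity of $\ga\ra f_\ga^m(c_j)$ and of the critical points, just as in the paper's proof.
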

\begin{proof}
If $\F$ is natural then the set of all $\ga\in[\al,\be]$ that have at least one kneading sequence
of length at most $n$ for some $n>0$
$$K_n=\bigcup_{j=1}^l\SetDef{\ga\in[\al,\be]}{|\ku_j(\ga)|\leq n}$$
is finite. This is sufficient for the existence of all asymptotic kneading sequences.

If $\ku_j(\ga_0)\in S\times\Sigma$ with $S\in\A_I^n$ and $n\geq 0,j\in\SetEnu 1l$ then by the continuity of 
$\ga\ra f_\ga^m(c_j)$ and of $\ga\ra c_i$ for all $m\var 0{n-1}$ and $i\var 1l$ there exists $\de>0$ such that 
$$\ku_j(\ga)\in S\times\Sigma\mbox{ for all }\ga\in(\ga_0-\de,\ga_0+\de)\cap[\al,\be].$$
Therefore if $\ku_j(\ga)\in\A_I^\ft$ then $\ku^-_j(\ga)=\ku_j(\ga)=\ku^+_j(\ga)$. If $\ku_j(\ga)=Sc_i$ for some 
$i\in\SetEnu 1l$ then $\ku^-_j(\ga)=Sl_1l_2\ldots$ and $\ku^+_j(\ga)=Sr_1r_2\ldots$. Again by the continuity
of $\ga\ra f_\ga^n(c_j)$ and of $\ga\ra c_k$ for all $k\var 1l$ 
$$l_1,r_1\in\left\{I_i,I_{i+1}\right\}.$$
\end{proof}
Note that we may omit the parameter $\ga$ 
whenever there is no danger of confusion but $c_j$, $\iu$ and $\ku_j$ for some $j\in\SetEnu 1l$ should always
be understood in the context of some $f_\ga$. However, the symbols of the itineraries 
of $\Sigma$ are $I_1,\ldots,I_{l+1},c_1,\ldots,c_l$ and do not depend on $\ga$.


\section{One-parameter families of bimodal maps}
\label{sectFam}

In this section we consider a natural family $\G:[\al,\be]\rightarrow \Pc_2$ 
of bimodal polynomials with negative Schwarzian derivative satisfying the following conditions  
 
\begin{equation}
\label{equGRep}
0,1\in\partial I\mbox{ are fixed and repelling for }g_\al,
\end{equation}
\begin{equation}
\label{equGPre}
g_\ga(c_1)=1\ag{},
\end{equation}
\begin{equation}
\label{equGAl}
g_\ga(c_2)=0\mbox{ if an only if }\ga=\al.
\end{equation} 

Let us denote by $v_n=g_\ga^{n+1}(c_2)$ for $n\geq 0$ 
the points of the second critical orbit and let $\ku=\ku_2(\ga)=k_0k_1\ldots$.
If $S\in\A^k_I,k\geq 1$ and $n\geq 1$ we write $S^n$ for $SS\ldots S\in\A^{kn}_I$ 
repeated $n$ times and $S^\infty$ for $SS\ldots\in\A^\infty_I$. 

Proposition \ref{propAsy} shows the existence of $\ku^+(\al)=\ku(\al)=I_1^\ft$ therefore,
there is $\de_0>0$ such that 
\begin{equation}
\label{equGKn}
\ku\in I_1^2\times\Sigma
\end{equation}
for all $\ga\in[\al,\al+\de_0]$. Figure \ref{figGGa} represents the graph
of a bimodal map with the second kneading sequence $I_1c_1\succ\ku(\ga)$ for all $\ga\in[\al,\al+\de_0]$.

\begin{figure}
\begin{center}

\includegraphics[width=2.5in]{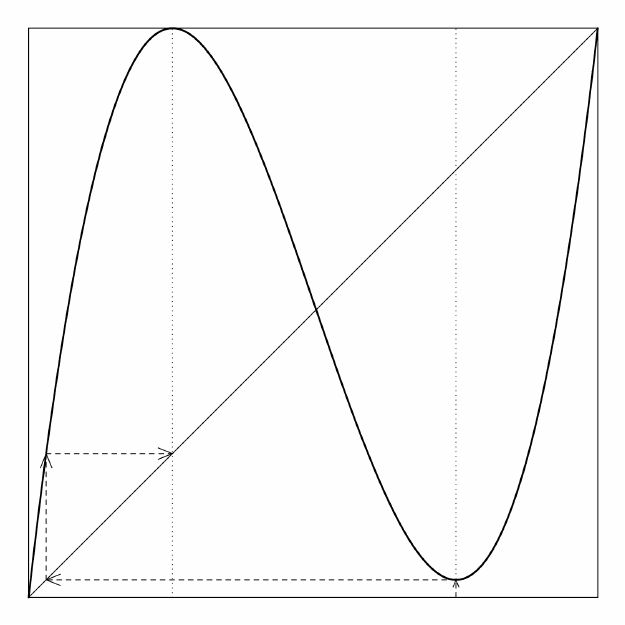}
  
\begin{picture}(0,0)
\put(-44, 12) {$c_1$}
\put(38, 12) {$c_2$}
\put(-80, 12) {$v$}
\end{picture}

\caption{bimodal map with $\ku_2=I_1c_1$.}
\label{figGGa}

\end{center}
\end{figure}

Let us observe that $O^+(\MO{Crit}_{g_\al})=\{0,c_1,c_2,1\}$ and that by Singer's Theorem \ref{thmSing},
$g_\al$ has no homtervals. Therefore by Corollary \ref{corEquiv}, if $\Hc\fd{[\al',\be']}{\Pc_2}$ is a
natural family satisfying \cG then $g_\al$ and $h_{\al'}$ are topologically conjugate. Moreover, $g_\al$ 
is conjugate to the second Chebyshev polynomial (on $[-2,2]$) and topological properties of its dynamics 
are universal. Let us study this dynamics and extend by continuity some of its properties to some 
neighborhood of $\al$ in the parameter space. 

We have seen that $g_\al$ has no homtervals and that all its periodic points are repelling. Proposition
\ref{propOrder} shows that the map
$$\iu(g_\al)\fd I\Sigma\mbox{ is strictly increasing.}$$ 
Let us denote by $\sigma^-(\iu)$ the set of all preimages of $\iu$ by some shift 
$$\sigma^-(\iu)=\SetDef{\iu'\in\Sigma}{\exists k\geq 0\mbox{ such that }\sigma^k(\iu')=\iu}.$$ 
As $(0,1)=\intI\subseteq g_\al(I_j)$ for $j=1,2,3$, $g_\al(c_1)=1$, $g_\al(c_2)=0$, $\iu(g_\al)(0)=I_1^\ft$ and $\iu(g_\al)(1)=I_3^\ft$
$$\iu(g_\al)(\intI)=\Sigma\sm\left(\sigma^-(I_1^\ft)\cup\sigma^-(I_3^\ft)\right).$$
Let us denote by $\Sigma_0=\iu(g_\al)(I)=\iu(g_\al)(\intI)\cup\{I_1^\ft,I_3^\ft\}$. Then
\begin{equation}
\iu(g_\al)\fd I\Sigma_0\mbox{ is an order preserving bijection}.
\label{equSZ}
\end{equation}
Remark also that $\Sigma_0$ is the space of all itinerary sequences of $I$ under a bimodal map. 

As $g_\al$ is decreasing on $I_2$, $g_\al(c_1)>c_1$ and $g_\al(c_2)<c_2$ it has exactly one fixed point 
$r\in I_2$ and it is repelling. Moreover, $g_\al$ has no fixed points in $I_1$ or $I_3$ other than $0$ and $1$
as this would contradict the injectivity of $\iu(g_\al)$. As $0$ and $1$ are repelling fixed points
$g_\al(x)>x$ for all $x\in(0,c_1)$ and $g_\al(x)<x$ for all $x\in(c_2,1)$. Then by the $C^1$ continuity
of $\G$ and Corollary \ref{corRep} we obtain the following lemma.
\begin{lemma}
\label{lemFix}
There is $\de_1>0$ such that $g_\ga$ has exactly one fixed point $r(\ga)$ in $(0,1)$ and all its 
fixed points $0,1$ and $r(\ga)$ are repelling for all
$\ga\in[\al,\al+\de_1]$. Moreover, the map $\ga\ra r(\ga)$ is continuous and $\iu(r)=I_2^\ft$.
\end{lemma}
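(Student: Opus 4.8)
The statement blends a \emph{global} count of fixed points with \emph{local} data that is already at hand, so \textbf{the plan is} to localize near each of the three fixed points $0$, $1$, $r$ of $g_\al$ and control $g_\ga$ in between. Note first that $g_\ga(0)=0$ for all $\ga\in[\al,\be]$ since $g_\ga\in\Pc_2\se\Sc_2$, and that $g_\ga(1)=1$ for all $\ga$ because $\ga\mapsto g_\ga(1)$ is continuous with values in the discrete set $\pa I$ and equals $1$ at $\al$; hence $0,1$ are fixed throughout. Write $\phi_\ga=g_\ga-\mathrm{id}$, so a point of $(0,1)$ is fixed for $g_\ga$ exactly when it is a zero of $\phi_\ga$.

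Near the endpoints, since $0,1$ are repelling for $g_\al$ we have $g_\al'(0),g_\al'(1)>1$, so by the $C^1$-continuity of $\G$ and the uniform continuity of $g_\al'$ there are $\ez>0$ and $\de>0$ with $g_\ga'>1$ on $[0,\ez]\cup[1-\ez,1]$ for all $\ga\in[\al,\al+\de]$. Then $\phi_\ga$ is strictly increasing on each of these two collars and vanishes there only at $0$, resp.\ $1$; hence $g_\ga$ has no fixed point in $(0,\ez]\cup[1-\ez,1)$ and $0,1$ stay repelling. Shrinking $\ez$ we may also assume $c_1(\al),c_2(\al),r\in(\ez,1-\ez)$, and then, shrinking $\de$ and using \rlem{Crit}, that $c_1(\ga),c_2(\ga)\in(\ez,1-\ez)$ for all $\ga\in[\al,\al+\de]$.

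On the compact middle $K=[\ez,1-\ez]$, recall from the discussion preceding the lemma that $\phi_\al$ vanishes only at $r$, that $r\in I_2$, and that $g_\al'(r)<-1$, whence $\phi_\al'(r)<-2$. Choose $\eta>0$ so small that $[r-\eta,r+\eta]\se(c_1(\al),c_2(\al))$ and $g_\al'<-1$ on it; shrinking $\de$ and using \rlem{Crit} again, $g_\ga'<-1$ on $[r-\eta,r+\eta]\se(c_1(\ga),c_2(\ga))=I_2$ for all $\ga\in[\al,\al+\de]$. On the compact set $K\sm(r-\eta,r+\eta)$ the function $\phi_\al$ does not vanish, so $\abs{\phi_\al}\geq m$ there for some $m>0$; shrinking $\de$ so that $\nm{g_\ga-g_\al}_\ft<m$ (again by $C^1$-continuity of $\G$) rules out fixed points of $g_\ga$ on $K\sm(r-\eta,r+\eta)$. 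On $[r-\eta,r+\eta]$ the function $\phi_\ga$ is strictly decreasing, and for $\de$ small one has $\phi_\ga(r-\eta)>0>\phi_\ga(r+\eta)$ as for $\ga=\al$; hence $\phi_\ga$ has exactly one zero $r(\ga)$ there, which is repelling since $g_\ga'(r(\ga))<-1$.

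Taking $\de_1=\de$, this shows that for each $\ga\in[\al,\al+\de_1]$ the map $g_\ga$ has exactly one fixed point $r(\ga)$ in $(0,1)$, that $r(\ga)\in I_2$, and that $0,1,r(\ga)$ are all repelling. Continuity of $\ga\mapsto r(\ga)$ and the identity $\iu(r)=I_2^\ft$ then follow from \rcor{Rep} applied at the repelling fixed point $r$ of $g_\al$: the continuation it furnishes must coincide with $r(\ga)$ by the uniqueness just proved, and being repelling it is never super-attracting, so its itinerary is constant, equal to $\iu(r(\al))=\iu(r)=I_2^\ft$ because the orbit of the fixed point $r\in I_2$ is $\Set r$. \textbf{The main obstacle} is precisely this global uniqueness: a bare perturbation argument (\rcor{Rep}) controls only the three known fixed points and not the possible birth of new ones as $\ga$ moves away from $\al$, and the degeneracy $\phi_\al(0)=\phi_\al(1)=0$ at the endpoints is exactly what forces the separate, derivative-based treatment of the two boundary collars.
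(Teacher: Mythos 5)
Your proposal is correct and follows essentially the same route as the paper: the paper obtains the lemma from the already established structure of $g_\al$ (exactly the three fixed points $0,1,r$, all repelling, with $g_\al$ above resp.\ below the diagonal on $(0,c_1)$ and $(c_2,1)$) together with the $C^1$ continuity of $\G$ and Corollary \ref{corRep}, and your three-region perturbation argument (collars at $0,1$ where $g_\ga'>1$, the compact middle where $|g_\al-\mathrm{id}|$ has a uniform lower bound, and the monotone zone around $r$) simply supplies the details of that persistence, with Corollary \ref{corRep} used in the same way for the continuation and the itinerary $I_2^\ft$.
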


Let $p$ be a periodic point of period $2$ of $g_\al$. Then $\iu(p)$ is periodic of period $2$ and
infinite. So $\iu(p)\in\SetDef{(I_jI_k)^\ft}{j,k=1,2,3}$.
But $\iu(g_\al)$ is injective, $\iu(g_\al)(0)=I_1^\ft$, $\iu(g_\al)(r)=I_2^\ft$ and 
$\iu(g_\al)(1)=I_3^\ft$ so 
$$\iu(p)\in\SetDef{(I_jI_k)^\ft}{j\neq k\mbox{ and }j,k=1,2,3}\se\Sigma_0.$$
Therefore $g_\al$ has exactly $3$ periodic orbits of period $2$ with itinerary sequences
$(I_1I_2)^\ft$, $(I_1I_3)^\ft$, $(I_2I_3)^\ft$ and their shifts. 
Figure \ref{figG02} illustrates the periodic orbits of period $2$ of $g_\al$. By continuity of
$\ga\ra g_\ga^2$ and Corollary \ref{corRep} we obtain the following lemma.
\begin{lemma}
\label{lemPer}
There is $\de_2>0$ such that $g_\ga$ has exactly $3$ periodic orbits of period $2$
with itinerary sequences
$(I_1I_2)^\ft$, $(I_1I_3)^\ft$, $(I_2I_3)^\ft$ for all $\ga\in[\al,\al+\de_2]$. 
Moreover, the periodic orbits of period $2$ are repelling and continuous with respect to 
$\ga$ on $[\al,\al+\de_2]$.
\end{lemma}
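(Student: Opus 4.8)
The strategy is to combine the local persistence statement \rcor{Rep}, applied at each of the finitely many fixed points of $g_\al^2$, with a compactness argument that forbids new period-$2$ points from appearing nearby. The fixed points of $g_\al^2$ in $I$ are precisely the three fixed points $0,r,1$ of $g_\al$ (\rlem{Fix}) together with the six points $q_1,\dots,q_6$ lying on the three period-$2$ orbits of $g_\al$ found above, and all nine are repelling, so $(g_\al^2)'>1$ at each of them.

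For each $k=1,2,3$ pick $p_k$ on the $k$-th period-$2$ orbit of $g_\al$. Since $p_k$ is non-neutral, \rcor{Rep} provides a connected neighborhood $J_k$ of $\al$ and a continuous branch $\ga\mapsto p_k(\ga)$ of non-neutral period-$2$ points of $g_\ga$ with $p_k(\al)=p_k$; after shrinking $J_k$ we keep $|(g_\ga^2)'(p_k(\ga))|>1$, so $p_k(\ga)$ is repelling, hence not super-attracting, hence $\iu(p_k(\ga))$ is constant and equal to that of $p_k$ --- one of $(I_1I_2)^\ft,(I_1I_3)^\ft,(I_2I_3)^\ft$. As these three itineraries are distinct, the orbits of $p_1(\ga),p_2(\ga),p_3(\ga)$ are three distinct repelling period-$2$ orbits of $g_\ga$, varying continuously in $\ga$ (the second point of each orbit is $g_\ga(p_k(\ga))$, continuous by \rlem{Iter}).

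The crux is that no further period-$2$ point exists for $\ga$ near $\al$. By $C^1$-continuity of $\ga\mapsto g_\ga^2$ (\rlem{Iter}), continuity of $\ga\mapsto r(\ga)$ (\rlem{Fix}), and since $0,r,1$ are repelling for $g_\al$ (by \requ{GRep} and \rlem{Fix}), there are $\ve>0$ and a right-neighborhood of $\al$ on which $(g_\ga^2)'(x)>1$ for every $x$ with $\dst(x,\Set{0,r(\ga),1})<\ve$; such an $\ve$-neighborhood therefore contains no period-$2$ point of $g_\ga$ other than $0,r(\ga),1$ (in particular there is no period doubling off the fixed points). Applying the implicit--function construction underlying \rcor{Rep} to $F(\ga)(x)=g_\ga^2(x)-x$ at each $q_i$ (where $(g_\al^2)'(q_i)\neq1$) yields a neighborhood $U_i\ni q_i$ and a right-neighborhood of $\al$ on which the only solution of $g_\ga^2(x)=x$ in $U_i$ is one of $p_k(\ga)$ or $g_\ga(p_k(\ga))$. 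On the compact complement $K$ of all these $\ve$- and $U_i$-neighborhoods, $g_\al^2(x)-x$ does not vanish, so by uniform continuity of $(\ga,x)\mapsto g_\ga^2(x)-x$ on $[\al,\be]\times I$ there is a right-neighborhood of $\al$ on which $g_\ga^2$ has no fixed point in $K$. Taking $\de_2>0$ below all the finitely many radii above: for $\ga\in[\al,\al+\de_2]$ any period-$2$ point of $g_\ga$ solves $g_\ga^2(x)=x$, avoids $K$ and the three $\ve$-neighborhoods (which contain only fixed points), hence lies in some $U_i$ and equals $p_k(\ga)$ or $g_\ga(p_k(\ga))$. Thus the period-$2$ points of $g_\ga$ are exactly the six points of the orbits of $p_1(\ga),p_2(\ga),p_3(\ga)$, which are repelling, continuous in $\ga$, and carry the itineraries $(I_1I_2)^\ft,(I_1I_3)^\ft,(I_2I_3)^\ft$.

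The only genuinely delicate part is the \emph{exactly three} clause: ruling out the creation of extra period-$2$ orbits near $\al$. This is exactly what the two-part argument above handles --- the expansion estimate near $0,r,1$ excludes period doubling, and the compactness estimate excludes period-$2$ points born away from the nine fixed points of $g_\al^2$ --- while \rcor{Rep} simultaneously pins down precisely the six expected period-$2$ points through its built-in local uniqueness.
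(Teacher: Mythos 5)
Your proof is correct and takes essentially the same route as the paper: the paper classifies the period-$2$ orbits of $g_\al$ via the order-preserving itinerary bijection in the paragraph preceding the lemma and then obtains the perturbed statement from the $C^1$-continuity of $\ga\mapsto g_\ga^2$ together with \rcor{Rep}. Your only addition is to spell out the transversality-near-the-nine-fixed-points-of-$g_\al^2$ plus compactness-away-from-them argument that excludes newly created period-$2$ orbits, which the paper compresses into that single sentence.
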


\begin{figure}
\begin{center}

\includegraphics[width=2.5in]{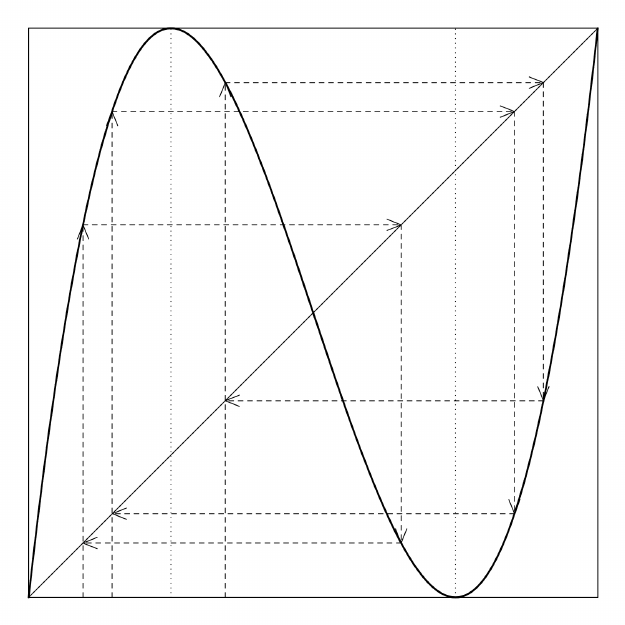}

\begin{picture}(0,0)
\put(-72, 12) {$p_1$}
\put(-59, 12) {$p_2$}
\put(-29, 12) {$p_3$}
\end{picture}

\caption{$g_\al$ and its periodic orbits of period $2$, $p_1$ with $\iu(p_1)=(I_1I_2)^\infty$,
$p_2$ with $\iu(p_2)=(I_1I_3)^\infty$ and $p_3$ with $\iu(p_3)=(I_2I_3)^\infty$.}
\label{figG02}

\end{center}
\end{figure}

Let us define 
\begin{equation}
\label{equBp}
\be'=\al+\min\{\de_0,\de_1,\de_2\}
\end{equation}
so that $\G$ satisfies equality (\ref{equGKn}), Lemma \ref{lemFix} and the previous lemma for all 
$\ga\in[\al,\be']$.

Let us consider the dynamics of all maps $g_\ga$ with $\ga\in[\al,\be']$ from the combinatorial
point of view. We observe that if $x\geq v=g_\ga(c_2)$ then $g_\ga^n(x)\geq v$ for all $n\geq 0$.
This means that any itinerary of $g_\ga$ is of the form $\iu_\ga=I_1^ka\ldots\in\Sigma_0$ with 
$k\geq 0$, $a\neq I_1$ and such that $\sigma^{k+p}\iu_\ga\succeq \ku$ for all $p\geq 0$. Let 
$\Sigma(\ku)$ denote the set of itineraries satisfying this condition. We observe that 
$(v,1)\se g_\ga(I_j)$ for $j=1,2,3$ and $c_1,c_2\in (v,1)$ for all $\ga\in [\al,\be']$ 
by relation (\ref{equGKn}) so we obtain the following lemma. The continuity is an immediate consequence
of Proposition \ref{propPull}.
\begin{lemma}
\label{lemFin}
Let $\ga_0\in [\al,\be']$ and $\ku=\ku_2(\ga_0)$. Then every finite itinerary
$$\iu_0\in\SetDef{\iu\in\Sigma(\ku)}{|\iu|<\ft}$$
is realized by a unique point $x(\iu)\in I$ and $\ga\ra x(\iu)$ is continuous on a neighborhood of $\ga_0$.
\end{lemma}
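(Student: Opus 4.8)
The plan is to show that the itinerary map $\iu(g_{\ga_0})$ restricted to finite itineraries has exactly the image $\Sigma(\ku)$, and then invoke Proposition \ref{propPull} for continuity. For the realization part I would argue by induction on $|\iu_0|$. First, recall that by construction (relation \requ{GKn} and the choice $\be'$ in \requ{Bp}), for every $\ga\in[\al,\be']$ we have $v=g_\ga(c_2)$ with $c_1,c_2\in(v,1)$ and $(v,1)\subseteq g_\ga(I_j)$ for $j=1,2,3$. Consequently $[v,1]$ is forward invariant, every point of $I$ eventually maps into $[v,1]$, and on $[v,1]$ the map $g_\ga$ is surjective onto $[v,1]$ laterally on each lap: each lap meets $[v,1]$ in a subinterval that $g_\ga$ maps \emph{onto} all of $[v,1]$. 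This is precisely what makes the symbolic dynamics a (lap-restricted) subshift whose admissible sequences are exactly those in $\Sigma(\ku)$: an itinerary $I_1^k a\ldots$ is admissible iff every shift of it starting after the initial block of $I_1$'s dominates $\ku$ in the order $\prec$, because the constraint "never go below $v$" translates under $\iu(g_{\ga_0})$ into "never have a tail $\prec\ku$", using that $\ku=\iu(g_{\ga_0})(v)$ and that $\iu(g_{\ga_0})$ is order preserving by Proposition \ref{propOrder}.

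For the inductive realization, given a finite $\iu_0=a_0 a_1\ldots a_{m-1}c_i\in\Sigma(\ku)$, I would use that $\sigma\iu_0=a_1\ldots a_{m-1}c_i$ is again in $\Sigma(\ku)$ (the defining domination condition is shift-stable on tails, and the leading-block condition is only relaxed by a shift), hence by the induction hypothesis it is realized by a unique $y=x(\sigma\iu_0)\in I$; moreover $y\in(v,1)$ when $m\geq 1$ since its itinerary starts with a non-$I_1$ symbol or with fewer $I_1$'s than the worst admissible, and in any case $y\geq v$. Now $y$ lies in the interior of $g_{\ga_0}(I_{a_0})$ because $(v,1)\subseteq g_{\ga_0}(I_j)$ for every $j$; therefore $g_{\ga_0}^{-1}(y)\cap I_{a_0}$ is a single point $x$ (each lap is a strict monotonicity interval), and $\iu(g_{\ga_0})(x)=a_0\sigma\iu_0=\iu_0$. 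Uniqueness of $x(\iu_0)$ is immediate: two points with the same finite itinerary $\iu_0$ would give, after applying $g_{\ga_0}^{|\iu_0|-1}$ monotonically lap by lap, two preimages of the critical point $c_i$ inside one lap, impossible; alternatively it follows directly from the injectivity in Proposition \ref{propOrder} combined with \requ{SZ}. The base case $m=0$, $\iu_0=c_i$, is just $x(\iu_0)=c_i$.

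Finally, continuity of $\ga\ra x(\iu)$ on a neighborhood of $\ga_0$ follows from Proposition \ref{propPull} (or Corollary \ref{corCont}): writing $\iu=S c_i$ with $S\in\A_I^n$, the point $x(\iu)(\ga)$ is the unique solution of $f_\ga^n(x_\ga)=c_i(\ga)$ with $\iu(x_\ga)\in S\ts$, where $c_i(\ga)$ is continuous by Lemma \ref{lemCrit}; since this configuration is realized at $\ga_0$ and the maximal domain of definition in Proposition \ref{propPull} is open, it contains a neighborhood of $\ga_0$ on which $\ga\ra x(\iu)$ is continuous. The main obstacle is the bookkeeping in the induction — precisely matching the combinatorial condition defining $\Sigma(\ku)$ (the "$\sigma^{k+p}\iu_\ga\succeq\ku$ for all $p\geq 0$" clause, including the behavior of the leading $I_1$-block under the shift) with the geometric condition "$g_{\ga_0}$-orbit stays in $[v,1]$", and verifying that $\sigma\iu_0$ stays in $\Sigma(\ku)$ and that the relevant preimage point lands in the interior of the lap's image rather than on its boundary. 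All the analytic input (surjectivity of laps onto $[v,1]$, repelling fixed endpoints, no homtervals) is already available from \requ{GKn}, Lemma \ref{lemFix}, Lemma \ref{lemPer} and \requ{SZ}.
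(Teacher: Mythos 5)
Your argument is essentially the paper's own proof: the paper justifies the lemma precisely by the observation that $(v,1)\se g_\ga(I_j)$ for $j=1,2,3$ with $c_1,c_2\in(v,1)$, which is exactly what your symbol-by-symbol inductive pullback implements, and it gets continuity from Proposition \ref{propPull} applied with $y(\ga)=c_i(\ga)$, as you do. The only slip is the side remark that each lap meets $[v,1]$ in a subinterval mapped onto all of $[v,1]$ (false for the $I_1$ lap, since $g_\ga(v)>v$), but it is never used --- your induction only needs $(v,1)\se g_\ga(I_{a_0})$ --- and the boundary situation you flag (the pulled-back point landing on $v$ rather than in $(v,1)$) can only arise when a tail of $\iu_0$ coincides with a finite $\ku$, a degenerate case the paper's own two-line justification glosses over in the same way.
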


A kneading sequence $\ku\in \Sigma(\ku)$ satisfies the following property.
\begin{definition}
\label{defMin}
We call $\um\in\Sigma_0$ \emph{minimal} if
$$\um\preceq\sigma^k\um\mbox{ for all }0\leq k<|m|.$$
\end{definition}

The following proposition shows that the minimality is an almost sufficient condition for an
itinerary to be realized as the second kneading sequence in the family $\G$. This is very
similar to the realization of maximal kneading sequences in unimodal families but the proof
involves some particularities of our family $\G$. For the convenience of the reader, we include 
a complete proof.
\begin{prop}
\label{propMin}
Let $\al\leq\al_0<\be_0\leq\be'$ and $\um$ be a minimal itinerary such that
$$\ku(\al_0)\prec \um\prec\ku(\be_0).$$
Then there exists $\ga\in(\al_0,\be_0)$ such that
$$\ku(\ga)=\um.$$
\end{prop}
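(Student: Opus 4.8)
The plan is to use the Intermediate Value Theorem in the parameter space, exploiting the monotonicity of itineraries with respect to the signed lexicographic order $\prec$ and the continuity results for realizations of finite itineraries (Lemma \ref{lemFin}, Proposition \ref{propPull}, Corollary \ref{corCont}). The key observation is that the second kneading sequence $\ku(\ga)=\iu(g_\ga(c_2))=\iu(v_0)$ varies ``monotonically with jumps'': since $v_0(\ga)=g_\ga(c_2(\ga))$ is continuous in $\ga$, Proposition \ref{propOrder} forces $\ku$ to be weakly monotone along any interval on which it does not hit a finite itinerary, and the naturalness of $\G$ (only finitely many $\ga$ with $\ku$ of length $\leq n$) controls the exceptional parameters.

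First I would set up the comparison. For each $n\geq 1$, consider the finite truncation $\um|_n\in\A_I^{n-1}\times\A_c$ obtained by reading the first $n-1$ symbols of $\um$ (all in $\A_I$, since $\um$ is minimal hence in $\Sigma_0$ and, being compared strictly between two kneading sequences in $\Sigma(\ku)$, lies in $\Sigma(\ku)$) and appending a critical symbol; more precisely I would work with the point $x_n(\ga)$ realizing the itinerary prefix $\um|_{n}$ in the sense of Lemma \ref{lemFin}, i.e.\ the unique point with $g_\ga^{n}(x_n)\in\MO{Crit}_{g_\ga}$ and the correct first $n$ symbols. By Lemma \ref{lemFin} and Proposition \ref{propPull}, $\ga\mapsto x_n(\ga)$ is continuous where defined. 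The condition $\ku(\al_0)\prec\um$ means that for $\ga$ near $\al_0$ the critical value $v_0(\ga)$ lies strictly below the point realizing $\um$ (in the $\prec$-induced order, which by Proposition \ref{propOrder}(3) and Corollary \ref{corHom} is the actual order on $I$ for maps in $\Pc_2'$); similarly $\um\prec\ku(\be_0)$ places $v_0(\be_0)$ strictly above it. So the continuous function $\ga\mapsto v_0(\ga)$ and the (piecewise continuous, monotone-in-$n$) comparison point cross somewhere in $(\al_0,\be_0)$.

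Next I would run the crossing argument carefully, imitating the realization of maximal kneading sequences in unimodal families. Define $\ga^* = \sup\{\ga\in[\al_0,\be_0] : \ku(\ga)\prec\um\ \text{or}\ \ku(\ga)=\um\}$ — equivalently the last parameter at which $\ku(\ga)\preceq\um$. Using naturalness, only finitely many parameters in $[\al_0,\be_0]$ have short kneading sequences, so I can choose the relevant $\de$'s to isolate $\ga^*$ and apply Proposition \ref{propAsy} to the asymptotic kneading sequences $\ku^-(\ga^*),\ku^+(\ga^*)$. The continuity of $\ga\mapsto g_\ga^m(c_2)$ for each fixed $m$ shows that $\ku(\ga)$ agrees with $\um$ on longer and longer prefixes as $\ga\to\ga^*$ from the appropriate side; since $\um$ is minimal it lies in $\Sigma(\ku(\ga))$ for $\ga$ near $\ga^*$ (this uses exactly the characterization of $\Sigma(\ku)$ via $\sigma^{k}\iu\succeq\ku$ together with minimality $\um\preceq\sigma^k\um$), so the prefixes of $\um$ are genuinely realized. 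If $\ku(\ga^*)\neq\um$, then $\ku^-(\ga^*)$ and $\ku^+(\ga^*)$ must straddle $\um$, but Proposition \ref{propAsy} says they differ only in a single symbol at the first critical position, contradicting strictness $\ku(\al_0)\prec\um\prec\ku(\be_0)$ unless in fact $\ku(\ga^*)=\um$. This forces $\ga^*\in(\al_0,\be_0)$ and $\ku(\ga^*)=\um$.

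The main obstacle I expect is the bookkeeping at the exceptional parameter $\ga^*$: verifying that $\um$, being merely minimal rather than literally equal to some $\ku(\ga)$ a priori, actually lies in the realizable set $\Sigma(\ku(\ga))$ for $\ga$ near $\ga^*$, and that the ``single-symbol jump'' dichotomy of Proposition \ref{propAsy} rules out $\um$ being skipped over. This is where the particularities of $\G$ enter — namely condition \requ{GKn} guaranteeing $c_1,c_2\in(v,1)\subseteq g_\ga(I_j)$ for $j=1,2,3$ so that all prefixes in $\Sigma(\ku)$ are realized (Lemma \ref{lemFin}), and the fact that a jump in $\ku$ at $\ga^*$ corresponds to the critical value $v_0$ crossing a critical point, which by minimality of $\um$ cannot bypass the target. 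Everything else (continuity, IVT, monotonicity of $\iu$) is routine given the preliminaries.
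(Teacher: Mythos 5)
Your reduction to the jump parameter $\ga_0=\sup\SetDef{\ga}{\ku(\ga)\preceq\um}$ is the same starting point as the paper's proof, but the decisive step is wrong as stated. You claim that \rpro{Asy} forces $\ku^-(\ga_0)$ and $\ku^+(\ga_0)$ to ``differ only in a single symbol at the first critical position,'' so that no itinerary can be skipped. \rpro{Asy} controls only the \emph{first} symbol after the common prefix $S$ (it says $l_1,r_1\in\{I_i,I_{i+1}\}$); it says nothing about the tails of the two asymptotic sequences, and in general a whole interval of itineraries in $\Sigma_0$ \emph{is} skipped at $\ga_0$. The dangerous case is exactly when $\ku(\ga_0)=Sc_2$, i.e.\ $c_2$ becomes super-attracting: by \rcor{Rep} and Singer's \rthm{Sing} the attracting periodic orbit persists on a punctured neighborhood of $\ga_0$, so there $\ku(\ga)$ is locked to one of the periodic sequences $(SI_2)^\ft$, $(SI_3)^\ft$, and the kneading invariant jumps across everything strictly between them (a parameter ``window''). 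Your argument gives no reason why $\um$ cannot lie in that gap, so it fails precisely at these windows.

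The missing content is where minimality actually works, and your proposal never uses it substantively (you only invoke it for realizability of prefixes, which is a side issue). The paper's proof handles the jump by a case analysis on $\ku(\ga_0)=Sc_1$ versus $Sc_2$: in the preperiodic case it computes $\ku^\pm(\ga_0)\in S\times\{I_1,I_2\}\times I_3^\ft$ (using the realized families $x(I_1I_3^kc_2)$, $x(I_2I_3^kc_2)$ and compactness, not \rpro{Asy} alone), so that only $Sc_1$-type sequences are skipped, which forces $\um=Sc_1=\ku(\ga_0)$, a contradiction; in the super-attracting case it proves the combinatorial lemma that the \emph{only minimal} itinerary strictly between $(SI_2)^\ft$ and $(SI_3)^\ft$ is $Sc_2$ (the short induction with the sign $\epsilon(S)$), so the minimal $\um$ cannot lie in the skipped gap unless $\um=Sc_2=\ku(\ga_0)$, again a contradiction. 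Without these two computations — in particular the ``$K=\{Sc_2\}$'' lemma — the intermediate-value scheme you outline does not close, so the proposal has a genuine gap rather than being a complete alternative proof.
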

\begin{proof}
Suppose that $\ku(\ga)\neq\um$ for all $\ga\in(\al_0,\be_0)$. Let $\ga_0=\sup\SetDef{\ga\in[\al_0,\be_0]}
{\ku(\ga)\preceq\um}$ and $n=\min\SetDef{j\geq 0}{\ku(\ga_0)(j)\neq\um(j)}<\ft$. Then, using the continuity
of $g_\ga^n$, $c_1$ and $c_2$ one may check that 
$$k_n=\ku(\ga_0)(n)\in\A_c=\{c_1,c_2\},$$
otherwise the maximality of $\ga_0$ is contradicted as $\ku(0),\ldots,\ku(n-1)$ and $\ku(n)$ would be 
constant on an open interval that contains $\ga_0$. There are two possibilities 
\begin{enumerate}
\item $k_n=c_1$ so $g_{\ga_0}^n(c_2)=c_1$ therefore $c_2$ is preperiodic.
\item $k_n=c_2$ so $g_{\ga_0}^n(c_2)=c_2$ therefore $c_2$ is super-attracting.
\end{enumerate}

Therefore $\ga_0>\al$ and $\ga_0\leq \be'<\be$. Let us recall that $\G$ is a natural family so 
the asymptotic kneading sequences $\ku^-(\ga_0)$ and $\ku^+(\ga_0)$ do exist and are infinite. 
Then the definition of $\ga_0$ shows that
\begin{equation}
\label{equM}
\min(\ku(\ga_0),\ku^-(\ga_0))\preceq\um\preceq\ku^+(\ga_0).
\end{equation}

Let $\um=m_0m_1\ldots m_n\ldots$ and $S=m_0\ldots m_{n-1}\in\A_I^n$ be the maximal common prefix
of $\ku(\ga_0)$ and $\um$, so $\ku(\ga_0)=Sc_j$ with $j\in\{1,2\}$. Therefore, using Proposition 
\ref{propAsy}, $m_n\in\{I_j,I_{j+1}\}$.

Suppose $k_n=c_1$ so $g_{\ga_0}^n(c_2)=c_1$. Lemma \ref{lemFin} and property \requ{GKn} show that 
the sequences $I_1I_3^kc_2$ and $I_2I_3^kc_2$ are realized as itineraries by all $g_\ga$ with 
$\ga\in[\al,\be']$ for all $k\geq 0$. Moreover 
$x(I_1I_3^kc_2)$ is strictly increasing in $k$ for all $\ga\in[\al,\be']$ 
and it is continuous in $\ga$. Analogously, $x(I_2I_3^kc_2)$ is strictly decreasing in $k$ for all $\ga\in[\al,\be']$ and it is continuous in $\ga$. Then by compactness and
by the continuity of $\ga\ra g_\ga^n$ and of $\ga\ra c_1$
$$\ku^-(\ga_0),\ku^+(\ga_0)\in S\times\{I_1,I_2\}\times I_3^\ft.$$
Therefore inequality (\ref{equM}) shows that
$$\min(c_1,I_1I_3^\ft)=I_1I_3^\ft\preceq \sigma^n\um\preceq I_2I_3^\ft=\max(c_1,I_2I_3^\ft).$$ 
But $\um\in\Sigma_0$ so 
$$I_1I_3^\ft\prec \sigma^n\um\prec I_2I_3^\ft$$
therefore $m_n=c_1$ as $I_1I_3^\ft=\max (I_1\times \Sigma)$ and $I_2I_3^\ft=\min (I_2\times \Sigma)$,
a contradiction.

Consequently $\ku(\ga_0)=Sc_2$ so $c_2(\ga_0)$ is super-attracting. Then by Corollary \ref{corRep} 
there is a neighborhood $J$ of $\ga_0$ such that $a(\ga)$ is a periodic attracting point of period 
$n$ for all $\ga\in J$, $\ga\ra a(\ga)$ is continuous and $a(\ga_0)=c_2(\ga_0)$. By Singer's Theorem 
\ref{thmSing}, $c_2$ is contained in the immediate basin of attraction $B_0(a(\ga))$ for all $\ga\in J$,
which is disjoint from $c_1$. Therefore, considering the local dynamics of $g_\ga^n$ on a neighborhood of
$a(\ga)$, $\ku(\ga)=\iu(g_\ga(a))$ is also periodic of period $n$ or 
finite of length $n$ for all $\ga\in J$. As the family $\G$ is natural, there exists $\ve>0$ such that 
$c_2$ is not periodic for all $\ga\in(\ga_0-\ve,\ga_0+\ve)\sm\{\ga_0\}$. Again by Corollary \ref{corRep},
$\ku(\ga)=\ku^-(\ga_0)$ for all $\ga\in(\ga_0-\ve,\ga_0)$ and $\ku(\ga)=\ku^+(\ga_0)$ for all 
$\ga\in(\ga_0,\ga_0+\ve)$. Then Proposition \ref{propAsy} shows that 
$$\ku^-(\ga_0),\ku^+(\ga_0)\in\{(SI_2)^\ft,(SI_3)^\ft\}.$$
Let $\um_1=\min((SI_2)^\ft,(SI_3)^\ft)$ and $\um_2=\max((SI_2)^\ft,(SI_3)^\ft)$ and 
$$K=\SetDef{\iu\in\Sigma}{\iu\mbox{ minimal and }\um_1\prec\iu\prec\um_2}.$$ 
As the sequences $Sc_2$,
$\ku^-(\ga_0)$ and $\ku^+(\ga_0)$ are all realized as a kneading sequence $\ku(\ga)$ with $\ga\in[\al,\be']$,
using inequality (\ref{equM}) it is enough to show that 
$$K=\{Sc_2\}.$$

Let $\iu\in K\sm\{Sc_2\}$ so
$$\iu\in S\times\{I_2,I_3\}\times\Sigma.$$

Suppose $\epsilon(S)=1$ so $\um_1=(SI_2)^\ft$ and $\um_2=(SI_3)^\ft$. 
Suppose $\iu(n)=I_2$, then as $\epsilon(SI_2)=-1$ and $\iu$ is minimal
$$\iu\preceq\sigma^n(\iu)\prec(SI_2)^\ft=\sigma^n(\um_1)=\um_1,$$
a contradiction.

Analogously, suppose $\iu(n)=I_3$, then for all $k\geq 1$
$$\iu\preceq\sigma^{kn}(\iu)\prec(SI_3)^\ft=\sigma^{kn}(\um_2),$$
so, by induction, $\iu=(SI_3)^\ft=\um_2\notin K$.

The case $\epsilon(S)=-1$ is symmetric so we may conclude that $K=\{Sc_2\}$ which
contradicts our initial supposition.
\end{proof}

Let us prove a complementary combinatorial property.
\begin{lemma}
\label{lemMin}
Let $S\in\A_I^n$ with $\ku(\al)\preceq\siti\preceq\ku(\be')$ and such that $\siti$ is minimal.
If $i_1i_2\ldots\in\Sigma$ and $i_1,i_2,\ldots\in\A\sm\{I_1\}$ then
$$\sit^ki_1i_2\ldots\in\Sigma\mbox{ is minimal for all }k\geq|S|.$$
\end{lemma}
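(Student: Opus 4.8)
The plan is to show directly from the definition of minimality that every shift $\sigma^p(\sit^k i_1 i_2\ldots)$ with $0\le p < |S|k = nk$ dominates $\sit^k i_1 i_2\ldots$ in the signed lexicographic order $\prec$. Write $p = qn + s$ with $0\le s < n$ and $0\le q < k$, so that $\sigma^p(\sit^k i_1i_2\ldots) = \sigma^s(S)\cdot S^{k-q-1}i_1i_2\ldots$ (the prefix $\sigma^s(S)$ being the tail of one block $S$, followed by $k-q-1$ full blocks and then the sequence $i_1i_2\ldots$). I would compare this with $\sit^k i_1i_2\ldots = S\cdot S^{k-1}i_1i_2\ldots$. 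The first $n-s$ symbols of the shifted sequence are $S(s),S(s+1),\ldots,S(n-1)$, while the first $n-s$ symbols of $\siti$ are $S(0),\ldots,S(n-s-1)$. Since $\siti$ is minimal and $s < n = |S|$, we have $\siti \preceq \sigma^s(\siti)$; the key point is that this comparison is decided by a disagreement occurring within the first $n-s$ symbols, \emph{unless} $\sigma^s(S)$ is itself a prefix of $S$.

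So the argument splits into two cases. \textbf{Case 1:} the comparison $\siti\preceq\sigma^s(\siti)$ is strict and is decided at some index $j < n-s$. Then $S(0)\ldots S(j-1) = S(s)\ldots S(s+j-1)$ and the signed inequality at position $j$ is already witnessed among symbols all lying in $\A_I$ (as $S\in\A_I^n$), hence the same disagreement, at the same position, decides $\sit^k i_1i_2\ldots \prec \sigma^p(\sit^k i_1i_2\ldots)$ — note the cumulative sign product up to $j$ is the same in both because the prefixes agree. \textbf{Case 2:} $\sigma^s(S)$ is a prefix of $S$, i.e.\ $S(s+t) = S(t)$ for all $t$ with $s+t < n$; equivalently $S$ has period $s$ (if $s>0$), which combined with minimality of $\siti$ forces $\sigma^s(\siti) = \siti$. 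Here the shifted sequence equals $S(s)\ldots S(n-1)\cdot S^{k-q-1}i_1i_2\ldots$, and because $S$ is $s$-periodic this is $S^{m}i_1i_2\ldots$ for the appropriate multiplicity, i.e.\ a \emph{tail} of $\sit^k i_1i_2\ldots$ of the form $S^{k'}i_1i_2\ldots$ with $k'<k$; comparing $S^k i_1i_2\ldots$ with $S^{k'}i_1i_2\ldots$ we cancel the common prefix $S^{k'}$ and are left to compare $S^{k-k'}i_1i_2\ldots$ with $i_1i_2\ldots$. Since $i_1,i_2,\ldots \in \A\setminus\{I_1\}$ and the itinerary $\siti \in \Sigma(\ku)$ starts with $S(0)\in\A_I$ in fact $S(0)=I_1$ is impossible here only if... — more precisely, because $\siti$ is minimal and every admissible itinerary begins with a maximal block of $I_1$'s while $i_1\neq I_1$, the first symbol $S(0)$ of $S^{k-k'}i_1\ldots$ is $\preceq$-smaller (after applying signs, using minimality of $\siti$) than or the comparison reduces to a strict one decided by the mismatch $S(0)$ vs $i_1$, giving $S^ki_1i_2\ldots \prec \sigma^p(\cdots)$.

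The remaining (and technically most delicate) point is the boundary case $s=0$, $q>0$: then $\sigma^p(\sit^k i_1i_2\ldots) = S^{k-q}i_1i_2\ldots$, a genuine tail, and we must show $S^k i_1i_2\ldots \preceq S^{k-q}i_1i_2\ldots$; cancelling $S^{k-q}$ reduces this to $S^q i_1i_2\ldots$ versus $i_1i_2\ldots$, i.e.\ to $S(0)$ versus $i_1$ under the sign $\prod$. This is exactly where the hypothesis $i_1,i_2,\ldots\neq I_1$ is used: since $\siti$ begins with $I_1^a$ for some $a\ge 1$ (as $\siti=\ku_2(\ga)\in\Sigma(\ku)$ for the relevant $\ga$, or more elementarily since $\siti$ is minimal and $I_1$ is the $\prec$-smallest symbol after sign $+1$), the first symbol $S(0)$ is $I_1$ and $i_1\in\{I_2,I_3,c_1,c_2\}$, so $\epsilon$-weighted we get $S(0)\preceq i_1$ strictly, hence $S^q i_1i_2\ldots \prec i_1 i_2\ldots$ when the weighted leading symbols differ, and equality is impossible. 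I expect the main obstacle to be bookkeeping the cumulative sign products $\prod_{i=0}^{j-1}\epsilon(\cdot)$ so that the disagreements found for $\siti$ transfer verbatim to $\sit^k i_1i_2\ldots$; since all symbols involved in the relevant prefixes lie in $\A_I$ and the prefixes agree up to the point of disagreement, these products coincide, and the transfer is legitimate. Collecting the three cases establishes $\sit^k i_1i_2\ldots \preceq \sigma^p(\sit^k i_1i_2\ldots)$ for all $0\le p < nk$, which is the definition of minimality since $k\ge|S|$ guarantees $nk = |S^k|$ is the full length of the finite-or-infinite prefix under consideration.
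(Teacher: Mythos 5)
Your proposal is built on a misreading of the statement. In the paper's notation (fixed at the beginning of Section~\ref{sectFam}), $SI_2^k$ denotes the block $S$ followed by $k$ copies of the single symbol $I_2$, so the sequence in the lemma is $S\,I_2^k\,i_1i_2\ldots$, whose prefix before $i_1$ has length $n+k$; you treat it throughout as $S^k i_1i_2\ldots$, i.e.\ $k$ concatenated copies of the block $S$. Your decomposition $p=qn+s$, the periodicity discussion in Case~2 and the boundary case $s=0$ all concern that other sequence, and the hypothesis $k\geq|S|$ is used only to claim (incorrectly, even under your reading) that $nk$ is ``the full length'' relevant for minimality, whereas by Definition~\ref{defMin} one must check \emph{all} shifts $j<|\underline m|$, including those landing in the tail $i_1i_2\ldots$. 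The actual role of $k\geq n$ is different: it makes $S I_2^k i_1i_2\ldots$ coincide with $SI_2^\infty$ on its first $2n$ symbols. The intended argument is then short: if $\sigma^j(\underline u)\prec\underline u$ for $\underline u=SI_2^ki_1i_2\ldots$, note first that $SI_2^\infty\preceq\ku(\be')\in I_1\times\Sigma$ forces $S(0)=I_1$, while every symbol of $\underline u$ from position $n$ on lies in $\A\setminus\{I_1\}$; hence $j<n$, the first disagreement between $\underline u$ and $\sigma^j(\underline u)$ occurs at some position $m\leq n-1$, and since both sequences agree with $SI_2^\infty$, resp.\ $\sigma^j(SI_2^\infty)$, through position $2n-1$, the same disagreement with the same sign product yields $\sigma^j(SI_2^\infty)\prec SI_2^\infty$, contradicting minimality. (Note also that $S(0)=I_1$ does \emph{not} follow from minimality alone, as $I_2^\infty$ is minimal; it comes from the comparison with $\ku(\be')$.)

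Independently of the misreading, the delicate step of your own argument is genuinely flawed: in the boundary case you ``cancel the common prefix'' $S^{k-q}$ and compare $S(0)$ with $i_1$ as if the weight were $+1$, but the cancelled prefix contributes the sign $\epsilon(S)^{k-q}$, which may equal $-1$ and reverses the inequality. Indeed the statement you are actually arguing for is false whenever $\epsilon(S)=-1$: for $S=I_1I_1I_2$ the sequence $SI_2^\infty$ is minimal, yet $SSI_2c_1=I_1I_1I_2I_1I_1I_2I_2c_1$ is not, since its shift by $3$ agrees with it on $I_1I_1I_2$ (sign $-1$) and then shows $I_2$ against $I_1$. So the sign bookkeeping you defer is not mere bookkeeping; it is exactly what the paper's transfer-to-$SI_2^\infty$ argument is designed to avoid, and your route cannot be repaired without returning to the correct sequence $S I_2^k i_1i_2\ldots$.
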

\begin{proof}
Let $\iu=\sit^ki_1i_2\ldots\in\Sigma$, $n=|S|$ and $k\geq n$. Suppose there exists $j>0$ such that  
$$\sigma^j(\iu)\prec\iu.$$
As $SI_2^\ft\preceq\ku(\be')=I_1\ldots$
$$\iu\in I_1\times\Sigma.$$
Then $j<n$ and we set $m=\min\SetDef{p\geq 0}{\sigma^j(\iu)(p)\neq \iu(p)}$. Therefore $m\leq n-1$ so
$$\sigma^j(SI_2^\ft)\prec SI_2^\ft$$
as $\iu$ coincides with $SI_2^\ft$ on the first $2n$ symbols, a contradiction.
\end{proof}

Using relation (\ref{equGKn}), $\ku(\ga)=I_1\ldots$ so $I_2^kc_j\in\Sigma(\ku(\ga))$ for all $k\geq 0$, 
$j=1,2$ and $\ga\in[\al,\be']$. Then by Lemma \ref{lemFin} the maps 
$$\ga\ra p_k(\ga)=x(I_2^kc_1)(\ga)\mbox{ and }\ga\ra q_k(\ga)=x(I_2^kc_2)(\ga)$$
are uniquely defined and continuous on $[\al,\be']$ for all $k\geq 0$.
Let us recall that $g_\ga$ is decreasing on $I_2$ so
$$c_1\prec I_2c_2\prec I_2^2c_1\prec I_2^3c_2\prec\ldots\prec I_2^\infty
\prec\ldots\prec I_2^3c_1\prec I_2^2c_2\prec I_2c_1\prec c_2,$$
therefore
$$c_1=p_0<q_1<p_2<q_3<\ldots<r<\ldots<p_3<q_2<p_1<q_0=c_2$$
for all $\ga\in[\al,\be']$.

Let us show that $p_k\ra r$ and $q_k\ra r$ as $k\ra\infty$ for all $\ga\in[\al,\be']$. Let 
$$
\begin{array}{l}
r^-=\lim\limits_{k\ra\infty}p_{2k}=\lim\limits_{k\ra\infty}q_{2k+1}
\mbox{ and }\\
r^+=\lim\limits_{k\ra\infty}q_{2k}=\lim\limits_{k\ra\infty}p_{2k+1}.
\end{array}
$$
Suppose that $r^-<r^+$ then by continuity $g_\ga(r^-)=r^+$ and $g_\ga(r^+)=r^-$, as
$g_\ga(p_{k+1})=p_k$ and $g_\ga(q_{k+1})=q_k$ for all $k\geq 0$. Then $r^-$ and $r^+$
are periodic points of period $2$ and with itinerary sequence $I_2^\infty$, which contradicts 
Lemma \ref{lemPer}. By compactness
\begin{equation}
\label{equPQ}
p_k,q_k\ra r\mbox{ uniformly as }k\ra\ft.
\end{equation}

The following proposition shows that these convergences have a counterpart in the parameter space.
\begin{prop}
\label{propConv}
Let $S\in\A_I^n$ for some $n\geq 0$ be such that $SI_2^\ft$ is minimal and $\ku^{-1}(SI_2^\ft)$ 
is finite. Let $\al\leq\al_0<\be_0\leq\be'$ be such that $\ku(\al_0)\prec SI_2^\ft\prec\ku(\be_0)$ and
$S'=SI_2^{k+1}$ with $k\geq 0$ and such that $\epsilon(S')=1$. If $\iu_1=S'c_1$, $\iu_2=S'c_2$ and  
$k$ is sufficiently large then we may define
\begin{equation}
\label{equAlBe}
\begin{array}{l}
\ga_1=\max\left(\ku^{-1}(\iu_1)\cap(\al_0,\be_0)\right)\mbox{ and}\\
\ga_2=\min\left(\ku^{-1}(\iu_2)\cap(\ga_1,\be_0)\right)
\end{array}
\end{equation}
and then 
$$\lim\limits_{k\ra\ft}(\ga_2-\ga_1)=0.$$
\end{prop}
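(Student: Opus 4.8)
The plan is to exploit the fact that the two parameters $\ga_1, \ga_2$ are each defined as the extreme parameter in $(\al_0, \be_0)$ (resp. $(\ga_1, \be_0)$) at which a specific finite itinerary $S'c_1$ or $S'c_2$ is realized as the second kneading sequence. Between these two parameters, the kneading sequence $\ku(\ga)$ must "pass through" $S'c_1$ and then $S'c_2$; since $S' = SI_2^{k+1}$ with $\epsilon(S') = 1$, the ordering $c_1 \prec c_2$ combined with Proposition \ref{propMin} forces all minimal sequences strictly between $S'c_1$ and $S'c_2$ to be realized in the open interval $(\ga_1, \ga_2)$. The key observation is then that the "window" of itineraries trapped between $\ku(\ga_1^+)$-asymptotics and $\ku(\ga_2^-)$-asymptotics is essentially the set of minimal sequences in $(S'c_1, S'c_2)$, and I will show that the \emph{parameter-space} width of this window is controlled by the \emph{phase-space} distance $q_k - p_k$, which by \requ{PQ} tends to $0$.

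Here are the steps in order. First, I would check that $\ga_1, \ga_2$ are well-defined for $k$ large: by Proposition \ref{propMin}, since $S'c_1$ is the limit (in $\prec$) of the minimal sequences $S'I_3^jc_2$ used earlier and $\ku(\al_0) \prec SI_2^\ft \prec \ku(\be_0)$, for $k$ large enough $\ku(\al_0) \prec S'c_1 \prec S'c_2 \prec \ku(\be_0)$, so both preimage sets meet the relevant open intervals; finiteness of $\ku^{-1}(\iu_i)$ (the family is natural) makes $\max$ and $\min$ legitimate. Second, I would establish $\ga_1 < \ga_2$: since $S'c_1 \prec S'c_2$ and $\ku$ is "increasing through" finite sequences in the sense made precise by Proposition \ref{propMin} and the remark preceding Definition \ref{defMin}, the last occurrence of $S'c_1$ precedes the first occurrence of $S'c_2$. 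Third — the heart of the argument — I would show that for every $\ga \in (\ga_1, \ga_2)$ one has $\ku(\ga) \in S' \times \Sigma$, i.e. $\ku(\ga)$ begins with the block $S' = SI_2^{k+1}$; this is because on $(\ga_1,\ga_2)$ the kneading sequence is squeezed between the asymptotic sequences at $\ga_1$ and $\ga_2$, which by Proposition \ref{propAsy} both lie in $S' \times \{I_2, I_3\} \times \cdots$, hence share the prefix $S'$. Consequently $g_\ga^{n+k+1}(c_2) = g_\ga^{|S'|}(c_2)$ lies between $p_{?}$ and $q_{?}$ — more precisely, the condition $\iu(g_\ga^{|S'|}(c_2)) \succeq \ku(\ga) \succ$ (the lower asymptotic) pins $v_{|S'|-1} = g_\ga^{|S'|}(c_2)$ into the phase-space interval $[p_k, q_k]$ (or a nearby pair), whose length is at most $\max_{\ga \in [\al,\be']}(q_k - p_k)(\ga) =: \eta_k \to 0$ by \requ{PQ} and compactness.

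Fourth, I would convert this phase-space smallness into parameter-space smallness. Consider the continuous map $\Phi_k : [\al_0,\be_0] \to \R$, $\Phi_k(\ga) = g_\ga^{|S'|}(c_2(\ga)) - c_1(\ga)$ (or alternatively $- c_2(\ga)$); by construction $\Phi_k(\ga_1)$ and $\Phi_k(\ga_2)$ are controlled, and on $(\ga_1,\ga_2)$ the value $g_\ga^{|S'|}(c_2)$ stays within $\eta_k$ of the common limit point $r(\ga)$ by the previous step. Combining this with a uniform lower bound on the speed at which $\ga \mapsto g_\ga^{|S'|}(c_2) - c_i(\ga)$ can vary — which is NOT available directly, so instead I argue purely combinatorially: any minimal itinerary $\um$ with $S'c_1 \prec \um \prec S'c_2$ must be realized at some $\ga \in (\ga_1,\ga_2)$ by Proposition \ref{propMin}, and the set of such $\um$ is precisely the set of minimal sequences beginning with $S'$ and having second coordinate (after $S'$) in $\{I_2,I_3\}$; by Lemma \ref{lemMin} and minimality, as $k \to \infty$ these sequences all force their tails to approach $I_2^\ft$, so the "spread" shrinks. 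The cleanest route, which I expect to be the main obstacle to make rigorous, is the final compactness/contradiction argument: if $\ga_2 - \ga_1 \not\to 0$ along a subsequence $k_j$, extract (by compactness of $[\al,\be']$) limits $\ga_1^\infty < \ga_2^\infty$; then for every $\ga \in (\ga_1^\infty, \ga_2^\infty)$ and every $j$ large, $\ku(\ga) \in S'_{k_j} \times \Sigma$, forcing $v_{|S'_{k_j}|-1}(\ga) \in [p_{k_j}(\ga), q_{k_j}(\ga)]$, so letting $j \to \infty$ and using \requ{PQ} gives $g_\ga^{m_j}(c_2) \to r(\ga)$ with $m_j \to \infty$ — but one checks this forces $\iu(v_\cdot) $ to be eventually $I_2^\ft$ for a whole parameter interval, contradicting that the family is natural (only finitely many $\ga$ can have $\ku$ of any given finite length, and $I_2^\ft$-type behavior on an interval would either make $r$ attracting or create a homterval, contradicting $\G \se \Pc_2$ and Lemma \ref{lemPer}). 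The delicate point throughout is tracking exactly which pair $(p_i, q_i)$ with $i \approx k$ bounds $v_{|S'|-1}$, and ensuring the index grows with $k$ so that \requ{PQ} applies; I would handle this by noting $|S'| = n + k + 1$ and that the block $I_2^{k+1}$ at the end of $S'$ forces the itinerary of $v_{n}$ to agree with $I_2^{k+1}\cdots$, placing $v_n$ between $p_{k+1}$ and $q_{k+1}$ up to the combinatorial ordering displayed before \requ{PQ}.
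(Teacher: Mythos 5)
Your overall skeleton does match the paper's proof (well-definedness of $\ga_1,\ga_2$ via Lemma \ref{lemMin} and Proposition \ref{propMin}, trapping of $\ku(\ga)$ on $(\ga_1,\ga_2)$ so that it inherits the prefix $S'=SI_2^{k+1}$, then a limiting argument showing that failure of $\ga_2-\ga_1\ra 0$ forces $\ku(\ga)=SI_2^\ft$ on a whole parameter interval), but your final contradiction is the wrong one, and this is a genuine gap: you never use the hypothesis that $\ku^{-1}(SI_2^\ft)$ is finite, which is exactly the hypothesis the conclusion hinges on. Naturality of the family cannot substitute for it, because naturality only bounds the fibers of $\ku$ over \emph{finite} itineraries, while $SI_2^\ft$ is infinite. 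Nor does ``$\ku(\ga)=SI_2^\ft$ on an interval of parameters'' produce any phase-space pathology: for each such $\ga$ the map $g_\ga$ simply has $c_2$ mapped after finitely many steps onto the repelling fixed point $r$ (since the only point with itinerary $I_2^\ft$ is $r$), a Misiurewicz-type configuration fully compatible with $\Pc'_2$, with Lemma \ref{lemPer}, and with the absence of homtervals; the degeneracy is purely in parameter space. This is precisely why the proposition carries the extra hypothesis, and why in Section \ref{sectUHPnRCE} the paper checks separately, by analyticity, that $r(\ga)-g_\ga^n(c_2)$ has finitely many zeros. The paper's proof ends exactly where yours should: $\ku(\ga)=SI_2^\ft$ for all $\ga$ in a nondegenerate interval contradicts the finiteness of $\ku^{-1}(SI_2^\ft)$, full stop. (The paper reaches that interval via monotone limits $\ga^-\leq\gt\leq\ga^+$ of $\ga_1$ and $\ga_2$ rather than your subsequence extraction; both work.)

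A second, smaller but real, defect is your justification of the trapping step. You argue that on $(\ga_1,\ga_2)$ the kneading sequence is ``squeezed between the asymptotic sequences at $\ga_1$ and $\ga_2$'': kneading sequences are not monotone in the parameter, so no such squeezing is available, and Proposition \ref{propAsy} only describes one-sided limits at the endpoints, not the behaviour inside the interval. The correct mechanism uses the extremality built into the definitions \requ{AlBe} together with Proposition \ref{propMin}: if $\ku(\ga)\prec\iu_1$ for some $\ga\in(\ga_1,\ga_2)$, apply Proposition \ref{propMin} on $[\ga,\be_0]$ to produce a realization of $\iu_1$ to the right of $\ga_1$, contradicting the maximality of $\ga_1$; if $\ku(\ga)\succ\iu_2$, apply it on $[\ga_1,\ga]$ to contradict the minimality of $\ga_2$. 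Once $\iu_1\preceq\ku(\ga)\preceq\iu_2$ is known, the common prefix $S'$ follows (two sequences between $S'c_1$ and $S'c_2$ must agree with $S'$ on the first $|S'|$ symbols), and then your passage to the limit, which replaces the paper's direct use of \requ{SZ} and \requ{PQ} to identify $\SetDef{\iu\in\Sigma_0}{\iu_1\preceq\iu\preceq\iu_2\mfa k}$ with $\{SI_2^\ft\}$, goes through; the intermediate detour through the phase-space interval $[p_k,q_k]$ and the quantity $\eta_k$ is unnecessary.
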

\begin{proof}
First let us remark that the condition $\epsilon(S')=1$ guarantees that
$$\iu_1\prec SI_2^\ft\prec\iu_2.$$
Using for example convergences \requ{PQ} and the bijective map $\iu(g_\al)$ defined by \requ{SZ} there exists 
$N_0>0$ such that for all $k\geq N_0$, $\ku(\al_0)\prec \iu_1\prec\iu_2\prec\ku(\be_0)$. 
Moreover, if $k\geq n$ then $\iu_1$ and $\iu_2$ are minimal, using \rlem{Min}. 

Therefore for $k\geq\max(N_0,n)$ we may apply Proposition \ref{propMin} to show that there exist
$\ga_1\in\ku^{-1}(\iu_1)\cap(\al_0,\be_0)$ and $\ga_2\in\ku^{-1}(\iu_2)\cap(\ga_1,\be_0)$. As $\iu_1$ 
and $\iu_2$ are finite and the family $\G$ is natural, $\ku^{-1}(\iu_1)$ and $\ku^{-1}(\iu_2)$ are finite.

We may apply again Proposition \ref{propMin} to see that 
$\ga_1$ is increasing to a limit $\ga^-$ as $k\ra\ft$. Again by Proposition \ref{propMin} and by 
the finiteness of $\ku^{-1}(SI_2^\ft)$ there exists
$$\gt=\max\left(\ku^{-1}(SI_2^\ft)\cap(\al_0,\be_0)\right)<\be_0\mbox{ and }\ga^-\leq\gt.$$
For the same reasons there is $N>0$ such that $\ga_2>\gt$ for all $k\geq N$, therefore $\ga_2$ becomes
decreasing and converges to some $\ga^+\geq \gt$.

Suppose that the statement does not hold, that is 
$$\ga^-<\ga^+.$$ 
The map $\iu(g_\al)\fd I{\Sigma_0}$ is bijective and order preserving and $p_i\ra r$, $q_i\ra r$ 
as $i\ra\ft$ therefore 
$$\SetDef{\iu\in\Sigma_0}{\iu_1\preceq\iu\preceq\iu_2\mbox{ for all }k>0}=\{SI_2^\ft\}.$$ 
Then the definitions of $\ga^-$ and $\ga^+$ imply that
$$\ku(\ga)=SI_2^\ft\mbox{ for all }\ga\in[\ga^-,\ga^+],$$
which contradicts the hypothesis. 
\end{proof}

From the previous proof we may also retain the following Corollary.
\begin{corollary}
\label{corLim}
Assume the hypothesis of the previous proposition. Then
$$\limi k{\ga_1}=\limi k{\ga_2}=\gt$$
and $\ku(\gt)=SI_2^\ft$.
\end{corollary}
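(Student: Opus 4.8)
The plan is to read the statement off from the proof of Proposition~\ref{propConv} and to close with a short squeezing argument; no genuinely new work is required, which is why the result is phrased as a corollary. First I would recall the information already produced in that proof: for $k$ large the quantities $\ga_1$ and $\ga_2$ are defined by \requ{AlBe}, the sequence $(\ga_1)_{k}$ is (eventually) nondecreasing and converges to a limit $\ga^-$ with $\ga^-\leq\gt$, and, past some index $N$, the sequence $(\ga_2)_{k}$ is nonincreasing and converges to a limit $\ga^+$ with $\ga^+\geq\gt$, where $\gt=\max\bigl(\ku\mo(\siti)\cap(\al_0,\be_0)\bigr)$. The proof of the proposition also shows $\ga^-=\ga^+$: if one had $\ga^-<\ga^+$, then using that $\iu(g_\al)\fd I{\Sigma_0}$ is an order preserving bijection together with $p_i\ra r$ and $q_i\ra r$, the only itinerary in $\Sigma_0$ squeezed between $\iu_1=S'c_1$ and $\iu_2=S'c_2$ for all $k$ is $\siti$, forcing $\ku(\ga)=\siti$ for every $\ga\in[\ga^-,\ga^+]$, which contradicts the finiteness of $\ku\mo(\siti)$ assumed in the hypothesis.

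Granting these facts, the first assertion is immediate by squeezing: from $\ga^-\leq\gt\leq\ga^+=\ga^-$ one gets $\ga^-=\gt=\ga^+$, hence $\limi k{\ga_1}=\limi k{\ga_2}=\gt$.

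For the second assertion I would just check that $\gt$ is well defined and is attained in $\ku\mo(\siti)$. Since $\siti$ is minimal by hypothesis and $\ku(\al_0)\prec\siti\prec\ku(\be_0)$, Proposition~\ref{propMin} gives some $\ga\in(\al_0,\be_0)$ with $\ku(\ga)=\siti$, so $\ku\mo(\siti)\cap(\al_0,\be_0)\neq\es$; this set is finite because $\ku\mo(\siti)$ is finite by hypothesis. Therefore the maximum defining $\gt$ is attained at an element of $\ku\mo(\siti)$, i.e.\ $\ku(\gt)=\siti$. The only thing to be careful about is bookkeeping: the argument reuses the notation ($\ga_1,\ga_2,\ga^-,\ga^+,\gt,\iu_1,\iu_2$) and the estimates established inside the proof of Proposition~\ref{propConv}, so the corollary must be stated and proved immediately afterwards; there is no independent difficulty.
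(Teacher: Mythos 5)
Your proposal is correct and follows exactly the route the paper intends: the paper gives no separate argument, simply retaining from the proof of Proposition \ref{propConv} the facts $\ga_1\nearrow\ga^-\leq\gt$, $\ga_2\searrow\ga^+\geq\gt$ (for $k$ large), $\ga^-=\ga^+$, and the definition of $\gt$ as the attained maximum of the finite, nonempty set $\ku\mo(SI_2^\ft)\cap(\al_0,\be_0)$, which is precisely what you spell out. The only cosmetic simplification available is that $\ga^-=\ga^+$ can be cited directly from the already-proved conclusion $\lim_k(\ga_2-\ga_1)=0$ rather than re-running the squeezing contradiction, but this changes nothing of substance.
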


We may also control the growth of the derivative on the second critical orbit in the setting of
the last proposition. In fact, letting $k\ra\ft$, the second critical orbit spends most of its
time very close to the fixed repelling point $r$. Therefore the growth of the derivative along
this orbit is exponential.

Let us also compute some bounds for the derivative along two types of orbits.
\begin{lemma}
\label{lemBou}
Let $[\ga_1,\ga_2]\se[\al,\be']$, $n\geq 0$, $S\in\A_I^n$ and $\iu_1,\iu_2\in S\ts$ with $\iu_1\prec\iu_2$ 
be finite or equal to $I_1^\ft$, $I_2^\ft$ or $I_3^\ft$. If $\iu_1$, $\iu_2$ are realized on 
$[\ga_1,\ga_2]$ then there exists $\te > 0$ such that
$$\te<\left|\left(g_\ga^j\right)'(x)\right|<\te^{-1}$$
for all $\ga\in[\ga_1,\ga_2]$, $x\in[x(\iu_1),x(\iu_2)]$ and $j\var 1n$.
\end{lemma}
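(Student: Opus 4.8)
\textbf{Proof plan for Lemma \ref{lemBou}.}

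The plan is to exploit compactness in both the parameter and phase variables, together with the fact that the interval $[x(\iu_1),x(\iu_2)]$ carries a prescribed combinatorial type that keeps it bounded away from the critical points along the first $n$ iterates. First I would observe that since $\iu_1,\iu_2\in S\times\Sigma$ with $S\in\A_I^n$, the map $g_\ga^j$ restricted to $[x(\iu_1),x(\iu_2)]$ is strictly monotone for each $j=1,\dots,n$; indeed, by \rpro{Order} (combined with the monotonicity of $\iu$ and the fact that $\iu(x)$ begins with the $\A_I$-symbol $S(j)$ for every $x$ in the interval and every index $j<n$) the iterate $g_\ga^{j-1}$ maps $[x(\iu_1),x(\iu_2)]$ into the lap $I_{S(j-1)}$, hence strictly inside $\intI\setminus\cri_{g_\ga}$. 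Actually one must be slightly careful at the endpoints when $\iu_i$ is one of $I_1^\ft, I_2^\ft, I_3^\ft$ or finite ending in a critical symbol: in the finite case $x(\iu_i)$ could be a preimage of a critical point at step $|\iu_i|$, but since we only take $j\le n < |\iu_i|$, this does not occur; in the $I_k^\ft$ case the orbit stays in the interior of $I_k$ for all time. So in all cases the compact set
$$
E=\SetDef{g_\ga^{j-1}(x)}{\ga\in[\ga_1,\ga_2],\ x\in[x(\iu_1),x(\iu_2)],\ 1\le j\le n}
$$
is contained in $\intI\setminus\cri_{g_\ga}$ for every relevant $\ga$, but more precisely is uniformly bounded away from all critical points.

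Next I would make the ``bounded away'' claim precise. For each fixed $\ga$, the points $g_\ga^{j-1}(x)$ lie in the open lap $I_{S(j-1)}(\ga)$, and the map $(\ga,x)\mapsto g_\ga^{j-1}(x)$ is continuous on the compact set $[\ga_1,\ga_2]\times[x(\iu_1)(\ga),x(\iu_2)(\ga)]$ — here I use \rlem{Iter} (so that $\ga\mapsto g_\ga^{j-1}$ is $C^1$-continuous) and \rcor{Cont} / \rpro{Pull} (so that $\ga\mapsto x(\iu_i)(\ga)$ is continuous) together with \rlem{Crit} for the continuity of the critical points $c_1(\ga),c_2(\ga)$. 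By continuity of $\ga\mapsto c_i(\ga)$ and of the iterated orbit, together with the fact that for each $\ga$ the orbit point $g_\ga^{j-1}(x)$ is strictly interior to its lap, a standard compactness argument produces a single $\eta>0$ with
$$
\dst\bigl(g_\ga^{j-1}(x),\cri_{g_\ga}\bigr)\ge\eta
$$
for all $\ga\in[\ga_1,\ga_2]$, all $x\in[x(\iu_1),x(\iu_2)]$ and all $j=1,\dots,n$. The only subtlety is confirming that the infimum is not zero, which follows because the infimum is attained (continuous function on a compact set, using that the domain $\SetDef{(\ga,x)}{x\in[x(\iu_1)(\ga),x(\iu_2)(\ga)]}$ is compact) and is positive at every point.

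With $\eta$ in hand, the conclusion is routine. On the compact set $A_\eta=\SetDef{(\ga,y)}{\ga\in[\ga_1,\ga_2],\ \dst(y,\cri_{g_\ga})\ge\eta}$, the continuous nonvanishing function $(\ga,y)\mapsto|g_\ga'(y)|$ attains a positive minimum $\theta_0>0$ and a finite maximum $M_0<\infty$ (nonvanishing by \rdef{Multi}, since $g_\ga'$ has no zeros outside $\cri_{g_\ga}$, and using \rlem{Iter} for joint continuity in $(\ga,y)$). Then the chain rule gives, for every $\ga$, every $x\in[x(\iu_1),x(\iu_2)]$ and every $j\le n$,
$$
\bigl|(g_\ga^j)'(x)\bigr|=\prod_{i=0}^{j-1}\bigl|g_\ga'(g_\ga^i(x))\bigr|\in[\theta_0^{\,n},M_0^{\,n}],
$$
so setting $\theta=\min(\theta_0^{\,n},M_0^{-n})>0$ yields $\theta<|(g_\ga^j)'(x)|<\theta^{-1}$ as claimed. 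The main obstacle, and the only place requiring genuine care, is the uniform separation from the critical set: one must check that the combinatorial constraint $\iu_i\in S\times\Sigma$ with $S\in\A_I^n$ truly forces the whole interval $[x(\iu_1),x(\iu_2)]$ — not just its endpoints — to have its first $n-1$ iterates land in open laps, and that this, via compactness, upgrades to a uniform bound $\eta$; everything after that is bookkeeping with the chain rule.
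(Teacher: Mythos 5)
Your proof is correct and takes essentially the same route as the paper: the paper also notes that every $x\in[x(\iu_1),x(\iu_2)]$ has itinerary in $S\ts$, so $(g_\ga^j)'(x)\neq 0$ for $j\leq n$, and then uses continuity of $x(\iu_1),x(\iu_2)$ in $\ga$ to get a compact set in $(\ga,x)$ on which the continuous, nonvanishing functions $(\ga,x)\mapsto(g_\ga^j)'(x)$ attain positive minima and finite maxima. Your intermediate step of first extracting a uniform distance $\eta$ to the critical set and then invoking the chain rule is a slightly longer but equivalent packaging of the same compactness argument.
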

\begin{proof}
Let us remark that $\iu(x)\in S\ts$ therefore $\left(g_\ga^j\right)'(x)\neq 0$ for all $\ga\in[\ga_1,\ga_2]$, 
$x\in[x(\iu_1),x(\iu_2)]$ and $j\var 1n$. As $x(\iu_1)$ and $x(\iu_2)$ are continuous by Lemmas \ref{lemFix} and \ref{lemFin},
the set
$$\SetDef{(\ga,x)\in\R^2}{\ga\in[\ga_1,\ga_2],x\in[x(\iu_1),x(\iu_2)]}$$
is compact. Therefore the continuity of $(\ga,x)\ra\left(g_\ga^j\right)'(x)$ for all $j\var 1n$ implies 
the existence of $\te$.
\end{proof}

The previous lemma helps us estimate the derivative of $g_\ga^n(x)$ on a compact interval of parameters 
if $\iu(x)\in I_j^n\ts$ and $n$ is sufficiently large. Let us denote 
$$I_j(n)(\ga)=\SetDef{x\in I_j}{g_\ga^k(x)\in I_j\mbox{ for all }k\var 1n}$$
for j=1,2,3, the interval of points of $I_j$ that stay in $I_j$ under $n$ iterations. 
Let also $s_j$ be the unique fixed point in $I_j$.
\def\int{{[\ga_1,\ga_2]}}
\def\gi{{\ga\in\int}}
\def\agi{{for all $\gi$ }}
\def\fe{{\frac\ve2}}
\begin{lemma}
\label{lemDF}
Let $[\ga_1,\ga_2]\se[\al,\be']$, $j\in\{1,2,3\}$ and $\ve>0$. Let also
$$\la_1 = \la_1(j) = \min_\gi\left|g_\ga'(s_j)\right|,$$
$$\la_2 = \la_2(j) = \max_\gi\left|g_\ga'(s_j)\right|.$$
There exists $N>0$ such that for all $k>0$, $\gi$ and $x\in I_j(\max(k,N))(\ga)$
$$\la_1^{k(1-\ve)}<\left|\left(g_\ga^k\right)'(x)\right|<\la_2^{k(1+\ve)}.$$
\end{lemma}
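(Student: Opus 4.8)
The plan rests on two facts. First, the sets $I_j(n)(\ga)$ shrink to the fixed point $s_j(\ga)$ uniformly in $\ga\in[\ga_1,\ga_2]$, in the sense that $\sup_\ga\dia I_j(n)(\ga)\ra 0$ as $n\ra\ft$. Second, near the repelling point $s_j(\ga)$ the modulus $|g_\ga'|$ agrees with $|g_\ga'(s_j(\ga))|\in[\la_1,\la_2]$ up to an arbitrarily small relative error, uniformly in $\ga$. Granting both, a point $x\in I_j(\max(k,N))(\ga)$ has all but a bounded number $N_1$ of its first $k$ iterates $g_\ga^i(x)$ so close to $s_j(\ga)$ that $\la_1^{1-\ve/3}<|g_\ga'(g_\ga^i(x))|<\la_2^{1+\ve/3}$, while the at most $N_1$ remaining ``bad'' factors $|g_\ga'(g_\ga^i(x))|$ stay in a fixed interval $[\mu_1,\mu_2]$ with $\mu_1>0$; choosing $N$ large will absorb the bad factors into the exponent. (To obtain strict inequalities one runs the whole estimate with $\ve/2$ in place of $\ve$ and uses $\la_1,\la_2>1$, $k\ge 1$.)

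First I would record that $x\in I_j(\max(k,N))(\ga)$ forces $g_\ga^i(x)\in I_j(\max(k,N)-i)(\ga)$ for $0\le i\le k-1$, with $\max(k,N)-i\ge 1$, so every such iterate lies in $\overline{I_j(1)(\ga)}$. For $j=2$ this set is $[\,q_1(\ga),p_1(\ga)\,]$ with $c_1(\ga)<q_1(\ga)<p_1(\ga)<c_2(\ga)$; for $j\in\{1,3\}$ it is an interval with one endpoint the fixed point $s_j(\ga)\in\{0,1\}$ and the other endpoint strictly inside $I_j$ and continuous in $\ga$ (it solves $g_\ga(x)=c_1(\ga)$, resp.\ $g_\ga(x)=c_2(\ga)$, at a point where $g_\ga'\neq 0$, so the remark following \rpro{Pull} applies). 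In every case $\overline{I_j(1)(\ga)}$ is disjoint from $\cri_{g_\ga}$, so by compactness of $\{(\ga,y):\ga\in[\ga_1,\ga_2],\ y\in\overline{I_j(1)(\ga)}\}$ together with the continuity and non-vanishing of $(\ga,y)\ra|g_\ga'(y)|$ there, one gets $0<\mu_1\le|g_\ga'(y)|\le\mu_2$ throughout; in particular $(g_\ga^k)'(x)\neq 0$.

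Next I would establish the uniform shrinking. For $j=2$, $\overline{I_2(n)(\ga)}$ is the interval bounded by $p_n(\ga)$ and $q_n(\ga)$ and containing $r(\ga)=s_2(\ga)$, so its diameter tends to $0$ uniformly by \requ{PQ}. For $j\in\{1,3\}$, $I_j(n)(\ga)$ is an interval with endpoint $s_j(\ga)$, nested in $n$, and $\bigcap_n I_j(n)(\ga)=\{s_j(\ga)\}$: by \rlem{Fix}, $g_\ga$ has no fixed point in $\overline{I_j}\sm\{s_j(\ga)\}$, so, $g_\ga$ being monotone on $I_j$ with $g_\ga(x)\neq x$ there, the orbit of any $x\in I_j\sm\{s_j(\ga)\}$ is strictly monotone and must leave $I_j$, as otherwise it would converge to a fixed point in $\overline{I_j}\sm\{s_j(\ga)\}$. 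A compactness argument upgrades this to uniformity: from a hypothetical sequence $(\ga_i,n_i)$ with $n_i\ra\ft$ and $\dia I_j(n_i)(\ga_i)\ge\eta_0$ one extracts $\ga_i\ra\ga_*$, observes that a point $y_0$ at fixed positive distance from $s_j(\ga)$ lies in $I_j(n_i)(\ga_i)$ for every $i$, and, using that $\ga\ra g_\ga^m$ is continuous, concludes that the orbit of $y_0$ under $g_{\ga_*}$ would stay in $I_j$ forever, a contradiction. Hence, given $\ve>0$, compactness and continuity (using $|g_\ga'(s_j(\ga))|\in[\la_1,\la_2]$ and $\la_1>1$) yield $\eta>0$ with $\la_1^{1-\ve/3}<|g_\ga'(y)|<\la_2^{1+\ve/3}$ whenever $|y-s_j(\ga)|<\eta$, and then $N_1$ with $I_j(n)(\ga)\se(s_j(\ga)-\eta,s_j(\ga)+\eta)$ for all $n\ge N_1$, $\ga\in[\ga_1,\ga_2]$ (recall $s_j(\ga)\in I_j(n)(\ga)$).

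The remaining step is bookkeeping. I would call $i\in\{0,\ldots,k-1\}$ \emph{good} if $\max(k,N)-i\ge N_1$, so that $g_\ga^i(x)\in I_j(N_1)(\ga)$ is within $\eta$ of $s_j(\ga)$; since $\max(k,N)\ge k$, at most $N_1-1$ of the indices $i$ are non-good. Hence, as $\la_1,\la_2>1$,
$$\la_1^{(1-\ve/3)(k-N_1)}\cdot c'\ \le\ \left|(g_\ga^k)'(x)\right|\ \le\ \la_2^{(1+\ve/3)k}\cdot c'',$$
with $c',c''>0$ depending only on $N_1,\mu_1,\mu_2$; and when $k\le N-N_1+1$ every $i$ is good, giving directly $\la_1^{k(1-\ve/3)}<|(g_\ga^k)'(x)|<\la_2^{k(1+\ve/3)}$, hence the claim for that range. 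In the complementary range $k>N-N_1+1$, taking logarithms turns $\la_1^{(1-\ve/3)(k-N_1)}c'\ge\la_1^{k(1-\ve)}$ and $\la_2^{(1+\ve/3)k}c''\le\la_2^{k(1+\ve)}$ into a single lower bound $k\ge K_0=K_0(\ve,N_1,\mu_1,\mu_2,\la_1,\la_2)$; so it is enough to fix $N\ge N_1$ with $N-N_1+1\ge K_0$. The only genuinely non-formal point is the uniform shrinking of $I_j(n)(\ga)$ for $j\in\{1,3\}$; everything else is compactness and counting.
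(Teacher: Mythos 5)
Your proposal is correct and is essentially the paper's own argument: all but a bounded number of the first $k$ iterates of a point of $I_j(\max(k,N))(\ga)$ lie in a uniformly small neighbourhood of the repelling fixed point $s_j$, where the one-step derivative is pinched between $\la_1^{1-\ve/3}$ and $\la_2^{1+\ve/3}$, and the boundedly many remaining factors are uniformly bounded and absorbed by taking $N$ large (the paper bounds that tail via \rlem{Bou} and gets the uniform shrinking for $j\in\{1,3\}$ as the analogue of \requ{PQ}, where you argue by compactness --- purely cosmetic differences). The fact $\la_1>1$, which you use without comment, is the paper's opening observation and follows from the choice \requ{Bp} of $\be'$, which makes the fixed points $0$, $1$ and $r(\ga)$ repelling for all $\ga\in[\al,\be']$.
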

\begin{proof}
Let us first observe that by the definition \requ{Bp} of $\be'$
$$1<\la_1\leq\la_2.$$
\rlem{Fin} shows that the itinerary sequences $I_j^nc_1$, $I_j^nc_2$ are realized on $[\al,\be']$ for all
$n\geq 0$. We may easily obtain analoguous convergences to \requ{PQ} if $j\in\{1,3\}$, therefore
$$x(I_j^nc_1),x(I_j^nc_2)\ra s_j\mbox{ uniformly as }n\ra\ft.$$
Moreover $\pa I_j(n)(\ga)\se\{x(I_j^{n}c_1),x(I_j^{n}c_2),s_j\}$ \agi and $n \geq 0$.
By continuity of $s_j$ and of $(\ga,x)\ra g_\ga'(x)$ there exists $N_0>0$ such that
$$\la_1^{1-\frac \ve2}<\left|g_\ga'(x)\right|<\la_2^{1+\fe}$$
\agi and $x\in I_j(N_0)(\ga)$.

Using \rlem{Bou} there exists $\te>0$ such that
$$\te<\left|\left(g_\ga^i\right)'(x)\right|<\te^{-1}$$
\agi, $x\in I_j(N_0)(\ga)$ and $1\leq i\leq N_0$.
Let $N_1>0$ be such that 
$$\la_1^{N_1\fe}>\te^{-1}\la_2^{N_0(1+\ve)}$$
and set $N=N_0+N_1$. Let $k>N_1$ and $n=\max(N_1,k-N_0)$ then
$$\te\la_1^{n\left(1-\fe\right)}<\left|\left(g_\ga^k\right)'(x)\right|<\te^{-1}\la_2^{n\left(1+\fe\right)}$$
\agi and $x\in I_j(m)(\ga)$, where $m=\max(k,N)$. As $n\geq N_1$ and $1<\la_1\leq\la_2$
$$\la_1^{k(1-\ve)}<\left|\left(g_\ga^k\right)'(x)\right|<\la_2^{k(1+\ve)}$$
\agi and $x\in I_j(m)(\ga)$. If $k\leq N_1$ then $g_\ga^n(x)\in I_j(N_0)(\ga)$ for all $n\var 0{k-1}$ so
$$\la_1^{k(1-\ve)}<\la_1^{k\left(1-\fe\right)}<\left|\left(g_\ga^k\right)'(x)\right|<
\la_2^{k\left(1+\fe\right)}<\la_2^{k(1+\ve)}$$
\agi and $x\in I_j(m)(\ga)$.
\end{proof}
We may remark that if we assume the hypothesis of the previous lemma then $g_\ga^k$ is monotone on 
$I_j(m)(\ga)$ therefore \agi
\begin{equation}
\label{equInt}
\la_2(j)^{-k(1+\ve)}<|I_j(m)(\ga)|<\la_1(j)^{-k(1-\ve)}.
\end{equation}

Let $d_n\fd{[\al,\be']}{\R_+}$ be defined by
$$d_n(\ga)=\left|\left(g_\ga^n\right)'(v)\right|,$$
where $v=g_\ga(c_2)$ the second critical value.
As $\ga\ra v$ and $\ga\ra g_\ga^n$ are continuous, $d_n$ is continuous. The family
$\G$ is natural so $d_n$ has finitely many zeros for all $n\geq 0$.

\begin{corollary}
\label{corDer}
Assume the hypothesis of \rpro{Conv} and let $\la_0=|g_{\gt}'(r)|>1$. For all $0<\ve<1$ there
exists $N>0$ such that if $k\geq N$ then
$$\la_0^{(n+k)(1-\ve)}<d_{n+k}(\ga)<\la_0^{(n+k)(1+\ve)}\mbox{ for all }\ga\in[\ga_1,\ga_2].$$
\end{corollary}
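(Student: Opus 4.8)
The plan is to combine Corollary~\ref{corLim} with Lemma~\ref{lemDF} applied to $j=2$, exploiting the fact that, when $k$ is large, the second critical orbit of $g_{\ga_1}$ (and of $g_{\ga_2}$, and of every $g_\ga$ with $\ga\in[\ga_1,\ga_2]$) begins by following the kneading word $S$ for $n$ steps and then stays inside $I_2$ near the repelling fixed point $r$ for a long block of $k$ iterations. More precisely, for $\ga\in[\ga_1,\ga_2]$ the second critical value $v=g_\ga(c_2)$ has itinerary beginning with $SI_2^{k}$ (this is forced: $\iu_1=S'c_1$ and $\iu_2=S'c_2$ with $S'=SI_2^{k+1}$ bracket $\ku(\ga)$, since $\ku(\ga_1)=\iu_1$ and $\ku(\ga_2)=\iu_2$ while $\ga\mapsto\ku(\ga)$ is monotone on $[\ga_1,\ga_2]$ by Proposition~\ref{propOrder}, and $\iu_1\prec SI_2^\ft\prec\iu_2$). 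Hence $g_\ga^{n}(v)\in I_2(k)(\ga)$, so that the derivative $d_{n+k}(\ga)$ factors as the product of the derivative of $g_\ga^{n}$ along $S$ times the derivative of $g_\ga^{k}$ along the $I_2$-block.

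First I would treat the $I_2$-block. Fix $0<\ve<1$. By Corollary~\ref{corLim}, $\ga_1,\ga_2\to\gt$ as $k\to\ft$, and $\ku(\gt)=SI_2^\ft$, so $g_{\gt}$ has a fixed point $r=s_2(\gt)$ in $I_2$ with multiplier $\la_0=|g_{\gt}'(r)|>1$. Since $\ga\mapsto s_2(\ga)$ and $(\ga,x)\mapsto g_\ga'(x)$ are continuous (Lemma~\ref{lemFix} and $C^1$-continuity of $\G$), for $k$ large enough the interval $[\ga_1,\ga_2]$ is so short that the quantities $\la_1(2)=\min_{[\ga_1,\ga_2]}|g_\ga'(s_2)|$ and $\la_2(2)=\max_{[\ga_1,\ga_2]}|g_\ga'(s_2)|$ satisfy $\la_0^{1-\ve/3}<\la_1(2)\le\la_2(2)<\la_0^{1+\ve/3}$. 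Now apply Lemma~\ref{lemDF} with $j=2$ and with $\ve/3$ in place of $\ve$: there is $N_1>0$ (depending on $[\ga_1,\ga_2]$, hence ultimately absorbed once $k\ge$ some threshold) such that for $k$ large and all $\ga\in[\ga_1,\ga_2]$,
\[
\la_1(2)^{k(1-\ve/3)}<\bigl|(g_\ga^{k})'(g_\ga^{n}(v))\bigr|<\la_2(2)^{k(1+\ve/3)},
\]
which, combined with the bound relating $\la_i(2)$ and $\la_0$, gives
\[
\la_0^{k(1-\ve/2)}<\bigl|(g_\ga^{k})'(g_\ga^{n}(v))\bigr|<\la_0^{k(1+\ve/2)}\quad\text{for all }\ga\in[\ga_1,\ga_2].
\]
There is a small bookkeeping subtlety here, namely that Lemma~\ref{lemDF} as stated requires $k$ itself to be large (the threshold $N$ there depends only on $[\ga_1,\ga_2]$ and $\ve$), while here $[\ga_1,\ga_2]$ itself shrinks with $k$; but since the thresholds in Lemma~\ref{lemDF} come from the uniform convergence $x(I_2^mc_i)\to s_2$ and from Lemma~\ref{lemBou}, they are controlled uniformly over any neighbourhood of $\gt$, so for $k$ beyond some $N$ everything holds simultaneously. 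This uniformity is the one point that needs a careful, though routine, check.

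Next I would dispose of the finite prefix $S$. By Lemma~\ref{lemBou} applied on a fixed compact neighbourhood $[\gt-\eta,\gt+\eta]\se[\al,\be']$ of $\gt$ (which contains $[\ga_1,\ga_2]$ for $k$ large), to the word $S$ and to the two bracketing itineraries $\iu_1,\iu_2$, there is $\te>0$, \emph{independent of $k$}, with $\te<|(g_\ga^{j})'(x)|<\te^{-1}$ for all $\ga$ in that neighbourhood, all $x\in[x(\iu_1),x(\iu_2)]$ and $j\le n$; in particular $\te<|(g_\ga^{n})'(v)|<\te^{-1}$ for $\ga\in[\ga_1,\ga_2]$. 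Finally, writing $d_{n+k}(\ga)=|(g_\ga^{n})'(v)|\cdot|(g_\ga^{k})'(g_\ga^{n}(v))|$ by the chain rule, I combine the two estimates:
\[
\te\,\la_0^{k(1-\ve/2)}<d_{n+k}(\ga)<\te^{-1}\la_0^{k(1+\ve/2)}.
\]
Since $\la_0>1$ and $\te$ is fixed, for $k$ large enough $\te>\la_0^{-\ve k/2}\ge\la_0^{-\ve(n+k)/2}$ (once $k\ge \frac{\ve}{2-\ve}\,n$, which holds for $k$ large since $n$ is fixed), and similarly $\te^{-1}<\la_0^{\ve k/2}\le\la_0^{\ve(n+k)/2}$; also $k(1\pm\ve/2)$ versus $(n+k)(1\pm\ve)$ is absorbed the same way because $n$ is a fixed constant. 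Hence
\[
\la_0^{(n+k)(1-\ve)}<d_{n+k}(\ga)<\la_0^{(n+k)(1+\ve)}\quad\text{for all }\ga\in[\ga_1,\ga_2],
\]
for all $k\ge N$ with $N$ chosen large enough, which is the assertion. The main obstacle is purely organisational: making sure that the threshold produced by Lemma~\ref{lemDF} (which nominally depends on the parameter interval) can be taken uniform as $[\ga_1,\ga_2]$ collapses onto $\gt$, and that the constant $\te$ from Lemma~\ref{lemBou} is genuinely $k$-independent; once these uniformities are in hand, the proof is just the chain rule plus swallowing the bounded prefix contribution into the exponent $\ve(n+k)$.
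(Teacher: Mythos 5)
Your argument is essentially the paper's proof: it factors $d_{n+k}(\ga)=d_n(\ga)\cdot\abs{\lr{g_\ga^k}'(v_n)}$ by the chain rule, bounds the prefix factor $d_n$ between $\te$ and $\te^{-1}$ with $\te$ independent of $k$, obtains $\la_0^{k(1\pm\ve/2)}$ for the $I_2$-block from Lemma \ref{lemDF} combined with Corollary \ref{corLim}, and then takes $N$ so large that $\te^{\pm 1}$ and the discrepancy between the exponents $k(1\pm\ve/2)$ and $(n+k)(1\pm\ve)$ are absorbed; the uniformity point you flag (taking the Lemma \ref{lemDF} threshold and $\te$ on a fixed compact neighbourhood of $\gt$ containing the shrinking intervals $[\ga_1,\ga_2]$) is exactly how the constants are made $k$-independent. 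The one misstep is your justification of the itinerary claim: Proposition \ref{propOrder} gives monotonicity of $x\mapsto\iu(x)$ for a fixed map, not of $\ga\mapsto\ku(\ga)$, and kneading sequences are in general not monotone in the parameter (which is precisely why Proposition \ref{propMin} argues via a supremum rather than by intermediate values on $\ku$). The fact you actually need -- that $\ku(\ga)$ begins with $SI_2^{k+1}$, hence $v_n\in I_2(k)(\ga)$, for every $\ga\in[\ga_1,\ga_2]$ -- is the property the paper itself attaches to the output of Proposition \ref{propConv} (compare (\ref{equKn}) in the construction of Section \ref{sectUHPnRCE}), resting on the local constancy of finite $\A_I$-prefixes of $\ku$ and the choice (\ref{equAlBe}) of $\ga_1,\ga_2$, not on parameter monotonicity; with that citation repaired, your proof coincides with the paper's.
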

 
\begin{proof}
Let us remark that $|\ku(\ga)|>n$ for all $\ga\in[\ga_1,\ga_2]$ therefore there exists 
$\te>0$ such that
$$\te < d_n(\ga) < \te^{-1}\mbox{ for all }\ga\in[\ga_1,\ga_2].$$
Using the previous argument and \rcor{Lim} there exists $N_0>0$ such that if $k\geq N_0$ then
$$\la_0^{k\left(1-\frac\ve2\right)}<\left|\left(g_\ga^k\right)'(v_n)\right|<\la_0^{k\left(1+\frac\ve2\right)}
\mbox{ for all }\ga\in[\ga_1,\ga_2].$$
Therefore it is enough to choose $N\geq N_0$ such that 
$$\la_0^{N\fe}>\te^{-1}\la_0^{n(1-\ve)}.$$
\end{proof}


\section{TCE does not imply RCE} 
\label{sectUHPnRCE}

In this section we consider a family $\G\fd{[\al,\be]}\Pc_2$ (see the definition of $\Pc_2$ at page \pageref{defP2}) satisfying all properties (\ref{equGRep}) to
(\ref{equGKn}) and Lemmas \ref{lemFix} and \ref{lemPer} \allg{}.
We build a decreasing sequence 
of families $\G_n:[\al_n,\be_n]\rightarrow \Pc_2$ with $\G_0=\G$, $\al_n\nearrow\gl$ and 
$\be_n\searrow\gl$ as $n\rightarrow\infty$. This means that $\G_n(\ga)=\G(\ga)$ for all 
$n\geq 0$ and $\ga\in[\al_n,\be_n]$. 
We obtain our counterexample as a limit $g_{\gl}=\G(\gl)=\G_n(\gl)$ for all $n\geq 0$.
For all $n\geq 0$ we choose two finite minimal itinerary sequences $\iu_1(n+1)$ and
$\iu_2(n+1)$ as in Proposition \ref{propConv} such that
$$\ku_2(\al_n)\prec\iu_1(n+1)\prec\iu_2(n+1)\prec\ku_2(\be_n).$$
We set $\al_{n+1}=\ga_1$ and $\be_{n+1}=\ga_2$. Choosing sufficient long sequences 
$\iu_1(n+1)$ and $\iu_2(n+1)$ we obtain the convergences $\al_n\ra\gl$ and $\be_n\ra\gl$ as
$n\ra\ft$.

Let $T_2(x)=x^3-3x$ be the second Chebyshev polynomial. Observe that $-2$, $0$ and $2$ are fixed 
and that the critical points $c_1=-1$ and $c_2=1$ are sent to $2$ respectively $-2$. Its Schwarzian
derivative $S(T_2)(x)=-\frac {4x^2+1}{(x^2-1)^2}$ is negative on $\mathbb R\setminus\{c_1,c_2\}$.
Let $h>0$ small and for each $\ga\in[0,h]$ two order preserving linear maps 
$P_\ga(x)=x(4+\ga)-2-\ga$ and $Q_\ga(y)=\frac{y-T_2(-2-\ga)}{2-T_2(-2-\ga)}$ 
that map $[0,1]$ onto $[-2-\ga,2]$ respectively $[T_2(-2-\ga),T_2(2)]$ onto $[0,1]$. Let then 
\begin{equation}
\label{equGGa}
g_\ga=Q_\ga\circ T_2\circ P_\ga
\end{equation}
be a bimodal degree $3$ polynomial. 
As $S(P_\ga)=S(Q_\ga)=0$ for all $\ga\in[0,h]$, using equality (\ref{equSchw}), one may 
check that 
$$S(g_\ga)<0\mbox{ on }I\setminus\{c_1(\ga),c_2(\ga)\}\mbox{ for all }\ga\in[0,h].$$
If we write  
\begin{equation}
\label{equCont}
g_\ga(x)=\sum_{k=0}^3a_k(\ga)x^k
\end{equation}
it is not hard to check that $\ga\rightarrow a_k(\ga)$ is continuous on $[0,h]$ for $k\var 03$ 
therefore $\ga\rightarrow g_\ga$ is continuous with respect to the $C^1$ topology on $I$. 
By the definition of $\Pc_2$ (see page \pageref{defP2}), as $g_\ga(0)=0$ for all $\ga\in[0,h]$,
$\G:[0,h]\rightarrow\Pc_2$ with 
$\G(\ga)=g_\ga$ for all $\ga\in[0,h]$ is a family of bimodal maps with negative Schwarzian 
derivative. Observe that $0$ and $1$ are fixed points for all $\ga\in[0,h]$ and that they are
repelling for $g_0$, with $g_0'(0)=g_0'(1)=9$, which is condition (\ref{equGRep}). Moreover,
$g_\ga(c_1)=1$ for all $\ga\in[0,h]$ thus $\G$ satisfies also (\ref{equGPre}). Observe that 
if $\ga\in[0,h]$ then $Q_\ga(-2)=0$ if and only if $\ga=0$, therefore condition
(\ref{equGAl}) is also satisfied by $\G$. We show that $\G$ is also natural and 
that any minimal sequence $SI_2^\ft$
with $S\in\A_I^n$ and $n\geq 0$ equals the second kneading sequence $\ku(\ga)$ for at most 
finitely many $\ga\in[0,h]$. This allows us to use all the results of the previous section 
for the family $\G$.

Let $G\fd{[0,h]\times [0,1]}{\R}$ be defined by
$$G(\ga,x)=g_\ga(x)\mbox{ for all }\ga\in[0,h]\mbox{ and }x\in[0,1].$$
Then
$$G(\ga,x)=\frac{P_1(\ga,x)}{P_2(\ga)},$$
where $P_1$ and $P_2$ are polynomials. Using definition (\ref{equGGa}), we may compute $P_2$ easily
$$P_2(\ga)=2-T_2(-2-\ga)=(\ga+1)^2(\ga+4).$$
We may therefore extend $G$ analytically on a neighborhood $\Omega\se\R^2$ of $[0,h]\times[0,1]$.
The critical points $c_1$ and $c_2$ are continuously defined on $[0,h]$ by Lemma \ref{lemFin}.
They are also analytic in $\ga$ as a consequence of
the \IFT for real analytic maps applied to $\frac{\partial G}{\partial x}$. 
Therefore for all $n\geq 0$ the map $g_\ga^n(c_2)$ is analytic on a neighborhood 
of $[0,h]$ so 
$$c_j(\ga)-g_\ga^n(c_2)\mbox{ has finitely many zeros in }[0,h]$$
for all $j\in\{1,2\}$ and $n\geq 0$ as $g_0^n(c_2)=0$ and $c_1(\ga),c_2(\ga)\in(0,1)$ for all 
$\ga\in[0,h]$. The family $\G$ is therefore natural so by eventually shrinking $h$ we may also
suppose that $\G$ satisfies property (\ref{equGKn}) and Lemmas \ref{lemFix} and \ref{lemPer}
for all $\ga\in[0,h]$. Then the repelling fixed point $r$ is continuously defined on $[0,h]$ and
again by the \IFT applied to $G(\ga,x)-x$, it is analytic on a neighborhood of $[0,h]$. Then 
$$r(\ga)-g_\ga^n(c_2)\mbox{ has finitely many zeros in }[0,h]$$
for all $n\geq 0$ as $r(0)-g_0^n(c_2)=\frac 12$.

Let then $\G_0=\G$ so $\al_0=0$ and $\be_0=h$. Our counterexample $g_{\gl}$ should be $TCE$ but 
not $RCE$ (see Definitions \ref{defRCE} and \ref{defUHP}). 
Its first critical point is non-recurrent as $g_\ga(c_1)=1$
and $1$ is fixed \allg 0. Therefore the second critical point $c_2$ should be recurrent and not
Collet-Eckmann. We let $c_2$ accumulate on $c_1$ also to control the growth of the derivative along
its orbit. We build $g_{\gl}$ such that its second critical orbit spends 
most of the time near $r$ or $1$ so its derivative accumulates sufficient expansion. We show that $g_{\gl}$
is $ExpShrink$ (thus $TCE$) using a telescopic construction, in an analogous way to the proof of \rthm{MCN1}.

\subsection{A construction}
\label{sectConstr}
The construction of the sequence $(\G_n)_{n\geq 0}$ is realized by imposing at the $n$-th step
the behavior of the second critical orbit for a time span $t_{n-1}+1,t_{n-1},\ldots,t_n$. This is achieved
specifying the second kneading sequence and using Proposition \ref{propConv}. We set $t_0=0$.

We have seen that $\ku^+(0)=I_1^\ft$ and that $g_\ga(x)>x$ for all $x\in(0,c_1)$ and all $\ga\in[0,h]$ as
$0$ is repelling and $g_\ga$ has no fixed point in $(0,c_1)$. Therefore the backward orbit of $c_1$ in $I_1$
converges to $0$ and by compactness the convergence is uniform. Then
$$\ku^{-1}\lr{I_1^kc_1}\ra 0\mbox{ as }k\ra\ft,$$
using \rpro{Min} for their existence. Then for any $\ez>0$ there is $k_0>0$ such that 
$I_1^{k_0}c_1\prec\ku(\be_0)$ and $||g_0-g_\ga||_{C^1}<\ez$ for all $\ga\in[0,\ku^{-1}(I_1^{k_0}c_1)]$.
In particular, if 
$$1<\la<\la'<\abs{g_0'(r)}=3<\abs{g_0'(0)}=\abs{g_0'(1)}=9$$
then for $\ez$ sufficiently small
\begin{equation}
\label{equR}
\la'<\abs{g_\ga'(r)},\la'<\abs{g_\ga'(0)}\mbox{ and }\la'<\abs{g_\ga'(1)}
\end{equation} 
for all $\ga\in[0,\ku^{-1}(I_1^{k_0}c_1)]$.
Let $S_0=I_1^{k_0+1}\in\A_I^{k_0+1}$ so $\iu\prec I_1^{k_0}c_1$ for all $\iu\in S_0\times\Sigma$. Moreover,
$S_0I_2^\ft$ is minimal. Using \rpro{Conv} we find $\al_0<\ga_1<\ga_2<\be_0$ such that
$$\ku(\al_0)\prec\ku(\ga_1)\prec S_0\iti\prec\ku(\ga_2)\prec\ku(\be_0)$$
with $\ku(\ga_1),\ku(\ga_2)\in S_0I_2\ts$ and
$$|\ga_2-\ga_1|<2^{-1}.$$
We set $\al_1=\ga_1$ and $\be_1=\ga_2$ and define $\G_1\fd{\iab 1}{\Pc_2}$ by $\G_1(\ga)=\G(\ga)=g_\ga$
\allg 1.
Moreover, let $t_1=k+|S_0|$ and $S_1=S_0I_2^k$, where $k$ is specified by \rpro{Conv}, then  
$$\ku(\ga)\in S_1I_2\ts,$$
\allg 1. Using \rcor{Der} we may also assume that 
\begin{equation}
\label{equDn}
d_m(\ga)>\la^m,
\end{equation}
\allg 1, where $m=t_1=|S_1|$. Let us recall that $d_n(\ga)=\abs{\left(g_\ga^n\right)'(v)}$ and $v=g_\ga(c_2)$.

Then we build inductively the decreasing sequence of families $(\G_n)_{n\geq 0}$ such that for all $n\geq 1$,
$\G_n$ satisfies 
\begin{equation}
\label{equKn}
\ku(\ga)\in S_nI_2\ts,
\end{equation}
\begin{equation}
\label{equFin}
|\ku(\al_n)|,|\ku(\be_n)|<\ft,
\end{equation}
\begin{equation}
\label{equConv}
|\be_n-\al_n|<2^{-n},
\end{equation}
\begin{equation}
\label{equSi}
\ku(\al_n)\prec S_nI_2^\ft\prec\ku(\be_n),
\end{equation}
and conditions \requ{R} and \requ{Dn} \allg n, for some $S_n\in\A_I^m$ with $S_nI_2^\ft$ minimal, 
where $m=t_n$. As the sequence $(\G_n)_{n\geq 0}$ is decreasing, inequality \requ{R} is satisfied
by all $\G_n$ with $n\geq 1$. For transparency we denote $v_n=g_\ga^n(v)$ and
$$d_{n,p}(\ga)=\left|\left(g_\ga^p\right)'(v_n)\right|,$$
which also equals $d_{n+p}(\ga)d_n^{-1}(\ga)$, whenever $|\ku(\ga)|>n$ so $d_n(\ga)\neq 0$.

Let us describe two types of steps, one that takes the second critical orbit near $c_1$ to 
control the growth of the derivative and the other that takes it near $c_2$ to make the 
second critical point $c_2$ recurrent. We alternate the two types of steps in the construction of the
sequence $(\G_n)_{n\geq 0}$ to obtain our counterexample.

The following proposition describes the passage near $c_1$.
\begin{prop}
\label{propA}
Let the family $\G_n$ with $n\geq 1$ satisfy conditions \requ{R} to \requ{Si} and 
$$0<\la_1<\la_2<\la.$$
Then there exists a subfamily $\G_{n+1}$ of $\G_n$ satisfying the same conditions and such
that there exists $2t_n<p<t_{n+1}$ with the following properties
\begin{enumerate}
\item $\max\limits_{\ga\in[\al_{n+1},\be_{n+1}]}\abs{\log\abs{g_\ga'(r)}-\frac 1{p-1}\log d_{p-1}(\ga)}
<\log \la_2-\log\la_1$.
\item $\la_1^p<d_p(\ga)<\la_2^p$ \allg{n+1}.
\item $d_{t_n,l}(\ga)>\la^l$ \allg{n+1} and $l\var 1{p-1-t_n}$.
\item $d_{p,l}(\ga)>\la^l$ \allg{n+1} and $l\var 1{t_{n+1}-p}$.
\item $d_{t_n,t_{n+1}-t_n}(\ga)>\la^{t_{n+1}-t_n}$ \allg{n+1}.
\end{enumerate}
\end{prop}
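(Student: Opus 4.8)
The plan is to realize the desired subfamily $\G_{n+1}$ by choosing an appropriate pair of minimal itineraries $\iu_1(n+1)=S'c_1$ and $\iu_2(n+1)=S'c_2$ and applying \rpro{Conv} together with \rcor{Der}. Concretely, starting from $S_n$ (with $S_nI_2^\ft$ minimal and $\ku^{-1}(S_nI_2^\ft)$ finite, which holds by the naturality established for $\G$), I would take $S'=S_nI_2^{k+1}$ with $k$ large and $\epsilon(S')=1$, set $\gt=\gt(n)$ as in \rcor{Lim}, and define $\al_{n+1}=\ga_1$, $\be_{n+1}=\ga_2$ via \requ{AlBe}. \rpro{Conv} gives $\ga_2-\ga_1\to 0$ and \rcor{Lim} gives $\ga_1,\ga_2\to\gt$ and $\ku(\gt)=S_nI_2^\ft$; note $|g_\gt'(r)|$ plays the role of $\la_0$ in \rcor{Der}. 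Then I set $S_{n+1}=S_nI_2^{k}$ for a slightly smaller amount of $I_2$'s and $t_{n+1}=|S_{n+1}|$, and choose $p$ to be the time at which the second critical orbit is ``deepest'' near $r$, i.e.\ roughly $p=t_n+k'$ for an intermediate $k'<k$ such that $2t_n<p<t_{n+1}$; the exact $p$ will be pinned down below.

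Next I would verify the five conclusions. For (2), the interval $[\al_{n+1},\be_{n+1}]$ satisfies $\ku(\ga)\in S_nI_2^{p-t_n}\ts$ for all $\ga$ there (by \requ{GKn} and continuity), so the orbit $v_{t_n},\dots,v_{p-1}$ lies in $I_2(p-1-t_n)(\ga)$; \rlem{DF} applied with $j=2$ then gives $\la_1(2)^{(p-t_n)(1-\ve)}<d_{t_n,p-t_n}(\ga)<\la_2(2)^{(p-t_n)(1+\ve)}$, and combining with the bounds $\te<d_{t_n}(\ga)<\te^{-1}$ from \requ{Dn}-type control and with $p-t_n$ large gives $\la_1^p<d_p(\ga)<\la_2^p$ after absorbing $\te$ and the $t_n$-initial segment, using $\la_1(2),\la_2(2)\to|g_\gt'(r)|$ as $k\to\ft$ and the freedom in $\la_1<\la_2<\la$. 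For (1), since most of the time between $t_n$ and $p-1$ is spent near $r$, the average $\frac{1}{p-1}\log d_{p-1}(\ga)$ is within $\log\la_2-\log\la_1$ of $\log|g_\ga'(r)|$ for $k$ large — this is again \rlem{DF} plus the smallness of the finitely many ``transition'' contributions and the convergence $|g_\ga'(r)|\to|g_\gt'(r)|$ uniformly on $[\al_{n+1},\be_{n+1}]$ as $\ga_1,\ga_2\to\gt$.

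For (3) and (4), I would use \rcor{Der}: applied at the scale $p$, it gives $\la_0^{l(1-\ve)}<d_{t_n,l}(\ga)/(\text{bounded})<\la_0^{l(1+\ve)}$ for $l$ up to $p-1-t_n$ — but more carefully, since $d_{t_n,l}(\ga)=d_{t_n+l}(\ga)d_{t_n}(\ga)\mo$ and $d_{t_n}(\ga)$ is bounded below (by \requ{R} applied along the already-constructed orbit, as $\ku(\ga)$ still begins with $S_n$), the exponential lower bound $\la^l$ follows from \rcor{Der} with $\la_0>\la$ once $l$ is large, and from \rlem{Bou} for the finitely many small $l$, after further shrinking $\ez$ in \requ{R} as needed. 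Conclusion (4) is the symmetric statement for the return segment $v_p,\dots,v_{t_{n+1}}$, which lies near $r$ then escapes — this is the ``second half'' of the orbit controlled by \rcor{Der} applied at scale $p$ to $v_p$. Conclusion (5) is the concatenation of (3) and (4) via the chain rule, $d_{t_n,t_{n+1}-t_n}(\ga)=d_{t_n,p-1-t_n}(\ga)\cdot(\text{transition factor})\cdot d_{p,t_{n+1}-p}(\ga)>\la^{p-1-t_n}\cdot\te'\cdot\la^{t_{n+1}-p}$, and since $\te'$ is a fixed constant while $t_{n+1}-t_n$ can be taken arbitrarily large (by choosing $k$ large in \rpro{Conv}), we get $>\la^{t_{n+1}-t_n}$ after reducing the effective rate slightly — or, cleaner, prove it with $\la$ replaced by $\la''\in(\la_2,\la)$ throughout and absorb the constant.

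The main obstacle I anticipate is bookkeeping the interplay between the three rates $\la_1<\la_2<\la$ and the geometric quantities $\la_1(2),\la_2(2),\la_0=|g_\gt'(r)|$: the construction only works because as $k\to\ft$ all of $\la_1(2),\la_2(2),|g_\ga'(r)|$ converge to $|g_\gt'(r)|>1$ uniformly on the shrinking parameter interval, so one must first fix how close $|g_\gt'(r)|$ is to the target window $(\la_1,\la_2)$ — actually $\la_1,\la_2$ are given and one needs $|g_\gt'(r)|\in(\la_1,\la_2)$, which requires choosing $\gt$, hence $S_n$ and the earlier steps, appropriately; since $\G$ is natural and $r$ is analytic with $|g_0'(r)|=3$, values of $|g_\gt'(r)|$ in any prescribed small window near $3$ are attained, and one arranges $(\la_1,\la_2)$ to straddle such a value. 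The second delicate point is ensuring $p$ can be chosen strictly between $2t_n$ and $t_{n+1}$ while simultaneously making $p-t_n$ and $t_{n+1}-p$ both large; this just needs $k$ (hence $t_{n+1}-t_n$) large compared to $t_n$, which is free. Everything else is routine compactness and chain-rule estimation of the type already carried out in \rlem{DF} and \rcor{Der}.
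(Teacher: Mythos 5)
There is a genuine gap, and it lies exactly at the heart of the proposition: your construction never brings the critical orbit close to a critical point, so conclusion (2) cannot be achieved. Extending the kneading prefix only by $I_2$'s keeps $v_{t_n},\ldots,v_{p-1}$ near the repelling fixed point $r$, where by condition \requ{R} (one of the standing conditions every subfamily must satisfy) $\abs{g_\ga'(r)}>\la'>\la>\la_2$ for \emph{all} $\ga$; hence $d_p(\ga)$ grows at rate at least $\la'$ and the upper bound $d_p(\ga)<\la_2^p$ is impossible. Your fallback — choosing $\gt$ so that $\abs{g_\gt'(r)}\in(\la_1,\la_2)$ — is not available: $\la_1,\la_2$ are \emph{given} constants, arbitrary in $(0,\la)$, and in the application to Theorem A one takes $\la^{-1}<\la_1<\la_2<1$, so the window lies below $1$ while $\abs{g_\ga'(r)}>\la'>1$ throughout the family. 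Note also that conclusions (1) and (2) together force $\abs{g_\ga'(v_{p-1})}=d_p(\ga)/d_{p-1}(\ga)$ to be exponentially small in $p$, which can only happen if $v_{p-1}$ is exponentially close to a critical point; no itinerary of the form $S_nI_2^k$ produces this.

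The paper's proof takes $S_{n+1}=S_nI_2^{k_1+1}I_3^{k_2}I_2^{k_3}$ and $p=t_n+k_1+1$: the orbit stays near $r$ for $k_1$ steps (giving (1) and (3) via \rlem{DF} and \rcor{Der}, much as you argue), but then $v_{p-1}$ lands at distance comparable to $\la_3^{-k_2/2}$ from $c_1$ (because $v_p$ must remain in $I_3$ for $k_2$ steps and $c_1$ is a quadratic preimage of the repelling fixed point $1$), so $\abs{g'(v_{p-1})}\asymp\la_3^{-k_2/2}$ and $d_p\asymp\la_0^{p}\la_3^{-k_2/2}$. Choosing $k_2$ with $k_2/(2p)\ra\eta$, where $\eta$ is calibrated from $\la_1,\la_2,\la_0=\abs{g_\gt'(r)}$ and $\la_3=\abs{g_\gt'(1)}$ (this is where the hypothesis $\la_2<\la<\la_0$ and the choice of $\ve<\ve_0$ enter), places $d_p(\ga)$ in the prescribed window $(\la_1^p,\la_2^p)$; the subsequent $I_3^{k_2}I_2^{k_3}$ blocks restore the growth needed for (4). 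This also shows why your argument for (5) is too optimistic: the ``transition factor'' between times $p-1$ and $p$ is not a fixed constant but exponentially small, and (5) holds only because the rates near $1$ and $r$ exceed $\la$ enough (in particular $\la_3^{1/2}>\la$) to absorb that loss when $k_2,k_3$ are large. So the proposal reproduces the easy parts ((1), (3), (4)) but misses the essential mechanism — the calibrated passage near $c_1$ — without which the proposition, and with it the failure of $RCE$ for the limit map, cannot be obtained.
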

\begin{proof}
This proof follows a very simple idea, to define the family $\G_{n+1}$ with
$$S_{n+1}=S_nI_2^{k_1+1}I_3^{k_2}I_2^{k_3},$$
as described by properties \requ{Kn} and \requ{Si}.
For $k_1$ and $k_3$ sufficiently large there exist $k_2$ such that the conclusion is satisfied
for $p=t_n+k_1+1$.

Let us apply \rpro{Conv} to $S_n$, $\al_n$ and $\be_n$. Let $k_1=k+1$, $\la_0=\abs{g_{\gt}'(r)}$ and 
$\la_3=\abs{g_{\gt}'(1)}$.  By inequality \requ{R} 
$$0<\la_1<\la_2<\la<\la_0$$ 
therefore there exists $\ve_0\in(0,1)$ such that 
$$\frac{(1+\ve_0)\log\la_0-\log\la_2}{(1-\ve_0)\log\la_3}<\frac{(1-\ve_0)\log\la_0-\log\la_1}{(1+\ve_0)\log\la_3}.$$
We choose $0<\ve<\ve_0$ such that
$$\ve<\frac {\log\la_2-\log\la_1}{8\log\la_0}.$$
Let us recall that  
\begin{equation}
\label{equSn}
\ku(\ga_1)=S_nI_2^{k_1}c_1\prec S_{n+1}\ts\prec\ku(\ga_2)=S_nI_2^{k_1}c_2.
\end{equation}
Using \rlem{DF} and \rcors{Lim}{Der} there exists $N_0$ such that if $k_1>N_0$ then
the first and the third conclusions are satisfied provided $[\al_{n+1},\be_{n+1}]\se[\ga_1,\ga_2]$.

Let $y(\ga)\in I$ with $\iu(y)\in I_2I_3^{k_2}I_2\ts$ and $y'=g_\ga(y)$. 
By \rcor{Lim}, \rlem{DF} and inequality \requ{Int} there exist $N_1,N_0'>0$ such that if $k_1>N_1$ and 
$k_2> N_0'$ then for all $\ga\in[\ga_1,\ga_2]$
\begin{equation}
\label{equDist}
\la_3^{-k_2(1+\ve)}<|1-y'|<\la_3^{-(k_2-1)(1-\ve)},
\end{equation}
as $y\in I_3(k_2-1)\sm I_3(k_2)$.

Let us recall that $g_\ga(x)=\sum_{k=0}^3a_k(\ga)x^k$ with $a_i$ continuous and $g_\ga'(c_1)=0$, 
$g_\ga''(c_1)\neq 0$ for all $\ga\in[\al,\be']$ and $c_1$ is continuous. Therefore there exist constants
$M>1$, $\de>0$ and $N_2>0$ such that if $k_1>N_2$ and $\ga\in[\ga_1,\ga_2]$ then
\begin{equation}
\label{equPow}
\begin{array}{lcccl}
 M^{-1}(x-c_1)^2 & < & |1-g_\ga(x)| & < & M(x-c_1)^2 \mbox{ and} \\
 M^{-1}(x-c_1) & < & \abs{g_\ga'(x)} & < & M(x-c_1)
\end{array}
\end{equation}
for all $x\in(c_1-\de,c_1+\de)$. Using inequality \requ{Dist} there exists $N_1'$ such that if $k_2>N_1'$
then $|1-y'|=|1-g_\ga(y)|<M^{-1}\de^2$ so $|y-c_1|<\de$, therefore
$$M^{-\frac 32}\la_3^{-\frac{k_2}2(1+\ve)}<\abs{g_\ga'(y)}<M^{\frac 32}\la_3^{-\frac{k_2-1}2(1-\ve)}.$$ 

Let $k_1>\max(t_n,N_0,N_1,N_2)$ and $k_2>\max(N_0',N_1')$. \rlem{Min} shows that $S_{n+1}I_2^\ft$ 
is minimal. We may therefore apply \rpro{Conv} with $S=S_nI_2^{k_1+1}I_3^{k_2}$, using inequality 
\requ{Sn}. Let $k_3=k$ and $\al_{n+1}$ and $\be_{n+1}$ be the new bounds for $\ga$ provided by \rpro{Conv}. 
Let us recall that $p=t_n+k_1+1$ and $v_n=g_\ga^{n+1}(c_2)$ for all $n\geq 0$, therefore 
$\iu(v_{p-1})\in I_2I_3^{k_2}\ts$ so we may set $y=v_{p-1}$ and $y'=v_{p}$. Let us remark that
$$d_p(\ga)=d_{p-1}(\ga)\cdot\abs{g_\ga'(y)}\ag{n+1}.$$
By \rcor{Der}, if $k_1$ is sufficiently large, then \allg{n+1}
$$M^{-\frac 32}\la_0^{(p-1)(1-\ve)}\la_3^{-\frac{k_2}2(1+\ve)} < d_p(\ga) <
M^{\frac 32}\la_0^{(p-1)(1+\ve)}\la_3^{-\frac{k_2-1}2(1-\ve)}.$$
Therefore the second conclusion is satisfied if
$$p\log\la_1<-\frac 32\log M+(p-1)(1-\ve)\log\la_0-\frac{k_2}2(1+\ve)\log\la_3$$ and
$$p\log\la_2>\frac 32\log M + (p-1)(1+\ve)\log\la_0-\frac{k_2-1}2(1-\ve)\log\la_3.$$
We may let $p\ra\ft$ and $\frac{k_2}{2p}\ra\eta$ so it is enough to find $\eta > 0$ such that
$$
\begin{array}{rcl}
 \log\la_1 & < & (1-\ve)\log\la_0-\eta(1+\ve)\log\la_3 \mbox{ and}\\
 \log\la_2 & > & (1+\ve)\log\la_0-\eta(1-\ve)\log\la_3.
\end{array}
$$
The existence of $\eta$ is guaranteed by the choice of $\ve<\ve_0$.

Again by inequality \requ{R}, \rlem{DF} and \rcor{Der}, if $k_2$ and $k_3$ are sufficiently large then
the last two conclusions are satisfied. If $k_3$ is sufficiently large then by \rcor{Lim} inequality
\requ{Conv} is also satisfied.
\end{proof}

The following proposition describes the passage of the second critical orbit near $c_2$.
\begin{prop}
\label{propB}
Let the family $\G_n$ with $n\geq 1$ satisfy conditions \requ{R} to \requ{Si} and 
$$\De>0.$$
Then there exists a subfamily $\G_{n+1}$ of $\G_n$ satisfying the same conditions and such
that there exists $t_n<p<t_{n+1}$ with the following properties
\begin{enumerate}
\item $\abs{g_\ga^p(c_2)-c_2}<\De$ \allg{n+1}.
\item $d_{t_n,l}(\ga)>\la^l$ \allg{n+1} and $l\var 1{t_{n+1}-t_n}$.
\item $d_{p-1,t_{n+1}-p+1}(\ga)>\la^{t_{n+1}-p+1}$ \allg{n+1}.
\end{enumerate}
\end{prop}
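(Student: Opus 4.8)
The plan is to mimic the structure of the proof of \rpro{A}, this time steering the second critical orbit near $c_2$ rather than near $c_1$. We define the new family $\G_{n+1}$ by prescribing
$$S_{n+1}=S_nI_2^{k_1+1}\,\om\,I_2^{k_3},$$
where $\om\in\A_I^q$ is a fixed finite block whose associated point $x(S_nI_2^{k_1+1}\om c_2)$ is forced to be very close to $c_2$, and $k_1,k_3$ are taken large. Concretely, since $g_\ga$ realizes all itineraries of $\Sigma(\ku(\ga))$ by \rlem{Fin}, and since $v_{p-1}$ will have itinerary in $I_2\times\om\times\Sigma$, the block $\om$ is chosen (using the bijectivity of $\iu(g_\al)$ on $\Sigma_0$ from \requ{SZ}, together with the convergences \requ{PQ}) so that the corresponding realized point lies within $\De$ of $c_2$; here $p=t_n+k_1+1$. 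One first applies \rpro{Conv} to $S_n,\al_n,\be_n$ to get $k_1$, the interval $[\ga_1,\ga_2]$, and $\gt$ with $\ku(\gt)=S_nI_2^\ft$; then \rlem{Min} shows $S_{n+1}I_2^\ft$ is minimal, so one applies \rpro{Conv} a second time with $S=S_nI_2^{k_1+1}\om$ to produce $k_3=k$ and the new bounds $\al_{n+1},\be_{n+1}\subseteq[\ga_1,\ga_2]$, and \requ{Conv}, \requ{Si}, \requ{Kn}, \requ{Fin} follow as in the previous proof.

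For the first conclusion, the key point is that by \rlem{Fin} and continuity, $\ga\ra x(S_nI_2^{k_1+1}\om c_2)(\ga)$ is continuous on $[\ga_1,\ga_2]$, and $v_p(\ga)=g_\ga^{p+1}(c_2)$ equals this realized point whenever $\ku(\ga)\in S_{n+1}\ts$. Choosing $\om$ so that this point is within $\De/2$ of $c_2$ uniformly on the compact interval $[\ga_1,\ga_2]$ (shrinking the interval via \rpro{Conv} if needed, i.e. taking $k_1$ large so $[\al_{n+1},\be_{n+1}]$ is small and using uniform continuity) gives conclusion (1). For conclusions (2) and (3), which are the derivative estimates, one proceeds exactly as in \rpro{A}: by \requ{R}, \rlem{DF} and \rcor{Der}, on the block $S_n$ one already has $d_{t_n,l}>\la^l$ by the inductive hypothesis \requ{Dn}; on the block $I_2^{k_1}$ the orbit stays near $r$ where $\abs{g_\ga'(r)}>\la'>\la$, so by \rcor{Der} the derivative keeps growing at rate $\la$; and on the trailing blocks $\om$ and $I_2^{k_3}$, taking $q$ fixed and $k_3$ large, \rlem{Bou} bounds the (bounded-length) contribution of $\om$ from below by a constant $\te>0$, while \rlem{DF} gives growth $\la^l$ along $I_2^{k_3}$, so after $k_3$ is chosen large enough the constant $\te$ is absorbed and one gets $d_{p-1,l}>\la^l$ for all $l$ up to $t_{n+1}-p+1$. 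Finally one sets $t_{n+1}=\abs{S_{n+1}}=t_n+k_1+1+q+k_3$.

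The main obstacle is the tension between conclusion (1) and conclusions (2),(3): passing near $c_2$ means passing near a critical point, where $g_\ga'$ is small, so the derivative $d_p$ receives a multiplicative loss of order $\abs{g_\ga'(v_{p-1})}\sim\abs{v_{p-1}-c_2}$, which is at most $\De$. This loss must be recovered before the derivative test $d_{\bullet,l}>\la^l$ can resume, and it is recovered precisely during the long $I_2^{k_1}$ stretch that precedes the close approach: the exponent of $\la_0$ built up over $k_1$ iterations near $r$ dominates the bounded loss $\log(1/\De)$ once $k_1$ is large, which is why conclusion (3) starts the count at index $p-1$ (just before the bad step) rather than at $p$. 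Making this quantitative — choosing $k_1$ large enough that $\la_0^{k_1(1-\ve)}$ beats $\De^{-1}$ times the constants from \rlem{Bou} — is the one genuinely new estimate; everything else is a rearrangement of the machinery already used in \rpro{A}. One should also note that, unlike in \rpro{A}, here there is no analogue of the fine logarithmic-average control (\rpro{A}(1)); that was needed there to later extract exponential expansion, whereas here the role of the near-$c_2$ step is purely to enforce recurrence of $c_2$, so the weaker conclusions (1)–(3) suffice.
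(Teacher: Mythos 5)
Your plan founders on the one point where this proposition differs from \rpro{A}: you never say what the block $\omega$ is, and no block of the kind you describe can do the job. To force $g_\ga^p(c_2)=v_{p-1}$ into the $\De$-neighbourhood of $c_2$ by combinatorial means, note that every point $x$ with $\abs{x-c_2}<\De$ has $g_\ga(x)$ in a small one-sided neighbourhood of the critical value $v=g_\ga(c_2)$, whose itinerary is the kneading sequence $\ku(\ga)=S_nI_2\ldots$ itself; since $\iu$ is injective (no homtervals) and order preserving, a cylinder is forced into that neighbourhood, uniformly in $\ga$, only if its defining word is $I_2$ (or $I_3$) followed by a long initial segment of $\ku(\ga)$. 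Hence your $\omega$ must begin with $S_nI_2^{k_2+1}$ for a large ($\De$-dependent) $k_2$: it has length greater than $t_n$ and contains the symbol $I_1$. This self-shadowing is exactly the paper's choice $S_{n+1}=S_nI_2^{k_1}S_nI_2^{k_2+1}I_3I_2^{k_3}$, with the closeness statement obtained from the monotone limit $\gg\lr{x\lr{I_2^2S_nI_2^{k_2+1}I_3c_2}}\ra c_2$ as $k_2\ra\ft$, then \rcor{Lim} and continuity on $[\ga_1,\ga_2]$. Your justification for $\omega$ (the bijectivity \requ{SZ} together with the convergences \requ{PQ}) cannot produce this: \requ{PQ} forces closeness to the repelling fixed point $r$, not to $c_2$. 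Moreover, once $\omega$ has the required form, your appeal to \rlem{Min} for the minimality of $S_{n+1}I_2^\ft$ (needed before the second application of \rpro{Conv}) is invalid, because the hypothesis of \rlem{Min} excludes the symbol $I_1$ from the appended tail while $S_n$ begins with $I_1^{k_0+1}$; the paper needs a separate argument here, arranging $\epsilon\lr{S_nI_2^{k_2+1}I_3}=1$ and $k_1\geq k_2+2$ and showing that the only candidate shift is the one at position $t_n+k_1$. If instead $\omega$ avoids $I_1$, \rlem{Min} applies but conclusion (1) is out of reach; either way there is a gap.

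Your derivative bookkeeping also misassigns where the loss near $c_2$ is absorbed. Conclusion (3) starts the product at $v_{p-1}$, which \emph{is} the point $\De$-close to $c_2$, so the small factor $\abs{g_\ga'(v_{p-1})}$ is included in it and cannot be compensated by the $I_2^{k_1}$ stretch, which lies before index $p-1$; in the paper this loss is bounded below by the constant $\te$ of \rlem{Bou} (inequality \requ{Te}) and is absorbed by taking the trailing exponent $k_3$ large (\rlem{DF} and \rcor{Der}). The excess accumulated along $I_2^{k_1}$, quantified in \requ{TE}, is what saves conclusion (2): there the product starts at $v_{t_n}$ and must exceed $\la^l$ also for intermediate $l$ just after the bad step, where the trailing $I_2^{k_3}$ block cannot yet help. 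You mention both mechanisms but attach the $k_1$-excess to conclusion (3) and the $k_3$-absorption to the intermediate estimates, which is backwards; and since $\omega$ contains a copy of $S_n$ and a $\De$-dependent power of $I_2$, it is not a block of ``fixed'' length chosen independently of the construction, so the order of choices ($k_2$ first, depending on $\De$ and $t_n$; then $k_1$; then $k_3$ from the second application of \rpro{Conv}) has to be made explicit, as in the paper.
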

\begin{proof}
Once again, we build the family $\G_{n+1}$ using the prefix of the kneading sequence
$$S_{n+1}=S_nI_2^{k_1}S_nI_2^{k_2+1}I_3I_2^{k_3},$$
and show that we may choose $k_2$ such that if $k_1$ and $k_3$ are sufficiently large then the conclusion
is satisfied for $p=t_n+k_1$.

We apply \rpro{Conv} to $S_n$, $\al_n$ and $\be_n$. Let $k_1=k+2$, $\la_0=\abs{\gg'(r)}>\la'$ and
$$S'=S_nI_2^{k_2+1}I_3.$$
In the sequel $k_2$ is chosen such that $\epsilon(S')=1$ therefore $\ku(\gt)=S_n\iti\prec S'\ldots$, so
$$S_{n+1}\iti\mbox{ is minimal if }k_1-1>k_2>t_n.$$
Indeed, suppose that there exists $j>0$ such that $\si^j\left(S_{n+1}\iti\right)\prec S_{n+1}\iti$. 
Let us recall that $t_n=|S_n|$ and $S_n\iti$ is minimal, using property \requ{Si} of $\G_n$. 
A similar reasoning to the proof of \rlem{Min} shows that $j$ can only be equal to $t_n+k_1$ so
$$S'\ldots\prec S_nI_2^{k_1}\ldots$$
which contradicts $S_n\iti\prec S'\ldots$, as $k_1\geq k_2+2$. Moreover,
$$\ku(\ga_1)=S_nI_2^{k_1-1}c_1\prec S_{n+1}\iti\prec \ku(\ga_2)=S_nI_2^{k_1-1}c_2,$$
and $\iu'=I_2^2S'c_1\prec I_2^2S'c_2=\iu''\prec c_2$ are realized for all $\ga\in[\ga_1,\ga_2]$, using \rlem{Fin}. 
Let us remark that $\gg$ has no homterval as $v_{t_n}=r$, using Singer's \rthm{Sing}. Therefore 
$$\lim\limits_{k_2\ra\ft}\gg(x(\iu''))=c_2$$
as $\gg(x(\iu''))=x(\si\iu'')<c_2$ is increasing with respect to $k_2$ and 
$$\SetDef{\iu\in\Sigma_0}{I_2S'c_2\prec\iu\prec c_2\mfa k_2>0}=\emptyset.$$ 
Let $k_2$ be such that $|c_2(\gt)-x(\si\iu'')(\gt)|<\Delta$.
Using \rcor{Lim} and the continuity of $c_2$ and of $x(\si\iu'')<x(\si\iu')<c_2$ there exists $N_0>0$ 
such that if $k_1>N_0$ then 
\begin{equation}
\label{equDE}
|c_2-x|<\Delta,
\end{equation}
for all $\ga\in[\ga_1,\ga_2]$ and $x\in[x(\si\iu''),x(\si\iu')]$.
\rlem{Bou} applied to $\iu'$ and $\iu''$ yields $\te>0$ such that if $l=t_n+k_2+4$ then
\begin{equation}
\label{equTe}
\te<\left|\left(g_\ga^j\right)'(x)\right|<\te^{-1},
\end{equation}
for all $\ga\in[\ga_1,\ga_2]$, $x\in[x(\iu'),x(\iu'')]$ and $j\var 1l$. 
\rlem{DF} provides $N_1>0$ such that if $k_1>N_1$ then
\begin{equation}
\label{equLa}
\left(\la'\right)^j<d_{t_n,j}(\ga)\mfa\ga\in[\ga_1,\ga_2]\mfa j\var 1{k_1-2}.
\end{equation}
As $\la'>\la$ there exists also $N_2>0$ such that
\begin{equation}
\label{equTE}
\te^{-1}\la^{N_2-2+l}<(\la')^{N_2-2}.
\end{equation}
Let $k_1>\max(k_2+1,N_0,N_1,N_2)$ and $S''=S_nI_2^{k_1}S'$. Let us remark that 
$S''\iti=S_{n+1}\iti$ thus we may apply \rpro{Conv} to $S''$, $\ga_1$ and $\ga_2$. Let $\al_{n+1}$ 
and $\be_{n+1}$ be the new bounds for $\ga$ provided by \rpro{Conv} and $k_3=k$. 

As $g_\ga^p(c_2)=v_{p-1}$ and $\si^{p-1}\left(S''I_2^{k_3}\ldots\right)=I_2S'I_2\ldots$
$$g_\ga^p(c_2)\in[x(\si\iu''),x(\si\iu')],$$
for all $\ga\in[\al_{n+1},\be_{n+1}]$ thus, by inequality \requ{DE}, the first conclusion is satisfied. Moreover, using 
inequalities \requ{Te}, \requ{La} and \requ{TE}
$$\la^j<d_{t_n,j}(\ga)\ag{n+1}\mbox{ and }j\var 1{\abs{S''}=k_1-2+l}.$$
Using \rlem{DF} and \rcor{Der}, for $k_3$ sufficiently large the last two conclusions are satisfied. 
If $k_3$ is sufficiently large then by \rcor{Lim} inequality \requ{Conv} is also satisfied by $\al_{n+1}$ and $\be_{n+1}$.
\end{proof}

\subsection{Some properties of polynomial dynamics}
\label{sectRat}
Let us introduce some notations. For any set $E\subseteq\overline{\mathbb C}$ and $\al>0$,
we define the $\alpha$-neighborhood of $E$ by
$$E_{+\alpha}=B\left(E,\alpha\right) =\{x\in\overline{\mathbb C}|\MOo{dist}{x,E}<\alpha\}.$$
One may easily check that if $f,g\fd\Omega\RS$ with $\Omega\se\RS$ and $\de>||f-g||_\ft$ then for all 
$B\se\RS$ 
\begin{equation}
\label{equInc}
g^{-1}(B)\se f^{-1}(B_{+\de}).
\end{equation}
\def\Cd{{\C_d[z]}}
Using this simple observation we show that in a neighborhood of an $ExpShrink$ polynomial
some weaker version of Backward Stability is satisfied, see \rpro{Free}.

\begin{definition}
We say that a rational map $f$ has \emph{Backward Stability} if
for any $\varepsilon>0$ there exists $\delta>0$ such that for all $z\in
J$, the Julia set of $f$, all $n\geq 0$ and every connected component $W$ of $f^{-n}(B(z,\delta))$
$$\MO{diam}{W}<\varepsilon.$$
\label{defBS}
\end{definition}

Let us first show 
that the Julia set is \emph{continuous} in the sense of \rlem{JC}. For transparency we introduce additional 
notations. We denote by $\Cd$ the space of complex polynomials of degree $d$. If $f(z)=\sum_{i=0}^da_iz^i\in\Cd$ 
let us also denote
$$|f|=\max\limits_{0\leq i\leq d}|a_i|.$$
By convention, when $f\in\Cd$ and we compare it to another polynomial $g$ writing $|f-g|$ we also assume that
$g\in\Cd$.

Let us observe that the coefficients of $f^n=f\circ f\circ\ldots\circ f$, the $n$-th iterate of $f$, are 
continuous with respect to $(a_0,a_1,\ldots,a_d)\in\R^{d+1}$ for all $n>0$. Therefore given $f\in\Cd$, $m>0$ and 
$\ve>0$ there exists $\de>0$ such that if $|f-g|<\de$ then
$$\left|f^i-g^i\right|<\ve\mbox{ for all }i\var 1m.$$

Given a compact $K\se\C$, the map $\R^{d+1}\ni(a_0,a_1,\ldots,a_d)\ra f\in\Cd$ is continuous with respect 
to the topology of $C(K,\C)$. Therefore for all $f\in\Cd$, $\ve>0$ and $m>0$ there exists $\de>0$ such 
that if $|f-g|<\de$ then 
\begin{equation}
\label{equNorm}
\nm{f^i-g^i}_{\ft,K}<\ve\mbox{ for all }i\var 1m.
\end{equation}

\begin{lemma}
\label{lemJC}
Let $f\in\Cd$ with $d\geq 2$ and such that its Fatou set is connected and let $J$ be its Julia set. 
For all $\ve>0$ there exists $\de>0$ such that if $|f-g|<\de$ then
$$J_g\se J_{+\ve}.$$
\end{lemma}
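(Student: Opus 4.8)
The plan is to reduce the statement to an upper‑semicontinuity property of the \emph{filled} Julia set, the connectedness of the Fatou set being used only to identify the filled Julia set of $f$ with its Julia set. Recall that for a polynomial of degree $d\geq 2$ the basin of infinity $A_f(\infty)=\SetDef{z\in\RS}{f^n(z)\ra\infty}$ is a single, nonempty Fatou component: if $V\se\RS$ is an open connected neighbourhood of $\infty$ then, $f$ being proper and $\infty$ totally invariant, every component of $f^{-1}(V)$ surjects onto $V$ and hence contains $\infty$, so $f^{-n}(\{|z|>R\}\cup\{\infty\})$ is connected for every $n$, whence $A_f(\infty)$, their increasing union, is connected. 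Writing $K_f=\RS\sm A_f(\infty)$ for the filled Julia set, we have $J_f=\partial K_f$, and the Fatou set of $f$ equals $A_f(\infty)\sqcup\MO{int}K_f$; it is connected exactly when $\MO{int}K_f=\es$, that is when $K_f=J_f$. Since always $J_g\se K_g$ for the degree‑$d$ polynomial $g$ and, under our hypothesis, $(K_f)_{+\ve}=(J_f)_{+\ve}$, it suffices to prove that for $g$ close enough to $f$ every $z$ with $\dst(z,K_f)\geq\ve$ lies in $A_g(\infty)$.

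First I would fix an escape radius. Since the leading coefficient of $f$ is nonzero and the lower‑order coefficients are bounded, there is $R>0$ depending only on $f$ — valid for every $g$ with $|f-g|<1$ — such that $|w|\geq R$ implies $|g(w)|>2|w|$; consequently $K_f,K_g\se B(0,R)$, every $w$ with $|w|\geq R$ lies in $A_g(\infty)$, and once a $g$‑orbit exceeds $2R$ in modulus it stays above $2R$ and tends to $\infty$. This disposes of all $z$ with $|z|\geq R$, so it only remains to treat the compact set $S=\SetDef{z\in\C}{\dst(z,K_f)\geq\ve\mbox{ and }|z|\leq R}$, which is contained in $A_f(\infty)$. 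Because $S$ is compact and each of its points escapes to $\infty$ under $f$, there is a single integer $N$ with $|f^N(z)|>2R$ for all $z\in S$; moreover the finitely many orbit segments $\SetDef{f^j(z)}{z\in S,\ 0\leq j\leq N}$ all lie in a fixed compact $K'\se\C$ (for instance the closed ball of radius $R_N$, where $R_0=R$ and $R_{j+1}=\max_{|w|\leq R_j}|f(w)|$).

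Then I would invoke \requ{Norm}: choose $\de\in(0,1)$ so that $|f-g|<\de$ forces $\nm{f^i-g^i}_{\ft,K'}<R$ for $i\var 1N$. For $z\in S\se K'$ this gives $|g^N(z)|\geq|f^N(z)|-|f^N(z)-g^N(z)|>2R-R=R$, whence $g^N(z)\in A_g(\infty)$ and so $z\in A_g(\infty)$. Together with the case $|z|\geq R$, this shows that every $z\notin(K_f)_{+\ve}=(J_f)_{+\ve}$ lies outside $J_g$, i.e. $J_g\se J_{+\ve}$.

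The one genuine obstacle is the reduction in the first paragraph, and it is precisely here that the connectedness hypothesis is indispensable: without it one obtains only $J_g\se(K_f)_{+\ve}$, which can be strictly larger than $(J_f)_{+\ve}$ because an arbitrarily small perturbation may destroy a bounded Fatou component of $f$ (a Siegel disk, say) and let $J_g$ invade the region it occupied, so that $f\mapsto J_f$ fails to be upper semicontinuous. Everything else is a routine escape estimate; the only care needed is to produce the escape radius $R$ and the escape time $N$ uniformly over a neighbourhood of $f$, and to keep all the relevant orbit segments inside a single compact set so that \requ{Norm} applies with one $\de$.
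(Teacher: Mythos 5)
Your argument is essentially the paper's proof: use the connectedness of the Fatou set to identify $J$ with the set of points with bounded orbit (the filled Julia set), fix an escape radius $R$ valid for all nearby $g$, use compactness of the set of points at distance at least $\ve$ from $J$ (intersected with a bounded region) to get a single escape time $N$, and transfer the escape from $f$ to $g$ via the uniform estimate \requ{Norm} on a fixed compact set, concluding that such points cannot lie in $J_g$. The paper's proof does exactly this, phrased through the characterization $J=\SetDef{z\in\C}{|f^n(z)|\leq M\ \forall n}$ rather than through the filled Julia set.

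One slip needs repairing: the escape radius $R$ cannot be chosen ``valid for every $g$ with $|f-g|<1$'' when the leading coefficient $a_d$ of $f$ has small modulus. For instance, with $f(z)=\tfrac12 z^2$ and $g(z)=\tfrac1{100}z^2$ one has $|f-g|<1$, yet no $R$ depending only on $f$ gives $|g(w)|>2|w|$ for all $|w|\geq R$: the escape radius of $g$ blows up as its leading coefficient tends to $0$, and then neither $K_g\se B(0,R)$ nor the implication ``$|g^N(z)|>R\Rightarrow z\in A_g(\infty)$'' is available. The fix is the one the paper uses: restrict from the start to $|f-g|<\tfrac12|a_d|$ (so the leading coefficient of $g$ is bounded below) and choose the final $\de$ smaller than this threshold as well; with that modification your proof is complete.
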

\begin{proof}
The Fatou set of $f$ is the basin of attraction of $\ft$ and $J$ is compact and invariant. Let
$|J|=\max\limits_{z\in J}|z|$, then for all $M\geq |J|$
$$J=\SetDef{z\in\C}{\left|f^n(z)\right|\leq M\mbox{ for all }n\geq 0}.$$
Let $f(z)=\sum_{i=0}^da_iz^i\in\Cd$. There exists $R>1$ such that if $|f-g|<\frac 12|a_d|$ then
$$|J_g|\leq R.$$
Indeed, it is enough to choose 
$$R>4d+2|a_d|^{-1}\left(1+\sum\limits_{i=0}^{d-1}|a_i|\right),$$
and check that if $|z|>R$ then $|g(z)|>|z|+1$.

Let $T=\SetDef{z\in\RS}{\MO{dist}(z,J)\geq\ve}$. As $T$ is compact in $\RS$ and contained in the basin of attraction of $\ft$,
there is $m>0$ such that 
$$\left|f^m(z)\right|>R+1\mfa z\in T.$$
Let $K=\overline{B(0,R+1)}$ a compact such that $J_{+\ve},J_g\se K$ if $|f-g|<\frac 12|a_d|$. Inequality
\requ{Norm} yields $0<\de<\frac 12|a_d|$ such that if $|f-g|<\de$ then
$$\nm{f^i-g^i}_{\ft,K}<1\mfa i\var 1m.$$
Therefore by the definitions of $R$ and $m$, if $|f-g|<\de$ then
$$\left|g^m(z)\right|>R\mfa z\in T,$$
thus $J_g\cap T=\emptyset$.
\end{proof}
\begin{remark}
The hypothesis $f$ polynomial and its Fatou set connected are somewhat artificial, introduced for
the elegance of the proof. It may be easily generalized to rational maps with attracting periodic orbits but without parabolic periodic orbits nor rotation domains.
\end{remark}

\begin{prop}
\label{propFree}
Let $f$ be an $ExpShrink$ polynomial satisfying the hypothesis of \rlem{JC}. There 
exists $\de>0$ such that for all $0<r<\de$ there exist $N>0$ and $d>0$ such that 
for all $g$ with $|f-g|<d$ and $z\in J_g$ 
$$\MO{diam}\MO{Comp} g^{-N}(B(z,\de))<r.$$
\end{prop}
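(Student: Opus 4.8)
The plan is to combine the $ExpShrink$ property of $f$ with the continuity of the Julia set (\rlem{JC}) and the inclusion \requ{Inc} relating pullbacks under nearby maps. First I would fix the radius $\de>0$ produced by the $ExpShrink$ condition for $f$: there are $\la>1$ and $r_0>0$ such that every connected component $W$ of $f^{-n}(B(w,r_0))$ with $w\in J$ and $n>0$ satisfies $\dia W<\lm n$; I would take $\de\leq r_0$ small enough that all balls $B(w,\de)$ with $w$ in a neighborhood of $J$ stay inside a fixed compact $K\se\C$ on which the uniform estimates \requ{Norm} are available (using that the Fatou set of $f$ is connected, hence $J$ is bounded).

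Next, given $0<r<\de$, I would choose $N>0$ with $\lm N<r/2$, say. Now I would invoke \rlem{JC} to find $d_1>0$ so that $|f-g|<d_1$ forces $J_g\se J_{+\ve}$ for some small $\ve$ to be fixed; this guarantees that for $z\in J_g$ there is $w\in J$ with $\dst(z,w)<\ve$, so that $B(z,\de-\ve)\se B(w,\de)\se B(w,r_0)$. Then I would use the observation \requ{Inc}: if $d_2>\nm{f^N-g^N}_{\ft,K}$ (which can be made arbitrarily small by \requ{Norm} after shrinking $d$), then every component of $g^{-N}(B(z,\de-\ve))$ is contained in a component of $f^{-N}\!\left((B(z,\de-\ve))_{+d_2}\right)\se f^{-N}(B(w,\de))$ once $\ve+d_2\leq r_0-\de+\de$, i.e.\ $\ve$ and $d_2$ small. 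Actually, to pull back the full ball $B(z,\de)$ rather than $B(z,\de-\ve)$, I would be slightly more careful: replace $\de$ in the statement by a radius chosen at the start so that $B(z,\de)$ itself lands inside $B(w,r_0)$ after adding the perturbation slack; the point is purely bookkeeping of constants, and the $ExpShrink$ bound $\lm N<r$ does the rest, giving $\dia\cmp g^{-N}(B(z,\de))<r$.

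The step I expect to be the main obstacle is the uniformity in $g$: the $ExpShrink$ constants belong to $f$ alone, so I must make sure that pulling back under $g$ reduces, via \requ{Inc}, to pulling back under $f$ of a \emph{slightly larger} ball that is still centered (after a small translation) at a genuine point of $J$ and still has radius below $r_0$. This forces the order of quantifiers $\de$ first, then $N$ and $d$ depending on $r$, exactly as in the statement, and it forces me to absorb three separate small quantities — the Hausdorff slack $\ve$ from \rlem{JC}, the sup-norm slack $d_2$ from \requ{Norm}, and the gap $r_0-\de$ — into one inequality. Once these are arranged, connectedness of the components and monotonicity of $\dia$ under inclusion finish the argument; no further dynamics is needed beyond the single application of $ExpShrink$ at time $N$.
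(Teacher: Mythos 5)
Your proposal is correct and follows essentially the same route as the paper: fix the $ExpShrink$ constants of $f$, choose $N$ with $\la_0^{-N}<r$, use \rlem{JC} to recenter the ball at a genuine point of $J$, and use \requ{Norm} together with \requ{Inc} to include $g$-pullbacks in $f$-pullbacks of a slightly larger ball. The bookkeeping you defer is settled exactly as you suggest, by fixing $\de$ as a definite fraction of $r_0$ at the outset (the paper takes $\de=\frac{r_0}4$, so that $B(z,\de)_{+\de}\se B(z,2\de)\se B(z',r_0)$).
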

We use the notation $\MO{Comp} A$ for connected components of the set $A$. The previous statement means that the inequality holds for any such component.
\begin{proof}
Let us denote $J$ the Julia set of $f$.
Let $r_0>0$ and $\la_0>1$ be provided by \rdef{ES} such that for all $z\in J$
$$\MO{diam}\MO{Comp} f^{-n}\left(B(z,r_0)\right)<\la_0^{-n}\mfa n\geq 0.$$

Let $\de=\frac {r_0}4$ and choose $N\geq 1$ such that 
$$\la_0^{-N}<r.$$
Inequality \requ{Norm} provides $d_0$ such that if $|f-g|<d_0$ then
$$\left|f^N(z)-g^N(z)\right|<\de\mfa z\in\overline{J_{+r_0}}.$$
\rlem{JC} yields $d_1>0$ such that if $|f-g|<d_1$ and $z\in J_g$ then there exists $z'\in J$ 
such that $|z-z'|<2\de$ therefore 
$$B\left(z,2\de\right)\se B\left(z',r_0\right).$$
We choose $d=\min(d_0,d_1)$ and $g\in\Cd$ with $|f-g|<d$. Using inequality \requ{Inc}
$$\MO{diam}\MO{Comp} g^{-N}\left(B(z,\de)\right)<\la_0^{-N}<r\mfa z\in J_g.$$
\end{proof}

\begin{corollary}
\label{corFree}
Let $f$ satisfy the hypothesis of \rpro{Free} and $\ve>0$. There exist $d,\de>0$
such that if $|f-g|<d$ then for all $z\in J_g$ and $n\geq 0$
$$\MO{diam}\MO{Comp} g^{-n}(B(z,\de))<\ve.$$
\end{corollary}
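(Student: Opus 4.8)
The plan is to apply \rpro{Free} with a target radius tied to $\ve$, which fixes once and for all a block length $N$, then to propagate the resulting contraction along backward orbits in blocks of $N$ iterates, and finally to absorb the at most $N$ leftover iterates by a soft compactness estimate; the only point requiring care is the order of the choices, so that no circular dependence arises. Write $\de_0$ for the radius furnished by \rpro{Free} and assume, without loss of generality, that $\ve<\de_0$ (proving the statement with $\ve$ replaced by $\min(\ve,\de_0)$ gives it for $\ve$). The first step is to apply \rpro{Free} with $r=\ve/2$, obtaining $N\geq 1$ and $d_1>0$ such that $\diam\cmp g^{-N}(B(z,\de_0))<\ve/2$ whenever $|f-g|<d_1$ and $z\in J_g$; the crucial feature is that $N$ depends only on $f$ and $\ve$, and in particular not on the radius $\de$ that will eventually be output.

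Next I would telescope this estimate in blocks of length $N$, proving by induction on $k\geq 1$ that $\diam\cmp g^{-kN}(B(z,\de_0))<\ve/2$ for all $z\in J_g$ and all $g$ with $|f-g|<d_1$. Given a component $W$ of $g^{-(k+1)N}(B(z,\de_0))$, the connected set $g^{kN}(W)$ lies inside a single component $\widetilde W$ of $g^{-N}(B(z,\de_0))$; by the base case $\diam\widetilde W<\ve/2$, and $\widetilde W$ meets $J_g$ at some point $z''$ (a component of a backward image of a ball centred at a Julia point surjects onto that ball, and $J_g$ is completely invariant), so, using $\ve/2<\de_0$, we get $\widetilde W\se B(z'',\ve/2)\se B(z'',\de_0)$; hence $W$ lies in a component of $g^{-kN}(B(z'',\de_0))$, and the induction hypothesis at $z''$ finishes the step.

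The remaining ingredient is a short-range estimate: there are $\de\in(0,\de_0]$ and $d_2>0$ such that $\diam\cmp g^{-j}(B(w,\de))<\ve/2$ for every $0\leq j\leq N$, every $w\in J_g$ and every $g$ with $|f-g|<d_2$. For $f$ itself this is a compactness statement — as $\rho\to 0$ each component of $f^{-j}(B(w,\rho))$ contracts to a point of the finite set $f^{-j}(w)$, uniformly for $w\in J_f$ and for $j$ in the finite range $\{0,\dots,N\}$ — yielding some $\rho_f>0$ that works for $f$ with bound $\ve/4$; one then transfers it to maps $g$ near $f$ by invoking \rlem{JC} to place $w\in J_g$ within $\rho_f/3$ of $J_f$ and invoking the uniform closeness of $f^j$ and $g^j$ on a fixed compact containing all the relevant backward images, guaranteed by \requ{Norm}, to enclose $g^{-j}(B(w,\de))$ inside $f^{-j}$ of a ball of radius $\rho_f$ centred on $J_f$; one takes $\de=\rho_f/3$ and $d_2$ small.

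Finally I would set $d=\min(d_1,d_2)$ and output this $\de$. For $z\in J_g$, $n\geq 0$ and a component $W$ of $g^{-n}(B(z,\de))$, write $n=kN+j$ with $k\geq 0$ and $0\leq j<N$. If $k=0$ the short-range estimate gives $\diam W<\ve$. If $k\geq 1$, then $g^{j}$ maps the connected set $g^{kN}(W)$ into $B(z,\de)$, so $g^{kN}(W)$ lies in a component $\widetilde W$ of $g^{-j}(B(z,\de))$, which has $\diam\widetilde W<\ve/2$ by the short-range estimate and meets $J_g$ at some $z''$, whence $\widetilde W\se B(z'',\ve/2)\se B(z'',\de_0)$; therefore $W$ lies in a component of $g^{-kN}(B(z'',\de_0))$ of diameter $<\ve/2$ by the telescoped estimate, so $\diam W<\ve$. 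The step I expect to be the main obstacle is the short-range estimate: making the enclosure $g^{-j}(B(w,\de))\se f^{-j}(B(w',\rho_f))$ rigorous — keeping every set inside one fixed compact and letting $\rho_f$ absorb the two small errors, from \rlem{JC} and from the closeness of $f^{j}$ and $g^{j}$ — together with the bookkeeping point, stressed above, that $N$ must be pinned down from $\ve$ before $\de$ is chosen, so that the finite range $\{0,\dots,N\}$ is fixed and the argument does not loop.
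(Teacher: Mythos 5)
Your argument is correct and follows essentially the same route as the paper's proof: pull back in blocks of a fixed length extracted from the $ExpShrink$ data (via \rpro{Free}), re-centre at a point of $J_g$ after each block using the fact that a pullback component surjects onto the ball and complete invariance, and compare $g$ with $f$ through \rlem{JC}, \requ{Norm} and \requ{Inc}. The only real difference is in the leftover $j<N$ iterates, where you prove and transfer a separate finite-time compactness estimate, while the paper absorbs this step by shrinking $r_0$ so that Backward Stability of $f$ already yields the $\ve$-bound at every intermediate time.
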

\begin{proof}
Let us use the notations defined by the proof of \rpro{Free}.
It is straightforward to check that $f$ has Backward Stability and that, 
by eventually decreasing $r_0$, we may also suppose  
$$\MO{diam}\MO{Comp} f^{-n}\left(B(z,r_0)\right)<\ve\mfa z\in J\mbox{ and }n\geq 0.$$
Let $m\geq 1$ such that 
$$\la_0^{-m}<\de.$$
Inequality \requ{Norm} provides $d_0$ such that if $|f-g|<d_0$ then
$$\left|f^i(z)-g^i(z)\right|<\de\mfa z\in\overline{J_{+r_0}}\mbox{ and }i\var 1m.$$
Let $d_1$, $d$ and $g$ be as in the proof of \rpro{Free}.
By inequality \requ{Inc}, for all $z\in J_g$
$$ \MO{diam}\MO{Comp} g^{-m}\left(B(z,\de)\right) < \de$$ and
$$ \MO{diam}\MO{Comp} g^{-i}\left(B(z,\de)\right) < \ve\mfa i\var 0m.$$
For some $z\in J_g$, let $W\in\MO{Comp}g^{-m}\left(B(z,\de)\right)$ and
$z_1\in W\cap J_g$. Then
$$W\se B(z_1,\de)$$
and the proof is completed by induction.
\end{proof}

Let us show that the hypothesis of \rlem{JC} is easy to check for polynomials in $\G_0$.
\begin{lemma}
\label{lemAcc}
If $g_\ga\in\G_0$ and its second critical orbit $\sqnz v$ accumulates on a repelling
periodic orbit then $g_\ga$ satisfies the hypothesis of \rlem{JC}. Moreover, if $\sqnz v$
is preperiodic then $g_\ga$ has $ExpShrink$.
\end{lemma}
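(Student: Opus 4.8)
The plan is to check that the Fatou set of $g_\ga$ is the basin of $\infty$ and is connected --- this is exactly the hypothesis of \rlem{JC} --- and then, under the extra assumption, that $g_\ga$ is semi-hyperbolic, so that \cite{JJ,ETIC} yield $ExpShrink$. Recall that $g_\ga$ is a cubic polynomial with negative Schwarzian derivative, non-flat at its two critical points $c_1,c_2\in(0,1)$, with $g_\ga(I)\se I$, with $0,1$ repelling fixed points, and with $g_\ga(c_1)=1$. The first observation I would make is that the postcritical set of $g_\ga$ is contained in $I$: indeed $g_\ga^n(c_1)=1\in I$ for all $n\geq1$, and $g_\ga^n(c_2)=v_{n-1}\in I$ for all $n\geq1$ since $g_\ga(I)\se I$. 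In particular the postcritical set is real.

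Next I would rule out every bounded Fatou component. First, $g_\ga$ has no Siegel disk (and, being a polynomial, no Herman ring): the boundary of such a rotation domain is contained in the closure of the postcritical set, hence in $\R$, which is impossible for a bounded domain because $\C\sm K$ is connected for every bounded $K\se\R$. Second, $g_\ga$ has no attracting and no parabolic cycle $C$: such a $C$ absorbs the forward orbit of a critical point lying in its basin, so $C=\omega(c)$ for some $c\in\{c_1,c_2\}$ and hence $C\se I$; but $\omega(c_1)=\{1\}$ is repelling, not attracting or parabolic, while $\omega(c_2)$ contains a repelling periodic orbit and so cannot equal the non-repelling cycle $C$ --- a contradiction. (Equivalently, since every such $C$ lies in $I$, one may argue with Singer's \rthm{Sing} applied to $g_\ga|_I$ together with \rlems{Fix}{Per}: the immediate basin of $C$ in $I$ would contain one of $0,1,c_1,c_2$, but $0,1$ are repelling fixed points, $c_1$ is eventually the repelling fixed point $1$, and $O^+(c_2)$ accumulates on a repelling orbit.) Third, $g_\ga$ has no Cremer periodic point: such a point lies in the $\omega$-limit set of a recurrent critical point (Mañé), hence in $I\se\R$, hence is a real neutral periodic point whose multiplier is therefore $\pm1$, contradicting irrational indifference. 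With no attracting, parabolic or irrationally indifferent cycle, and with no wandering domain (Sullivan), every Fatou component is eventually mapped to the basin of $\infty$, so this basin is the whole Fatou set.

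Since moreover $c_1,c_2$ have bounded forward orbit, the filled Julia set is connected, hence the basin of $\infty$ --- and therefore the Fatou set --- is connected, which is the hypothesis of \rlem{JC}. This also gives $I\se J$ (a point of $I$ has bounded orbit and so lies outside the basin of $\infty$), in particular $c_1,c_2\in J$. For the ``moreover'' clause, suppose in addition that $\sqnz v$ is preperiodic. Then $O^+(c_2)$ is finite and $\omega(c_2)$ is a single periodic cycle, necessarily repelling by the previous step; and $c_2$ is not itself periodic (otherwise its orbit would be a super-attracting, hence non-repelling, cycle), so $c_2$ is strictly preperiodic and $c_2\notin\omega(c_2)$. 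Thus $c_1,c_2\in J$ are both non-recurrent and $g_\ga$ has no parabolic periodic orbit, i.e. $g_\ga$ is semi-hyperbolic (\rdef{NR}); by Carleson--Jones--Yoccoz \cite{JJ} (equivalently, semi-hyperbolic $\Rightarrow$ $TCE$ $\Rightarrow$ $ExpShrink$ via \cite{ETIC}), $g_\ga$ satisfies $ExpShrink$.

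The crux is the exclusion of bounded Fatou components; the point that makes it routine is the opening observation that the postcritical set of $g_\ga$ sits inside the real interval $I$, which forbids rotation domains and Cremer points directly and confines any attracting or parabolic cycle to $I$, where the prescribed behaviour of $c_1$ (eventually the repelling fixed point $1$) and of $c_2$ (accumulating on a repelling orbit) produces the contradiction. The remaining verifications --- connectivity of the filled Julia set, $I\se J$, and the semi-hyperbolicity bookkeeping --- are then routine.
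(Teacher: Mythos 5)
Your proof is correct and follows essentially the same route as the paper: confine the postcritical set to $I$, exclude attracting and parabolic cycles because their immediate basins would capture $c_1$ or $c_2$, exclude rotation domains via their boundaries lying in the closure of the critical orbits, invoke Sullivan to conclude the Fatou set is the (connected) basin of $\infty$, and in the preperiodic case deduce semi-hyperbolicity and apply \cite{JJ} (or \rthm{MCN1}) for $ExpShrink$. The only departures are harmless extras: the paper gets connectivity of the basin of $\infty$ directly from the maximum principle rather than via connectivity of the filled Julia set, and it does not need to discuss Cremer points, which lie in the Julia set and play no role in Sullivan's classification of Fatou components.
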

\begin{proof}
By Theorems III.2.2 and III.2.3 in \cite{CG}, the immediate basin of attraction of an attracting or parabolic
periodic point contains a critical point. But $c_1$ is strictly preperiodic and $\sqnz v$ accumulates on a
repelling periodic orbit thus it cannot converge to some attracting or parabolic periodic point.
Using Theorem V.1.1 in \cite{CG} we rule out Siegel disks and Herman rings as their boundary should be
contained in the closure of the critical orbits which is contained in $[0,1]$ for all $g_\ga\in\G_0$.
Using Sullivan's classification of Fatou components, Theorem IV.2.1 in \cite{CG}, the Fatou set
equals the basin of attraction of infinity which is connected for all polynomials by the maximum
principle.

If $\sqnz v$ is preperiodic then $g_\ga$ is semi-hyperbolic therefore by the main result in \cite{JJ} or by \rthm{MCN1} it has $ExpShrink$.
\end{proof}

Let us recall some general distortion properties of rational maps. 
The following result is a classical distortion estimate due to Koebe, see for example Lemma 2.5 in \cite{NUH}.

\begin{lemma}[Koebe]
Let $g\fd B\C$ be a univalent map from the unit disk into the complex plane. Then the image
$g(B)$ contains the ball $B\lr{g(0),\frac 14\abs{g'(0)}}$. Moreover, for all $z\in B$ we have that
$$\frac{\lr{1-|z|}}{\lr{1+|z|}^3}\leq\frac{\abs{g'(z)}}{\abs{g'(0)}}\leq\frac{\lr{1+|z|}}{\lr{1-|z|}^3},$$
and
$$\abs{g(z)-g(0)}\leq\abs{g'(z)}\frac{|z|(1+|z|)}{1-|z|}.$$
\label{lemKoebe}
\end{lemma}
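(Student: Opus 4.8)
The plan is to reduce to the normalized situation and then invoke the two classical cornerstones, the \emph{area theorem} and \emph{Bieberbach's inequality} $|a_2|\le 2$. Since $g$ is univalent, $g'$ is nowhere zero on $B$; in particular $g'(0)\ne 0$, so we may replace $g$ by $\phi(z)=\lr{g(z)-g(0)}/g'(0)$, which is univalent on $B$ with $\phi(0)=0$ and $\phi'(0)=1$, i.e.\ $\phi$ lies in the class $S$ of normalized univalent functions. Each of the three assertions transforms covariantly under this affine change: the first becomes $\phi(B)\supseteq B(0,\tfrac14)$, the middle one is unchanged (since $\abs{\phi'(z)}=\abs{g'(z)}/\abs{g'(0)}$), and the third becomes $\abs{\phi(z)}\le\abs{\phi'(z)}\,\abs z(1+\abs z)/(1-\abs z)$. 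Writing $\phi(z)=z+a_2z^2+\dots$, I first record $|a_2|\le 2$: the odd square-root transform $\tilde\phi$ defined by $\tilde\phi(z)^2=\phi(z^2)$ is again univalent on $B$ with $\tilde\phi(z)=z+\tfrac{a_2}{2}z^3+\dots$, hence $z\mapsto 1/\tilde\phi(1/z)=z-\tfrac{a_2}{2}z\mo+\dots$ is univalent on $\Set{|z|>1}$, and the area theorem forces $|a_2/2|^2\le 1$.

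Next I would establish the three conclusions. For the covering property, if $c\notin\phi(B)$ then $z\mapsto\phi(z)/\lr{1-\phi(z)/c}=z+(a_2+1/c)z^2+\dots$ again belongs to $S$, so $|a_2+1/c|\le 2$; together with $|a_2|\le 2$ this gives $|c|\ge\tfrac14$, whence $\phi(B)\supseteq B(0,\tfrac14)$, which is the first claim after undoing the normalization. For the derivative estimate, I fix $\zeta\in B$ and form the \emph{Koebe transform}
$$h_\zeta(z)=\frac{\phi\lr{\frac{z+\zeta}{1+\bar\zeta z}}-\phi(\zeta)}{(1-|\zeta|^2)\,\phi'(\zeta)}\in S;$$
a direct expansion around $z=0$ gives $2a_2(h_\zeta)=(1-|\zeta|^2)\,\phi''(\zeta)/\phi'(\zeta)-2\bar\zeta$, so $|a_2(h_\zeta)|\le 2$ becomes, after multiplying by $\zeta$ and writing $\zeta=re^{i\te}$, the inequality $\abs{(1-r^2)\,\zeta\phi''(\zeta)/\phi'(\zeta)-2r^2}\le 4r$. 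Taking real parts and using the identity $\mathrm{Re}\lr{\zeta\phi''(\zeta)/\phi'(\zeta)}=r\,\partial_r\log\abs{\phi'(re^{i\te})}$ turns this into the differential inequality $\tfrac{2r-4}{1-r^2}\le\partial_r\log\abs{\phi'(re^{i\te})}\le\tfrac{2r+4}{1-r^2}$; integrating in $r$ from $0$ to $|z|$, where $\log\abs{\phi'}=0$, yields precisely $\tfrac{1-|z|}{(1+|z|)^3}\le\abs{\phi'(z)}\le\tfrac{1+|z|}{(1-|z|)^3}$, i.e.\ the middle inequality.

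Finally, integrating the upper bound $\abs{\phi'(te^{i\te})}\le(1+t)/(1-t)^3$ radially gives the growth theorem $\abs{\phi(z)}\le|z|/(1-|z|)^2$ for $\phi\in S$; applying it to $h_z\in S$ at the point $-z$, where the Möbius argument $\frac{-z+z}{1-|z|^2}$ reduces to $0$ so that $h_z(-z)=\lr{\phi(0)-\phi(z)}/\lr{(1-|z|^2)\phi'(z)}$, gives $\abs{\phi(z)}\le\abs{\phi'(z)}\,|z|(1-|z|^2)/(1-|z|)^2=\abs{\phi'(z)}\,|z|(1+|z|)/(1-|z|)$, which is the third claim once translated back to $g$. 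The only genuinely delicate steps are the coefficient computation for the Koebe transform and the passage to the logarithmic-derivative differential inequality; everything else is bookkeeping with the affine normalization and the Möbius substitution. As this is a classical result, one may instead simply cite \cite{NUH} or any standard reference on univalent functions.
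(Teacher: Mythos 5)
Your argument is correct, and it is worth noting that the paper itself offers no proof of this lemma at all: it is quoted as a classical distortion estimate, with a pointer to Lemma 2.5 in the Graczyk--Smirnov preprint cited as \cite{NUH}, which is exactly the fallback you mention in your last sentence. What you supply instead is the standard self-contained proof from univalent function theory, and all the steps check out: the area theorem applied to $z\mapsto 1/\tilde\phi(1/z)$ gives Bieberbach's bound $|a_2|\le 2$; the omitted-value transform $\phi/(1-\phi/c)$ then yields the quarter theorem, i.e.\ the first claim; the Koebe transform $h_\zeta$ has second coefficient $\tfrac12\bigl[(1-|\zeta|^2)\phi''(\zeta)/\phi'(\zeta)-2\bar\zeta\bigr]$, and your passage to the radial differential inequality for $\partial_r\log|\phi'|$ and its integration reproduce the two-sided distortion estimate exactly (the antiderivative computation giving $\log\frac{1+r}{(1-r)^3}$ is right); and evaluating $h_z$ at $-z$ together with the growth bound $|\phi(w)|\le |w|/(1-|w|)^2$ gives precisely $|\phi(z)|\le|\phi'(z)|\,|z|(1+|z|)/(1-|z|)$, which undoes the normalization to the third inequality as stated. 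The only places where you assert rather than prove are routine: the univalence of the odd square-root transform and the coefficient expansion of $h_\zeta$, both standard computations. So compared with the paper your route is more elementary and self-contained, at the cost of length; the paper's choice to cite is reasonable since the lemma is used only as a black box to control distortion of pullbacks away from critical points (inequality (\ref{equFar}) and Lemma \ref{lemPBDia}), and nothing in the later arguments depends on how it is proved.
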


For the remainder of this section, let $f$ be any rational map and $\mathrm{Crit}$ the set of critical points of $f$.
\begin{description}
\item[Distortion.]
This is a reformulation of the previous lemma.
For all $D>1$ there exists $\varepsilon>0$ such
that if the open $W$ satisfies
\begin{equation}
\MO{diam}{W}\leq\varepsilon\MOo{dist}{W, \mathrm{Crit}},
\label{equFar}
\end{equation}
then the distortion of $f$ in $\overline{W}$ is bounded by $D$.
\item[Pullback.] Once a sufficiently small $r>0$ is fixed, there exists $M\geq
1$ such that for any open $U$ with $\MO{diam}{U}\leq r$ and for all $W\in\MO{Comp}f\mo(U)$ and all $z\in
\overline{W}$
\begin{equation}
\MO{diam}{W}\leq M|f'(z)|^{-1}\MO{diam} U.
\label{equMD}
\end{equation}
We shall use this estimate for $W$ close to $\mathrm{Crit}$.
\end{description}

\subsection{A counterexample}

Using \rpros AB we build a sequence of families $\sqno\G$ which converge to a bimodal polynomial $g$
that has $ExpShrink$. Its first critical point $c_1$ is non-recurrent
as $g(c_1)=1$ and $1$ is a repelling fixed point. The second critical point $c_2$ is recurrent and it does not
satisfy the Collet-Eckmann condition. Therefore $g$ does not satisfy $RCE$.

We obtain the following theorem which states that the converse of \rthm{MCN1} does not hold. We use the equivalence of $TCE$ and $ExpShrink$ \cite{ETIC}.
\begin{thma}
There exists an $ExpShrink$ polynomial that is not $RCE$.
\end{thma}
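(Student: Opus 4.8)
The plan is to assemble the counterexample $g=g_{\gl}$ from the inductive construction $(\G_n)_{n\geq 0}$ described in \rsec{Constr}, alternating applications of \rpros AB, and then verify the three required properties: $g$ has $ExpShrink$ (hence $TCE$), $c_2$ is recurrent, and $c_2$ is not Collet-Eckmann. First I would run the induction: start with $\G_0=\G$, and at each step apply \rpro A or \rpro B to pass from $\G_n$ to $\G_{n+1}$, always keeping conditions \requ{R} through \requ{Si} satisfied. The choice of step type must be made so that infinitely many steps are of type B (to force recurrence of $c_2$) while the slope parameters $\la_1(n),\la_2(n)$ in the type-A steps are chosen to tend to a value strictly below $1$ in logarithmic scale along a subsequence, so that $d_{p}(\ga)$ fails to grow exponentially. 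Because \requ{Conv} gives $|\be_n-\al_n|<2^{-n}$, the nested intervals $[\al_n,\be_n]$ shrink to a single parameter $\gl$, and $g_{\gl}=\G(\gl)$ is well defined and lies in $\Pc_2$.

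Next I would check $c_2$ is recurrent and not $CE$. Recurrence: infinitely many steps are of type B, and \rpro B(1) gives $|g_{\ga}^{p}(c_2)-c_2|<\De$ with $\De$ chosen as small as we like at each such step; since $\gl\in[\al_{n+1},\be_{n+1}]$, this bound holds for $g_{\gl}$, so $g_{\gl}^{p}(c_2)\to c_2$ along a subsequence of times, i.e.\ $c_2\in\omega(c_2)$. Failure of $CE$: at the type-A steps, \rpro A(2) gives $d_p(\ga)<\la_2^{p}$ where $\la_2<\la$ can be taken arbitrarily close to $1$; choosing these $\la_2\to 1$ along the type-A subsequence forces $\liminf_p \frac1p\log d_p(\gl)=0$, so there is no $C>0,\,\lambda>1$ with $|(g_{\gl}^n)'(g_{\gl}(c_2))|>C\lambda^n$. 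By \rlem{Crit} the first critical point satisfies $g_{\gl}(c_1)=1$ with $1$ a repelling fixed point, so $c_1$ is non-recurrent; hence $g_{\gl}$ has no recurrent $CE$ critical point while having a recurrent non-$CE$ one, so $g_{\gl}\notin RCE$.

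The hard part is showing $g_{\gl}$ has $ExpShrink$; this is where the bulk of the work lies. The idea is a telescopic estimate along backward orbits, in the spirit of \rthm{MCN1}. Each $\G_n$ is built so that \emph{along the interval} $[\al_n,\be_n]$ the second critical orbit is uniformly expanding outside a controlled neighborhood of $c_1$: conclusions (3)--(5) of \rpro A and (2)--(3) of \rpro B assert $d_{t_n,l}(\ga)>\la^l$ and related bounds $d_{p,l}(\ga)>\la^l$, $d_{p-1,\,\cdot}(\ga)>\la^{\cdot}$, for the relevant ranges of $l$. At the limit $\gl$ these piece together to give that for every backward branch of $g_{\gl}$ landing on the second critical value, the accumulated derivative along the forward piece is at least $\la$ to the length, \emph{except} for the bounded-length excursions near $c_1$; and near $c_1$ the key point highlighted in the introduction is that although the derivative can be tiny, the \emph{diameter} of a small pullback domain decreases only polynomially (square-root rate), which is harmless for $ExpShrink$. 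Concretely, I would invoke the Koebe distortion estimate \rlem{Koebe} and the Distortion/Pullback estimates \requs{Far}{MD} to control the diameters of connected components $W$ of $g_{\gl}^{-n}(B(z,r))$ for $z\in J=[0,1]$, splitting the pullback times into regular pieces (where \requ{MD} combined with the uniform expansion $d_{\cdot,l}>\la^l$ gives geometric shrinking) and the at-most-$M$ critical passages (where the quadratic behavior of $g_{\gl}$ near $c_1$ from \requ{Pow} and Koebe's $\frac14$-theorem give that the diameter of the pullback through $c_1$ is comparable to the square root of the diameter upstream, still contracting when the upstream piece is already exponentially small). Summing, one obtains $\dia W<\lambda_1^{-n}$ for some $\lambda_1>1$ and all small $r$, which is exactly \rdef{ES}. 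Once $g_{\gl}$ is $ExpShrink$, by the equivalences in \cite{ETIC} it is $TCE$, and combined with $g_{\gl}\notin RCE$ this proves Theorem A; in particular \rlem{Acc} applies because $\sqnz v$ accumulates on the repelling orbit of $1$, so the hypothesis of \rlem{JC} and the Fatou-connectedness needed in \rsec{Rat} are in force.
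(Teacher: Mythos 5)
Your construction of $g_{\gl}$ and the verification that it is not $RCE$ follow the paper's route: alternate \rpros AB, use \rpro B(1) with $\De_k\to 0$ for recurrence of $c_2$, and \rpro A(2) to kill the Collet--Eckmann condition (the paper simply fixes $\la\mo<\la_1<\la_2<1$, so that $d(p_{2k})<\la_2^{p_{2k}}<1$, instead of letting $\la_2\to 1$; both variants work). The genuine gap is in the $ExpShrink$ half, which is the bulk of the actual proof.

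First, your key claim that the derivative along the orbit of the second critical value is at least $\la^{\ell}$ outside \emph{bounded-length} excursions near $c_1$ is false. After the deep pass by $c_1$ at time $p_{2k}$ one has $d(p_{2k})<\la_2^{p_{2k}}<1$, and the expansion is only rebuilt during the long stretch $I_3^{k_2}I_2^{k_3}$ near the repelling fixed points, a window of length comparable to $p_{2k}$ on which $d(n)>\la^n$ fails; this is exactly why \requ{CE} excludes the times $n\in[p_{2k},t_{2k})$. A pullback landing on the critical value at such a time cannot be treated by ``uniform expansion plus one square root'': the paper splits it at time $p$, uses diameter (not derivative) contraction near $1$ and near $r$ (\rlem{Neig}, \rcor{Neig}), the quadratic estimates \requ{BC}, the bound $d(p-1,1)\mo<\la^{p(\nu+2)}$ extracted from the two-sided control in \rpro A, the relation \requ{TP}, and a two-case analysis ending with \requ{Brut} and the rate $\la_0\leq\la^{1/(4(\nu+3))}$; moreover the construction itself has to be pre-tuned for these estimates to close (conditions \requ{Eat} and \requ{Long}), which your plan omits. ``Still contracting when the upstream piece is exponentially small'' does not by itself yield a single rate $\la_0>1$ valid for all $n$. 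Second, your ``at most $M$ critical passages'' is unjustified: along an arbitrary backward orbit the number of passages near $\{c_1,c_2\}$ is unbounded, so degree and distortion accumulate; the paper controls this by cutting the pullback into telescope blocks of degree at most $\mu=2$ (via \rcor{Deg}) glued by \rlem{EMod}, and by establishing uniform-in-$\ga$ versions of backward stability and of the contraction statement (\rcor{GBS}, \rpro{GCE}, \rlem{Pull}) on a neighborhood of $g_0$ \emph{before} running the construction, so the constants do not depend on the limit map. (Also, $z$ must range over the full complex Julia set, not over $[0,1]$, which is why \rlem{JC} and \rpro{Free} are needed.) Without these ingredients your final ``summing'' step does not close.
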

The proof that $g$ has $ExpShrink $ is analogous to that of \rthm{MCN1}. 
This paper contains a complete proof of Theorem A. However, remarks about the proof of \rthm{MCN1} are present for the convenience of the reader. 
As $g$ is not $RCE$ we have to 
modify some of the tools like Propositions 9, 10 and 11 in \cite{MCN1}. The polynomial $g_0$ is
Collet-Eckmann and semi-hyperbolic thus $RCE$. By the main result of \cite{JJ} or by \rthm{MCN1}, $g_0$ has $ExpShrink$. Choosing
the family $\G_1$ in a sufficiently small neighborhood of $g_0$ we show two contraction results
similar to Propositions 9 and 10 in \cite{MCN1} that hold on $\G_1$, \rcor{GNR} and \rpro{GCE} below.
As $g\in\G_1$ we may choose constants $\mu,\te,\ve$, $R$ and $N_0$ - as described in the sequel - that do not depend on $g$. 

The main idea of the proof of Theorem A is that in inequality \requ{MD} the right term may be
much larger than the left term, see also \rlem{Pull}. This means that when pulling back a ball $B$ to 
$B^{-1}$ near a degree two critical point, the diameter of $B^{-1}$ is comparable to the square 
root of the radius of $B$ but $\abs{f'(z)}\mo$ may be as large as one wants for some $z\in B\mo$. This 
is the main difference between growth conditions in terms of the derivative or in terms of the 
diameter of pullbacks.

\rcor{GBS}, an immediate consequence of \rcor{Free}, replaces Proposition 11 in \cite{MCN1} in the proof of Theorem A.
\begin{corollary}
\label{corGNR}
There exists $\de>0$ such that for all $0<r<R\leq\de$ there exist $\be>\al_0$ and $N>0$ such
that for all $\ga\in[\al_0,\be]$ and $z\in J$ the Julia set of $g_\ga$
$$\MO{diam}\MO{Comp} g_\ga^{-N}(B(z,R))<r.$$
\end{corollary}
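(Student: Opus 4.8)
The plan is to deduce the statement from \rpro{Free} applied to $f=g_0=\G_0(\al_0)$, so the first step is to verify that $g_0$ meets the hypotheses of \rpro{Free}, namely that it has $ExpShrink$ and satisfies the hypothesis of \rlem{JC}. The polynomial $g_0$ is affinely conjugate to the second Chebyshev polynomial $T_2$: its first critical value $g_0(c_1)=1$ is a repelling fixed point, and its second critical orbit $\sqnz v$ is constantly $0$, again a repelling fixed point, so $g_0$ is semi-hyperbolic. Hence \rlem{Acc} applies: since $\sqnz v$ is (pre)periodic, $g_0$ has $ExpShrink$, and since $\sqnz v$ accumulates on the repelling periodic orbit $\{0\}$, $g_0$ satisfies the hypothesis of \rlem{JC}. (Both facts were already recorded just before the statement.)

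I would then fix once and for all the $\de>0$ produced by \rpro{Free} for $f=g_0$; this is the $\de$ of the corollary. Given any $0<r<R\leq\de$, in particular $r<\de$, so \rpro{Free} provides $N>0$ and $d>0$ such that
$$\dia\cmp g^{-N}(B(z,\de))<r$$
for every polynomial $g$ with $|g_0-g|<d$ and every $z\in J_g$. To turn the condition $|g_0-g|<d$ into a condition on the parameter, recall from \requ{Cont} that $g_\ga(x)=\sum_{k=0}^3a_k(\ga)x^k$ with each coefficient $a_k$ continuous on $[\al_0,\be_0]$; hence $|g_0-g_\ga|\to 0$ as $\ga\to\al_0$, and there is $\be\in(\al_0,\be_0]$ such that $|g_0-g_\ga|<d$ for all $\ga\in[\al_0,\be]$. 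Thus every $g_\ga$ with $\ga\in[\al_0,\be]$ is an admissible choice of $g$ in the displayed inequality.

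It remains to pass from the ball of radius $\de$ to the (possibly smaller) ball of radius $R$. Fix $\ga\in[\al_0,\be]$, let $J$ be the Julia set of $g_\ga$, take $z\in J$, and let $W$ be any connected component of $g_\ga^{-N}(B(z,R))$. Since $R\leq\de$ we have $B(z,R)\se B(z,\de)$, so $W$ is a connected subset of $g_\ga^{-N}(B(z,\de))$ and is therefore contained in some connected component $W'$ of $g_\ga^{-N}(B(z,\de))$. Applying the displayed inequality with $g=g_\ga$ and the point $z$ gives $\dia W'<r$, whence $\dia W\leq\dia W'<r$, which is the asserted bound.

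No step is genuinely hard here. The only points requiring care are the verification of the two hypotheses of \rpro{Free} for $g_0$ (which is precisely the content of \rlem{Acc}), respecting the quantifier order of \rpro{Free} (the universal $\de$ is chosen before $N$ and $d$, which depend on $r$), and the elementary observation that a connected component of the preimage of a smaller ball lies in a connected component of the preimage of a larger ball — this is what lets one lower the radius from $\de$ to $R$ for free.
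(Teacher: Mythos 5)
Your proof is correct and follows essentially the same route as the paper: verify via \rlem{Acc} that $g_0$ satisfies the hypotheses of \rpro{Free}, take $\de$, $N$, $d$ from that proposition, and use the continuity of the coefficients in \requ{Cont} to find $\be>\al_0$ with $|g_0-g_\ga|<d$ on $[\al_0,\be]$. The only addition is your explicit remark that a component of $g_\ga^{-N}(B(z,R))$ sits inside a component of $g_\ga^{-N}(B(z,\de))$ when $R\leq\de$, a point the paper leaves implicit.
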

\begin{proof}
Using \rlem{Acc}, $g_0$ satisfies the hypothesis of \rpro{Free}. Using the continuity of coefficients
of $g_\ga$ \requ{Cont} there exists $\be>\al_0$ such that
$$|g_0-g_\ga|<d\mfa \ga\in[\al_0,\be].$$
\end{proof}

The following consequence of \rcor{Free} is a weaker version of uniform Backward Stability. The proof is
analogous to the proof of the previous proposition.
\begin{corollary}
\label{corGBS}
For all $\ve>0$ there exist $\be>\al_0$ and $\de>0$ such that for all $\ga\in[\al_0,\be]$ and
$z\in J$ the Julia set of $g_\ga$
$$\MO{diam}\MO{Comp} g_\ga^{-n}(B(z,\de))<\ve\mfa n\geq 0.$$
\end{corollary}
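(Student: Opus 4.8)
The plan is to obtain this statement as an immediate consequence of \rcor{Free} applied to the anchor polynomial $f=g_0=\G_0(\al_0)$, in exact parallel to the way \rcor{GNR} was derived from \rpro{Free}. Concretely, I would fix $\ve>0$, produce constants $d,\de>0$ from \rcor{Free} for the polynomial $g_0$, and then use the continuity of the coefficients to transfer the conclusion to all $g_\ga$ with $\ga$ in a small right-neighborhood $[\al_0,\be]$ of $\al_0$.

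The only genuine point to check is that $g_0$ satisfies the hypothesis of \rcor{Free}, that is, that it is an $ExpShrink$ polynomial whose Fatou set is connected (the hypothesis of \rlem{JC}). For this I would invoke \rlem{Acc}: by construction $g_0(c_2)=0$ and $0$ is a repelling fixed point, so the second critical orbit $\sqnz v$ of $g_0$ is constantly $0$, hence preperiodic and accumulating on the repelling fixed point $0$. Thus \rlem{Acc} gives that $g_0$ satisfies the hypothesis of \rlem{JC} and, being semi-hyperbolic, has $ExpShrink$; in particular $g_0$ meets the hypothesis of \rpro{Free} and of \rcor{Free}.

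With $\ve>0$ fixed, \rcor{Free} then furnishes $d>0$ and $\de>0$ such that every cubic polynomial $g$ with $|g_0-g|<d$ satisfies $\MO{diam}\MO{Comp} g^{-n}(B(z,\de))<\ve$ for all $z\in J_g$ and all $n\geq 0$ (all maps in $\G_0$ are cubic by \requ{GGa}, so $|g_0-g_\ga|$ is the intended norm on $\C_3[z]$). Finally, by the continuity \requ{Cont} of $\ga\mapsto a_k(\ga)$, there is $\be>\al_0$ with $|g_0-g_\ga|<d$ for all $\ga\in[\al_0,\be]$; applying the previous line with $g=g_\ga$ for each such $\ga$ yields the claim.

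The hard part will be essentially nonexistent: the argument is a verbatim adaptation of the proof of \rcor{GNR}, with \rcor{Free} in place of \rpro{Free}. The only care needed is bookkeeping — confirming that the degree is constant along the family so that the norm $|\cdot|$ on $\C_3[z]$ used by \rcor{Free} is the right one, and that $g_0$ really does fall under \rlem{Acc} (which it does, since its second critical point is strictly preperiodic onto the repelling fixed point $0$).
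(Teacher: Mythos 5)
Your proposal is correct and is essentially the paper's own argument: the paper proves this corollary exactly by the route you describe, namely applying \rcor{Free} to $g_0$ (whose hypotheses are verified via \rlem{Acc}, since $g_0(c_2)=0$ is a repelling fixed point) and then using the continuity of the coefficients \requ{Cont} to get $|g_0-g_\ga|<d$ on some $[\al_0,\be]$, in parallel with the proof of \rcor{GNR}. No gaps to report.
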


Let us compute an estimate of the diameter of a pullback far from critical points.
\begin{lemma}
\label{lemPBDia}
Let $h\fd {B(z,2R)}\C$ be an analytic univalent map and $U\ni z$ a connected open with $\dia U\leq R$.
If
$$\sup\limits_{x,y\in B(z,2R)}\abs{\frac{h'(x)}{h'(y)}}\leq D$$
then 
$$\dia U\leq D\abs{h'(z)}\mo\dia h(U).$$
\end{lemma}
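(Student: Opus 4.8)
The plan is to establish the pointwise estimate $|a-b|\le D\,|h'(z)|^{-1}|h(a)-h(b)|$ for all $a,b\in U$ and then pass to the supremum over $a,b\in U$, which yields $\dia U\le D|h'(z)|^{-1}\dia h(U)$ immediately. The first observation is purely metric: since $z\in U$ and $\dia U\le R$, every point of $U$ lies in $\overline{B(z,R)}$, and for $a,b\in U$ the whole segment $[a,b]$ also lies in $\overline{B(z,R)}\subseteq B(z,2R)$, because for $w=(1-t)a+tb$ one has $|w-z|\le\max(|a-z|,|b-z|)\le\dia U\le R$. Thus all of the relevant geometry takes place well inside the disk on which $h$ is univalent.

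Next I would pass to the inverse branch. Since $h$ is univalent and holomorphic on $B(z,2R)$, its image $\Omega:=h(B(z,2R))$ is open and connected, $h(U)\subseteq\Omega$, and $g:=h^{-1}:\Omega\to B(z,2R)$ is holomorphic and univalent with $g'(w)=1/h'(g(w))$. The hypothesis $\sup_{x,y\in B(z,2R)}|h'(x)/h'(y)|\le D$ gives in particular $|h'(x)|\ge|h'(z)|/D$ for every $x\in B(z,2R)$, hence the uniform bound
$$|g'(w)|=\frac1{|h'(g(w))|}\le\frac{D}{|h'(z)|}\qquad\text{for all }w\in\Omega.$$
Given $a,b\in U$, join $h(a)$ to $h(b)$ by the straight segment $[h(a),h(b)]$; assuming for the moment that this segment lies in $\Omega$, we may integrate $g'$ along it and obtain
$$|a-b|=\Bigl|\,\int_{[h(a),h(b)]}g'(w)\,dw\,\Bigr|\le |h(a)-h(b)|\cdot\sup_{\Omega}|g'|\le D\,|h'(z)|^{-1}|h(a)-h(b)|,$$
which is exactly the pointwise inequality we want.

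The step that requires genuine care — and which I expect to be the main obstacle — is the bracketed assumption that $[h(a),h(b)]\subseteq\Omega$; this is precisely where the gap between $\dia U\le R$ and the radius $2R$ of univalence is used, via Koebe's theorem. For each point $x$ of $[a,b]$ one has $B(x,R)\subseteq B(z,2R)$, so applying the Koebe $\tfrac14$-theorem (\rlem{Koebe}) to $h$ on $B(x,R)$ gives $\Omega\supseteq h(B(x,R))\supseteq B\!\left(h(x),\tfrac14 R|h'(x)|\right)\supseteq B\!\left(h(x),\tfrac{R|h'(z)|}{4D}\right)$, so that the arc $h([a,b])$ traverses $\Omega$ staying at distance at least $R|h'(z)|/(4D)$ from $\partial\Omega$; combining this with the distortion estimate $|h(a)-h(b)|\le|a-b|\sup_{[a,b]}|h'|$ and the elementary length/bulge relation for the arc $h([a,b])$ one checks that the chord $[h(a),h(b)]$ cannot escape $\Omega$. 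Writing out this containment is the only non-formal part of the argument; the remainder is the bookkeeping above, and taking the supremum over $a,b\in U$ in the pointwise bound completes the proof. (If one prefers to sidestep the containment entirely, one may instead integrate $g'$ along $h\circ\sigma$ for a path $\sigma$ joining $a$ to $b$ inside $U$, which automatically stays in $\Omega$, at the cost of then having to estimate the length of $\sigma$ in terms of $\dia U$.)
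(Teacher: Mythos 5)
The step that fails is exactly the one you flag as ``the only non-formal part'': the containment $[h(a),h(b)]\subseteq\Omega=h(B(z,2R))$. The ingredients you list do not yield it. Koebe's $\tfrac14$-theorem does show that every point of the \emph{arc} $h([a,b])$ lies at distance at least $\rho=R|h'(z)|/(4D)$ from $\partial\Omega$, but the \emph{chord} has length up to $|a-b|\sup_{[a,b]}|h'|\le RD|h'(z)|=4D^2\rho$, so it may exceed the guaranteed depth by the factor $4D^2$; and knowing that some curve joining $h(a)$ to $h(b)$ stays $\rho$-deep in $\Omega$ says nothing about points of the chord that are far from that curve (picture a $\rho$-deep channel bending around a tongue of $\partial\Omega$ which the chord cuts across). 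No ``length/bulge relation'' closes this gap, so the containment is simply not established --- and it is in any case a delicate global statement about images of univalent maps, not something that follows from the two local estimates you invoke. The parenthetical fallback does not repair it: a path $\sigma\subseteq U$ from $a$ to $b$ can have length far larger than $\operatorname{diam}U$ (a thin spiral), and integrating $g'$ along $h\circ\sigma$ only bounds $|a-b|$ by $D|h'(z)|^{-1}$ times the \emph{length} of $h\circ\sigma$, which is not controlled by $\operatorname{diam}h(U)$.

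The paper's proof shows the containment is not needed. Take $x,y\in\overline U$ with $|x-y|=\operatorname{diam}U$ and pull back the segment $[h(x),h(y)]$ by $h^{-1}$ starting at $x$, i.e.\ follow the maximal initial subsegment lying in $\Omega$. Either the whole segment is traversed, and the preimage curve joins $x$ to $y$, so its length is at least $\operatorname{diam}U$; or the preimage curve accumulates on $\partial B(z,2R)$, and since $x\in\overline{B(z,R)}$ its length is at least $2R-R=R\ge\operatorname{diam}U$. In both cases the distortion hypothesis gives $|(h^{-1})'|\le D|h'(z)|^{-1}$ along the traversed part of the segment, so the length of the preimage curve is at most $D|h'(z)|^{-1}|h(x)-h(y)|\le D|h'(z)|^{-1}\operatorname{diam}h(U)$, which is the lemma. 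Note that the same dichotomy even proves your pointwise inequality $|a-b|\le D|h'(z)|^{-1}|h(a)-h(b)|$ for $a,b\in U$, because in the exit case $|a-b|\le\operatorname{diam}U\le R$ is again dominated by the length of the preimage curve; so your overall plan becomes correct once the chord-containment argument is replaced by this ``pull back until you exit $B(z,2R)$'' argument, which is where the hypothesis of univalence on the doubled ball is really used.
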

\begin{proof}
Let $x,y\in\partial U$ such that $\abs{x-y}=\dia U$. Let $a=h(x)$, $b=h(y)$ and consider the pullback of the 
line segment $[a,b]$ that starts at $x$. Then there exists $t_0\in(0,1]$ such that 
$$[a,t_0a+(1-t_0)b]\se h\lr{B(z,2R)}$$
and such that the length of $h\mo\lr{[a,t_0a+(1-t_0)b]}$ is at least $\dia U$. We also notice that
$$\abs{\lr{h\mo}'(ta+(1-t)b)}\leq D\abs{h'(z)}\mo\mfa t\in[0,t_0],$$
which completes the proof as $\abs{(t_0-1)a+(1-t_0)b}\leq\dia h(U)$.
\end{proof}

Proposition 10 in \cite{MCN1} relies on inequalities \requs{Far}{MD}. We remark that they are satisfied uniformly on 
a neighborhood of $g_0$. By Koebe's \rlem{Koebe}, the definition \requ{Far} of $\ve$ does not
depend on $f$. Let us prove the uniform version of inequality \requ{MD} in $\G$.
\begin{lemma}
\label{lemPull}
There exist $M>1$, $\be_M>\al_0$ and $r_M>0$ such that for all $\ga\in[\al_0,\be_M]$ if $W$ is
a connected open with $\dia W<r_M$, $W\mo$ a connected component of $g_\ga\mo(W)$ and $x\in W\mo$ 
then
$$\dia W\mo< M\abs{g_\ga'(x)}\mo\dia W.$$
\end{lemma}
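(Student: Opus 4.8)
The statement is the uniform--in--$\ga$ form of the classical Pullback estimate \requ{MD}; the plan is to recall the two--case proof of \requ{MD} for one map and to check that all constants stay uniform for $\ga$ in a short interval $[\al_0,\be_M]$. The uniformity is soft: by the continuity \requ{Cont} of the coefficients $a_k(\ga)$ there are $\be_M>\al_0$ and constants $0<\te_0<\Theta_0$ such that for all $\ga\in[\al_0,\be_M]$ one has $g_\ga'(x)=3a_3(\ga)(x-c_1(\ga))(x-c_2(\ga))$ with $\te_0<a_3(\ga)<\Theta_0$, $|g_\ga''(c_i(\ga))|\ge\te_0$, $|c_1(\ga)-c_2(\ga)|\ge\te_0$, and $|g_\ga''|,|g_\ga'''|\le\Theta_0$ on a fixed bounded neighbourhood of $[0,1]$; hence the non--degeneracy bounds \requ{Pow} hold on a uniform neighbourhood of each $c_i(\ga)$ with uniform constants. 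Also $g_\ga(W\mo)=W$ since $g_\ga$ is proper, so $\dia W=\dia g_\ga(W\mo)$; and the inequality is trivial if $x\in\cri_{g_\ga}$, so assume $x\notin\cri_{g_\ga}$. Fix a large constant $K$, then $M$ large enough (depending on $K$) that the constant $\ve_0=\ve_0(M)$ of the Distortion estimate \requ{Far} --- independent of $\ga$ by Koebe's \rlem{Koebe} --- satisfies $\ve_0\ge 1/K$, and split according to whether $\dst(W\mo,\cri_{g_\ga})\ge K\,\dia W\mo$ (the \emph{far} case) or not (the \emph{near} case).

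In the far case, for $K$ large and $r_M$ small the ball $B$ of radius $3\dia W\mo$ about any point of $W\mo$ avoids $\cri_{g_\ga}$ and $\arg g_\ga'$ has variation $<\pi$ there (because $\arg(x-c_i(\ga))$ does), so $g_\ga|_B$ is univalent (Noshiro--Warschawski) and, by \requ{Far}, has distortion $\le M$ on $\ol{W\mo}\se B$; \rlem{PBDia} applied at each $x\in W\mo$ then gives $\dia W\mo\le M|g_\ga'(x)|\mo\dia W$, which also covers every $W\mo$ lying far from $[0,1]$. In the near case, after shrinking $r_M$ the set $W\mo$ lies well inside the uniform neighbourhood of a single critical point $c=c_i(\ga)$ on which the normal form $g_\ga(c+z)-g_\ga(c)=\chi(z)^2$ holds, with $\chi=\chi_\ga$ analytic and univalent, $\chi(0)=0$, and $|\chi'|$ bounded above and below by uniform constants (this rests on $g_\ga'(c)=0$, $|g_\ga''(c)|\ge\te_0$, $|g_\ga'''|\le\Theta_0$, and is the source of \requ{Pow}). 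Since $\chi$ is uniformly bi--Lipschitz, writing $\Omega=\chi(W\mo-c)$ and $w_0=\chi(x-c)\in\Omega$ one gets $\dia W\mo\asymp\dia\Omega$, $|g_\ga'(x)|=2|\chi(x-c)||\chi'(x-c)|\asymp|w_0|$ and $\dia W=\dia g_\ga(W\mo)\asymp\dia\{w^2:w\in\Omega\}$, so it suffices to prove the absolute inequality $\dia\Omega\le C|w_0|\mo\dia\{w^2:w\in\Omega\}$ for every connected open $\Omega\ni w_0$ of small diameter.

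For that, let $s=\sup_{w\in\ol\Omega}|w|\ge|w_0|$; it suffices to show $\dia\{w^2:w\in\Omega\}\ge\tfrac14 s\,\dia\Omega$, which gives the claim with $C=4$ since $s\ge|w_0|$. If $\ol\Omega$ meets $B(0,s/2)$, say at $w'$, then $\dia\{w^2:w\in\Omega\}\ge|w_1^2-w'^2|\ge\tfrac34 s^2\ge\tfrac38 s\,\dia\Omega$ for $w_1\in\ol\Omega$ with $|w_1|=s$, using $\dia\Omega\le 2s$. If $w\mapsto w^2$ is not injective on $\Omega$ then $\Omega$ contains an antipodal pair $\pm w^*$, and a path in $\Omega$ joining them (which avoids $0$, as $\ol\Omega$ then misses $B(0,s/2)$) maps under $w\mapsto w^2$ to a loop winding around $0$ at modulus $\ge s^2/4$, so $\dia\{w^2:w\in\Omega\}\ge s^2/2\ge\tfrac14 s\,\dia\Omega$. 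Otherwise $w\mapsto w^2$ is univalent on $\ol\Omega\se\{|w|\ge s/2\}$; then, with $w_a,w_b\in\ol\Omega$ realising $|w_a-w_b|=\dia\Omega$, either $|w_a+w_b|\ge s/2$, so $\dia\{w^2:w\in\Omega\}\ge|w_a^2-w_b^2|=\dia\Omega\,|w_a+w_b|\ge\tfrac12 s\,\dia\Omega$, or $w_a,w_b$ point in nearly opposite directions, whence the angular spread of $\{w^2:w\in\Omega\}$ exceeds $\pi$ and, as before, $\dia\{w^2:w\in\Omega\}\ge s^2/2$. Enlarging $M$ to absorb all constants then yields the strict inequality of the statement. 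The hard part is precisely this near case: one must cleanly isolate the geometry of the squaring map --- the regime in which the left side $\dia W\mo$ is far smaller than the right side $|g_\ga'(x)|\mo\dia W$, which is the mechanism behind Theorem~A --- from the soft ingredients (continuity of coefficients, Koebe, the univalence criterion), and then verify that $\be_M$, $M$ and $K$ can be fixed once and for all, independently of $W$ and of $\ga\in[\al_0,\be_M]$.
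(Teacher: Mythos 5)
Your argument is correct, and its overall shape coincides with the paper's proof of \rlem{Pull}: the same dichotomy according to whether $W\mo$ is far from or close to $\cri$ (the paper tests $\dst\lr{W\mo,\cri}$ against $3\dia W\mo$, you against $K\dia W\mo$), with Koebe's \rlem{Koebe} plus \rlem{PBDia} handling the far case; your explicit univalence check via the argument variation of $g_\ga'$ is a useful supplement there, since merely avoiding $\cri$ does not formally give univalence of a cubic on a disk. Where you genuinely diverge is the near case. The paper writes the exact cubic Taylor expansion of $g_\ga$ at $c\in\cri$, compares with the model $f_b(z)=bz^2$, and records the two comparabilities \requ{BC}, namely $\sup_{W\mo}\abs{g_\ga'}\asymp\dia W\mo$ and $\dia W\asymp\lr{\dia W\mo}^2$, valid under the near-side condition; multiplying them gives the lemma with $M\geq M_4^2$. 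You instead conjugate exactly to $w\mapsto w^2$ through a univalent, uniformly bi-Lipschitz normal form $\chi$ and prove the absolute estimate $\dia\Set{w^2 : w\in\Omega}\geq\frac 14\sup_{w\in\ol\Omega}\abs{w}\cdot\dia\Omega$ by the winding-number/angular-spread case analysis. Your route yields the pointwise bound for every $x\in W\mo$ directly, does not use the near-side condition in the squaring estimate (so it works for any $W\mo$ inside the normal-form neighbourhood), and isolates cleanly the derivative-versus-diameter discrepancy exploited in Theorem A; the paper's route is shorter because the cubic expansion is explicit and the crude sup/diameter comparabilities already suffice once combined. One shared soft point deserves a line in either write-up: the near case needs $\dia W<r_M$ to force $\dia W\mo$ small, so that $W\mo$ sits inside the fixed neighbourhood of a single critical point where \requ{Pow}, respectively your normal form, is valid; you assert this (``after shrinking $r_M$''), exactly as the paper does implicitly, and it is indeed true uniformly in $\ga$, e.g.\ by factoring $g_\ga(z)-y$ over its three roots to see that any connected component of the preimage of a set of diameter $\rho$ has diameter $O\lr{\rho^{1/3}}$.
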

\begin{proof}
Let $\ga\in[\al_0,\be_1]$, $x\in W\mo$ and suppose
$$3\dia W\mo\leq\dst\lr{W\mo,\cri},$$
where we denote by $\cri$ the set of critical points $\Set{c_1,c_2}$. Then by Koebe's \rlem{Koebe} the
distortion is bounded by an universal constant $M_1\geq 1$ on the ball $B\lr{x,2\dia W\mo}$. 
Using \rlem{PBDia} 
\begin{equation}
\label{equBF}
\dia W\mo\leq M_1\abs{g_\ga'(x)}\mo\dia W.
\end{equation}

Let us remark some properties of the map $f_b\fd\C\C$ defined by $f_b(z)=bz^2$ for all $z\in\C$ and
$b>0$. Let $U$ be a connected open and $V=f_b(U)$. If $3\dia U>\dst\lr{U,0}$ then there exist universal
constants $M_2,M_3>1$ such that
$$
\begin{array}{rcccl}
 bM_2\mo\dia U & < & \sup\limits_{z\in U}\abs{f_b'(z)} & < & bM_2\dia U,\\
 bM_3\mo\lr{\dia U}^2 & < & \dia V & < & bM_3\lr{\dia U}^2.
\end{array}
$$
Let us also remark that using equality \requ{Cont} if $\ga\in[\al_0,\be_1]$ and $c\in \cri$ then
$$g_\ga(x)=g_\ga(c)+\frac{g_\ga''(c)}2(x-c)^2+\frac{g_\ga'''(c)}6(x-c)^3.$$
As $g_0''(c)\neq 0$ and $g_\ga(c)$, $g_\ga''(c)$ and $g_\ga'''(c)$ are continuous there exist 
$r_M>0$, $\be_M>\al_0$ and $M_4>1$ such that if $\ga\in[\al_0,\be_M]$, $\dia W<r_M$ and 
$$3\dia W\mo>\dst\lr{W\mo,\cri},$$ 
then
\begin{equation}
\label{equBC}
\begin{array}{rcccl}
 M_4\mo\dia W\mo & < & \sup\limits_{x\in W\mo}\abs{g_\ga'(x)} & < & M_4\dia W\mo,\\
 M_4\mo\lr{\dia W\mo}^2 & < & \dia W & < & M_4\lr{\dia W\mo}^2.
\end{array}
\end{equation}
The previous inequality together with inequality \requ{BF} complete the proof.
\end{proof}

We may now prove a uniform contraction result on a neighborhood of $g_0$ in $\G$. It replaces Proposition 10 in \cite{MCN1} in the proof of Theorem A.
\begin{prop}
\label{propGCE}
For any $1<\lambda_0<\lambda$ and $\theta < 1$ there exist $\be>\al_0$, $\de>0$ and $N>0$ 
such that for all $\ga\in[\al_0,\be]$, $0<R\leq\de$, $n\geq N$ and $z\in J_\ga$, the Julia set of
$g_\ga$, if $W\in\MO{Comp}g_\ga^{-n}\lr{B(z,R)}$ and there exists $x\in\overline W$ such that
$\abs{\lr{g_\ga^n}'(x)}>\la^n$ then 
\begin{equation}
\label{equL}
\MO{diam} W<\te R\la_0^{-n}.
\end{equation}
\end{prop}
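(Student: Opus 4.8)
The plan is to reduce the statement to a telescopic decomposition of the pullback along the orbit $z, g_\ga(z^{1}), \dots$ where $z^{j}$ denotes the point in $g_\ga^{-j}(z)\cap \overline W$ corresponding to $x$, splitting the $n$ iterations into \emph{close-to-critical} times and \emph{far-from-critical} times. First I would fix $D>1$ so close to $1$ that the Distortion estimate gives a distortion factor so small that products of such factors over any number of far-from-critical passages stay bounded by some fixed $M_D$; this uses that between two successive close-to-critical times the orbit stays at controlled distance from $\cri$, so after shrinking the ball radius $\de$ (via \rcor{GBS}, applied to get uniform Backward Stability on $[\al_0,\be]$) every intermediate pullback domain $W_j$ satisfies \requ{Far}. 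Along far-from-critical stretches I would use \rlem{PBDia} to turn the derivative bound into a diameter contraction bound; along the at most finitely many close-to-critical passages I would use \rlem{Pull}, whose key content is that near a degree-two critical point $\dia W_j\mo$ is comparable to $\abs{g_\ga'(x)}\mo \dia W_j$ (inequality \requ{BC}), so the derivative hypothesis $\abs{(g_\ga^n)'(x)}>\la^n$ transfers into a diameter estimate with the same rate up to a bounded multiplicative constant.

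Concretely, I would write
$$\dia W = \dia W_0 \geq \text{(telescoping lower bound)} \quad\text{and}\quad \dia W \leq M \abs{(g_\ga^n)'(x)}\mo \dia B(z,R)$$
where $M$ collects the constant $M_D$ from the far-from-critical distortion, the constant $M$ from \rlem{Pull} raised to the (bounded) number of critical passages, and the Koebe constants; here the number of close-to-critical times is bounded independently of $n$ and $\ga$ because $g_0$ is semi-hyperbolic (so its critical orbit visits a neighborhood of $\cri$ boundedly often), and this persists on a small enough parameter interval $[\al_0,\be]$ by the $C^1$-continuity \requ{Cont} of $\G$ together with \rcor{GBS}. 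Combining with $\abs{(g_\ga^n)'(x)}>\la^n$ gives $\dia W < M\,\la^{-n}\,2R$. To upgrade $\la^{-n}$ to $\te R\la_0^{-n}$ I would absorb the constant $M$ and the factor $2$ into the exponential gap: since $\la_0<\la$, choose $N$ so large that $2M\,(\la_0/\la)^{N} < \te$, and then for $n\geq N$ we get $\dia W < \te R\la_0^{-n}$ (replacing $\de$ by a smaller value if needed so that $R\le\de$ only enters linearly, which it already does).

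The main obstacle I expect is \textbf{controlling the close-to-critical passages uniformly}: one must show that the number of times the pullback domain comes $\varepsilon$-close to $\cri$ (forcing failure of \requ{Far}) is bounded by a constant $N_0$ independent of $n$, $\ga$, and the base point $z$. This is exactly where the hypothesis that $g_0$ is semi-hyperbolic and $c_1$ strictly preperiodic is essential — one needs a form of the telescopic lemma from \cite{MCN1} (its Proposition 10, here being replaced) adapted to the fact that $g$ itself is \emph{not} semi-hyperbolic because $c_2$ is recurrent. The way around this is the remark emphasized in the paper's overview: near the degree-two critical point the \emph{diameter} of a pullback shrinks only at a power rate regardless of how small the derivative is, so a passage near $c_2$ still contributes a harmless bounded distortion constant to the diameter estimate even though it would be catastrophic for a derivative estimate; this is precisely what \rlem{Pull} and inequality \requ{BC} buy us, and it is why the statement is phrased in terms of $\dia W$ and a given lower bound on $\abs{(g_\ga^n)'(x)}$ rather than asserting the derivative bound itself.
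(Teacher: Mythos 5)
Your overall architecture (split the pullback into far-from-critical times handled by distortion plus \rlem{PBDia}, and close-to-critical times handled by \rlem{Pull}, then trade the derivative bound $\abs{(g_\ga^n)'(x)}>\la^n$ for a diameter bound and absorb constants into the gap $\la_0<\la$) matches the paper. But there is a genuine gap at the step you yourself flag as the main obstacle: you claim the number of close-to-critical passages is bounded by a constant $N_0$ independent of $n$, arguing from semi-hyperbolicity of $g_0$ and $C^1$-continuity. This is false and cannot be repaired in that form. Semi-hyperbolicity of $g_0$ concerns the forward critical orbit; here one pulls back a ball centered at an \emph{arbitrary} $z\in J_\ga$, and such a backward orbit may enter any fixed neighborhood of $\cri$ on the order of $n$ times. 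Moreover the proposition must hold uniformly on $[\al_0,\be]$, including parameters (and ultimately the limit map $g_{\gl}$) whose second critical point is recurrent, so pullbacks along the critical orbit itself meet neighborhoods of $c_1,c_2$ unboundedly often as $n\to\ft$; $C^1$-closeness to $g_0$ only controls finitely many iterates and gives no such count. With $t$ unbounded, your final constant $M^{t}$ is not a constant, and the concluding choice ``$2M(\la_0/\la)^N<\te$'' no longer closes the argument. (A secondary version of the same problem: choosing $D$ close to $1$ does not make the product of per-step distortion factors over all far passages bounded; that product is of size $D^{n-t}$ and must also be absorbed exponentially, which forces $D<\la/\la_0$.)

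What the paper does instead is not to bound $t$ but to \emph{space out} the critical passages and amortize their cost. One picks $r_1$ small so that for $g_0$ (and, by continuity of coefficients and critical points, for all $\ga\in[\al_0,\be]$) every component of $g_\ga^{-k}\lr{B(c_i,2r_1)}$ with $1\leq k\leq N$ satisfies \requ{Far}; this uses only that the critical values of $g_0$ land on repelling fixed points, so short backward orbits of the critical points avoid $\cri$. Combined with \rcor{GBS}, which makes every pullback of $B(z,\de)$ of diameter $<\ve r_1$, any time $W_k$ violates \requ{Far} it is trapped in $B(c,2r_1)$, and the next violation can occur only after more than $N$ steps; hence $t<n/l$ with $N=2l$. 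The conclusion then follows by multiplying the stepwise bounds $\dia W_j\leq D\abs{g_\ga'(x_j)}\mo\dia W_{j-1}$ (far) and $\dia W_j\leq M\abs{g_\ga'(x_j)}\mo\dia W_{j-1}$ (close), giving $\dia W_n\leq M^{n/l}D^{n}\la^{-n}\cdot 2R$, and choosing $D\in(1,\la/\la_0)$ and $l$ in advance so that $2M^{j/l}D^{j}\la^{-j}\leq\te\la_0^{-j}$ for $j\geq l$. So the exponential gap must absorb both $D^n$ and $M^{n/l}$; your proposal needs this spacing-and-amortization mechanism to replace the unavailable uniform bound on the number of critical passages.
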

\begin{proof}

Let us fix $D\in(1,\lambda/\lambda_0)$. Let $\ve\in(0,1)$
be provided by inequality (\ref{equFar}). Let also $r_M>0$ be small and $M
> 1$ provided by the \rlem{Pull}. Let $l\geq 1$ such that
\begin{equation}
2M^{j/l}D^j\lambda^{-j}\leq \theta\lambda_0^{-j}\mbox{ for all }j\geq l.
\label{equC}
\end{equation}

Let $N=2l$. There exists $r_1<r_M$ such that for all $i=1,2$, $k\var 1N$ and any
connected component $W$ of $g_0^{-k}\lr{B(c_i,4r_1)}$
$$\MO{diam}{W}\leq2\ve\MOo{dist}{W, \mathrm{Crit}}.$$
An argument similar to the proof of \rpro{Free} and the continuity of the critical points
and of the coefficients \requ{Cont} of $g_\ga$ show that there exists $b_0>\al_0$ such that
for all $\ga\in[\al_0, b_0]$, $i=1,2$ and $k\var 1N$
$$g_\ga^{-k}\lr{B(c_i,2r_1)}\se g_0^{-k}\lr{B(c_i,4r_1)}.$$
There are only a finite number of connected components of $g_0^{-k}\lr{B(c_i,4r_1)}$
for all $i=1,2$ and $k\var 1N$. Therefore by the continuity of the critical points there
exists $b_1>\al_0$ such that for all $\ga\in[\al_0,b_1]$, $i=1,2$ and $k\var 1N$ all 
connected components of $g_\ga^{-k}\lr{B(c_i,2r_1)}$ satisfy inequality \requ{Far}.

\rcor{GBS} provides $b_2>\al_0$ and $\de>0$ such that for all $\ga\in[\al_0,b_2]$, $z\in J_\ga$ 
and $k\geq 0$
$$\MO{diam}\MO{Comp} g_\ga^{-k}(B(z,\de))<\ve r_1.$$
Let us define $\be=\min\lr{\be_M,b_0,b_1,b_2}$ and fix $\ga\in[\al_0,\be]$, $z\in J_\ga$ and $n>N$.
Then 
$$\MO{diam}\MO{Comp}g_\ga^{-k}\lr{B(z,R)^{-k}}<\varepsilon r_1<r_M\mfa 0\leq k\leq n.$$
Let us also fix $W$ and $x$ as in the hypothesis. Denote $x_k=g_\ga^{n-k}(x)\in W_k=g_\ga^{n-k}(W)$
for all $k\var 0n$. 

Let $0<k_1<\ldots<k_t\leq N$ be all the integers $0\leq k\leq n$ such that $W_k$ does not 
satisfy the inequality (\ref{equFar}).
As $\ve r_1\geq \MO{diam}{W_{k_i}}$
$$r_1>\MOo{dist}{W_{k_i},\mathrm{Crit}}\mfa 1\leq i\leq t.$$ 
Then for all $1\leq i \leq t$ there exists $c\in\{c_1,c_2\}$ such that $W_{k_i}\subseteq B(c,2r_1)$. 
By the definition of $r_1$
\begin{equation}
k_{i+1}-k_i>N\mfa 1\leq i< t.
\label{equDst}
\end{equation}

We may begin estimates. For all $0<j\leq n$ with $j\neq k_i$ for all $1
\leq i \leq t$, $W_j$ satisfies the inequality (\ref{equFar}), so the distortion
on $W_j$ is bounded by $D$. Thus by \rlem{PBDia}
\begin{equation}
\MO{diam}{W_j}\leq D|g_\ga'(x_j)|^{-1}\MO{diam}{W_{j-1}}.
\label{equD}
\end{equation}
If $j=k_i$ for some $1\leq i\leq t$ we use \rlem{Pull} to obtain
\begin{equation}
\MO{diam}{W_j}\leq M|g_\ga'(x_j)|^{-1}\MO{diam}{W_{j-1}}.
\label{equP}
\end{equation}

Let us recall that $x_n=x$ with $\abs{\lr{g_\ga^n}'(x)}>\la^n$ and that $W_0=B(z,R)$ so 
$\MO{diam} W_0=2R$. If $t\geq 2$ inequality (\ref{equDst}) yields $lt\leq 2l(t-1)=N(t-1)<n$.
Consequently, as $n>2l=N$,
$$t<\frac nl.$$
Multiplying all the relations (\ref{equD}) and (\ref{equP}) for all $0<j\leq n$ we obtain
$$
\begin{array}{rcl}
\MO{diam}{W_n}& \leq & M^tD^{n-t}\abs{\lr{g_\ga^n}'(x_n)}^{-1}\MO{diam}{W_0}\\
& < & 2M^{n/l}D^n\lambda^{-n}R\\
& \leq & \theta R\lambda_0^{-n}.
\end{array}
$$
The last inequality is inequality (\ref{equC}).
\end{proof}

As a direct consequence of inequality \requ{Dst} we obtain the following corollary.
\begin{corollary}
\label{corGCE}
Assume the hypothesis of \rpro{GCE}. If there exist $$-1\leq k_1<k_2< n$$ such that 
$v\in \ol{g_\ga^{k_1+1}(W)}$ and $\ol{g_\ga^{k_2}(W)}\cap\{c_1,c_2\}\neq\es$ then 
$k_2-k_1>N$ therefore condition $n\geq N$ is superfluous.
\end{corollary}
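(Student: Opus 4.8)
The assertion is a short bookkeeping consequence of the estimate \requ{Dst} established inside the proof of \rpro{GCE}: there one sets $W_j=g_\ga^{n-j}(W)$ for $0\le j\le n$, and — let me call $j$ a \emph{bad index} when $W_j$ fails the geometric condition \requ{Far} — the estimate \requ{Dst}, stated there for consecutive bad indices, says after telescoping that any two bad indices differ by more than $N$. The plan is simply to exhibit the two events of the hypothesis as two distinct bad indices.

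First I would rewrite the events in that notation: $g_\ga^{k_2}(W)=W_{n-k_2}$ and $g_\ga^{k_1+1}(W)=W_{n-k_1-1}$, and since $k_1<k_2$ we get $n-k_1>n-k_2\ge 1$, so the two corresponding indices are distinct. The critical-point event is immediate: $\ol{W_{n-k_2}}\cap\Set{c_1,c_2}\neq\es$ gives $\dst(W_{n-k_2},\cri)=0$ while $\dia W_{n-k_2}>0$ (it is the image of a nonempty connected open set under a nonconstant polynomial), so $W_{n-k_2}$ violates \requ{Far}, i.e.\ $n-k_2$ is a bad index.

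For the other event I would use that $v=g_\ga(c_2)$ is a critical value. The component $W_{n-k_1}$ sits one pullback step closer to $W$ than $W_{n-k_1-1}$ and satisfies $g_\ga(W_{n-k_1})=W_{n-k_1-1}$; since $v\in\ol{W_{n-k_1-1}}$ and the pullback passes the critical value $v$ through the critical point $c_2$, we get $c_2\in\ol{W_{n-k_1}}$, so $W_{n-k_1}$ too violates \requ{Far} and $n-k_1$ is a bad index. (The boundary value $k_1=-1$, for which $g_\ga^{k_1+1}(W)=\ol{W_n}=\ol W$ itself, is treated directly in the same spirit.) Applying \requ{Dst} to the two distinct bad indices $n-k_1>n-k_2$ yields $(n-k_1)-(n-k_2)>N$, that is $k_2-k_1>N$; then $k_1\ge -1$ and $k_2<n$ force $n>k_2\ge k_1+N+1\ge N$, so $n>N$ holds automatically, which is exactly the asserted superfluity of the hypothesis $n\ge N$ of \rpro{GCE}.

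The main obstacle is the step in the third paragraph: making precise that the presence of the critical \emph{value} $v$ in $\ol{W_{n-k_1-1}}$ really forces the critical \emph{point} $c_2$ (and not the other $g_\ga$-preimage of $v$) into $\ol{W_{n-k_1}}$, together with the off-by-one index bookkeeping and the $k_1=-1$ endpoint; once this identification is in place everything follows from \requ{Dst} by the arithmetic above, which is why the result is labelled a direct consequence of that inequality.
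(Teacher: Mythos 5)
Your reduction of the critical--point event to a ``bad index'' and your final arithmetic ($k_2-k_1>N$ together with $k_1\ge-1$, $k_2<n$ forcing $n>N$) are fine, but the step you flag yourself is a genuine gap, and it cannot be closed in the form you state it. From $v\in\ol{g_\ga^{k_1+1}(W)}$ one cannot conclude $c_2\in\ol{g_\ga^{k_1}(W)}$: the cubic $g_\ga$ has a second, non-critical preimage of $v$ (for $\ga$ near $\al_0$ it lies near the repelling fixed point $0$, since $g_0^{-1}(0)=\{0,c_2\}$), and the closure of the particular component $g_\ga^{k_1}(W)$ of $g_\ga^{-1}(g_\ga^{k_1+1}(W))$ is only guaranteed to contain \emph{some} preimage of $v$, possibly the non-critical one. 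Moreover, in the case $k_1=-1$ --- which is exactly the case in which the corollary is used in the proof of Theorem A --- the set $g_\ga^{k_1}(W)$ does not exist, and inequality \requ{Dst} as proved only concerns the sets $W_k$ with $0\le k\le n$, so it cannot be invoked literally; one has to rerun its mechanism with one extra pullback.

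The argument that works (and is what the paper's one-line justification refers to) is the following. Since $g_\ga$ is an open map and $c_2\in g_\ga^{-1}(v)$, the hypothesis $v\in\ol{g_\ga^{k_1+1}(W)}$ yields a component $W'$ of $g_\ga^{-1}(g_\ga^{k_1+1}(W))$ with $c_2\in\ol{W'}$; in general $W'$ need not be $g_\ga^{k_1}(W)$, and $W'$ is defined also when $k_1=-1$. Now $W'$ is a component of $g_\ga^{-(n-k_1)}(B(z,R))$ and $R\le\de$, so by the choice of $\de$ via \rcor{GBS} in the proof of \rpro{GCE} we have $\dia W'<\ve r_1$, hence $W'\se B(c_2,2r_1)$; similarly $g_\ga^{k_2}(W)\se B(c,2r_1)$ for some $c\in\{c_1,c_2\}$, because its closure meets the critical set and its diameter is smaller than $\ve r_1$. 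Since $g_\ga^{k_2-k_1}(W')=g_\ga^{k_2}(W)$, the set $W'$ lies in a component of $g_\ga^{-(k_2-k_1)}(B(c,2r_1))$; if $1\le k_2-k_1\le N$, this component satisfies \requ{Far} by the choice of $r_1$, $b_0$ and $b_1$, so it is at positive distance from $\{c_1,c_2\}$, contradicting $c_2\in\ol{W'}$. Hence $k_2-k_1>N$, and your concluding arithmetic then gives $n>N$, as claimed.
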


Let us compute a diameter estimate similar to \requ{Int}.
\begin{lemma}
\label{lemNeig}
There exist $\de>0$ and $N>0$ such that for all $\ga\in[\al_0,\be_1]$, $k\geq 1$ and $x\in I_3(N)$ 
with $\iu(x)=I_3^kI_*\ldots$ where $I_*\in\left\{I_2,I_3\right\}$, the following statement holds. 
If $x\in W\se\C$ is a connected open such that $\MO{diam}g_\ga^i(W)<\de$ for all $i\var 0{k-1}$ then
$$\MO{diam}W<\la^{-k}\MO{diam}g_\ga^k(W).$$
\end{lemma}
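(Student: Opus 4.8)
The plan is to pull $g_\ga^k(W)$ back to $W$ along $g_\ga^{k-1}(W),\ldots,g_\ga^0(W)=W$ with bounded distortion and to compare the resulting contraction factor with $\abs{(g_\ga^k)'(x)}\mo$, which by \rlem{DF} carries genuine exponential expansion of rate at least $\la'>\la$. The decisive structural remark is that the whole finite piece of orbit $x,g_\ga(x),\ldots,g_\ga^{k-1}(x)$ stays uniformly away from $\cri$. Indeed, for $\ga\in[\al_0,\be_1]$ let $\xi(\ga)\in(c_2,1)$ be the unique point of $I_3$ with $g_\ga(\xi(\ga))=c_1$; it exists and is continuous because $g_\ga$ maps $I_3$ increasingly onto $(v,1]$ and $v<c_1<1$ by \requ{GKn}. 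Since $I_*\in\Set{I_2,I_3}$ we have $g_\ga^k(x)>c_1$, so a downward induction on $i$, using the monotonicity of $g_\ga$ on $I_3$, gives $g_\ga^i(x)\in(\xi(\ga),1]$ for every $i\var 0{k-1}$. By compactness, $\theta_0:=\inf\SetDef{\abs{g_\ga'(y)}}{\ga\in[\al_0,\be_1],\ \xi(\ga)\le y\le1}>0$ and $\eta_0:=\inf_\ga\lr{\xi(\ga)-c_2}>0$, hence $\dst\lr{g_\ga^i(x),\cri}\ge\eta_0$ for $i\var 0{k-1}$.

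Fix $\ve\in(0,1)$ small enough that $(\la')^{1-\ve}>\la$, and fix $D\in\lr{1,(\la')^{1-\ve}/\la}$, so $q:=D\la/(\la')^{1-\ve}<1$. Let $N_1$ be the integer given by \rlem{DF} for $j=3$ and this $\ve$ on $[\al_0,\be_1]$ (recall $[\al_0,\be_1]\se[\al,\be']$), and note that $\la_1(3)=\min_{\ga\in[\al_0,\be_1]}\abs{g_\ga'(1)}\ge\la'$ by \requ{R}. We will fix $N\ge N_1$ below and distinguish two cases. If $1\le k\le N$, then $x\in I_3(N)(\ga)\se I_3(\max(k,N_1))(\ga)$, so \rlem{DF} gives $\abs{(g_\ga^k)'(x)}>(\la')^{k(1-\ve)}$. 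If $k>N$, then from $\iu(x)=I_3^k\ldots$ we have $x\in I_3(k-1)(\ga)=I_3(\max(k-1,N_1))(\ga)$ (as $k-1\ge N\ge N_1$), so \rlem{DF} gives $\abs{(g_\ga^{k-1})'(x)}>(\la')^{(k-1)(1-\ve)}$, and multiplying by $\abs{g_\ga'(g_\ga^{k-1}(x))}\ge\theta_0$,
$$\abs{(g_\ga^k)'(x)}>\theta_0\,(\la')^{(k-1)(1-\ve)}.$$

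For the pullback, let $\ve_0>0$ be provided by \ineq{Far} for the distortion bound $D$ (Koebe's \rlem{Koebe}), and choose $\de>0$ so small that $2\de<\eta_0$, that $g_\ga$ is univalent on every disc $B(y,2\de)$ with $\xi(\ga)\le y\le1$ (valid for $\de$ small, since $(\ga,z)\mapsto g_\ga'(z)$ is uniformly continuous near $I$ and $\ge\theta_0$ in modulus there), and that $4\de\le\ve_0(\eta_0-2\de)$. If $\dia g_\ga^i(W)<\de$ for $i\var 0{k-1}$, then $g_\ga^i(W)\se B(g_\ga^i(x),\de)$, so by \ineq{Far} the distortion of $g_\ga$ on $B(g_\ga^i(x),2\de)$ is at most $D$; applying \rlem{PBDia} on that disc and multiplying the $k$ resulting inequalities yields
$$\dia W\le D^k\abs{(g_\ga^k)'(x)}\mo\dia g_\ga^k(W).$$
In the first case this gives $\dia W\le q^k\la^{-k}\dia g_\ga^k(W)<\la^{-k}\dia g_\ga^k(W)$. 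In the second it gives $\dia W\le\theta_0\mo(\la')^{1-\ve}q^k\la^{-k}\dia g_\ga^k(W)$; since $q<1$ there is $k_1$, depending only on the fixed constants $\theta_0,D,\la,\la',\ve$, with $\theta_0\mo(\la')^{1-\ve}q^k<1$ for all $k\ge k_1$. Taking $N=\max(N_1,k_1,1)$, the case $k>N$ then forces $k\ge k_1$ and the desired inequality follows.

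The hard part is balancing the three competing exponential rates — the distortion loss $D^k$, the genuine expansion of rate about $\la'$ gained while the orbit sits near the repelling fixed point $s_3=1$, and the target rate $\la^{-k}$ — and ordering the choices ($\ve$ and $D$ first, then the threshold $N$, then $\de$) so that the unavoidable leaks survive: the non-uniform exponent $1-\ve$ of \rlem{DF}, and, when $k$ is large, the single last step that contributes only the fixed factor $\theta_0$ instead of an expanding one (which is why the clean bound of \rlem{DF} with exponent $k$ must be used for $k\le N$ and the exponent $k-1$ only for $k>N$). The geometric input that makes everything go through — that the finite orbit of $x$ never leaves $(\xi(\ga),1]$, so the pullback never meets a ``bad time'' near $\cri$ — is what allows a fixed distortion bound $D$ throughout, sidestepping the case analysis of \rpro{GCE}.
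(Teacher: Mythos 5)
Your proof is correct and follows essentially the same route as the paper: the orbit segment stays above the $I_3$-preimage of $c_1$ (your $\xi(\ga)$ is the paper's $x(I_3c_1)$), so one gets uniform distance to $\cri$, exponential expansion from \rlem{DF} (with the last factor controlled by a compactness bound, as the paper does via \rlem{Bou}), and then per-step bounded distortion via \requ{Far} and \rlem{PBDia}. The only differences are bookkeeping — the paper absorbs the $(1-\ve)$ exponent and the lost last factor into the single bound $\abs{(g_\ga^k)'(x)}>(\la')^k$ for $N$ large and takes $D\in(1,\la'/\la)$, while you carry them explicitly through the case split $k\le N$ versus $k>N$ — so no further comment is needed.
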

\begin{proof}
Let us denote $x_i=g_\ga^i(x)$ and $W_i=g_\ga^i(W)$ for all $i\var 0k$. Using \rlem{DF}, 
inequalities \requ{R} and \rlem{Bou} for $\iu_1=I_3c_1$, $\iu_2=I_3c_2$ if $I_*=I_2$ and
$\iu_1=I_3c_2$, $\iu_2=I_3^\ft$ if $I_*=I_3$ there exists $N_0>0$ that does not depend on $\ga$ 
such that if $N\geq N_0$ then
$$\abs{\lr{g_\ga^k}'(x)}>\lr{\la'}^k.$$

Let $D\in\lr{1,\frac{\la'}{\la}}$ and $\ve>0$ given by inequality \requ{Far}. Using \rlem{PBDia} it 
is enough to show that $B(x_i,2\de)$ satisfies inequality \requ{Far} for all $i\var 0{k-1}$.

Let us recall that $g_\ga(y)<y$ for all $y\in(c_2,1)=I_3\sm\{1\}$. Therefore for all $i\var 0{k-1}$
$$\dst \lr{x_i,\{c_1,c_2\}}\geq\dst \lr{x_{k-1},\{c_1,c_2\}}>\dst \lr{x(I_3c_1),\{c_1,c_2\}}.$$
Let 
$$d=\min\limits_{\ga\in[\al_0,\be_1]}\MO{dist}\lr{x(I_3c_1),\{c_1,c_2\}}$$ 
and recall that $\ve$ does not depend on $\ga$. Therefore there exists 
$$\de=\frac{d}{2(1+2\ve\mo)}>0$$
such that if $\dst \lr{y,\{c_1,c_2\}}\geq d$ then $B(y,2\de)$ satisfies inequality \requ{Far}. 
\end{proof}
The following corollary admits a very similar proof. 
\begin{corollary}
\label{corNeig}
There exist $\de>0$ and $N>0$ such that for all $\ga\in[\al_0,\be_1]$, $k\geq 1$ and $x\in I_2(\max(k,N)+1),$ 
the following statement holds. 
If $x\in W\se\C$, a connected open such that $\MO{diam}g_\ga^i(W)<\de$ for all $i\var 0{k-1}$, then
$$\MO{diam}W<\la^{-k}\MO{diam}g_\ga^k(W).$$
\end{corollary}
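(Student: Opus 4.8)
The plan is to repeat the telescopic pull-back estimate of \rlem{Neig} almost verbatim; the only ingredient that needs a new argument is the uniform lower bound on $\dst\lr{g_\ga^i(x),\cri}$, because $g_\ga$ is not monotone on $I_2$ and so the argument via $g_\ga(y)<y$ that was used on $I_3$ is not available.

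First I would establish the derivative bound. The fixed point of $g_\ga$ in $I_2$ is the repelling point $r$, and $\min_{\ga\in[\al_0,\be_1]}\abs{g_\ga'(r)}>\la'$ by \requ{R}; hence, in the notation of \rlem{DF}, for $\ve>0$ small enough one has $\la_1(2)^{1-\ve}>\la'$. Applying \rlem{DF} with $j=2$ (and, if one wants to be careful about the last iteration before the orbit could leave $I_2$, \rlem{Bou} for the finite itineraries $I_2c_2\prec I_2c_1$ to bound that single derivative factor from below), one gets $N_0>0$, independent of $\ga$, such that $\abs{\lr{g_\ga^k}'(x)}>(\la')^k$ for all $k\geq 1$ and all $x\in I_2\lr{\max(k,N_0)+1}(\ga)$.

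Next, fix $D\in\lr{1,\la'/\la}$ and let $\ve>0$ be the constant of \requ{Far}. Writing $x_i=g_\ga^i(x)$ and $W_i=g_\ga^i(W)$, it is enough to find $\de>0$ so that $B(x_i,2\de)$ satisfies \requ{Far} for every $i=0,\ldots,k-1$: then $g_\ga$ is univalent with distortion at most $D$ on each such ball, the hypothesis $\dia g_\ga^i(W)<\de$ gives $W_i\se B(x_i,2\de)$, and chaining \rlem{PBDia} along $W_0,\ldots,W_k$ together with the derivative bound yields $\dia W\leq D^k\abs{\lr{g_\ga^k}'(x)}^{-1}\dia W_k<(D/\la')^k\dia W_k<\la^{-k}\dia W_k$, which is the claim. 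The key point is the distance estimate. For $i\leq k-1$ we have $x_i\in I_2\lr{\max(k,N_0)+1-i}(\ga)\se I_2(1)(\ga)$. Now $g_\ga$ is strictly decreasing on $I_2=(c_1,c_2)$ with $g_\ga(c_1)=1>c_2$ by \requ{GPre} and $g_\ga(c_2)=v<c_1$, so $g_\ga$ carries a one-sided neighbourhood of $c_1$ in $I_2$ into $I_3$ and a one-sided neighbourhood of $c_2$ in $I_2$ into $I_1$; hence $I_2(1)(\ga)=\lr{x(I_2c_2),x(I_2c_1)}$ is an open interval compactly contained in $(c_1,c_2)$ whose endpoints, as well as $c_1$ and $c_2$, depend continuously on $\ga$ by \rlems{Crit}{Fin}. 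Therefore $d:=\min_{\ga\in[\al_0,\be_1]}\min\lr{x(I_2c_2)-c_1,\,c_2-x(I_2c_1)}>0$ and $\dst(x_i,\cri)\geq d$ for all $i<k$. Taking $\de=d\lr{2(1+2\ve^{-1})}^{-1}$ makes $B(y,2\de)$ satisfy \requ{Far} whenever $\dst(y,\cri)\geq d$, and setting $N=N_0$ finishes the proof.

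The main obstacle is exactly this last observation: that a point of $I_2$ whose image again lies in $I_2$ is automatically bounded away from both turning points, uniformly in $\ga$. Once it replaces the monotonicity argument used on $I_3$, the rest is the same pull-back computation as in \rlem{Neig}.
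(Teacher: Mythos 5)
Your proof is correct and is exactly the route the paper intends: the paper only remarks that \rcor{Neig} ``admits a very similar proof'' to \rlem{Neig}, and your argument is precisely that adaptation --- the derivative bound from \rlem{DF} with $j=2$, $s_2=r$ and \requ{R}, followed by the same chaining of \rlem{PBDia} over balls satisfying \requ{Far}. Your replacement of the monotonicity argument used on $I_3$ by the observation that $I_2(1)(\ga)\subseteq\lr{x(I_2c_2),x(I_2c_1)}$ is uniformly bounded away from $c_1,c_2$ on $[\al_0,\be_1]$ is the right fix, and it matches the quantities $\abs{x(I_2c_2)-c_1}$, $\abs{x(I_2c_1)-c_2}$ that the paper itself uses to define $\ve_0$.
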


Let us recall that all distances and diameters are considered with respect to the Euclidean metric,
as we deal exclusively with polynomial dynamics. Let us state Lemma 3 in \cite{MCN1} in this setting.
\begin{lemma}
\label{lemEMod}
Let $f$ be a polynomial, $z\in\C$ and $0<r<R$. Let $W\in\MO{Comp}f^{-1}\lr{B(z,R)}$
and $W'\in\MO{Comp}f^{-1}\lr{B(z,r)}$ with $W'\subseteq W$. If $\Deg Wf\leq\mu$ then
$$\frac{\MO{diam}{W'}}{\MO{diam}{W}} < 32\left(\frac rR\right)^{\frac 1\mu}.$$
\end{lemma}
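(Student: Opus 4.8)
The plan is to run the classical conformal-modulus (extremal-length) argument. Write $A_0=B(z,R)\setminus\overline{B(z,r)}$, so that $\MO{mod}(A_0)=\frac1{2\pi}\log(R/r)$, and put $d:=\Deg Wf\le\mu$. Since $f$ is a polynomial, every component of the preimage of a topological disk is a bounded simply connected domain; in particular $W$ and $W'$ are bounded and simply connected, $\overline{W'}$ is a compact subset of $W$ (as $\overline{B(z,r)}\subseteq B(z,R)$), and $f$ restricts to a proper holomorphic map of degree $d$ of $W$ onto $B(z,R)$. The idea is: (i) $A_0$ pulls back under $f|_W$ to a topological annulus $U\subseteq W$ which separates $W'$ from $\partial W$ and satisfies $\MO{mod}(U)\ge\frac1d\MO{mod}(A_0)$; (ii) such a separating annulus of large modulus forces $\dia W'$ to be small compared with $\dia W$, via the Teichm\"uller--Gr\"otzsch estimate.

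First I would dispose of the range $32(r/R)^{1/\mu}\ge 1$: as $\overline{W'}$ is a compact subset of the open connected set $W$, two points realizing $\dia W'$ can be pushed a little further apart while staying inside $W$, so $\dia W'<\dia W\le 32(r/R)^{1/\mu}\dia W$, and there is nothing to prove. So assume $32(r/R)^{1/\mu}<1$, i.e.\ $(R/r)^{1/\mu}>32$; since $d\le\mu$ and $R/r>1$ this gives $(R/r)^{1/d}\ge(R/r)^{1/\mu}>32$ and $(r/R)^{1/d}\le(r/R)^{1/\mu}$.

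The construction of $U$ is the step I expect to be genuinely delicate. Because $f$ maps $f^{-1}(A_0)\cap W$ properly onto $A_0$ with degree $d$, a standard extremal-length pull-back argument (pull back the extremal metric of the core curve family of $A_0$ by $f$) produces a topological annulus $U_0\subseteq W$ separating $W'$ from $\partial W$ with $\MO{mod}(U_0)\ge\frac1d\MO{mod}(A_0)$; concretely one can take $U_0$ to be the component of $W\setminus f^{-1}(\overline{B(z,r)})$ adjacent to $\partial W$, with its superfluous complementary holes filled in. Now $\overline{W'}$ is a closed topological disk inside $W$ (its boundary lying in the piecewise-analytic curve $f^{-1}(\partial B(z,r))$), so $U:=W\setminus\overline{W'}$ is a topological annulus with boundary components $\partial W$ and $\partial W'$; it contains $U_0$, hence by monotonicity of the modulus $\MO{mod}(U)\ge\MO{mod}(U_0)\ge\frac1{2\pi d}\log(R/r)$, i.e.\ $e^{2\pi\MO{mod}(U)}\ge(R/r)^{1/d}$. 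The care is entirely in these topological statements and in the "degree divides modulus" inequality; all the rest is arithmetic.

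Finally, the Teichm\"uller--Gr\"otzsch modulus estimate states that any annulus with bounded complementary component $E$ and other complementary component $E'$ satisfies $e^{2\pi\MO{mod}(U)}\le 16\bigl(1+\dst(E,E')/\dia E\bigr)$. Here $E=\overline{W'}$ and $E'=\RS\setminus W$, with $\dst(\overline{W'},\C\setminus W)\le\dia W$; assuming $\dia W'>0$ (the other case being trivial), this rearranges to $\dia W'\le 16\,\dia W/\bigl(e^{2\pi\MO{mod}(U)}-16\bigr)$, the denominator being positive because $e^{2\pi\MO{mod}(U)}\ge(R/r)^{1/d}>32$. Using $e^{2\pi\MO{mod}(U)}-16>\tfrac12(R/r)^{1/d}$ one concludes
$$\dia W'\le\frac{16\,\dia W}{e^{2\pi\MO{mod}(U)}-16}<\frac{16\,\dia W}{\tfrac12(R/r)^{1/d}}=32\,(r/R)^{1/d}\,\dia W\le 32\,(r/R)^{1/\mu}\,\dia W,$$
which is the asserted inequality.
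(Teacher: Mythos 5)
Your route --- pull the round annulus $A_0=B(z,R)\setminus\overline{B(z,r)}$ back to a separating annulus of modulus at least $\frac1{2\pi\mu}\log(R/r)$, then convert modulus into a diameter ratio by a Teichm\"uller--Gr\"otzsch type estimate --- is the intended one (the paper does not reprove the lemma, it cites Lemma 3 of \cite{MCN1}, whose proof is exactly this modulus argument), and your final arithmetic does close. But the two facts you invoke are asserted at precisely the points where the work lies. The claim that $\overline{W'}$ is a closed topological disk, so that $U=W\setminus\overline{W'}$ is an annulus with boundary components $\partial W$ and $\partial W'$, is not justified, and for a general simply connected domain it is false (the closure need not be a disk and its complement need not be connected); here it can be repaired, but you should either pass to the filled hull $\widehat{\overline{W'}}$ --- which changes neither the diameter (the hull lies in the convex hull) nor the separation from $\C\setminus W$ --- and get connectivity of $W\setminus\widehat{\overline{W'}}$ from, say, Janiszewski's theorem, or apply the geometric estimate directly to the separating annulus produced by the pull-back. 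Relatedly, a component of $f^{-1}(A_0)$ inside $W$ need not be an annulus when $A_0$ contains critical values of $f|_W$, so the inequality $\MO{mod}\geq\frac1d\MO{mod}(A_0)$ should be obtained by the extremal-length argument on the crossing curve family of $W\setminus\widehat{\overline{W'}}$ (pull back an admissible metric of $A_0$; lengths do not decrease, the area is multiplied by at most $d$), not by asserting that a preimage component is an annulus of modulus $\frac1k\MO{mod}(A_0)$.

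The second point is quantitative: the constant $32$ hangs entirely on the exact form $e^{2\pi\MO{mod}(U)}\leq16\lr{1+\dst(E,E')/\dia E}$ that you quote. The naive application of the two-point Teichm\"uller theorem gives only $16\lr{2+\dst/\dia E}$ (diametral pair in $E$) or $16\lr{1+2\dst/\dia E}$ (nearest point of $E$ together with a point at distance at least $\dia E/2$ from it), and with either of these your computation no longer produces $32(r/R)^{1/\mu}$ --- for instance the range $32<(R/r)^{1/\mu}\leq64$ is then covered neither by the estimate nor by the trivial bound $\dia W'<\dia W$. The form you use is in fact true (it follows from $\mathrm{cap}(E)\geq\dia E/4$, the inclusion of $E$ in a disk of radius $2\,\mathrm{cap}(E)$ about its conformal centre, and the Gr\"otzsch inequality $\mu(s)<\log(4/s)$ for the Gr\"otzsch modulus function), but since the stated constant rests on it, it must be proved or cited precisely rather than invoked by name.
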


Let us set some constants that define the telescope construction used in the proof of Theorem A. For convenience, we use the same notations as in the proof of \rthm{MCN1} in \cite{MCN1}.
Let $\mu=2$ and $\te=\frac 1232^{-\mu}$. Let $\de_0>0$ be
provided by \rcor{GNR} and $\be_0'>\al_0$, $\de_1>0$, $N_1>0$ be provided by \rpro{GCE} applied to
$\la^{\frac 12}$. Let $\de'>0$, $N_2>0$ be provided by \rlem{Neig}, $\de''>0$, $N_3>0$ be provided 
by \rcor{Neig} and $\be_M>\al_0$, $r_M>0$ and $M>1$ defined by \rlem{Pull}.

Let us observe that
$$I_1^\ft\prec I_1c_2\prec c_1\prec I_2c_2\prec \iti\prec I_2c_1\prec c_2\prec I_3c_1\prec I_3^\ft$$
and that all these sequences are continuously realized on $[\al_0,\be_1]$. Let us define
$$\ve_0=\min\limits_{\ga\in[\al_0,\be_1]}\lr{\abs{x(I_1c_2)-c_1},\abs{x(I_2c_2)-c_1},
\abs{x(I_2c_1)-c_2},\abs{x(I_3c_1)-c_2}}$$
therefore $\ve_0>0$ is smaller than $\abs{c_1-c_2}$, $\abs{c_1}$ and $\abs{1-c_2}$ for all 
$\ga\in[\al_0,\be_1]$.
We set 
\begin{equation}
\label{equEps}
\ve=\min\lr{\ve_0,\de',\de'',r_M}.
\end{equation}
 
\rcor{GBS} provides $\be_1'>\al_0$ and $\de_2>0$ such that for all $\ga\in[\al_0,\be_1']$ the diameter of any
pullback of a ball of radius at most $\de_2$ centered on $J_\ga$ is smaller than $\ve$. Let 
$\be_2'=\min\lr{\be_1,\be_0',\be_1',\be_M}$ and $\de_3=\min\lr{\de_0,\de_1,\de_2}$,
such that \rpro{GCE} applies for balls centered on $J_\ga$ of radius at most $\de_3$, for 
all $\ga\in[\al_0,\be_2']$. Moreover, \rlem{Neig} and \rcor{Neig} apply and inequalities \requs{BF}{BC} 
hold on all pullbacks of such balls. 

\rcor{GNR} provides $\de_4$ such that for 
$$r=\te R<R=\min(\de_3,\de_4),$$ 
there exist $\be_3'>\al_0$ and $N_0>0$ the time span needed 
to contract the pullback of balls of radius $R$ into components of diameter smaller than $\te R$
for all $\ga\in[\al_0,\be_3']$. We define
$$\be=\min(\be_2',\be_3').$$

Let $f$ be a rational map and $\MO{Crit}$ its critical set.
If $W\se\RS$ is an open set and $\overline{f^k(W)}$ contains at most one critical
point for all $0\leq k < n$, let us define
$$\Deg{\overline W}{f^n}=\prod_{\substack{c\in \overline{f^k(W)}\cap\mathrm{Crit} \\ 0\leq k<n}}\MO{deg}(c),$$
counted with multiplicities. 

The following fact provides the hypothesis of \rcor{GCE}. 

\begin{corollary}
\label{corDeg}
For all $\ga\in[\al_0,\be]$, $z\in J_\ga$, $0<r\leq R$ and $(W_k)_{k\geq 0}$ a backward
orbit of $B\left(z,r\right)=W_0$, if $\deg_{\overline{W_k}}g_\ga^k>\mu$ then there exist 
$0<k_1<k_2\leq n$ such that $\ol{W_{k_1}}\cap \{c_1,c_2\}\neq\emptyset$ and $c_2\in \ol{W_{k_2}}$.
\end{corollary}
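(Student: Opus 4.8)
The whole argument turns on one feature of the family: $g_\ga(c_1)=1$ and $1$ is a fixed point of $g_\ga$ for every $\ga\in[\al_0,\be]$, so the forward orbit of $c_1$ is $c_1,1,1,1,\dots$. Hence if some set $\overline{W_k}$ of the backward orbit contains $c_1$, then all the earlier sets $\overline{W_0},\dots,\overline{W_{k-1}}$ contain the single point $1$; since a set of the backward orbit is far too small to contain both $1$ and a critical point, this will force that the \emph{last} time the backward orbit meets $\cri$ it in fact meets $c_2$. Everything else is bookkeeping with the uniform diameter bound of \rcor{GBS}.

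First I would record that $\dia W_k<\ve$ for all $k\ge0$: by \rcor{GBS} together with the choices of $\be$, $\de_2$, $\de_3$ and $R$, for every $\ga\in[\al_0,\be]$, $z\in J_\ga$ and $0<r\le R$ every connected component of $g_\ga^{-k}(B(z,r))$ has diameter $<\ve\le\ve_0$. Since $\ve_0$ is smaller than $|c_1-c_2|$ and than $|1-c_2|$ (hence also than $|1-c_1|=|1-c_2|+|c_2-c_1|$) for every $\ga\in[\al_0,\be]$, no $\overline{W_k}$ can contain two of the three points $c_1,c_2,1$. In particular each $\overline{W_k}$ meets $\cri=\{c_1,c_2\}$ in at most one point, which is a simple critical point of the cubic $g_\ga$; consequently $\deg_{\overline{W_n}}g_\ga^n=2^{N}$, where $N=\#\{\,1\le k\le n:\overline{W_k}\cap\cri\ne\es\,\}$, and the hypothesis $\deg_{\overline{W_n}}g_\ga^n>\mu=2$ gives $N\ge2$.

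Then I would let $k_1$ and $k_2$ be the smallest and the largest index in $\{1,\dots,n\}$ with $\overline{W_k}\cap\cri\ne\es$, so that $0<k_1<k_2\le n$ because $N\ge2$. Since $(W_k)_{k\ge0}$ is a backward orbit, $g_\ga(W_{k+1})=W_k$, so $g_\ga^i(\overline{W_k})=\overline{W_{k-i}}$ for $0\le i\le k$. If $c_1\in\overline{W_{k_2}}$, then applying $g_\ga^i$ and using $g_\ga(c_1)=1=g_\ga(1)$ gives $1\in\overline{W_{k_2-i}}$ for all $1\le i\le k_2$, in particular $1\in\overline{W_{k_1}}$; but $\overline{W_{k_1}}$ also meets $\cri$, so it would contain $1$ together with one of $c_1,c_2$, contradicting $\dia W_{k_1}<\ve$. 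Hence $c_2\in\overline{W_{k_2}}$, while $\overline{W_{k_1}}\cap\{c_1,c_2\}\ne\es$, which is the claim. The only point requiring a little care is the uniformity of the bound $\dia W_k<\ve$ over the whole parameter interval $[\al_0,\be]$ and for all radii $0<r\le R$, and this is precisely what \rcor{GBS} and the preceding choice of constants provide.
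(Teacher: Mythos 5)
Your proposal is correct and follows essentially the same route as the paper's proof: the uniform bound $\dia W_k<\ve\leq\ve_0$ coming from \rcor{GBS} and the choice of $R$ forces each $\ol{W_k}$ to contain at most one of $c_1,c_2,1$, the equality $\mu=\deg(c_1)=\deg(c_2)=2$ turns the degree hypothesis into two critical passages $k_1<k_2$, and the relation $g_\ga(c_1)=1$ with $1$ fixed rules out $c_1\in\ol{W_{k_2}}$ by placing $1$ in $\ol{W_{k_1}}$. Your extra bookkeeping (taking $k_1$ minimal and $k_2$ maximal, and writing the degree as $2^N$) is a harmless refinement of the same argument.
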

\begin{proof}
By the definition of $R$, $\MO{diam}W_k<\ve\leq\ve_0<\abs{c_1-c_2}$ therefore $\ol{W_k}$ contains at 
most one critical point for all $k\geq 0$. As $\mu=\mu_{c_1}=\mu_{c_2}$ there exist $0<k_1<k_2\leq n$ 
such that $\ol{W_{k_1}}$ and $\ol{W_{k_2}}$ contain exactly one critical point each. Suppose 
$c_1\in \ol{W_{k_2}}$ therefore $1\in \ol{W_{k}}$ for all $0\leq k<k_2$ which 
contradicts $\MO{diam}W_{k_1}<\ve\leq\ve_0$.
\end{proof}

Let us prove the main result of this section.
\begin{proof}[Proof of Theorem A]
This proof has two parts. The first part describes the construction of a convergent sequence of families
$\sqnz \G$ of bimodal polynomials with negative Schwarzian derivative. Its limit $g$ does not satisfy
$RCE$. The second part shows that $g$ has $ExpShrink$ and it is similar to the
proof of \rthm{MCN1}. 

Let us recall the construction of the family $\G_1$. It is described by the common prefix $S_1$ of its
kneading sequences $\ku(\ga)$ \allg 1. We defined $S_1=I_1^{k_0+1}I_2^{k_1}$ so $\be_1<\ku\mo\lr{I_1^{k_0}c_1}$
which converges to $\al_0=0$ as $k_0\ra\ft$. Using this convergence, inequalities \requ{R}, \rlem{DF} 
applied to $v=g_\ga(c_2)$ and \rlem{Bou} applied to $\iu_1=I_1c_1$, $\iu_2=I_1c_2$ to bound $\abs{g_\ga'(v_{k_0})}$ there
exists $k_0>0$ such that the following inequalities hold
$$\be_1<\max\ku\mo\lr{I_1^{k_0}c_1}<\be,$$
$$d_\ga(k)>\la^k\mfa \ga\in[\al_0,\be_1]\mbox{ and }k\var 1{k_0+1}.$$
Again by \rlem{DF}, property \requ{Kn} and inequalities \requ{R}, if $k_1$ is sufficiently large then
\begin{equation}
\label{equCEo}
d_\ga(k)>\la^k\mfa \ga\in[\al_1,\be_1]\mbox{ and }k\var 1{t_1},
\end{equation}
where $t_1=k_0+1+k_1=\abs{S_1}$.
Let us choose $k_1$ such that the previous inequality holds and such that $t_1>N_1$ and
\begin{equation}
\label{equEat}
\max\lr{\ve R\mo,2M_4\lr{\te R}\mo,\ve^2\lr{\te R}^{-2},2M_1}<\la^{t_1-1},
\end{equation}
where $M_1$ and $M_4$ are defined by inequalities \requ{BF} respectively \requ{BC}. 
This achieves the construction of the family $\G_1$.

For all $k\geq 1$ we construct $\G_{2k}$ using \rpro{A} with 
$$\la\mo<\la_1<\la_2<1,$$
and $\G_{2k+1}$ using \rpro{B} with 
$$\De_k=2^{-k}.$$
Using inequality \requ{Conv} the sequence $\sqno \G$ converges to a map $g=g_\gl$. Let us denote 
$d(n)=d_n(\gl)=\abs{\lr{g^n}'(v)}$ and $d(n,p)=d_{n,p}(\gl)=\abs{\lr{g^p}'(v_n)}$ for all $n,p\geq 0$, 
where $v$ is the second critical value and $v_n=g^n(v)$. For all $n\geq 2$ let $p_n=p$ be provided
by \rpro{A} or \rpro{B} used to construct $\G_n$. Therefore for all $n\geq 1$
$$t_n<p_{n+1}<t_{n+1},$$
where $t_n=\abs{S_n}$ the length of the common prefix $S_n$ of kneading sequences in $\G_n$. As 
$\gl\in[\al_n,\be_n]$ for all $n\geq 1$,
$$\ku=\ku(\gl)\in S_n\ts\mfa n\geq 1.$$
Let us also recall that for all $k\geq 1$
$$S_{2k}=S_{2k-1}I_2^{k_1+1}I_3^{k_2}I_2^{k_3},$$
and that we may choose $k_1$, $k_2$ and $k_3$ as large as we need. We impose therefore 
for all $k\geq 1$
\begin{equation}
\label{equLong}
k_1>N_3,k_2>N_2\ma k_3>N_3.
\end{equation}

Let us remark that $g(c_1)=1$, $g(1)=1$ and $\abs{g'(1)}>1$ therefore $c_1\in J$ the Julia set of $g$ and
$c_1$ is non-recurrent and Collet-Eckmann. Let us remark that $\De_k\ra 0$ as $k\ra\ft$ and 
$\gl\in[\al_{2k+1},\be_{2k+1}]$ for all $k\geq 1$ therefore the second critical orbit is recurrent. 
By construction and inequality \requ{Int} the second critical orbit accumulates on $r$ and on $1$. 
Therefore $c_2\in J$ using for example a similar argument to the proof of \rlem{Acc}. Let us show that
$c_2$ is not Collet-Eckmann. Indeed, by \rpro{A} for all $k\geq 1$
$$d(p_{2k})<\la_2^{p_{2k}}<1,$$
and $p_{2k}\ra\ft$ as $k\ra\ft$. Therefore by \rdef{RCE}
$$g\mbox{ is not }RCE.$$ 
Combining inequalities \requ{CEo} and \requ{Dn}, the third claim of \rpro{A} and the second claim of \rpro{B} 
\begin{equation}
\label{equCE}
d(n)>\la^n\mfa n\in\bigcup\limits_{k\geq 0}\SetEnu{t_{2k}}{p_{2k+2}-1}.
\end{equation}
Let us check that for all $m>0$ such that $\abs{g^m(c_2)-c_2}<\ve$ 
\begin{equation}
\label{equCER}
d(m)>\la^m.
\end{equation}
Let us recall that $\ve\leq\ve_0$ by its definition \requ{Eps} so $\abs{g^m(c_2)-c_2}<\ve$ implies that
$v_m=g^{m+1}(c_2)\in I_1$ therefore $\ku(m)=I_1$ so there exists $k\geq 1$ such that 
$$t_{2k}<m<t_{2k+1},$$ 
as \rpro{A} extends $S_{2n-1}$ to $S_{2n}$ using only the symbols $I_2$ and $I_3$ for
all $n\geq 1$. Therefore $m\in\SetEnu{t_{2k}}{p_{2k+2}-1}$ thus inequality \requ{CER} is a direct 
consequence of inequality \requ{CE}.

Let us show that $g$ has $ExpShrink$. We use a telescope that is very similar to the one used in the 
proof of \rthm{MCN1}. Loosely speaking, the strategy is the following. We consider pullbacks of a small ball centered on the Julia set $J$ of $g$. We show that after some time, the pullbacks are contracted. We include such a component in another ball centered on $J$. The construction is achieved inductively and contractions at each step are used to show uniform contraction, thus $ExpShrink$. We call blocks of the telescope the sequences of pullbacks associated to each step. In order to deal with various configurations of critical points inside the telescope, we need three types of blocks.

Let us introduce additional notation and rigorously define the telescope.

For $B\subseteq \overline{\mathbb C}$ connected and $n\geq 0$, we write $B^{-n}$ or $g^{-n}(B)$ for some
connected component of $g^{-n}(B)$. When $z\in B$ and some backward
orbit $z_n\in g^{-n}(z)$ are fixed, $B^{-n}$ is the connected component
of $g^{-n}(B)$ that contains $z_n$.

We consider a pullback of an arbitrary ball
$B\left(z,R\right)$ with $z\in J$, of length $N>0$. We show that there
are constants $C_1>0$ and $\lambda_3>1$ that do not depend on $z$
nor on $N$ such that
$$\MO{diam} B\left(z,R\right)^{-N}\leq C_1\lambda_3^{-n}.$$
It is easy to check that the previous inequality for all $z\in J$ and $N>0$
implies the $ExpShrink$ condition.

Let $(z_n)_{n\geq 0}\se J$ be a backward orbit of $z$, that is $z_0=z$ and $g(z_{n+1})=z_n$ for all $n\geq 0$. 
We consider preimages of $B(z,R_0'):=B(z,R)$ up to time $N$. We show that there is some
moment $N_0'$ when the pullback $B(z,R_0')^{-N_0'}$ observes a strong contraction. Then $B(z,R_0')^{-N_0'}$
can be nested inside some ball $B(z_{N_0'},R_1')$ where $R_1'\leq R$. 
This new ball is pulled back and the construction is achieved
inductively. The pullbacks $B(z,R_0')$, $B(z,R_0')^{-1}\dots B(z,R_0')^{-N_0'}$ form
the first block of the telescope. The pullbacks $B(z_{N_0'},R_1')$,
$B(z_{N_0'},R_1')^{-1}\dots B(z_{N_0'},R_1')^{-N_1'}$ form the second block and so on.
\rlem{EMod} is essential to manage the passage between
two such consecutive telescope blocks.
We show contraction for every block using either \rcor{GBS}
or \rpro{GCE}. This leads to a classification of blocks depending
on the presence and on the type of critical points inside them.

Let $R'$ be the radius of the initial ball of some block and $N'$ be its length.
We introduce a new parameter $r'< R$ for each block, a lower bound for $R'$.
It is an upper bound of the diameter of the last pullback
of the previous block. This choice guarantees that consecutive blocks are
nested. A block that starts at time $n$ is defined by the choice of $R'$ with $r'
\leq R'\leq R$ and of $N'$ with $1\leq N'\leq N-n$. It is the pullback
of length $N'$ of $B(z_n,R')$. 

For all $n,t\geq 0$ and $r>0$ we denote
$$
\begin{array}{rcl}
d(n,r,t)& = & \Deg{B(z_n,r)^{-t}}{g^t}\mbox{ and}\\
\overline d(n,r,t)& = & \Deg{\overline{B(z_n,r)^{-t}}}{g^t}.
\end{array}
$$
Fix $n\geq 0$ and $t \geq 1$ and consider the maps $d$ and $\overline d$ defined
on $[r',R]$. They are increasing and $d\leq\overline d$. 
Moreover, for all $n\geq 0$, $r>0$, $t\geq 0$ and $s > 0$,
$$\overline d(n,r,t)\leq d(n,r+s,t).$$ 
The set ${\left\{r\in[r',R]\ |\ d(n,r,t)<\overline d(n,r,t)\right\}}
$ is the common set of discontinuities of $d$ and
$\overline d$. Note also that $d$ is lower semi-continuous and $\overline d$ is upper
semi-continuous.

For transparency, we also denote
$$W_k=B(z_n,R')^{-k}.$$

Let us define the three types of blocks. For convenience, we keep the same notations as in the proof of \rthm{MCN1} in \cite{MCN1}.
\begin{description}
\item[Type 1] Blocks with $R'=r'$ and $N'$ such that $\ol d(n,R',N')>1$ and $c_2\in\ol{W_{N'+1}}$.
\item[Type 2] Blocks with $R'=R$, $N'=\min(N_0,N-n)$ and $d(n,R,N-n) \leq \mu$.
\item[Type 3] Blocks with $\ol d(n,R',N')>1$, $c_2\in\ol{W_{N'+1}}$ and $d(n,R',N-n) \leq \mu$.
\end{description}

Let us define $r'$. It is the diameter of the last pullback of the previous block of type 1 or 3. It is $r=\te R$ if the previous block is of type 2 and $r'=R$ for the first block.

Let us first show that for all $z\in J$ and $N>0$ we may define a telescope using the three types of blocks. The construction is inductive and we show that given $0\leq n<N$ and $0<r'\leq R$ we can find $R'\in[r',R]$ and
$0<N'\leq N-n$ that define a block of one of the three types. We also show contraction along every block so $r'$ defined as above is smaller than $R$, thus completing the proof of the existence of the telescope.

If $\overline d(n,r',N -n +1)>\mu=2$ then by \rcor{Deg} there is $1\leq N' \leq N-n$ that defines a type 1 pullback for $R'=r'$.
If $\overline d(n,R,N-n)\leq\mu$ then we define a type 2 block, as $d\leq\overline d$. 
Note that the first block of the telescope is already constructed as $r'=R$. In all other cases $r'<R$.
If $\overline d(n,R,N-n+1) > \mu$ there is a smallest $R'$, with $r'<R'\leq R$, such that
$\overline d(n,R',N-n+1) > \mu$. Thus $R'$ is a point of discontinuity of $\overline d$
so $d(n,R',N-n+1)<\overline d(n,R',N-n+1)$, therefore $d(n,R',N-n) \leq d(n,R',N-n+1) \leq \mu$.
Then by \rcor{Deg} there is $1\leq N' \leq N-n$ that defines a type 3 pullback.

Let us be more precise with our notations. We denote by $n_i'$, $N_i'$, $r_i'$
and $R_i'$ the parameters $n$, $N'$, $r'$ and $R'$ of the $i$-th block. Let also $W_{i,k}$
be $W_k$ in the context of the $i$-th block with $i\in\{0,\ldots, b\}$, where $b+1$
is the number of blocks of the telescope. So $n'_0=0$, $r_0'=R$ and $n_1'=N_0'$.
In the general case $i>0$, we have
$$
\begin{array}{rcl}
n_i' & = & n'_{i-1} + N'_{i-1}\mbox{ and}\\
r_i' & \geq & \MO{diam} W_{i-1,N'_{i-1}}.
\end{array}
$$
Let us also denote by $T_i\in\{1,2,2',3\}$ the type of the $i$-th block. The type
$2'$ is a particular case of the second type, when $N'<N_0$. This could only
happen for the last block, when $N-n'_b<N_0$. So $T_i\in\{1,2,3\}$ for all
$0\leq i<b$. We may code our telescope by the type of its blocks, from right to left
$$T_b\ldots T_2T_1T_0.$$

Our construction shows that
\begin{equation}
\MO{diam} W_{i-1,N_{i-1}'}\leq r'_{i}<R\mbox{ for all }0< i < b
\label{condSuf}
\end{equation}
is a sufficient condition for the existence of the telescope that contains the pullback of $B(z,R)$ of length $N$. 

If $T_i=2$ we apply \rcor{GBS} so
\begin{equation}
\MO{diam} W_{i,N_i'}<r'_{i+1}=\theta R<R.
\label{estNR}
\end{equation}

If $T_i\in\{1,3\}$ we show that there exists $\la_0>1$ such that
\begin{equation}
\MO{diam} W_{i,N_i'}<\theta R_i'\lambda_0^{-N_i'}<R,
\label{estCE}
\end{equation}
as $\theta<\frac 12$, $R_i'\leq R$ and $\lambda_0^{-N_i'}<1$. 
This inequality completes the proof of the existence of the telescope.
 
Let us fix $i\geq 0$ such that $T_i\in\Set{1,3}$. Suppose that $i>0$ and $T_{i-1}\in\Set{1,3}$ also, 
therefore 
$$c_2\in \ol{W_{i-1,N_{i-1}'+1}}\se \ol{W_{i,1}}=\ol{B(z_{n_i},R_i')\mo}=
\ol{g^{N_i'}\lr{W_{i,N_i'+1}}}\se \ol{B(z_{n_i},R)\mo}.$$
But $c_2\in \ol{W_{i,N_i'+1}}$ also and $\dia B(z_{n_i},R)\mo<\ve$ by the definition of $R$. 
Therefore by inequality \requ{CER}
$$d(N_i')>\la^{N_i'}$$
so by \rcor{GCE} we may apply \rpro{GCE} to obtain
$$\dia W_{i,N_i'}<\te R_i'\la^{-\frac{N_i'}2}.$$
We have proved that for all $i>0$ with $T_{i-1}\in\Set{1,3}$ inequality (\ref{estCE}) holds for all 
$\la_0\leq \la^\frac 12$. If $i=0$ or $T_{i-1}=2$ then $R_i'\in[\te R,R]$. 
Therefore it is enough to show that there exists $\la_0>1$ such that for all $z\in J$, $\tau\in[\te R,R]$, $n>0$ and $W$ a connected component of $g^{-n}\lr{B(z,\tau)}$ the following statement holds. If $v\in \ol W$ and there
exist $0\leq m<n$ such that $\ol{g^m(W)}\cap\cri\neq\es$ then
\begin{equation}
\label{equES}
\dia W<\te \tau\la_0^{-n}.
\end{equation}
Again, if $d(n)>\la^n$ using \rcor{GCE} and \rpro{GCE} the previous inequality is satisfied for all 
$1<\la_0\leq\la^\frac 12$. Therefore using inequality \requ{CE} we may suppose that there exist 
$k'\geq 1$ such that
$$p_{2k'}\leq n<t_{2k'}.$$

Let us denote $p=p_{2k'}$, $t=t_{2k'-1}$ and $W^k=g^k(W)$ for all $k\var 0n$. By the definition of
$p$ in \rpro{A} 
\begin{equation}
\label{equTP}
2t<p.
\end{equation}
\def\dc{\dia W^{p-1}}
\def\dv{\dia W^{p}}
Using \rcor{Neig}, inequalities \requs{Long}{Eat}
$$\dw t<\lmp{p-1-t}\dc<\lmp{p-1-t}\ve<R.$$
As $t_1>N_1$, inequality \requ{Dn} lets us apply \rpro{GCE} to $B\lr{v_t,\dw t}$ which combined
to the previous inequality shows that 
\begin{equation}
\label{equLeft}
\dia W<\te\lmp{p-1-\frac t2}\dc.
\end{equation}

By inequalities \requ{Long}, using \rlem{Neig} and eventually \rcor{Neig} if $v_n\in I_2$ 
\begin{equation}
\label{equRight}
\dv<\lmp{n-p}\dw n=2\lmp{n-p}\tau.
\end{equation}

Therefore the only missing link is an estimate of $\dc$ with respect to $\dv$. We distinguish the
following two cases.
\begin{enumerate}
\item $\dst\lr{W^{p-1},c_1}<3\dc$.
\item $\dst\lr{W^{p-1},c_1}\geq 3\dc$.
\end{enumerate}

Suppose the first case. The by the definition \requ{Eps} of $\ve$ we may use inequality \requ{BC}
therefore
$$
\begin{array}{rcl}
 \dc & < & \lr{M_4\dv}^\frac 12 \\
     & < & \lr{2M_4\tau}^\frac 12\lm{\frac{n-p}2}
\end{array}
$$
using inequality \requ{Right}. Recall that $\tau\geq \te R$ and $t\geq t_1$. Therefore \ineqss{Left}{TP} 
and \requ{Eat} imply that
$$
\begin{array}{rcl}
 \dia W & < & \te\lm{\frac n2}\tau\lr{2\la M_4r\mo}^\frac 12\lm{\frac t2}\\
        & < & \te\lm{\frac n2}\tau.
\end{array}
$$
Therefore in the first case it is enough to choose $\la_0\leq\la^\frac 12$.

Suppose the second case. Using \ineqss{Left}{TP} and \requ{Eat} we may compute 
\begin{equation}
\label{equBrut}
\begin{array}{rcl}
 \dia W & < & \te\lmp{p-1-\frac t2}\ve\\
        & < & \te\lm{\frac p2}\te R\leq\te\lm{\frac p2}\tau\\
        & = & \te\lm{n\lr{\frac p{2n}}}\tau.
\end{array}
\end{equation}
This is not enough as $\la_0$ should depend only on $g$. We may remark that we are in position to use
\ineq{BF} for $W^p$ therefore
$$\dc<M_1\abs{g'(v_{p-1})}\mo\dv.$$
Let us compute an upper bound for $\abs{g'(v_{p-1})}\mo=d(p-1,1)\mo$. Using the first two claims of
\rpro{A}
$$d(p)\mo=d(p-1)\mo d(p-1,1)\mo<\la_1^{-p}<\la^p,$$
and
$$d(p-1)<\la_r^{p-1}\la^{p-1},$$
where we denote $\la_r=\abs{g'(r(\gl))}$. Let $\nu=\frac{\log \la_r}{\log \la}$. Combining the
previous inequalities
$$d(p-1,1)\mo<\la^{p(\nu+2)},$$
therefore using \ineqss{Left}{Right}, \requ{TP} and \requ{Eat}
$$
\begin{array}{rcl}
 \dia W & < & 2M_1\te\lmp{p-1-\frac t2}\la^{p(\nu+2)}\lmp{n-p}\tau \\
        & < & \te(2M_1)\la^{\nu p+2p+1+\frac t2}\lm n\tau \\
        & < & \te\lm{n+p(\nu + 3)}\tau.
\end{array}
$$
If $n>2p(\nu+3)$ then \ineq{ES} is satisfied for all $\la_0\leq \la^\frac 12$. If $n\leq 2p(\nu+3)$
then using \ineq{Brut}, \ineq{ES} is satisfied for all 
$$\la_0\leq \la^\frac 1{4(\nu+3)}\leq \la^\frac p{2n},$$
which completes the proof of inequality (\ref{estCE}) and therefore of the existence of the telescope.

The remainder of the proof shows the global exponential contraction of the diameter of pullbacks. It is identical to the second part of the proof of \rthm{MCN1} in \cite{MCN1}. We reproduce it here for convenience.

Note that if $T_i=1$ then we may rewrite inequality (\ref{estCE}) as follows
\begin{equation}
r'_{i+1}<\theta r_i'\lambda_0^{-N_i'}<r_i'\lambda_0^{-N_i'}.
\label{estTO}
\end{equation}
Recall also that if there are $\la_3>1$ and $C_1>0$ such that
\begin{equation}
\MO{diam} B(z,R)^{-N} = \MO{diam} W_{0,N}<C_1\la_3^{-N},
\label{estCO}
\end{equation}
then the theorem holds. We may already set
\begin{equation}
\la_3=\min\left(2^{\frac {1}{\mu N_0}},\lambda_0^{\frac {1}{\mu}}\right).
\label{valLO}
\end{equation}

As inequality (\ref{estTO}) provides an easy way to deal with the first type of block,
we compute estimates only for sequences of blocks of types $1\ldots 1$, $1\ldots 12$ and $1\ldots 13$,
as the sequence $T_b\ldots T_2T_1T_0$ can be decomposed in such sequences. Sequences with only one block
of type $2$ or $3$ are allowed as long as the following block is not of type $1$.
For a sequence of blocks $T_{i+p}\ldots T_i$, let
$$N'_{i,p}=N'_{i+p} + \ldots + N'_i$$
be its length.

A sequence $1\ldots 1$ may only occur as the first sequence of blocks, thus $i=0$. As $r'_0=R$, iterating inequality (\ref{estTO}) for such a sequence we obtain

\begin{equation}
\begin{array}{rcl}
r'_{p+1} &   <  & \theta^{p+1} R\lambda_0^{-N'_{0,p}}\\
         &   <  & 2\theta R'_0\la_3^{-\mu N'_{0,p}}.
\end{array}
\label{estS1}
\end{equation}

Combining inequalities (\ref{estTO}), (\ref{estNR}) and the definition (\ref{valLO}) of $\la_3$,
for a sequence $1\ldots 12$
\begin{equation}
\begin{array}{rcl}
r'_{i+p+1} &   <  & r'_{i+1}\lambda_0^{-N'_{i+1,p-1}}\\
           &   <  & 2\theta R2^{-1}\lambda_0^{-N'_{i+1,p-1}}\\
           & \leq & 2\theta R'_i\la_3^{-\mu N'_{i,p}},
\end{array}
\label{estSS}
\end{equation}
as $N_i'=N_0$, $N'_{i,p}=N_i'+N'_{i+1,p-1}$ and $R'_i=R$.

For a sequence $1\ldots 13$, inequalities (\ref{estTO}) and (\ref{estCE}) yield
\begin{equation}
\begin{array}{rcl}
r'_{i+p+1} &   <  & r'_{i+1}\lambda_0^{-N'_{i+1,p-1}}\\
           &   <  & \theta R'_i\lambda_0^{-N'_{i,p}}\\
           &   <  & 2\theta R'_i\la_3^{-\mu N'_{i,p}}.
\end{array}
\label{estST}
\end{equation}

We also find a bound for $r_{b+1}'$ in the case $T_b=2'$. 
Using notations introduced in Section \ref{sectRat} we define $K=||g'||_{\infty,J_{+\ve}}$. We compute 
\begin{equation}
\begin{array}{rcl}
r'_{b+1} & < & R'_b\mu K^{N'_b}\\
         & = & \mu (K\la_3)^{N'_b} R'_b\la_3^{-N'_b}.
\end{array}
\label{estSP}
\end{equation}

We decompose the telescope into $m+1$ sequences $1\ldots 1$, $1\ldots 12$, $1\ldots 13$ and
eventually $2'$ on the leftmost position
$$S_m\ldots S_2S_1S_0.$$

Consider a sequence of blocks
$$S_j=T_{i+p}\ldots T_i.$$
Denote $n''_j=n'_i$, $N''_j=N'_{i,p}$, $r''_j=r'_i$ and $R''_j=R'_i$. Let also
$$\Delta_j=\MO{diam} W_{i,N-n'_i}$$
 be the diameter of the pullback of the first block
of the sequence up to time $-N$.

With the eventual exception of $S_m$, inequalities (\ref{estS1}), (\ref{estSS}) and  (\ref{estST})
provide good contraction estimates for each sequence $S_j$
$$r''_{j+1}<2\theta R''_j \la_3^{-\mu N''_j}.$$
If $T_b=2'$ then inequality (\ref{estSP}) yields a constant $\mu (K\la_3)^{N'_b}<C_1=\mu (K\la_3)^{N'_0}$
such that
$$r''_{m+1}<C_1 R''_m\la_3^{-N''_m},$$
as $R_m''=R_b'$ and $N_m''=N_b'$.
Note that the previous inequality also holds if $T_b\in\{1,2,3\}$.
We cannot simply multiply these inequalities as $R''_j > r''_j$ for all $0<j\leq m$.

By the definitions of types $2$ and $3$, the degree $d(n''_j,R''_j,N-n''_j)$
is bounded by $\mu$ in all cases. So \rlem{EMod} provides a bound for
the distortion of pullbacks up to time $-N$

$$
\begin{array}{rcl}
\frac {\Delta_{j-1}}{\Delta_j}& < & 32\left(\frac{r''_j}{R''_j}\right)^{\frac 1\mu}\\
& < & 32\left(2\theta  \la_3^{-\mu N''_{j-1}}\frac{R''_{j-1}}{R''_j}\right)^{\frac 1\mu}\\
& = & \la_3^{-N''_{j-1}}\left(\frac{R''_{j-1}}{R''_j}\right)^{\frac 1\mu},
\end{array}
$$
for all $0< j \leq m$. Therefore
$$\frac {\Delta_0}{\Delta_{m}} < \la_3^{-N + N''_m}\left(\frac{R''_0}{R''_m}\right)^{\frac 1\mu}.$$
Recall that $R''_j \leq R < 1$ for all $0\leq j \leq m$ and $\Delta_{m}=r''_{m+1}$, so
$$
\begin{array}{rcl}
\Delta_0 & < & \la_3^{-N + N''_m} C_1 R''_m\la_3^{-N''_m} \left(\frac{R}{R''_m}\right)^{\frac 1\mu}\\
& < & \la_3^{-N}C_1 (R''_m)^{1-\frac 1\mu}\\
& < & C_1\la_3^{-N}.
\end{array}
$$
By definition $\Delta_0=\MO{diam} W_{0,N}$, therefore the previous inequality combined with inequality
(\ref{estCO}) completes the proof of the theorem.
\end{proof}

\section{RCE is not a topological invariant for real polynomials with negative Schwarzian derivative}
\label{sectRCEnTOP}
Let $\Hc\fd{[0,h]}{\Pc_2}$ (see the definition of $\Pc_2$ at page \pageref{defP2}) be equal to the 
family $\G$ defined in the previous section. Let us define
another family of bimodal maps $\Ht\fd{[0,h']}{\Pc_2}$ in an analogous fashion. Let $T\in\R_7[x]$ be
a degree $7$ polynomial such that $T(0)=0$ and such that $T'(x)=(x+1)^3(x-1)^3$. Therefore $T$ has
two critical points $-1$ and $1$ of degree $4$ and $T(-x)=-T(x)$ for all $x\in\R$. 
Let $y_0=T(-1)$ and $x_0>1$ such that $T(x_0)=y_0$.
Let $h'>0$ be small and for each $\ga\in[0,h']$ two order preserving linear maps 
$R_{\ga'}(x)=x(2x_0+\ga')-x_0-\ga'$ and $S_{\ga'}(y)=\frac{y-T(-x_0-\ga')}{y_0-T(-x_0-\ga')}$ that map
$[0,1]$ onto $[-x_0-\ga',x_0]$ respectively $[T(-x_0-\ga'),T(x_0)]$ onto $[0,1]$. One may show by direct
computation that if a real polynomial $P$ is such that all the roots of $P'$ are real then $P$ has negative
Schwarzian derivative. Therefore 
$$\ti h_{\ga'}=S_{\ga'}\circ T\circ R_{\ga'}\in\Pc_2\mfa \ga'\in[0,h'].$$
We define $\Ht(\ga')=\ti h_{\ga'}$ for all $\ga'\in[0,h']$. 
Let us remark that $y_0\in(0,1)$ and $x_0\in\lr{\frac 32,2}$ therefore all three fixed points of $\ti h_0$ are repelling.
Let $\ti r(\ga')$ be the only fixed point of $\ti h_{\ga'}$ in $(0,1)$ and $\ti c_1<\ti c_2$ its critical 
points. The proofs that for $h'>0$ sufficiently small $\Ht$ satisfies properties (\ref{equGRep}) to 
(\ref{equGKn}), \rlems{Fix}{Per}, that it is natural, that $\ti r$, $\ti c_1$ and $\ti c_2$ 
are continuous and that for all $n>1$
$$\ti r(\ga')-\ti h_{\ga'}^n(\ti c_2)\mbox{ has finitely many zeros in }[0,h'],$$
go exactly the same way as for $\G$. As $h_0'(r(0))=-3$, $h_0'(1)=9$, $y_0=\frac{16}{35}$, $\frac 32<x_0<2$,
$\ti r(0)=0$ and $\abs{h_0'(\ti r(0))}=\frac{x_0}{y_0}$
one may compute that 
$$\frac 12\frac{\log \abs{h_0'(1)}}{\log \abs{h_0'(r(0))}}=1<
\frac 34\frac{\log \abs{\ti h_0'(1)}}{\log \abs{\ti h_0'(\ti r(0))}}.$$
We may also suppose $h>0$ and $h'>0$ sufficiently small such that there exist $1<\la<\la'$,
$1<\ti\la<\ti\la'$ and $\te_1<\te_2$ such that for all $\ga\in[0,h]$ and $\ga'\in[0,h']$
$$
\begin{array}{rcl}
 \la' & < & \min\lr{\abs{h_\ga'(0)},\abs{h_\ga'(r)},\abs{h_\ga'(1)}} \ma\\
 \ti\la'& < & \min\lr{\abs{\ti h_{\ga'}'(0)},\abs{\ti h_{\ga'}'(\ti r)},\abs{\ti h_{\ga'}'(1)}}
\end{array}
$$ and
\begin{equation}
\label{equLog}\frac 12\frac{\log \abs{h_\ga'(1)}}{\log \abs{h_\ga'(r(\ga))}}<\te_1<\te_2<
\frac 34\frac{\log \abs{\ti h_{\ga'}'(1)}}{\log \abs{\ti h_{\ga'}'(\ti r({\ga'}))}}.
\end{equation}

\def\inn{[\al_n',\be_n']}
Let us denote $\ku(\ga)$ the second kneading sequence of $h_\ga$ and $\ti\ku(\ga')$ the second kneading
sequence of $\ti h_{\ga'}$. We construct two decreasing sequences of families of bimodal maps
$(\Hc_n)_{n\geq 1}$ and $(\Ht)_{n\geq 1}$. Let $\Hc_n\fd {[\al_n,\be_n]}\Pc_2$ with 
$\Hc_n(\ga)=\Hc(\ga)$ \allg n and
$\Ht_n\fd\inn\Pc_2$ with $\Ht_n(\ga)=\Ht(\ga)$ \allgp n. By construction we choose that for all $n\geq 1$
$$\ku(\al_n)=\ti\ku(\al_n')\ma \ku(\be_n)=\ti\ku(\be_n').$$
Let us denote $v=h_\ga(c_2)$, $\ti v=\ti h_{\ga'}(\ti c_2)$ and $v_n=h_\ga^n(v)$, 
$\ti v_n=\ti h_{\ga'}^n(\ti v)$ for
all $n\geq 0$, $\ga\in[0,h]$ and $\ga'\in[0,h']$. Let also $d_n(\ga)=\abs{\lr{h_\ga^n}'(v)}$, $\ti d_n(\ga')=
\abs{\lr{\ti h_{\ga'}^n}'(\ti v)}$, $d_{n,p}(\ga)=\abs{\lr{h_\ga^p}'(v_n)}$ and $\ti d_{n,p}(\ga')=
\abs{\lr{\ti h_{\ga'}^p}'(\ti v_n)}$ for all $n,p\geq 0$, $\ga\in[0,h]$ and $\ga'\in[0,h']$.
The basic construction tool is again \rpro{Conv} and we build the sequences $(\Hc_n)_{n\geq 1}$ and 
$(\Ht)_{n\geq 1}$ by specifying the common prefix $S_n$ of the kneading sequences in $\Hc_n$ and 
$\Ht_n$ for all $n\geq 1$. We also reuse the notation $t_n=|S_n|$ for all $n\geq 1$. In an analogous way
to the construction of the family $\G_1$, see inequality \requ{CEo}, we choose
$$S_1=I_1^{k_0+1}I_2^{k_1}$$
such that  
\begin{equation}
\label{equHCEo}
d_k(\ga)>\la^k\ma \ti d_k({\ga'})>\ti \la^k
\end{equation}
\allg 1, $\gabp 1$ and $k\var 1{t_1}$ and
$$\be_1<h\ma \be_1'<h'.$$
 
Let us describe the construction of the sequences $(\Hc_n)_{n\geq 1}$ and $(\Ht)_{n\geq 1}$ which satisfy
properties \requ{Dn} to \requ{Si} and  
\begin{equation}
\label{equDnp}
\ti d_{t_n}(\ga')>\ti\la^{t_n}
\end{equation}
for all $n\geq 1$.
 
Let us recall that \rpro{A} employs twice \rpro{Conv} to construct a subfamily $\G_{n+1}$ of $\G_n$ with
$$S_{n+1}=S_nI_2^{k_1+1}I_3^{k_2}I_2^{k_3}.$$ 
Let $\gt$ and $\gtp$ be provided by \rpro{Conv} such that $\ku(\gl)=\ti\ku(\gl')=S_n\iti$. We use the 
same strategy as in the proof of \rpro{A} to define both $\Hc_{n+1}$ and $\Ht_{n+1}$ with the same combinatorics.
Taking $k_1$, $k_2$ and $k_3$ sufficiently large we may control the growth of $d_m(\ga)$ and $\ti d_m(\ga')$ 
uniformly for all $t_n<m\leq t_{n+1}$. We let 
$$\frac{k_1}{k_2}\ra\eta>0,$$  
$p=t_n+k_1+1$ and compute some bounds for $d_p(\ga)$ and $\ti d_p(\ga')$. For transparency, let us denote 
$\la_0=\abs{h_{\gt}'(r)}$, $\ti\la_0=\abs{\ti h_{\gtp}'(\ti r)}$, $\la_3=\abs{h_{\gt}'(1)}$ and
$\ti\la_3=\abs{\ti h_{\gtp}'(1)}$. As in the proof of \rpro{A} we obtain 
\begin{equation}
\label{equLLog}
\lim\limits_{k_1\ra\ft}\frac 1{k_1}\log d_p(\ga)=\log \la_0-\frac 1{2\eta}\log\la_3\ag{n+1}.
\end{equation}
We may observe that inequalities \requ{Pow} hold exactly when $c_1$ is a second degree critical point.
We may however write similar bounds for $\Ht_{n+1}$. By the same arguments there exist constants
$\ti M>1$, $\ti \de>0$ and $\ti N_2>0$ such that if $k_1>\ti N_2$ and $\ga'\in[\ga_1',\ga_2']$ then
$$
\begin{array}{lcccl}
 \ti M^{-1}(x-\ti c_1)^4 & < & |1-\ti h_{\ga'}(x)| & < & \ti M(x-\ti c_1)^4 \mbox{ and} \\
 \ti M^{-1}(x-\ti c_1)^3 & < & \abs{\ti h_{\ga'}'(x)} & < & \ti M(x-\ti c_1)^3
\end{array}
$$
for all $x\in(\ti c_1-\ti\de,\ti c_1+\ti\de)$, where $\ga_1'$, $\ga_2'$ are the bounds for
$\ga'$ provided by \rpro{Conv} applied to $S_n$ and $\Ht_n$. Therefore we obtain  
\begin{equation}
\label{equLLogp}
\lim\limits_{k_1\ra\ft}\frac 1{k_1}\log \ti d_p(\ga')=\log \ti\la_0-\frac 3{4\eta}\log\ti\la_3\agp{n+1}.
\end{equation}
Using inequalities \requ{Log} and the limits \requ{LLog} and \requ{LLogp} it is enough to choose 
$$\te_1<\eta<\te_2$$ 
to obtain the following corollary of \rpro{A}.
\begin{corollary}
\label{corA}
There exist 
$$0<\la_1<1<\la_2<\min\lr{\la,\ti\la}$$
that depend only on $\Hc_1$ and $\Ht_1$ such that if $\Hc_n$ is a subfamily of $\Hc_1$ and
$\Ht_n$ is a subfamily of $\Ht_1$ both satisfying conditions \requ{Dn} to \requ{Si} and \requ{Dnp} then
there exist $\Hc_{n+1}$ a subfamily of $\Hc_n$ and  $\Ht_{n+1}$ a subfamily of $\Ht_n$ 
satisfying the same condition and $2t_n<p<t_{n+1}$ with the following properties
\begin{enumerate}
\item $d_p(\ga)>\la_2^p$ \allg{n+1}.
\item $\ti d_p(\ga')<\la_1^p$ \allgp{n+1}.
\item $d_{t_n,l}(\ga)>\la^l$ \allg{n+1} and $l\var 1{p-1-t_n}$.
\item $\ti d_{t_n,l}(\ga')>\ti\la^l$ \allgp{n+1} and $l\var 1{p-1-t_n}$.
\item $d_{p,l}(\ga)>\la^l$ \allg{n+1} and $l\var 1{t_{n+1}-p}$.
\item $\ti d_{p,l}(\ga')>\ti\la^l$ \allgp{n+1} and $l\var 1{t_{n+1}-p}$.
\end{enumerate}
\end{corollary}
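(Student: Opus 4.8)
The plan is to run the construction in the proof of \rpro{A} twice at once --- for $\Hc_n$ and for $\Ht_n$ --- with the \emph{same} combinatorics, and then to read the six claimed properties off the asymptotics \requ{LLog} and \requ{LLogp} together with \requ{Log}. So I would fix $n\geq 1$, put
$$S_{n+1}=S_nI_2^{k_1+1}I_3^{k_2}I_2^{k_3}$$
exactly as in \rpro{A}, with $k_1,k_2,k_3$ large and $k_1/k_2\to\eta$ for a ratio $\eta>0$ to be fixed, and apply \rpro{Conv} --- first to $S_n$, then to $S_nI_2^{k_1+1}I_3^{k_2}$ --- once for $\Hc_n$ and once for $\Ht_n$. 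This produces $\Hc_{n+1}$ on $[\al_{n+1},\be_{n+1}]\se[\al_n,\be_n]$ and $\Ht_{n+1}$ on $[\al'_{n+1},\be'_{n+1}]\se[\al'_n,\be'_n]$; since the boundary kneading sequences delivered by \rpro{Conv} (first $S_nI_2^{k_1}c_1$, $S_nI_2^{k_1}c_2$, then $S_{n+1}c_1$, $S_{n+1}c_2$) are purely combinatorial, both families are built with the common prefix $S_{n+1}$ and with $\ku(\al_{n+1})=\ti\ku(\al'_{n+1})$, $\ku(\be_{n+1})=\ti\ku(\be'_{n+1})$, as the ambient construction demands. Properties $3$ and $5$ for $\Hc_{n+1}$, properties $4$ and $6$ for $\Ht_{n+1}$, and conditions \requ{Dn} to \requ{Si} for $\Hc_{n+1}$ and \requ{Dn} to \requ{Si} together with \requ{Dnp} for $\Ht_{n+1}$, are then the literal conclusions of \rpro{A}: once applied to $\Hc_n$ with the constant $\la$, once to $\Ht_n$ with the constant $\ti\la$; the hypotheses of \rpro{A}, namely the bounds $\la'<\min\lr{\abs{h_\ga'(0)},\abs{h_\ga'(r)},\abs{h_\ga'(1)}}$ for $\Hc$ and the analogous bound for $\Ht$, are supplied by the standing assumptions on $\Hc_1$ and $\Ht_1$.

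The genuinely new point is the simultaneous validity of properties $1$ and $2$, and for this the essential input is the pair of limits \requ{LLog} and \requ{LLogp}. On the $\Hc$-side this is verbatim \rpro{A}: $v_{p-1}$ lies near $c_1$, $\abs{1-v_p}$ is of order $\la_3^{-k_2}$ with $\la_3=\abs{h_{\gt}'(1)}$, the quadratic tangency of $h_\ga$ at $c_1$ (inequalities \requ{Pow}) makes $\abs{v_{p-1}-c_1}$ of order $\la_3^{-k_2/2}$, hence $\abs{h_\ga'(v_{p-1})}$ of order $\la_3^{-k_2/2}$ and $d_p(\ga)$ of order $\la_0^{k_1}\la_3^{-k_2/2}$ with $\la_0=\abs{h_{\gt}'(r)}$, yielding \requ{LLog}. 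On the $\Ht$-side one repeats the argument with the displayed quartic estimates in place of \requ{Pow}: $\ti v_{p-1}$ lies near $\ti c_1$, the degree-$4$ tangency turns $\abs{1-\ti v_p}$ of order $\ti\la_3^{-k_2}$ into $\abs{\ti v_{p-1}-\ti c_1}$ of order $\ti\la_3^{-k_2/4}$, and the cubic vanishing of $\ti h'$ gives $\abs{\ti h_{\ga'}'(\ti v_{p-1})}$ of order $\ti\la_3^{-3k_2/4}$, whence $\ti d_p(\ga')$ is of order $\ti\la_0^{k_1}\ti\la_3^{-3k_2/4}$, i.e.\ \requ{LLogp}. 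Both limits hold uniformly in the parameter on the relevant intervals and, crucially, uniformly in $n$, because $\la_0,\la_3,\ti\la_0,\ti\la_3$ range over fixed compact subsets of $(1,\ft)$ as $\gt$ varies in $[0,h]$ and $\gtp$ in $[0,h']$ (continuity of $\Hc,\Ht$ in the $C^1$ topology and of the points $r,\ti r,c_1,\ti c_1$).

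It remains to fix $\eta$. Evaluating \requ{Log} at $\ga=\gt$ and $\ga'=\gtp$ gives
$$\frac 12\,\frac{\log\la_3}{\log\la_0}<\te_1<\te_2<\frac 34\,\frac{\log\ti\la_3}{\log\ti\la_0},$$
so I would take any $\eta\in(\te_1,\te_2)$. Then $\log\la_0-\frac 1{2\eta}\log\la_3>0$ while $\log\ti\la_0-\frac 3{4\eta}\log\ti\la_3<0$; since $\te_1,\te_2$ and the compact sets above depend only on $h,h'$, i.e.\ only on $\Hc_1$ and $\Ht_1$, both quantities stay bounded away from $0$ uniformly in $n$. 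Hence there are $\la_1\in(0,1)$ and $\la_2\in\lr{1,\min(\la,\ti\la)}$, depending only on $\Hc_1$ and $\Ht_1$, such that once $k_1$ (and $k_2\sim k_1/\eta$) is large enough, \requ{LLog} forces $d_p(\ga)>\la_2^p$ \allg{n+1} and \requ{LLogp} forces $\ti d_p(\ga')<\la_1^p$ \allgp{n+1}, which are properties $1$ and $2$. Only finitely many lower bounds on $k_1,k_2,k_3$ have been imposed (from the two runs of \rpro{A} and of \rpro{Conv}, \rcor{Lim}, \rcor{Der}), each of them uniform in $n$; choosing $k_1,k_2,k_3$ above all of them while keeping $k_1/k_2\to\eta$ finishes the construction.

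The main obstacle is not a new idea but the bookkeeping just invoked: one must verify that every ``$k_i$ sufficiently large'' threshold inherited from the two applications of \rpro{A} is genuinely uniform in $n$ --- this is what makes $\la_1,\la_2$ independent of $n$ --- and that the quartic analogue of \requ{Pow} for $\ti h_{\ga'}$ at $\ti c_1$ propagates correctly through the proof of \rpro{A}, so that $\Ht_{n+1}$ still satisfies \requ{Dn} to \requ{Si} and \requ{Dnp}; concretely the expansion accumulated in $I_3$ over $k_2$ steps must absorb the deeper derivative dip ($\ti\la_3^{-3k_2/4}$ rather than $\la_3^{-k_2/2}$) at $\ti v_{p-1}$, which is exactly what the margin in \requ{Log} --- arranged via the choice of the degree-$7$ polynomial $T$ and the numerics $\abs{h_0'(r(0))}=3$, $\abs{h_0'(1)}=\abs{\ti h_0'(1)}=9$, $\abs{\ti h_0'(\ti r(0))}=x_0/y_0$ --- is there to guarantee. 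Finally one keeps $\la_2<\min(\la,\ti\la)$ so that properties $1$ and $2$ stay consistent with the growth conditions \requ{Dn} and \requ{Dnp} that must survive into step $n+1$.
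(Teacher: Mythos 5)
Your proposal is correct and follows essentially the same route as the paper: run the construction of \rpro{A} simultaneously for $\Hc_n$ and $\Ht_n$ with the common prefix $S_{n+1}=S_nI_2^{k_1+1}I_3^{k_2}I_2^{k_3}$, derive the limits \requ{LLog} and \requ{LLogp} (with the quartic/cubic analogue of \requ{Pow} at $\ti c_1$), and choose $\te_1<\eta<\te_2$ via \requ{Log} to get expansion at time $p$ on the $\Hc$-side and contraction on the $\Ht$-side, the remaining properties and conditions \requ{Dn} to \requ{Si}, \requ{Dnp} coming from the two runs of \rpro{A} with $k_3$ large. One small remark: what makes $\la_1,\la_2$ depend only on $\Hc_1,\Ht_1$ is the uniform gap in \requ{Log} (equivalently the uniform bounds on $\la_0,\la_3,\ti\la_0,\ti\la_3$), not uniformity in $n$ of the ``$k_i$ sufficiently large'' thresholds, which may well depend on $n$ (e.g.\ $k_1>t_n$) and harmlessly so.
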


\rpro{B} has an immediate corollary for the families $\Hc$ and $\Ht$.
\begin{corollary}
\label{corB}
Let the subfamilies $\Hc_n$ and $\Ht_n$ of $\Hc_1$ respectively $\Ht_1$ with $n\geq 1$ satisfy 
conditions \requ{Dn} to \requ{Si} and \requ{Dnp} and
$$\De>0.$$
Then there exist subfamilies $\Hc_{n+1}$ of $\Hc_n$ and $\Ht_{n+1}$ of $\Ht_n$ satisfying the 
same conditions and such that there exists $t_n<p<t_{n+1}$ with the following properties
\begin{enumerate}
\item $\abs{h_\ga^p(c_2)-c_2}<\De$ \allg{n+1}.
\item $\abs{\ti h_{\ga'}^p(\ti c_2)-\ti c_2}<\De$ \allgp{n+1}.
\item $d_{t_n,l}(\ga)>\la^l$ \allg{n+1} and $l\var 1{t_{n+1}-t_n}$.
\item $\ti d_{t_n,l}(\ga')>\ti\la^l$ \allgp{n+1} and $l\var 1{t_{n+1}-t_n}$.
\end{enumerate}
\end{corollary}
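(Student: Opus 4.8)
The plan is to run the construction in the proof of \rpro B for the two families $\Hc_n$ and $\Ht_n$ \emph{in parallel}, imposing on both the same combinatorial prefix for the second kneading sequence. What makes this possible is that, by the way the sequences $(\Hc_n)$ and $(\Ht_n)$ are defined, the two families have matching kneading invariants at the endpoints, $\ku(\al_n)=\ti\ku(\al_n')$ and $\ku(\be_n)=\ti\ku(\be_n')$, and both $\Hc$ and $\Ht$ are natural families in $\Pc_2$ with negative Schwarzian derivative satisfying \cG. Consequently all of \rsec{Fam}'s machinery---\rpros{Min}{Conv}, \rcor{Lim}, \rlem{Fin}, \rlem{Bou}, \rlem{DF} and \rcor{Der}---applies verbatim to each of the two families, and exactly the same minimal itinerary sequences are realized as second kneading sequences over $[\al_n,\be_n]$ for $\Hc$ and over $[\al_n',\be_n']$ for $\Ht$. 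The analogue of condition \requ{R} (with rate $\la'$ for $\Hc$ and $\ti\la'$ for $\Ht$) holds on all of $[0,h]$, resp.\ $[0,h']$, so it is inherited by every subfamily.

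Following the proof of \rpro B I would fix the prefix
$$S_{n+1}=S_nI_2^{k_1}S_nI_2^{k_2+1}I_3I_2^{k_3}\ma p=t_n+k_1,$$
and apply \rpro{Conv} twice to each of the two families: first to $S_n$ with the pairs $(\al_n,\be_n)$ and $(\al_n',\be_n')$, then to the longer prefix $S_nI_2^{k_1}S_nI_2^{k_2+1}I_3$. This yields parameter intervals $[\al_{n+1},\be_{n+1}]\se[\al_n,\be_n]$ and $[\al_{n+1}',\be_{n+1}']\se[\al_n',\be_n']$ whose second kneading sequences both have common prefix $S_{n+1}$ and satisfy \requ{Si}; then \requ{Kn} and \requ{Fin} are immediate, and \requ{Conv} for both intervals follows by taking $k_3$ large, since \rcor{Lim} forces $\al_{n+1},\be_{n+1}\ra\gt$ and $\al_{n+1}',\be_{n+1}'\ra\gtp$, with $\ku(\gt)=\ti\ku(\gtp)=S_n\iti$. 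As in \rpro B, $S_n\iti$ minimal and $k_1>k_2>t_n$ give $S_{n+1}\iti$ minimal via \rlem{Min}, so \rpro{Conv} does apply.

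The four conclusions then split into two independent halves. Conclusions $(1)$ and $(2)$---that at time $p$ the critical value's orbit returns within $\De$ of the critical point---are precisely the first conclusion of \rpro B applied to each family: in both cases $h_\ga^p(c_2)$, resp.\ $\ti h_{\ga'}^p(\ti c_2)$, equals $v_{p-1}$, resp.\ $\ti v_{p-1}$, which lies in a small interval around $c_2$, resp.\ $\ti c_2$, whose diameter tends to $0$ as $k_2\ra\ft$; this fact rests on \rlem{Fin}, \rlem{Bou}, \rcor{Lim} and the absence of homtervals of $h_{\gt}$ and $\ti h_{\gtp}$ (Singer's \rthm{Sing}, since $v_{t_n}=r$ and $\ti v_{t_n}=\ti r$), none of which sees the degree of the critical point. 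One chooses $k_2$ so that this diameter is below $\De$ at $\gt$ for $\Hc$ \emph{and} at $\gtp$ for $\Ht$, then $k_1$ large enough (using \rcor{Lim} and continuity of the critical points) to propagate both bounds over $[\al_{n+1},\be_{n+1}]$ and $[\al_{n+1}',\be_{n+1}']$. Conclusions $(3)$ and $(4)$---the expansion $d_{t_n,l}(\ga)>\la^l$, resp.\ $\ti d_{t_n,l}(\ga')>\ti\la^l$, for $l$ up to $t_{n+1}-t_n$---follow for each family exactly as the corresponding conclusion of \rpro B: along the $I_2$ and $S_n$ blocks one gains at the strictly larger rate $\la'$ (resp.\ $\ti\la'$) by \rlem{DF} and \rcor{Der}, while the bounded detour of the orbit near $c_2$ costs only the uniform constant $\te$ of \rlem{Bou}, which is absorbed once $k_1,k_2$ are large; the degree of $c_2$ merely changes the value of $\te$.

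Since every requirement above is of the form ``holds once $k_1,k_2,k_3$ are large enough,'' I would finish by taking $k_1,k_2,k_3$ above all the finitely many thresholds produced for both families at once. The one step that really needs a word of care---and the one I would regard as the crux---is the verification that the \emph{same} prefix $S_{n+1}$, hence the same $p$ and the same $t_{n+1}$, can be realized by both $\Hc_n$ and $\Ht_n$; this is exactly where the coincidence of the endpoint kneading invariants and the purely combinatorial character of \rpros{Min}{Conv} enter, and it is what will ultimately make the two limiting polynomials combinatorially equivalent, hence topologically conjugate on $I$.
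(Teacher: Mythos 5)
Your proposal is correct and is essentially the paper's argument: the paper treats this as an immediate corollary of Proposition \ref{propB}, obtained by running that construction in parallel for $\Hc_n$ and $\Ht_n$ with the same prefix $S_{n+1}=S_nI_2^{k_1}S_nI_2^{k_2+1}I_3I_2^{k_3}$ and taking $k_1,k_2,k_3$ above the finitely many thresholds for both families at once, exactly as you describe. Your elaboration of why the combinatorial steps (\rpro{Conv}, minimality, the diameter bound near $c_2$, and the derivative estimates via \rlem{DF}, \rlem{Bou}, \rcor{Der}) transfer verbatim to $\Ht$ matches the paper's earlier remark that $\Ht$ satisfies all the Section \ref{sectFam} hypotheses.
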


For all $k\geq 1$ we define $\Hc_{2k}$ and $\Ht_{2k}$ using \rcor{A} and $\Hc_{2k+1}$ and $\Ht_{2k+1}$ 
using \rcor{B} with $\De=2^{-k}$. Let $h$ be the limit of $(\Hc_n)_{n\geq 1}$ and $\ti h$ be the limit 
of $(\Ht_n)_{n\geq 1}$. Then $h$ is $CE$ therefore $RCE$ and the second critical point $\ti c_2$ of
$\ti h$ is recurrent but not $CE$ therefore $\ti h$ is not $RCE$. Both $h$ and $\ti h$ have negative
Schwarzian derivative and their second critical orbits accumulate on $r$ and $1$ respectively on $\ti r$ and
$1$. Moreover, using \rlem{Acc}, $h$ and $\ti h$ do not have attracting or neutral periodic points on $[0,1]$. 
We may therefore apply \rcors{Hom}{Equiv} to obtain the following theorem that contradicts Conjecture 1
in \cite{TRCE}.

\begin{thmb}
The $RCE$ condition for S-multimodal maps is not topologically invariant.
\end{thmb}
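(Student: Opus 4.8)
The plan is to realise the two polynomials of Theorem B as the limits $h=\Hc(\gl)$ and $\ti h=\Ht(\gl')$ of the decreasing sequences of families $(\Hc_n)_{n\geq 1}$ and $(\Ht_n)_{n\geq 1}$ set up above, arranging that the two constructions stay combinatorially synchronised at every stage while the derivatives along the second critical orbits are pulled apart. The engine is the degree asymmetry at the first critical point: $c_1$ has degree $2$ in $\Hc=\G$, whereas $\ti c_1$ has degree $4$ in $\Ht$, so near these points $\abs{h_\ga'}$ behaves like $\abs{x-c_1}$ while $\abs{\ti h_{\ga'}'}$ behaves like $\abs{x-\ti c_1}^3$. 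Consequently a long stretch of the second critical orbit near the repelling fixed point $1$ followed by a close approach to the first critical point costs the derivative a factor comparable to $\la_3^{-1/2}$ in the degree-$2$ family but $\ti\la_3^{-3/4}$ in the degree-$4$ family. This is exactly what the asymptotic formulas \requ{LLog} and \requ{LLogp} express; combined with inequality \requ{Log}, they show that for the common ratio $\eta=k_1/k_2$ chosen in $(\te_1,\te_2)$ one such passage makes $d_p(\ga)>\la_2^p>1$ grow and $\ti d_p(\ga')<\la_1^p<1$ shrink, which is the content of \rcor{A}.

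With \rcor{A} and \rcor{B} available, the construction is inductive. Starting from $\Hc_1$, $\Ht_1$, which satisfy \requ{HCEo} and the standing hypotheses \requ{Dn} to \requ{Si} and \requ{Dnp}, for each $k\geq 1$ I would apply \rcor{A} to produce $\Hc_{2k}$, $\Ht_{2k}$ — the step that drives both second critical orbits near $c_1$ and separates their derivative growth — and then \rcor{B} with $\De=2^{-k}$ to produce $\Hc_{2k+1}$, $\Ht_{2k+1}$ — the step that brings both second critical orbits within $2^{-k}$ of $c_2$. Since every stage prescribes the same common prefix $S_n$ of the kneading sequences of $\Hc_n$ and $\Ht_n$, and since \requ{Conv} forces the parameter intervals down to single points $\gl$ and $\gl'$, the limits $h$ and $\ti h$ are well defined and share the same kneading invariant. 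Because $\De=2^{-k}\to 0$ while $\gl\in[\al_{2k+1},\be_{2k+1}]$ and $\gl'\in[\al_{2k+1}',\be_{2k+1}']$ for all $k$, both $c_2$ and $\ti c_2$ are recurrent. Concatenating the derivative lower bounds of claims (1), (3), (5) of \rcor{A} and (3) of \rcor{B} with \requ{HCEo} and the identity $d_{n+p}=d_n\,d_{n,p}$, I would check that the orbit of $c_2$ under $h$ is exponentially expanding at all times; since in addition $c_1$ is non-recurrent and Collet--Eckmann ($h(c_1)=1$ is a repelling fixed point), $h$ is $CE$, hence $RCE$. On the other hand, by claim (2) of \rcor{A} applied at the even steps, $\ti d_p(\ga')<\la_1^p$ with $\la_1<1$ along a subsequence of times, so $\ti c_2$ is recurrent but not $CE$ and $\ti h$ is not $RCE$. (That $c_2,\ti c_2\in J$ follows, as in \rlem{Acc}, from the accumulation of the second critical orbits on the repelling fixed points $r$ and $1$, respectively $\ti r$ and $1$.)

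It then remains to conclude topological conjugacy. By construction $h,\ti h\in\Pc'_2$: both have negative Schwarzian derivative and non-flat critical points, and by \rlem{Acc} (equivalently Singer's \rthm{Sing}) neither has attracting or neutral periodic orbits, so all their periodic points are repelling. Having the same kneading invariant, $h$ and $\ti h$ are combinatorially equivalent by the third point of \rpro{Order}; \rcor{Hom} then rules out homtervals and \rcor{Equiv} upgrades combinatorial equivalence to a topological conjugacy of $h$ and $\ti h$ on $I$. Thus $h$ and $\ti h$ are topologically conjugate S-multimodal analytic maps with negative Schwarzian derivative of which only $h$ is $RCE$, contradicting Conjecture 1 of \cite{TRCE} and proving Theorem B. The genuine difficulty is concentrated in \rcor{A}, and within it in the uniform-in-parameter derivative estimates \requ{LLog} and \requ{LLogp} governing the passage near the first critical point; once those are in place, keeping the two constructions combinatorially parallel while their analytic behaviour diverges is bookkeeping, and the conjugacy is then immediate from the machinery of Sections 2 and 3.
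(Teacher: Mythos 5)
Your proposal follows the paper's own proof essentially step for step: the same two families $\Hc=\G$ (degree-$2$ critical points) and $\Ht$ built from the degree-$7$ polynomial with degree-$4$ critical points, the same exploitation of the exponent asymmetry via \requ{Log}, \requ{LLog} and \requ{LLogp} packaged in \rcor{A}, the same alternating application of \rcors AB with $\De=2^{-k}$, and the same conclusion via kneading invariants, \rcor{Hom} and \rcor{Equiv} that $h$ and $\ti h$ are conjugate with only $h$ being $RCE$. No substantive difference from the paper's argument; the proposal is correct.
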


\bigskip
\textbf{Acknowledgments.} The author would like to thank Jacek Graczyk who ask the question wether $RCE$ is equivalent to $TCE$. He also suggested that the construction developed for the proof of Theorem A could also be employed to prove Theorem B. The author is also grateful to Neil Dobbs, who helped improve the presentation of the paper. Part of this work was done at University of Orsay, France.


\bibliographystyle{plain}

\end{document}